\documentclass[reqno,11pt]{amsart}
\usepackage{mathrsfs}
\usepackage{stmaryrd}
\usepackage{cases}
\usepackage{epic}
\usepackage{amsfonts}
\usepackage{graphicx}
\usepackage{amsmath}
\usepackage{amssymb, upgreek}
\usepackage{mathtools}
\usepackage{bm}
\usepackage{array}
\usepackage{chngcntr} 
\counterwithin{figure}{section}
\counterwithin{table}{section}
\usepackage{latexsym,todonotes}
\usepackage{pdflscape}
\usepackage[all]{xypic}
\usepackage[all]{xy}
\usepackage{color}
\usepackage{colordvi}
\usepackage{multicol}
\oddsidemargin 0cm \evensidemargin -0cm \textwidth 16cm
\usepackage{tikz}
\usepackage{tikz-cd}
\usepackage[linktocpage=true]{hyperref}
\hypersetup{colorlinks,linkcolor=blue,urlcolor=cyan,citecolor=blue}

\newcommand{\RN}[1]{%
  \textup{\uppercase\expandafter{\romannumeral#1}}%
}

\usepackage{amsxtra}
\usepackage{amsmath}
\usepackage{amscd}
\usepackage{amssymb}
\usepackage{amsfonts}
\usepackage{mathrsfs}
\usepackage{amsthm}
\usepackage{color}
\usepackage{upgreek}
\usepackage{verbatim}  
\usepackage{enumerate}
\usepackage{latexsym}
\usepackage{graphicx}
\usepackage{tikz}
\usepackage{todonotes}
\usepackage{setspace}
\usepackage{hyperref}
\usepackage{stmaryrd}
\usepackage{nicefrac}
\usepackage{euscript}
\usetikzlibrary{decorations.pathmorphing}

 \definecolor{darkgreen}{HTML}{336633}
 \definecolor{darkred}{HTML}{993333}
\makeatletter

\newcommand{\arxiv}[1]{\href{http://arxiv.org/abs/#1}{\tt
    arXiv:\nolinkurl{#1}}}

\allowdisplaybreaks

\newtheorem{alphatheorem}{Theorem}

\newtheorem{alphaproposition}[alphatheorem]{Proposition}

\theoremstyle{plain}
\newtheorem{thm}{Theorem}[section]
\newtheorem*{thm*}{Theorem}

\newtheorem{lem}[thm]{Lemma}
\newtheorem{prop}[thm]{Proposition}

\newtheorem{df-prop}[thm]{Definition-Proposition}

\theoremstyle{remark}
\newtheorem{rem}[thm]{Remark}

\numberwithin{equation}{section}

\newcommand{\U}{\bold{U}}
\newcommand{\Ui}{\bold{U}^\imath}

\newcommand{\qbinom}[2]{\begin{bmatrix} #1\\#2 \end{bmatrix} }

\setcounter{tocdepth}{1}
\begin{document}

\title{$\mathrm{i}$Canonical basis arising from quasi-split rank one $\mathrm{i}$quantum group}

\author[Ziming Chen]{Ziming Chen}
\address{Department of Mathematics, University of Virginia, Charlottesville, VA 22903, USA}\email{jrd6an@virginia.edu}

\subjclass[2020]{Primary 17B37}  
\keywords{iQuantum groups, Canonical bases}

\begin{abstract} 
We compute icanonical basis of the quasi-split rank one modified iquantum group, by obtaining explicit transition matrices among the icanonical basis, monomial basis, and standardized canonical basis; all these bases can be naturally categorified. These transition matrices follow from their counterparts computed in this paper among the icanonical basis, monomial basis, and canonical basis on simple finite-dimensional modules of quantum $\mathfrak{sl}_3$.  
\end{abstract}

\maketitle

\tableofcontents

\thispagestyle{empty}


\section{Introduction}

\subsection{Background}

A cornerstone in the theory of Drinfeld-Jimbo quantum groups $\U$ is the canonical basis introduced by Lusztig \cite{Lus90}, which enjoys striking properties such as integrality and positivity, and plays a fundamental role in representation theory, geometry, and categorification. 

Quantum symmetric pairs $(\U,\Ui)$ were formulated by Letzter \cite{Let02} (and generalized in \cite{Kol02}), where the coideal subalgebra $\Ui$ of $\U$ is nowadays known as an iquantum group. The iquantum groups can be viewed as a natural generalization of quantum groups, and the aim of the $\imath$-Program (see \cite{Wan23} for a survey) is to develop $\imath$-analogues of the fundamental structures in the theory of quantum groups, including icanonical bases, ibraid group action, and icategorification; cf. \cite{BW18b, WZ25, BWW25}. The icanonical basis with positivity properties on the modified iquantum group $\dot{\U}^{\imath}$ of quasi-split type AIII was constructed geometrically in \cite{LiW18} (building on \cite{BKLW18}). While the icanonical basis has been constructed in full generality \cite{BW21}, explicit formulas have not been fully available even in all rank one cases.

Unlike the usual quantum group setting, where there is a unique rank one $\U_{q}(\mathfrak{sl}_{2})$, there are three rank one quasi-split iquantum groups, classified by Satake diagrams, i.e. pairs consisting of a Dynkin diagram together with a diagram involution, as follows. 
\vspace{2mm}
\begin{center}
\renewcommand{\arraystretch}{1.8} 

\tikzset{
  satake node/.style={circle, draw, fill=white, inner sep=2pt},
  satake involution/.style={<->, bend left=30, blue, yshift=2pt}
}

\begin{tabular}{c c c}
\hline
Type & Dynkin type & Satake diagram \\
\hline
Diagonal &
$A_1 \times A_1$ &
\begin{tikzpicture}[baseline=-0.6ex]
  \node[satake node] (a) at (0,0) {};
  \node[satake node] (b) at (1.6,0) {};
  \draw[satake involution]
    (a) to node[above=-1.2pt]{\scriptsize$\tau$} (b);
\end{tikzpicture}
\\

Split &
$A_1$ &
\begin{tikzpicture}[baseline=-0.6ex]
  \node[satake node] at (0.8,0) {};
\end{tikzpicture}
\\

Quasi-split &
$A_2$ &
\begin{tikzpicture}[baseline=-0.6ex]
  \node[satake node] (a) at (0,0) {};
  \node[satake node] (b) at (1.6,0) {};
  \draw (a) -- (b);
  \draw[satake involution]
    (a) to node[above=-1.2pt]{\scriptsize$\tau$} (b);
\end{tikzpicture}
\\
\hline
\end{tabular}
\end{center}
\vspace{2mm}

Let us quickly recall several distinguished bases for rank-one iquantum groups. First, the iquantum group of diagonal type is isomorphic to the usual quantum group $\U_q(\mathfrak{sl}_2)$. In this case, the canonical basis of the modified quantum $\mathfrak{sl}_2$ is known (cf. \cite{Ka93}, \cite[25.3.1]{Lus94}) and the standardized canonical basis (called fused canonical basis back then) was introduced and computed in \cite{Wan25}.
Secondly, for the split type, closed formulas for the icanonical basis on the modified iquantum group (known as idivided powers), as well as explicit expressions of the icanonical basis in terms of the canonical basis at the module level, were obtained in \cite{BeW18, BeW21} (confirming some conjectures in \cite{BW18a}). The transition matrices between standardized canonical basis and idivided powers were given in \cite{BWW23}. 
In contrast, for the third rank one case, namely the quasi-split case, the transition matrices remain largely unknown.

\subsection{What is achieved in this paper?} We focus on the quasi-split rank one case in this paper, which is a subalgebra of a rank two quantum group. From now on, $\U$ denotes the quantum group $U_q(\mathfrak{sl}_3)$, $\mathbf{U}^\imath$ (and $\dot{\U}^{\imath}$) denotes the (modified) quasi-split rank one iquantum group, and $L(m,n)$ denotes the simple finite-dimensional $\U$-module of highest weight $m\omega_1+n\omega_2$, for $m,n\in \mathbb Z_{\ge 0}$. Unless otherwise specified, statements made at the \emph{algebra level} refer to $\dot{\U}^{\imath}$, while statements made at the \emph{module level} refer to $L(m,n)$. 

The standardized canonical basis of $\dot{\U}^{\imath}$ (and for general higher rank as well) has been constructed in \cite{BWW25}. The monomial bases for $\dot{\U}^{\imath}$ and $L(m,n)$ introduced in \cite{WZ25} (also called pseudo icanonical bases in an earlier version therein) are explicit and easy to work with, as they are products of divided powers of the form $B_i^{(a)}B_{\tau i}^{(b)}B_i^{(c)}$, for $b\ge a+c$.

The goal of this paper is to determine explicit formulas for the icanonical bases for $\dot{\U}^{\imath}$ and for the modules $L(m,n)$. More explicitly, we shall establish various transition matrices among different bases:
\begin{itemize}
    \item \textbf{On the algebra $\dot{\U}^{\imath}$:} the icanonical basis, the monomial basis, and the standardized canonical basis;
    \item \textbf{On the modules $\bf L(m,n)$:} the icanonical basis, the monomial basis, and Lusztig's canonical basis.
\end{itemize}
 
Explicit transition matrices between the monomial basis and Lusztig’s canonical basis on the module level have already been obtained  in \cite{WZ25}. Consequently, the above problem on module level is largely reduced to determining the transition matrices between the monomial basis and the icanonical basis; the corresponding transition matrices on the algebra level will follow from this.

Many arguments in higher rank reduce to local rank-one calculations (see, e.g., \cite{BW18b, WZ25}). Consequently, a thorough understanding of the rank-one cases provides crucial insight and a solid foundation for the general theory. The bases and their transition matrices studied in this paper can be naturally categorified in the setting of \cite{BWW25}; compare \cite{Lau10, BWW23}. 

\subsection{Main results}
First, we obtain the transition matrices between the standardized canonical basis and the monomial basis on $\dot{\U}^{\imath}$. Note that the standardized canonical basis is not integral \cite{Wan25, BWW25}.

\begin{alphatheorem}[Theorem \ref{thm:stdCB1}]
Fix $\zeta\in X^{\imath}$, for any $a,b,c\geq 0$ such that $b\geq a+c$, we have the following identity on $\dot{\U}^{\imath}1_{\zeta}$:
\begin{align*}
\Delta_{i}^{(a,b,c)}=\sum_{0\leq x\leq a}\sum_{0\leq y\leq c}&(-1)^{x+y}\frac{q^{x(\varsigma_{\tau i}+(-1)^{\tau i}\zeta-2b+3c+a-1)+y(\varsigma_{i}+(-1)^{i}\zeta-2c+b-1)-xy+\binom{x}{2}+\binom{y}{2}}}{\prod\limits_{i=1}^{x}(1-q^{-2i})\prod\limits_{i=1}^{y}(1-q^{-2i})}\nonumber\\
&\times B_{\tau i}^{(a-x)}B_{i}^{(b-x-y)}B_{\tau i}^{(c-y)}1_{\zeta},
\end{align*}
and
\begin{align*}
B_{\tau i}^{(a)}B_{i}^{(b)}B_{\tau i}^{(c)}1_{\zeta}=\sum_{0\leq x \leq a}\sum_{0\leq y \leq c}&\frac{q^{x(\varsigma_{\tau i}+(-1)^{\tau i}\zeta-2b+3c+a-1)+y(\varsigma_{i}+(-1)^{i}\zeta-2c+b-1)+xy-\binom{x}{2}-\binom{y}{2}}}{\prod\limits_{i=1}^{x}(1-q^{-2i})\prod\limits_{i=1}^{y}(1-q^{-2i})}\nonumber\\
&\times \Delta_{i}^{(a-x,b-x-y,c-y)}.
\end{align*}
\end{alphatheorem}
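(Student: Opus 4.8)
The plan is to prove the first identity directly inside $\dot{\U}^{\imath}1_{\zeta}$ and then obtain the second one as a formal inversion. Observe first that both formulas are unitriangular changes of basis: they relate the monomial basis elements $B_{\tau i}^{(a)}B_{i}^{(b)}B_{\tau i}^{(c)}1_{\zeta}$ and the standardized canonical basis elements $\Delta_{i}^{(a,b,c)}$ through matrices that are triangular with respect to the partial order $(a,b,c)\succeq(a-x,b-x-y,c-y)$ for $x,y\ge 0$ (note the constraint $b\ge a+c$ is preserved under these shifts), with $x=y=0$ coefficient equal to $1$. In particular each such matrix is invertible over $\mathbb{Q}(q)$, so it suffices to (i) establish the first identity, and (ii) check that the two coefficient families are mutually inverse — a combinatorial statement with no representation theory in it.

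For (i) I would induct on $a+c$. The base case $a=c=0$ is the normalization $\Delta_{i}^{(0,b,0)}=B_{i}^{(b)}1_{\zeta}$, to which both sides reduce. For the inductive step, expand $\Delta_{i}^{(a,b,c)}$ using the recursive construction of the standardized canonical basis of \cite{BWW25} (lowering one of $a$ or $c$, up to lower-order corrections), apply the induction hypothesis to pass to the monomial basis, and then straighten the resulting products $B_{\tau i}\cdot\big(B_{\tau i}^{(a')}B_{i}^{(b')}B_{\tau i}^{(c')}1_{\zeta}\big)$ back into the monomial basis using the rank-one $\imath$-Serre relation and the divided-power commutation identities recalled from \cite{WZ25}, while tracking the $\imath$-weight shift induced by $B_{\tau i}$. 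Comparing coefficients of each monomial then produces a recursion that the asserted coefficient $(-1)^{x+y}q^{(\cdots)}\big/\big(\prod_{i=1}^{x}(1-q^{-2i})\prod_{i=1}^{y}(1-q^{-2i})\big)$ must satisfy, and one verifies that it does; the $q$-binomial-type denominators are precisely what these recursions force.

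For (ii), substitute the first identity (with $(a,b,c)$ replaced by each shifted triple) into the right-hand side of the second, write $x=x'+x''$, $y=y'+y''$, and collect the coefficient of a fixed $B_{\tau i}^{(a-x)}B_{i}^{(b-x-y)}B_{\tau i}^{(c-y)}1_{\zeta}$. After the weight- and index-dependent exponents — which are linear in the summation variables — telescope, this coefficient factors as a product of two one-variable sums, one over $x'+x''=x$ and one over $y'+y''=y$, each of the form $\sum_{k=0}^{n}(-1)^{k}\,q^{\binom{k}{2}+(\text{linear in }k)}\big/\big(\prod_{i=1}^{k}(1-q^{-2i})\prod_{i=1}^{n-k}(1-q^{-2i})\big)$, which vanishes unless $n=0$. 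This is the finite $q$-binomial identity (the $q$-analogue of $\sum_{k}(-1)^{k}\binom{n}{k}=\delta_{n,0}$), and it yields $\delta_{x,0}\delta_{y,0}$; hence the two matrices are inverse. Together with (i) this proves both displayed identities.

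The main obstacle is the $q$-exponent bookkeeping in step (i): correctly accounting for the weight-dependent contributions $\varsigma_{\tau i}+(-1)^{\tau i}\zeta$, $\varsigma_{i}+(-1)^{i}\zeta$ and the shifts $-2b+3c+a-1$, $-2c+b-1$ as $B_{\tau i}$ is commuted past $B_{i}^{(b)}B_{\tau i}^{(c)}1_{\zeta}$ and across the weight-$\zeta$ idempotent. By comparison, the triangularity recorded in the first paragraph and the $q$-binomial identity in step (ii) are routine.
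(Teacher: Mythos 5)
Your step (i) is where the proof actually lives, and it is not carried out. You invoke a ``recursive construction of the standardized canonical basis of \cite{BWW25} (lowering one of $a$ or $c$, up to lower-order corrections)'', but no such recursion is established or even stated precisely, and it is not how $\Delta_{i}^{(a,b,c)}$ is characterized in this setting: here $\Delta_{i}^{(a,b,c)}=\jmath^{-1}(F^{(a)}_{\tau i}F^{(b)}_{i}F^{(c)}_{\tau i})$ for the isometry $\jmath$ of Theorem \ref{bww25}, and $\jmath$ is only a linear isomorphism, not an algebra map, so multiplying by $B_{\tau i}$ on the $\dot{\U}^{\imath}$ side does not correspond in any simple way to multiplying by $F_{\tau i}$ on the $\U^{-}$ side. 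Even granting some recursion from the categorified construction, your inductive step reduces to ``straighten $B_{\tau i}\cdot B_{\tau i}^{(a')}B_{i}^{(b')}B_{\tau i}^{(c')}1_{\zeta}$ and verify that the asserted coefficients satisfy the resulting recursion'', which is exactly the $q$-exponent bookkeeping you flag as the main obstacle and never perform; as written this is a plan, not a proof. (Your step (ii), the triangular inversion via a terminating $q$-binomial identity, is fine in principle, but it is only needed because you try to prove one identity and invert it; note the cross terms $\mp xy$ and the $(a,b,c)$-dependent exponent shifts must be tracked when substituting the shifted triples, so even that check is not free.)

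The paper's route is shorter and avoids all of this: by \eqref{bww25thm5120}, $\Delta_{i}^{(a,b,c)}$ is characterized as the unique element of $\dot{\U}^{\imath}1_{\zeta}$ with $\lim_{m,n\to\infty,\,m-n=\zeta}\big(\Delta_{i}^{(a,b,c)}\eta-F^{(a)}_{\tau i}F^{(b)}_{i}F^{(c)}_{\tau i}\eta\big)=0$ in the monomial-basis coefficients on $L(m,n)$ (equation \eqref{defscb}). One then takes the explicit finite-level transition formulas \eqref{lem311}--\eqref{lem312} of \cite{WZ25} between $F$-monomials and $B$-monomials on $L(m,n)$ and passes to the limit $m,n\to\infty$, $m-n=\zeta$; the $q$-binomials $\qbinom{n-r-a+x}{x}$, $\qbinom{m-c+y}{y}$ converge coefficientwise to the factors $1/\prod_{i=1}^{x}(1-q^{-2i})$, $1/\prod_{i=1}^{y}(1-q^{-2i})$, and both displayed identities drop out directly (no induction, no straightening, no separate inversion check). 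If you want to salvage your approach, you would need to either prove the recursion for $\Delta_{i}^{(a,b,c)}$ you are assuming and then actually execute the coefficient verification, or switch to the asymptotic characterization, which is the tool the definitions in Section \ref{section24} already hand you.
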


Next we are going to present the transition matrices between icanonical basis and monomial basis on both simple $\U$-module $L(m,n)$ and modified iquantum group level. Fix an arbitrary $\zeta\in X^{\imath}$, we set 
\[
\alpha^{b,c}_{\zeta, \bm{\varsigma},i}:=(-1)^{i}\zeta+b-2c+\varsigma_{i},
\qquad
\beta^{a,b,c}_{\zeta, \bm{\varsigma},i}:=(-1)^{\tau i}\zeta+a-2b+3c+\varsigma_{\tau i}. 
\]
The form of the transition matrices between the icanonical basis and the monomial basis depends on the signs of $\alpha^{b,c}_{\zeta, \bm{\varsigma},i}$ and $\beta^{a,b,c}_{\zeta, \bm{\varsigma},i}$. Our formulas are obtained case-by-case depending on the signs:
\begin{itemize}
    \item $\alpha^{b,c}_{\zeta, \bm{\varsigma},i}\leq 0$, and $\beta^{a,b,c}_{\zeta, \bm{\varsigma},i}\leq 0$;
    \item exactly one of $\alpha^{b,c}_{\zeta, \bm{\varsigma},i}$ or $\beta^{a,b,c}_{\zeta, \bm{\varsigma},i}$ is positive.
\end{itemize}
The transition matrices between $\mathfrak{B}_{i,\zeta}^{(a,b,c)}$ (icanonical basis) and $B^{(a)}_{\tau i}B^{(b)}_{i}B^{(c)}_{\tau i}1_{\zeta}$ (monomial basis) on $\dot{\U}^{\imath}1_{\zeta}$ are summarized as follows.

\begin{alphaproposition}[Proposition \ref{prop32} and Theorem \ref{thm711}]
\qquad
\begin{enumerate}
    \item 
If $\alpha^{b,c}_{\zeta, \bm{\varsigma},i}\leq 0$, $\beta^{a,b,c}_{\zeta, \bm{\varsigma},i}\leq 0$, and $b\geq a+c$, then $B^{(a)}_{\tau i}B^{(b)}_{i}B^{(c)}_{\tau i}1_{\zeta}=\mathfrak{B}^{(a,b,c)}_{i,\zeta}$. 
\item
If $(-1)^{i}(m-n)+b-2c+\varsigma_{i}\leq 0$, $(-1)^{\tau i}(m-n)+a-2b+3c+\varsigma_{\tau i}\leq 0$, then $\mathfrak{B}^{(a,b,c)}_{i,\zeta}\eta=B^{(a)}_{\tau i}B^{(b)}_{i}B^{(c)}_{\tau i}\eta$ on $L(m,n)$, where $m,n\in \mathbb{Z}_{\geq 0}$ and $m-n=\zeta$.
\end{enumerate}
\end{alphaproposition}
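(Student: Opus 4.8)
The plan is to prove both parts by showing that the monomial basis element already satisfies the two defining properties of the icanonical basis: bar-invariance and the appropriate integrality/leading-term normalization (i.e. that it lies in the correct $\mathcal{A}$-lattice and is congruent to a PBW-type or canonical-basis element modulo $q^{-1}$ times that lattice). For part (2) at the module level, I would first recall from \cite{WZ25} the explicit expansion of $B^{(a)}_{\tau i}B^{(b)}_{i}B^{(c)}_{\tau i}\eta$ in terms of Lusztig's canonical basis of $L(m,n)$; the point is that under the sign hypotheses $\alpha^{b,c}_{\zeta,\bm{\varsigma},i}\le 0$ and $\beta^{a,b,c}_{\zeta,\bm{\varsigma},i}\le 0$, all the structure constants appearing in that transition matrix collapse — every correction coefficient involves a factor that vanishes or is forced to be a non-negative power of $q^{-1}$ with leading term killed, so the monomial element is \emph{already} bar-invariant and already lies in the canonical lattice with unit leading coefficient. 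Since the icanonical basis element $\mathfrak{B}^{(a,b,c)}_{i,\zeta}\eta$ is the unique bar-invariant element of the lattice with that leading term, the two must coincide.

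Concretely, the key steps are: (i) write down the bar operator on $\dot{\U}^{\imath}$ (resp.\ on $L(m,n)$) and verify directly that $B^{(a)}_{\tau i}B^{(b)}_{i}B^{(c)}_{\tau i}$ is bar-invariant — this is immediate since each $B^{(k)}_j$ (idivided power / Serre-type divided power of a generator) is bar-invariant and products of bar-invariants are bar-invariant; (ii) invoke the \cite{WZ25} transition matrix between the monomial basis and Lusztig's canonical basis on $L(m,n)$, and examine its entries under the standing sign conditions — here one checks that the hypotheses $(-1)^i(m-n)+b-2c+\varsigma_i\le 0$ and $(-1)^{\tau i}(m-n)+a-2b+3c+\varsigma_{\tau i}\le 0$ force the "off-diagonal'' Gaussian-binomial-type coefficients either to vanish outright or to be divisible by $q^{-1}$, so that $B^{(a)}_{\tau i}B^{(b)}_{i}B^{(c)}_{\tau i}\eta$ equals a single canonical basis element plus $q^{-1}\mathcal{A}_0$-combinations within the $\mathcal{A}$-lattice; (iii) conclude by uniqueness of the canonical/icanonical basis element with prescribed bar-invariance and leading term (the defining characterization of $\mathfrak{B}^{(a,b,c)}_{i,\zeta}$). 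Part (1) at the algebra level then follows from part (2) by a standard limiting/stability argument: the icanonical basis of $\dot{\U}^{\imath}1_\zeta$ is characterized by its compatible image in $L(m,n)$ for $m-n=\zeta$ with $m,n$ large, and once $m,n$ are large enough the module-level sign conditions reduce exactly to $\alpha^{b,c}_{\zeta,\bm{\varsigma},i}\le 0$ and $\beta^{a,b,c}_{\zeta,\bm{\varsigma},i}\le 0$, so the equality $B^{(a)}_{\tau i}B^{(b)}_{i}B^{(c)}_{\tau i}1_\zeta=\mathfrak{B}^{(a,b,c)}_{i,\zeta}$ is inherited from the module level (using that the monomial basis is itself compatible under the $\dot{\U}^{\imath}1_\zeta \to L(m,n)$ maps).

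The main obstacle I anticipate is step (ii): making precise why the sign conditions on $\alpha$ and $\beta$ are exactly what is needed to kill all corrections. This requires a careful bookkeeping of the exponents in the \cite{WZ25} formulas — in particular tracking how the weight $\zeta$ (equivalently $m-n$) enters each coefficient through the action of the Cartan part, and verifying that $\alpha^{b,c}_{\zeta,\bm{\varsigma},i}$ and $\beta^{a,b,c}_{\zeta,\bm{\varsigma},i}$ are precisely the two quantities controlling whether the two relevant "straightening'' moves (pulling a $B_i$ past a $B_{\tau i}^{(c)}$, and pulling a $B_{\tau i}$ past a $B_i^{(b)}$) produce only higher-order-in-$q^{-1}$ terms. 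A secondary technical point is the comparison at the algebra level: one must be careful that the stability argument is applied to the correct version of the bar-invariant lattice (the $\imath$-version of Lusztig's $\dot{\U}$-lattice), and that the monomial basis elements $B^{(a)}_{\tau i}B^{(b)}_{i}B^{(c)}_{\tau i}1_\zeta$ are mapped to the corresponding module-level monomial elements under the projection — both of which are available from \cite{WZ25, BWW25}.
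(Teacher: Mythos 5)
Your proposal follows essentially the same route as the paper: the paper proves the module-level statement by taking the explicit expansion of $B^{(a)}_{\tau i}B^{(b)}_{i}B^{(c)}_{\tau i}\eta$ in the canonical basis from \cite{WZ25} (Lemma \ref{prop59}), computing the degree of each coefficient $c^{a,b,c}_{\tau i,\bm{\varsigma},\bm{m},x,y}$ and observing it is negative for $(x,y)\neq(0,0)$ precisely under the hypotheses $\alpha\le 0$, $\beta\le 0$ (this is the bookkeeping you flag as your main obstacle, carried out in \eqref{degc}), and then invoking the defining characterization \eqref{icb1} of the icanonical basis; the algebra-level statement is then obtained exactly as you suggest, by passing to the limit $m,n\to\infty$ with $m-n=\zeta$ along the projective system defining the icanonical basis of $\dot{\U}^{\imath}1_{\zeta}$.
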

Actually, it follows from \eqref{mb} that $\mathfrak{B}^{(a,b,c)}_{i,\zeta}\eta=\mathfrak{B}^{(a,b,c)}_{i,\zeta}\eta\not=0$ if and only if $c\le m_{\tau i}$ and $a \le b-c \le m_{i}$, where we set $m_{1}=m$, $m_{2}=n$. 

\begin{alphatheorem}[Theorem \ref{thm722}]\label{thmm122}
When $\alpha^{b,c}_{\zeta, \bm{\varsigma},i}>0$ and $b>a+c$, we get
\begin{align*}
B^{(a)}_{\tau i}B^{(b)}_{i}B^{(c)}_{\tau i}1_{\zeta}&=[\alpha^{b,c}_{\zeta, \bm{\varsigma},i}]\sum_{\substack{k \ge 0 \\ k\equiv 0 \pmod 2}}\frac{[\alpha^{b,c}_{\zeta, \bm{\varsigma},i}-k+2]\cdots [\alpha^{b,c}_{\zeta, \bm{\varsigma},i}+k-4][\alpha^{b,c}_{\zeta, \bm{\varsigma},i}+k-2]}{[k]!}\mathfrak{B}_{i,\zeta}^{a,b-k,c-k}\\
&\quad\quad+\sum_{\substack{k\geq 0 \\ k\equiv 1 \pmod 2}}\frac{[\alpha^{b,c}_{\zeta, \bm{\varsigma},i}-k+1][\alpha^{b,c}_{\zeta, \bm{\varsigma},i}-k+3]\cdots [\alpha^{b,c}_{\zeta, \bm{\varsigma},i}+k-1]}{[k]!}\mathfrak{B}^{a,b-k,c-k}_{i,\zeta}.
\end{align*}
Furthermore, if $\alpha^{b,c}_{\zeta, \bm{\varsigma},i}$ is even, then
\begin{align*}
\mathfrak{B}_{i,\zeta}^{(a,b,c)}&=\sum_{0\leq k\leq c}(-1)^{k}\frac{[\alpha^{b,c}_{\zeta, \bm{\varsigma},i}][\alpha^{b,c}_{\zeta, \bm{\varsigma},i}+2]\cdots [\alpha^{b,c}_{\zeta, \bm{\varsigma},i}+2k-2]}{[k]!}B^{(a)}_{\tau i}B^{(b-k)}_{i}B^{(c-k)}_{\tau i}1_{\zeta}.
\end{align*} 
If $\alpha^{b,c}_{\zeta, \bm{\varsigma},i}$ is odd, then
\begin{align*}
\mathfrak{B}_{i,\zeta}^{(a,b,c)}&=\sum_{0\leq k\leq c}(-1)^{k}[\alpha^{b,c}_{\zeta, \bm{\varsigma},i}+k-1]\frac{[\alpha^{b,c}_{\zeta, \bm{\varsigma},i}+1][\alpha^{b,c}_{\zeta, \bm{\varsigma},i}+3]\cdots [\alpha^{b,c}_{\zeta, \bm{\varsigma},i}+2k-3]}{[k]!}B^{(a)}_{\tau i}B^{(b-k)}_{i}B^{(c-k)}_{\tau i}1_{\zeta}.
\end{align*} 
\end{alphatheorem}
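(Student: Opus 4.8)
The plan is to establish the module‑level analogues of all three displayed identities on the simple modules $L(m,n)$ and then transfer them to $\dot{\U}^{\imath}1_{\zeta}$. For the transfer, fix $(a,b,c)$ with $b>a+c$ and $\alpha^{b,c}_{\zeta,\bm{\varsigma},i}>0$, and choose $m,n\in\mathbb{Z}_{\ge 0}$ with $m-n=\zeta$ large enough that, by the non‑vanishing criterion recorded after the Proposition above (coming from \eqref{mb}), none of the finitely many monomials $B^{(a)}_{\tau i}B^{(b-k)}_{i}B^{(c-k)}_{\tau i}1_{\zeta}$ and icanonical vectors $\mathfrak{B}^{a,b-k,c-k}_{i,\zeta}$ with $0\le k\le c$ is annihilated by $u\mapsto u\eta$. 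Then $\dot{\U}^{\imath}1_{\zeta}\to L(m,n)$ is injective on their $\mathbb{Q}(q)$‑span; it carries the monomial basis of $\dot{\U}^{\imath}1_{\zeta}$ to monomial basis elements of $L(m,n)$ and the icanonical basis of $\dot{\U}^{\imath}1_{\zeta}$ to icanonical basis elements of the based $\U$‑module $L(m,n)$ (or to $0$); and, as $\zeta=m-n$, the inequality $\alpha^{b,c}_{\zeta,\bm{\varsigma},i}>0$ is literally the inequality $(-1)^{i}(m-n)+b-2c+\varsigma_{i}>0$ appearing at the module level. Hence it suffices to prove the three displays with $1_{\zeta}$ replaced by $\eta$ on $L(m,n)$.

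On $L(m,n)$ I would realize the transition matrix between the monomial basis and the icanonical basis as a composite of two others. First, \cite{WZ25} provides the explicit expansion of $B^{(a)}_{\tau i}B^{(b)}_{i}B^{(c)}_{\tau i}\eta$ in Lusztig's canonical basis of $L(m,n)$, and conversely. Second, and this is the crux, one needs the expansion of Lusztig's canonical basis of $L(m,n)$ in the icanonical basis of the based module, i.e.\ the effect of the $\imath$‑bar involution $\psi_{\imath}$ on the relevant canonical basis vectors. This localizes to a rank‑one computation: the span of $\{B^{(a)}_{\tau i}B^{(b-k)}_{i}B^{(c-k)}_{\tau i}\eta : 0\le k\le c\}$ carries an $\imath$‑weight string on which $\psi_{\imath}$ and the two bases behave as in the split rank‑one theory of idivided powers (cf.\ \cite{BeW18, BeW21}, and Lusztig's $\mathfrak{sl}_2$ divided‑power formulas \cite[25.3]{Lus94}), the length of that string being governed by the single integer $\alpha^{b,c}_{\zeta,\bm{\varsigma},i}$. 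The shifts $\varsigma_{i},\varsigma_{\tau i}$ and $\zeta$ enter only through $\alpha^{b,c}_{\zeta,\bm{\varsigma},i}$ and $\beta^{a,b,c}_{\zeta,\bm{\varsigma},i}$; since $\alpha^{b-k,c-k}_{\zeta,\bm{\varsigma},i}=\alpha^{b,c}_{\zeta,\bm{\varsigma},i}+k$ and $\beta^{a,b-k,c-k}_{\zeta,\bm{\varsigma},i}=\beta^{a,b,c}_{\zeta,\bm{\varsigma},i}-k$, the whole string stays in the present regime, whereas in the base case $\alpha\le 0$, $\beta\le 0$ of the Proposition above the monomial is already its own $\psi_{\imath}$‑fixed lift. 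The regime $\alpha^{b,c}_{\zeta,\bm{\varsigma},i}>0$ is exactly where $\psi_{\imath}$ acts non‑trivially on the string and a quasi‑$K$‑matrix correction $\Upsilon$ is picked up; computing it recursively produces the quantum‑integer products $[\alpha^{b,c}_{\zeta,\bm{\varsigma},i}]\,[\alpha^{b,c}_{\zeta,\bm{\varsigma},i}-k+2]\cdots[\alpha^{b,c}_{\zeta,\bm{\varsigma},i}+k-2]$ and $[\alpha^{b,c}_{\zeta,\bm{\varsigma},i}-k+1]\cdots[\alpha^{b,c}_{\zeta,\bm{\varsigma},i}+k-1]$, together with the even/odd‑$k$ split. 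Composing with the \cite{WZ25} matrix yields the first display.

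The remaining two displays are the inverse transition matrix, and follow formally once the first display is known for every triple on the string. Writing the first display at $(a,b-j,c-j)$ as $B^{(a)}_{\tau i}B^{(b-j)}_{i}B^{(c-j)}_{\tau i}1_{\zeta}=\sum_{j\le l\le c}c_{l-j}(\alpha^{b,c}_{\zeta,\bm{\varsigma},i}+j)\,\mathfrak{B}^{a,b-l,c-l}_{i,\zeta}$, the system is upper‑triangular in $(j,l)$ with diagonal entry $c_{0}(\alpha^{b,c}_{\zeta,\bm{\varsigma},i}+j)=[\alpha^{b,c}_{\zeta,\bm{\varsigma},i}+j]\ne 0$ (since $\alpha^{b,c}_{\zeta,\bm{\varsigma},i}+j>0$), hence invertible over $\mathbb{Q}(q)$. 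The parity of $\alpha^{b,c}_{\zeta,\bm{\varsigma},i}$ enters because the resulting telescoping products of quantum integers collapse differently in the two cases, yielding the closed forms $\frac{[\alpha][\alpha+2]\cdots[\alpha+2k-2]}{[k]!}$ and $[\alpha+k-1]\,\frac{[\alpha+1][\alpha+3]\cdots[\alpha+2k-3]}{[k]!}$; I would confirm these by isolating a short Gaussian‑binomial identity as a lemma rather than carrying out the inversion inline.

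The main obstacle is the rank‑one computation of the second paragraph: showing that the relevant monomials span a configuration of the type handled by the split rank‑one theory, tracking the $\imath$‑weight shift so that $\alpha^{b,c}_{\zeta,\bm{\varsigma},i}$---and no other affine function of $(a,b,c,\zeta,\bm{\varsigma})$---is the governing parameter, and matching $\psi_{\imath}$ against Lusztig's bar involution through the quasi‑$K$‑matrix carefully enough to pin down the icanonical basis vectors $\mathfrak{B}^{(a,b,c)}_{i,\zeta}$ explicitly from their defining properties ($\psi_{\imath}$‑invariance together with the prescribed leading term against Lusztig's canonical basis), in particular to produce the parity‑dependent leading factor $[\alpha^{b,c}_{\zeta,\bm{\varsigma},i}+k-1]$ in the odd case. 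Once that reduction is in place, the rest is bookkeeping and $q$‑binomial manipulation.
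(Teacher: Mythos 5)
There is a genuine gap at what you yourself identify as the crux. Your module-level argument hinges on the claim that the span of $\{B^{(a)}_{\tau i}B^{(b-k)}_{i}B^{(c-k)}_{\tau i}\eta : 0\le k\le c\}$ ``behaves as in the split rank-one theory'' of idivided powers, so that the action of $\psi^{\imath}$ (via a quasi-$K$-matrix correction) can be computed recursively and ``produces'' the quantum-integer products governed by $\alpha^{b,c}_{\zeta,\bm{\varsigma},i}$, with the even/odd split. This reduction is asserted, not proved, and it is not available: the quasi-split rank-one coideal sits inside $\U_q(\mathfrak{sl}_3)$, and the vectors above are not a module for any split rank-one subalgebra to which the formulas of \cite{BeW18, BeW21} apply; the quasi-split transition matrices are exactly what was previously unknown. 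In the paper the corresponding step is carried out in the opposite direction and is the technical heart of the work: one conjectures the expansion of $\mathfrak{B}^{(a,b,c)}_{i}$ in the monomial basis, rewrites it in Lusztig's canonical basis via Lemma \ref{prop59} (the \cite{WZ25} matrix), and then proves by the recursions of Lemmas \ref{lemrecursive1}, \ref{lem417} and the appendix computations that all off-diagonal coefficients $\Omega^{(a,b,c)}_{m,n,x,\theta}$, $\Xi^{(a,b,c)}_{m,n,x,\theta}$ lie in $q^{-1}\mathbb{Z}[q^{-1}]$, so that the defining characterization of the icanonical basis applies; the inverse direction is then obtained by the explicit $q$-binomial inversion of Proposition \ref{inverse}. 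Your proposal contains no substitute for this integrality/unitriangularity verification, and without it neither the exact products nor the parity-dependent factor $[\alpha^{b,c}_{\zeta,\bm{\varsigma},i}+k-1]$ is pinned down; calling the remainder ``bookkeeping'' understates where the difficulty lies.

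A secondary, fixable issue is the transfer to $\dot{\U}^{\imath}1_{\zeta}$. You pass through a single large $L(m,n)$ and assert that the evaluation $u\mapsto u\eta$ sends the icanonical basis of $\dot{\U}^{\imath}1_{\zeta}$ to icanonical basis elements of $L(m,n)$ or to zero. Inside this paper that stability statement is Theorem \ref{thmstability}, proved as a \emph{consequence} of the algebra-level formulas you are trying to establish, so using it as an input is circular unless you import it from \cite{Wat23}. The cleaner route, and the one the paper takes, is to use the very definition of $\mathfrak{B}^{(a,b,c)}_{i,\zeta}$ as the inverse limit along $\{L(m,n)\}_{m-n=\zeta}$: since the module-level coefficients depend on $(m,n)$ only through $\zeta=m-n$ (via $\alpha_{b,c,\bm{\varsigma}}$), the limit is immediate and no injectivity-plus-stability argument is needed.
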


In fact, Theorem \ref{thmm122} is derived from Theorems \ref{aeqb+c}, \ref{thm5144}, \ref{thm6155}, and \ref{thmm61010}. In these theorems, the explicit transition matrices between the icanonical basis and the monomial basis on the module level are computed. We first outline the main ideas of the proofs of these results and defer the detailed arguments to the corresponding sections.

We begin with a conjectural transition matrix from the icanonical basis to Lusztig’s canonical basis. To verify this conjecture, we use the explicit expansion of the monomial basis in terms of Lusztig’s canonical basis obtained in \cite{WZ25}. This yields the transition matrix from the conjectured icanonical basis to Lusztig’s canonical basis, which we then show to be uni-triangular with off-diagonal entries in $q^{-1}\mathbb{Z}[q^{-1}]$. Consequently, by the defining properties of the icanonical basis as in \cite[Theorem~5.7]{BW18b}, all these conjectures are indeed true statements.

By taking the inverse limit of the icanonical bases along the projective system $\{L(m,n)\}_{m-n=\zeta}$ (see \cite[Section 6]{BW18b}), we obtain Theorem~\ref{thmm122}. Using this result, we further derive Theorem~ \ref{thmstability}, which shows that, when the simple $\U$-module $L(m,n)$ is viewed as a $\dot{\U}^{\imath}$-module, the action of the icanonical basis of $\dot{\U}^{\imath}$ on the highest weight vector $\eta$ yields either an icanonical basis element of $L(m,n)$ or zero. Hence, in our rank one setting this provides an alternative proof of \cite[Corollary~4.6.3]{Wat23} (which was conjectured in \cite[Remark 6.18]{BW18b}).

\begin{alphatheorem}
[Theorem \ref{thm733}]\label{thmm133}
When $\alpha^{b,c}_{\zeta, \bm{\varsigma},i}>0$ and $b=a+c$, we get
\begin{align*}
B^{(a)}_{\tau i}B^{(b)}_{i}B^{(c)}_{\tau i}1_{\zeta}=&[\alpha^{b,c}_{\zeta, \bm{\varsigma},i}]\sum_{\substack{k \ge 0 \\ k\equiv 0 \pmod 2}}\sum_{0\leq l\leq\min\{a,c-k\}}\frac{[\alpha^{b,c}_{\zeta, \bm{\varsigma},i}-k+2][\alpha^{b,c}_{\zeta, \bm{\varsigma},i}-k+4]\cdots[\alpha^{b,c}_{\zeta, \bm{\varsigma},i}+k-2]}{[k]!}\nonumber\\
&\quad\quad\quad\quad\quad\quad\quad \times \mathfrak{B}_{i,\zeta}^{a-l,b-k-2l,c-k-l}\nonumber\\
&\quad\quad+\sum_{\substack{k\geq 0 \\ k\equiv 1 \pmod 2}}\sum_{0\leq l\leq\min\{a,c-k\}}\frac{[\alpha^{b,c}_{\zeta, \bm{\varsigma},i}-k+1][\alpha^{b,c}_{\zeta, \bm{\varsigma},i}-k+3]\cdots [\alpha^{b,c}_{\zeta, \bm{\varsigma},i}+k-1]}{[k]!}\nonumber\\
&\quad\quad\quad\quad\quad\quad\quad \times \mathfrak{B}_{i,\zeta}^{a-l,b-k-2l,c-k-l}.
\end{align*}
Furthermore, if $\alpha^{b,c}_{\zeta, \bm{\varsigma},i}$ is even, then
\begin{align*}
\mathfrak{B}_{i,\zeta}^{(a,b,c)}&=\sum_{0\leq k\leq c}(-1)^{k}\frac{[\alpha^{b,c}_{\zeta, \bm{\varsigma},i}][\alpha^{b,c}_{\zeta, \bm{\varsigma},i}+2]\cdots [\alpha^{b,c}_{\zeta, \bm{\varsigma},i}+2k-2]}{[k]!}B^{(a)}_{\tau i}B^{(b-k)}_{i}B^{(c-k)}_{\tau i}1_{\zeta}\nonumber\\
&\quad\quad -\sum_{0\leq k\leq c-1}\bigg((-1)^{k}\frac{[\alpha^{b,c}_{\zeta, \bm{\varsigma},i}][\alpha^{b,c}_{\zeta, \bm{\varsigma},i}+2]\cdots [\alpha^{b,c}_{\zeta, \bm{\varsigma},i}+2k-2]}{[k]!}\nonumber\\
&\quad\quad\quad\quad\quad\quad\times B^{(a-1)}_{\tau i}B^{(b-2-k)}_{i}B^{(c-1-k)}_{\tau i}1_{\zeta}\eta\bigg).
\end{align*}
If $\alpha^{b,c}_{\zeta, \bm{\varsigma},i}$ is odd, then
\begin{align*}
\mathfrak{B}_{i,\zeta}^{(a,b,c)}&=\sum_{0\leq k\leq c}(-1)^{k}[\alpha^{b,c}_{\zeta, \bm{\varsigma},i}+k-1]\frac{[\alpha^{b,c}_{\zeta, \bm{\varsigma},i}+1][\alpha^{b,c}_{\zeta, \bm{\varsigma},i}+3]\cdots [\alpha^{b,c}_{\zeta, \bm{\varsigma},i}+2k-3]}{[k]!}B^{(a)}_{\tau i}B^{(b-k)}_{i}B^{(c-k)}_{\tau i}1_{\zeta}\nonumber\\
&\quad\quad -\sum_{0\leq k\leq c-1}\bigg((-1)^{k}[\alpha^{b,c}_{\zeta, \bm{\varsigma},i}+k-1]\frac{[\alpha^{b,c}_{\zeta, \bm{\varsigma},i}+1][\alpha^{b,c}_{\zeta, \bm{\varsigma},i}+3]\cdots [\alpha^{b,c}_{\zeta, \bm{\varsigma},i}+2k-3]}{[k]!}\nonumber\\
&\quad\quad\quad\quad\quad\quad\times B^{(a-1)}_{\tau i}B^{(b-2-k)}_{i}B^{(c-1-k)}_{\tau i}1_{\zeta}\bigg).
\end{align*}
\end{alphatheorem}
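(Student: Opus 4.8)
The plan is to derive Theorem~\ref{thmm133} from its module-level counterparts by passing to the inverse limit along the projective system $\{L(m,n)\}_{m-n=\zeta}$, exactly as Theorem~\ref{thmm122} was obtained from the module-level computations (Theorems~\ref{aeqb+c}, \ref{thm5144}, \ref{thm6155}, \ref{thmm61010}). Recall (cf. \cite[Section~6]{BW18b}) that under the quotient $\dot{\U}^{\imath}1_{\zeta}\twoheadrightarrow L(m,n)$, $1_{\zeta}\mapsto\eta$, the icanonical and monomial bases of $\dot{\U}^{\imath}1_{\zeta}$ are sent, respectively, to $\mathfrak{B}^{(a,b,c)}_{i,\zeta}\eta$ and $B^{(a)}_{\tau i}B^{(b)}_{i}B^{(c)}_{\tau i}\eta$, each of which by \eqref{mb} is a basis vector of $L(m,n)$ or zero. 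Since every coefficient in the two asserted identities depends on $a,b,c,\zeta$ only through $\alpha^{b,c}_{\zeta,\bm{\varsigma},i}$, and hence not on the chosen pair $(m,n)$ with $m-n=\zeta$, and since for fixed $a,b,c$ no term is truncated once $m,n\gg 0$, the identities on $\dot{\U}^{\imath}1_{\zeta}$ follow from the stably identical identities on $L(m,n)$.

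For the module-level identities I would follow the route already used for Theorem~\ref{thmm122}. Write down the ansatz for $\mathfrak{B}^{(a,b,c)}_{i,\zeta}\eta$ given by the second and third displays (the even and odd cases); note that the correction monomials $B^{(a-1)}_{\tau i}B^{(b-2-k)}_{i}B^{(c-1-k)}_{\tau i}\eta$ occurring there remain in the monomial basis, since $b-2-k=(a-1)+(c-1-k)$. The ansatz is manifestly $\imath$-bar invariant — each coefficient is a ratio of products of quantum integers (hence bar invariant), and each monomial $B^{(a)}_{\tau i}B^{(b)}_{i}B^{(c)}_{\tau i}\eta$ is fixed by the bar involution — so by the characterization of the icanonical basis in \cite[Theorem~5.7]{BW18b} it remains only to prove that the transition matrix from the ansatz to Lusztig's canonical basis of $L(m,n)$ is unitriangular with off-diagonal entries in $q^{-1}\mathbb{Z}[q^{-1}]$ for the relevant partial order. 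This is where the explicit monomial-to-canonical-basis expansions of \cite{WZ25} enter: substituting them into the ansatz produces the required matrix, whose triangularity and integrality then become a matter of tracking $q$-degrees. Granting this, the ansatz coincides with the genuine icanonical basis vector, which establishes the second and third displays; the first display follows by inverting the resulting unitriangular matrix, a finite $q$-binomial computation.

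The main obstacle — and the reason $b=a+c$ must be separated from the case $b>a+c$ — is the boundary correction term. When $b=a+c$, the monomial $B^{(a)}_{\tau i}B^{(a+c)}_{i}B^{(c)}_{\tau i}\eta$ sits on the edge of the straightening region $b\ge a+c$, and the iquantum Serre / divided-power relations couple it to the lower monomial $B^{(a-1)}_{\tau i}B^{(a+c-2)}_{i}B^{(c-1)}_{\tau i}\eta$; this coupling is exactly what forces the extra summation index $l$ into the first display and the subtracted sum into the second and third. Consequently, the $q$-degree estimate establishing the $q^{-1}\mathbb{Z}[q^{-1}]$ condition must be carried out with this correction present — the quantum-integer products $[\alpha^{b,c}_{\zeta,\bm{\varsigma},i}]\,[\alpha^{b,c}_{\zeta,\bm{\varsigma},i}+2]\cdots$ must be balanced against the leading powers produced by truncating the monomial action at $\eta$ — and one must additionally verify the combinatorial identity expressing that the first display is inverse to the second and third. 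I expect these two points, rather than the conceptual framework, to be the bulk of the work; everything else reduces to the $b>a+c$ analysis already carried out for Theorem~\ref{thmm122}.
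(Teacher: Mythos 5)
Your proposal is correct and follows the paper's route for this statement: the algebra-level identities are obtained exactly as you say, by letting $m,n\to\infty$ with $m-n=\zeta$ in the module-level theorems, and those module-level theorems are themselves proved by the method you outline (a $\psi^{\imath}$-invariant ansatz whose expansion over Lusztig's canonical basis, computed from the formulas of \cite{WZ25}, is shown to have off-diagonal entries in $q^{-1}\mathbb{Z}[q^{-1}]$, after which the characterization of the icanonical basis applies). The one organizational difference is worth noting, because it removes exactly the two points you identify as the bulk of the work. The paper does not verify strict unitriangularity of the corrected ansatz directly; instead it proves (Propositions \ref{propaeqb+c} and \ref{propaeqb+codd}) that the \emph{uncorrected} sum $\sum_{k}(-1)^{k}\tfrac{[\alpha]\cdots}{[k]!}B^{(a)}_{\tau i}B^{(b-k)}_{i}B^{(c-k)}_{\tau i}\eta$ equals the auxiliary element $\mathbb{B}^{a,b,c}_{i}=\sum_{l}\mathfrak{B}^{(a-l,b-2l,c-l)}_{i}$: at $b=a+c$ the canonical-basis expansion has coefficients in $1+q^{-1}\mathbb{Z}[q^{-1}]$ at \emph{every} position $x=\theta$ (this, rather than any Serre-type straightening of the boundary monomial, is the source of the correction term), so the natural output is this sum of icanonical elements, not a single one. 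Your second and third displays then follow from the telescoping identity \eqref{thm411}, $\mathfrak{B}^{(a,b,c)}_{i}=\mathbb{B}^{a,b,c}_{i}-\mathbb{B}^{a-1,b-2,c-1}_{i}$, with no cancellation check between two sums needed; and the first display is obtained by inverting the $\mathbb{B}$-relation using the computation of Proposition \ref{inverse} verbatim (its proof is independent of the value of $\alpha$) and then expanding each $\mathbb{B}^{a,b-k,c-k}_{i}$ by its definition, so no new combinatorial inversion identity has to be verified either.
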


Similarly, Theorem \ref{thmm133} is obtained by taking the inverse limit along the projective system $\{L(m,n)\}_{m-n=\zeta}$, now using the transition matrices on module level obtained in Theorems~\ref{thmm59}, \ref{thmm510}, \ref{thm51515}, \ref{thm677}, and \ref{thm61111}. It leads to the same conclusion that the icanonical basis at the algebra level is stable under the action on the highest weight vector of the simple module $L(m,n)$.

In parallel,under another assumption when $\beta^{a,b,c}_{\zeta, \bm{\varsigma},i}=(-1)^{\tau i}\zeta+a-2b+3c+\varsigma_{\tau i}>0$,
analogous results to Theorems \ref{thmm122} and \ref{thmm133} hold. The detailed statements are given in Theorems \ref{thmm512}, \ref{thm51616}, \ref{thm688}, \ref{thm61212}, \ref{thm733} and Theorems \ref{thmm513}, \ref{thm51717}, \ref{thm699}, \ref{thm61313}, \ref{thm744}, respectively.

Taken together, these results provide a complete and explicit description of the transition matrices among the icanonical, monomial, standardized canonical, and Lusztig's canonical bases.\\

\subsection{Organization}
The organization of this paper is summarized in the diagram below. (CB is a shorthand for Canonical Basis.)

\begin{tikzpicture}[>=stealth, thick, scale=0.9, transform shape]
\begin{scope}[xshift=1.5cm] 

\node (iCBm) at (0,4) {iCB};
\node (MBm)  at (6,4) {monomial basis};
\node (LCB)  at (11,4) {Lusztig CB};

\draw[rounded corners] (-0.8,3.1) rectangle (7.4,4.9);

\draw[<->] (iCBm) -- node[above] {\small Sections \ref{section4}, \ref{section5}, \ref{section6}} (MBm);
\draw[<->] (MBm) -- node[above] {\small \cite{WZ25}} (LCB);

\draw[->] (0,4.5) -- (0,5.4) -- (10.5,5.4) -- (10.5,4.5);
\node at (5.3,5.65) {\small Section \ref{section8}};

\node[left] at (-1.7,4) {\small on $L(m,n)$:};

\node (iCBa) at (0,1) {iCB};
\node (MBa)  at (6,1) {monomial basis};
\node (SCB)  at (11,1) {standardized CB};

\draw[rounded corners] (-0.8,0.1) rectangle (7.4,1.9);

\draw[<->] (iCBa) -- node[above] {\small Section \ref{section7}} (MBa);
\draw[<->] (MBa) -- node[above] {\small Section \ref{section3}} (SCB);

\node[left] at (-1.7,1) {\small on $\dot{\U}^{\imath}$:};

\draw[->] (3,3.1) -- node[right] {} (3,1.9);
\end{scope}
\end{tikzpicture}

More precisely, we begin in Section \ref{section3} by determining the transition matrices between the monomial basis and the standardized canonical basis. We then study the transition matrices between the icanonical basis and the monomial basis at the module level, which is carried out in Sections \ref{section4}–\ref{section6}. 

Using the results obtained in Sections \ref{section4}–\ref{section6}, we subsequently relate the icanonical basis to the monomial basis at the algebra level in Section \ref{section7}. The vertical arrow in the diagram indicates that the algebra-level results are obtained by generalizing the corresponding module-level constructions via an appropriate asymptotic limit.

In addition, Section \ref{section8} establishes a direct connection between the icanonical basis and Lusztig’s canonical basis. We derive a new explicit formula relating the two bases, going beyond a mere combination of the results obtained in Sections \ref{section4}–\ref{section6} and those of \cite{WZ25}.
\\

\noindent{\bf Acknowledgments} The author thanks his advisor Weiqiang Wang for valuable guidance and insightful discussions. This research was supported in part by the Graduate Research Fellowship from Wang's NSF grant DMS-2401351. The author also appreciates the hospitality of the Department of Mathematics and IMS, National University of Singapore, where part of this work was carried out.


\section{Preliminaries}
In this section, we review basic on the quantum group $\U_q(\mathfrak{sl}_3)$ and the quasi-split rank one iquantum group $\U_q^{\imath}(\mathfrak{sl}_3)$. The purpose of this section is to establish our conventions and fix the notation that will be used throughout the paper.

\subsection{Weight lattice and $\imath$weight lattice} Denote $\mathbb{I}:=\{1,2\}$ and a bilinear form $\cdot :\mathbb{I}\times \mathbb{I}\to \mathbb{Z}$ such that $i\cdot i=2$, $\frac{2i \cdot j}{i\cdot i}=-1+3\delta_{i,j}$ for all $i,j\in \mathbb{I}$. We call $(\mathbb{I}, \cdot)$ a Cartan datum (cf. ). Furthermore, let $(X,Y, \langle\cdot,\cdot\rangle)$ be a root datum of type $(\mathbb{I}, \cdot)$ (cf.), that is, we have 
\begin{itemize}
	\item two finitely generated abelian groups $Y$, $X$, and a perfect bilinear pairing $\langle\cdot ,\cdot\rangle: Y\times X\to \mathbb{Z}$;
	\item embeddings $\mathbb{I}\to X$ ($i\to \alpha_{i}$) and $\mathbb{I}\to Y$ ($i\to h_{i}$) such that $\langle h_{i},\alpha_{j}\rangle=\frac{2i \cdot j}{i\cdot i}$ for all $i,j\in \mathbb{I}$.
\end{itemize}  
Here $X=\{m\omega_{1}+n\omega_{2} \mid m,n\in \mathbb{Z}\}$ is the weight lattice, where $\omega_i$ are fundamental weights.

Let $\tau: \mathbb{I}\to \mathbb{I}$ such that $\tau: i\to j$ for $i\not=j\in \mathbb{I}$ and $\tau(i)\cdot \tau(j)=i\cdot j$ for $i,j \in \mathbb{I}$. For any $\lambda=m\omega_{1}+n\omega_{2}\in X$, $\tau(\lambda)=n\omega_{1}+m\omega_{2}$. Following \cite{BW18b} we introduce the $\imath$weight lattice
\begin{align*}
X^{\imath}& :=X\big/\{\lambda+\tau(\lambda): \lambda\in X\}
=X\big/\{a(\omega_{1}+\omega_{2}): a\in \mathbb{Z}\}
\\
& \cong \mathbb{Z},
\end{align*}
where the last isomorphism is given by $\overline{m\omega_{1}+n\omega_{2}}\mapsto m-n$.  Set
\[
Y^{\imath}:=\{h\in Y: \tau(h)=-h\}. 
\]
Given $\zeta\in X$, we denote its image in the projection as $\hat{\zeta}\in X^{\imath}$ and call it $\imath$weight. When the context is clear, we slightly abuse of notation and simply write $\zeta\in X^{\imath}$. We have a well-defined bilinear paring $\langle\cdot ,\cdot\rangle: Y_{\imath}\times X^{\imath}\to \mathbb{Z}$ defined by $\langle \omega, \hat{\zeta}\rangle:=\langle \omega, \zeta\rangle$ for any $\omega\in Y^{\imath}$, $\zeta\in X$.

\subsection{Quantum groups and iquantum groups}
Let $\mathbb{Q}(q)$ be the field of rational functions of an indeterminate $q$. Let $\mathcal{A}:= \mathbb{Z}[q,q^{-1}]$, for any $a\in \mathbb{Z}$, $b, t\in \mathbb{Z}_{>0}$, we define 
$$
[a]_{q^{t}}:=\frac{q^{ta}-q^{-ta}}{q^{t}-q^{-t}} ,\quad\quad [b]_{q^{t}}!:=[1]_{q^{t}}[2]_{q^{t}}\cdots [b]_{q^{t}} ,\quad\quad \qbinom{a}{b}_{q^{t}}:=\frac{[a]_{q^{t}}[a-1]_{q^{t}}\cdots [a-b+1]_{q^{t}}}{[b]_{q^{t}}!}.
$$
Here we have $\qbinom{a}{b}_{q^t}=0$ when $a\in \mathbb Z_{\ge 0}$, $b<0$ or $b>a$ and $[0]_{q^{t}}!:=1$. In addition, when $t=1$, we omit the subscript $q$ from the notation.

Recall the quantum group $\boldmath{\U}=\boldmath{\U}_{q}(\mathfrak{sl}_{3})$ as the $\mathbb{Q}(q)$-algebra generated by $E_{i}, F_{i}, K_{\mu}$ for $i\in \mathbb{I}=\{1,2\}$, $\mu\in \mathbb{Z}\mathbb{I}$, subject to the following relations:
$$
K_{0}=1,\quad \quad K_{\mu}K_{\nu}=K_{\mu+\nu}\quad\quad \text{for $\mu, \nu \in \mathbb{Z}\mathbb{I}$},
$$
$$
K_{i}E_{j}K^{-1}_{i}=q^{\frac{2i \cdot j}{i\cdot i}}E_{j},\quad\quad K_{i}F_{j}K^{-1}_{i}=q^{-\frac{2i \cdot j}{i\cdot i}}F_{j},
$$
$$
E_{i}F_{j}-F_{j}E_{i}=\delta_{i,j}\frac{K_{i}-K_{i}^{-1}}{q-q^{-1}}\quad\quad \text{for $i, j \in \mathbb{I}$},
$$
$$
E_{j}E_{i}^{(2)}-E_{i}E_{j}E_{i}+E^{(2)}_{i}E_{j}=0,\quad\quad F_{j}F_{i}^{(2)}-F_{i}F_{j}F_{i}+F^{(2)}_{i}F_{j}=0\quad\quad (i\not=j),
$$
where $E_{i}^{(r)}= \frac{E^{r}_{i}}{[r]!}$, $F_{i}^{(r)}= \frac{F^{r}_{i}}{[r]!}$ for any $r\geq 0$, are called divided powers. We set $E^{(r)}_{i} = F^{(r)}_{i} =0$ for $r<0$. Denote by $\U^{+}$ (and respectively, $\U^{-}$) the subalgebra of $\U$ generated by $E_{i}\, (i\in \mathbb{I})$ (and respectively, $F_{i}\, (i\in \mathbb{I})$). Let $\psi$ denote the bar involution on $\boldmath{\U}$. More precisely, $\psi$ is an anti-linear $\mathbb{Q}$-algebra automorphism on $\U$ determined by
$$
\psi: q\to q^{-1},\quad E_{i}\to E_{i},\quad F_{i}\to F_{i},\quad K_{i}\to K_{-i}.
$$\textbf{}




Denote by $\Ui =\U^{\imath}_{\bm{\varsigma}}$ the quasi-split iquantum group associated to $(\mathbb{I},\tau)$ with parameter $\bm{\varsigma}=(\varsigma_{1},\varsigma_{2}) \in \mathbb Z^2$. By definition $\Ui$ is the $\mathbb Q(q)$-subalgebra of $\U$ generated by 
$$
B_{i}=F_{i}+q^{\varsigma_{i}}E_{\tau i}K^{-1}_{i},\quad\quad k_{i}=K_{i}K^{-1}_{\tau i}\quad\quad \forall i\in \mathbb{I}.
$$
In this paper, we always assume $(\varsigma_{1},\varsigma_{2})=(1,0)$ or $(0,1)$ for the study of icanonical basis. Furthermore, let $\psi^{\imath}$ denote the bar involution on $\boldmath{\U}^{\imath}$ (cf. \cite{BW18a}). More precisely, $\psi^\imath$ is an anti-linear $\mathbb{Q}$-algebra automorphism on $\boldmath{\U}^{\imath}$ such that
$$
\psi^\imath: q\to q^{-1},\quad B_{i}\to B_{i},\quad k_{i}\to k^{-1}_{i}.
$$

Next, we recall the modified iquantum group $\dot{\U^{\imath}}$, see \cite{BW18b, BW21}. The $\dot{\U}^{\imath}$ is the $\mathbb{Q}(q)$-algebra generated by $1_{\lambda}$, and $x1_{\lambda}$, where $x\in \U^{\imath}$, $\lambda\in X^{\imath}$; Moreover, $\dot{\U}^{\imath}$ is a left and a right $\Ui$-module such that
\begin{align}
1_{\lambda}1_{\mu}=\delta_{\lambda,\mu}1_{\lambda},\quad B_{i}1_{\lambda}=1_{\lambda-\alpha_{i}}B_{i},\quad k_{i}1_{\lambda}=q^{\langle h_{i}-h_{\tau i},\lambda\rangle}1_{\lambda}=1_{\lambda}k_{i},
\end{align}
for any $i\in \mathbb I$, $\lambda,\mu \in X^{\imath}$.  Notice that $\dot{\U^{\imath}}$ admits the following decomposition:
$$
\dot{\U^{\imath}}:=\bigoplus_{\lambda, \zeta\in X^{\imath}}1_{\lambda} \U^{\imath}1_{\zeta}=\bigoplus_{\lambda\in X^{\imath}}1_{\lambda} \U^{\imath}=\bigoplus_{\zeta\in X^{\imath}} \U^{\imath}1_{\zeta}.
$$

\subsection{Bilinear forms} 
First, we introduce 3 different bilinear forms related to the usual quantum group $\U$. From \cite[Chapter 1-3]{Lus94}, define $\mathbb{Q}(q)$-algebra homomorphism $r: \U^{-}\to \U^{-}\otimes \U^{-}$, $r(x)=1\otimes x+x\otimes 1$, we can define the symmetric bilinear form on $\U^{-}$ as follows:
$$
(1,1)=1,\quad (F_{i},F_{j})=\frac{\delta_{i,j}}{1-q^{-2}},\quad (x,yy')=(r(x),y\otimes y'),
$$
where the bilinear form $(\U^{-}\otimes \U^{-})\times (\U^{-}\otimes \U^{-})\to \mathbb{Q}(q)$ is given by $(x\otimes x',y\otimes y')=(x,x')(y,y')$ for any $x,x',y,y'\in \U^{-}$. We further know that the bilinear form is non-degenerate. We can extend the definition above to a symmetric bilinear form on $\dot{\U}$: 
$$
(1_{\lambda}x1_{\mu}, 1_{\lambda'}x'1_{\mu'})=0\quad \text{unless $\lambda=\lambda'$ and $\mu=\mu'$},\quad $$
$$
(ux,x')=(x,\rho(u)x'),\quad (y1_{\mu}, y'1_{\mu})=(y,y').
$$
for $\forall x,x'\in \dot{\U}, y,y'\in \U^{-}, \lambda, \lambda', \mu, \mu'\in X, u\in \U$. 

Furthermore, for $m,n\in \mathbb Z_{\ge 0}$, denote $L(m,n)$ as the (unique) simple $\U$-module with highest weight $\lambda=m\omega_{1}+n\omega_{2}$. Denote the highest weight vector as $\eta$. From \cite[Chapter 19]{Lus94}, there exists a unique  symmetric bilinear form on $L(m,n)$ such that
$$
(\eta,\eta)_{m,n}=1,\quad (ux,y)_{m,n}=(x,\rho(u)y)_{m,n}\quad \forall x,y\in L(m,n), u\in \U,
$$
and if $x\in L(m,n)_{\mu}$, $y\in L(m,n)_{\nu}$, $\mu\not=\nu$, then $(x,y)_{\lambda}=0$. From \cite{Lus94}, we also know
\begin{itemize}
    \item both $(~,~)$ on $\U^{-}$ and $(~,~)_{m,n}$ on $L(m,n)$ are non-degenerate;
    \item $\lim\limits_{m,n\to \infty}(x\eta,y\eta)_{m,n}=(x,y)$ for any $x,y\in U^{-}$.
\end{itemize}

Next, we introduce the $\imath$-analogue of the above bilinear forms. Now fix $\zeta\in X^{\imath}$, we define a bilinear form on $\U^{\imath}1_{\zeta}$ by (cf. \cite{BW18b})
$$
(u,v)_{\zeta}^{\imath}:= \lim_{\substack{m,n\to \infty \\ m-n=\zeta}}(u\eta,v\eta)_{m,n}\quad \forall u,v\in \U^{\imath}1_{\zeta}.
$$

\subsection{Canonical basis, icanonical basis, monomial basis, and standardized canonical basis}\label{section24}
Now we introduce the main objects that we are going to study. 

We start from the canonical basis on $\U^{-}$. From \cite[Chapter 14]{Lus94}, the elements in the set 
$$
\mathcal{B}:=\left\{F^{(a)}_{2}F^{(b)}_{1}F^{(c)}_{2}, F^{(a)}_{1}F^{(b)}_{2}F^{(c)}_{1}: a,b,c\in \mathbb Z_{\ge 0}\quad\text{and}\quad  b\geq a+c\right\},
$$
modulo the identification $F^{(a)}_{2}F^{(b)}_{1}F^{(c)}_{1}=F^{(c)}_{1}F^{(b)}_{2}F^{(a)}_{1}$ if $b=a+c$, forms a basis of $\U^{-}$, called the canonical basis of $\U^{-}$. Furthermore, denote $L(m,n)$ as the simple $\U_{q}(\mathfrak{sl}_{3})$ module with the highest weight $\lambda=m\omega_{1}+n\omega_{2}$ and highest weight vector $\eta$, for $m,n\geq 0$. Now the elements $\mathcal{B}\eta-\{0\}$ forms a basis of $L(m,n)$, called the canonical basis of $L(m,n)$. 

From \cite[Theorem 5.10]{BW18b}, for each $a,b,c\in \mathbb Z_{\ge 0}$ such that $F^{(a)}_{\tau i}F^{(b)}_{i}F^{(c)}_{\tau i}\eta\not=0$, there exists a unique $\mathfrak{B}^{(a,b,c)}_{i}\eta\in L(m,n)$ such that $\psi^{\imath}(\mathfrak{B}^{(a,b,c)}_{i}\eta)=\mathfrak{B}^{(a,b,c)}_{i}\eta$ and
\begin{align} \label{icb1}
\mathfrak{B}^{(a,b,c)}_{i}\eta=F^{(a)}_{\tau i}F^{(b)}_{i}F^{(c)}_{\tau i}\eta+\sum_{\substack{x,y\geq 0 \\ (x,y)\not=(0,0)}}q^{-1}\mathbb{Z}[q^{-1}]F^{(a-x)}_{\tau i}F^{(b-x-y)}_{i}F^{(c-y)}_{\tau i}\eta.
\end{align}
Furthermore, the above elements, modulo the identification $\mathfrak{B}^{(a,a+c,c)}_{2}\eta=\mathfrak{B}^{(c,a+c,a)}_{1}\eta$, together form a basis of $L(m,n)$, called the icanonical basis of $L(m,n)$. When the context is clear, $\mathfrak{B}^{(a,b,c)}_{i}\eta$ can be abbreviated as $\mathfrak{B}_{i}^{(a,b,c)}$.

From \cite[Theorem 6.17]{BW18b}, for any $\zeta\in X^{\imath}$, there exists a basis on $\dot{\U}^{\imath}1_{\zeta}$: $\{\mathfrak{B}_{2,\zeta}^{(a,b,c)}, \mathfrak{B}_{1,\zeta}^{(a,b,c)}: b\geq a+c\}$ modulo the identification $\mathfrak{B}_{2,\zeta}^{(a,a+c,c)}=\mathfrak{B}_{1,\zeta}^{(c,a+c,a)}$, obtained by taking the inverse limit of the icanonical bases $\mathfrak{B}^{(a,b,c)}_{i}\eta$, along the projective system $\{L(m,n)\}_{m-n=\zeta}$. From \cite{BW18b}, we further know that $\mathfrak{B}_{i,\zeta}^{(a,b,c)}$ is $\psi^{\imath}$ invariant. We call this basis the icanonical basis on $\dot{\U}^{\imath}1_{\zeta}$. When the context is clear, $\mathfrak{B}_{i,\zeta}^{(a,b,c)}$ can be abbreviated as $\mathfrak{B}_{i}^{(a,b,c)}$.

Next, we introduce the monomial basis. It follows from the following fact:
\begin{thm}[\text{cf. \cite[Theorems 5.11--5.12]{WZ25}}]
For any $i\in I$, $b\ge a+c$, $B_{i}^{(a)}B_{\tau i}^{(b)}B_{i}^{(c)} \eta = 0 $ if and only if $F_{i}^{(a)}F_{\tau i}^{(b)}F_{i}^{(c)} \eta = 0$, if and only if $c\leq m$ and $a\leq b-c\leq n$. Furthermore, the set 
\begin{align}\label{mb}
\{B_1^{(a)}B_2^{(b)}B_1^{(c)}\eta: c\le m, a \le b-c \le n\}\cup \{B_2^{(c)} B_1^{(b)} B_2^{(a)} \eta: a\le n, c\le b-a\le m\}
\end{align} 
forms a basis for $L(m,n)$, modulo the identification $B_1^{(a)}B_2^{(b)}B_1^{(c)}\eta=B_2^{(c)}B_1^{(b)}B_2^{(a)}\eta$ when $b=a+c$.
\end{thm}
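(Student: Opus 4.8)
The plan is to deduce both the vanishing criterion and the basis property by comparing each $B$-monomial $B_i^{(a)}B_{\tau i}^{(b)}B_i^{(c)}\eta$ with the corresponding $F$-monomial $F_i^{(a)}F_{\tau i}^{(b)}F_i^{(c)}\eta$ through the \emph{level grading} on $L(m,n)$, where a weight vector of weight $\lambda-k_1\alpha_1-k_2\alpha_2$ has level $k_1+k_2$; write $L(m,n)=\bigoplus_{N\ge 0}L(m,n)[N]$ for the resulting decomposition. The single external input I would take for granted is the classical $\U^{-}$-side criterion: for $b\ge a+c$ one has $F_i^{(a)}F_{\tau i}^{(b)}F_i^{(c)}\eta\ne 0$ if and only if $c\le m_i$ and $b-c\le m_{\tau i}$ (with $m_1=m$, $m_2=n$). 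This is standard $\mathfrak{sl}_3$ crystal combinatorics — $\mathcal B\eta\setminus\{0\}$ being the canonical basis of $L(m,n)$, with the surviving triples cut out by exactly these inequalities — and can alternatively be obtained from the $\mathfrak{sl}_2$-string fact $F_i^{(c)}\eta\ne 0\iff c\le m_i$ together with the $\mathfrak{sl}_3$ identity $F_i^{(a)}F_{\tau i}^{(a+c)}F_i^{(c)}=F_{\tau i}^{(c)}F_i^{(a+c)}F_{\tau i}^{(a)}$. In particular, counted modulo the identification at $b=a+c$, the number of surviving triples equals $\dim L(m,n)$.

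I would then record the leading-term comparison. Expanding $B_i=F_i+q^{\varsigma_i}E_{\tau i}K_i^{-1}$ in the product $B_i^{(a)}B_{\tau i}^{(b)}B_i^{(c)}$, each summand is weight-homogeneous; the unique summand in which every factor contributes its $F$-part is $F_i^{(a)}F_{\tau i}^{(b)}F_i^{(c)}$, of level $a+b+c$, while choosing an $E$-part in $j\ge 1$ slots raises the weight by $j(\alpha_i+\alpha_{\tau i})$, hence strictly lowers the level. Consequently the level-$(a+b+c)$ component of $B_i^{(a)}B_{\tau i}^{(b)}B_i^{(c)}\eta$ is exactly $F_i^{(a)}F_{\tau i}^{(b)}F_i^{(c)}\eta$, so the $B$-monomial is nonzero whenever the $F$-monomial is. This same description yields linear independence of the constrained $B$-monomials: in a hypothetical linear relation among them, project onto $L(m,n)[N]$ for the largest level $N$ occurring, which annihilates every monomial of level $\ne N$ and replaces each level-$N$ monomial by its $F$-leading term; since these leading terms are pairwise distinct nonzero canonical basis vectors, all level-$N$ coefficients must vanish, a contradiction. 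By the count above there are precisely $\dim L(m,n)$ constrained $B$-monomials (modulo $b=a+c$), so they form a basis; the identification $B_1^{(a)}B_2^{(b)}B_1^{(c)}\eta=B_2^{(c)}B_1^{(b)}B_2^{(a)}\eta$ at $b=a+c$ is inherited from the higher iSerre relation $B_i^{(a)}B_{\tau i}^{(a+c)}B_i^{(c)}=B_{\tau i}^{(c)}B_i^{(a+c)}B_{\tau i}^{(a)}$ in $\U^{\imath}$ (cf.\ \cite{WZ25}), which refines the $\mathfrak{sl}_3$ identity used above.

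The remaining implication, $F_i^{(a)}F_{\tau i}^{(b)}F_i^{(c)}\eta=0\Rightarrow B_i^{(a)}B_{\tau i}^{(b)}B_i^{(c)}\eta=0$, is where I expect the real work to lie, since the leading-term estimate alone does not deliver it; the key is to exploit that $\tau i\ne i$ — precisely the feature distinguishing the quasi-split case from the split one — so that $E_{\tau i}$ commutes with $F_i$ and annihilates $\eta$, giving $B_i^{(k)}\eta=F_i^{(k)}\eta$ for all $k$. Under $b\ge a+c$, vanishing of the $F$-monomial forces $c>m_i$ or $b-c>m_{\tau i}$. If $c>m_i$, then already $B_i^{(c)}\eta=F_i^{(c)}\eta=0$. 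If $c\le m_i$ but $b-c>m_{\tau i}$, note that $B_{\tau i}$ sends each $F_{\tau i}^{p}F_i^{q}\eta$ into $\mathbb Q(q)\,F_{\tau i}^{p+1}F_i^{q}\eta+\mathbb Q(q)\,F_{\tau i}^{p}F_i^{q-1}\eta$ — because $E_i$ commutes past $F_{\tau i}$ and lowers the $F_i$-power on $F_i^{q}\eta$ — so $B_{\tau i}^{(b)}B_i^{(c)}\eta=B_{\tau i}^{(b)}F_i^{(c)}\eta$ lies in the span of the $F_{\tau i}^{(b-j)}F_i^{(c-j)}\eta$ for $0\le j\le\min(b,c)$; since $b\ge c$, each such vector is a scalar multiple of the canonical basis element $F_i^{(0)}F_{\tau i}^{(b-j)}F_i^{(c-j)}\eta$, and it vanishes because $(b-j)-(c-j)=b-c>m_{\tau i}$. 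Hence $B_{\tau i}^{(b)}B_i^{(c)}\eta=0$, so $B_i^{(a)}B_{\tau i}^{(b)}B_i^{(c)}\eta=0$. Combining with the leading-term comparison gives the full chain of equivalences claimed in the statement.
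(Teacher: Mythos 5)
Your argument is essentially correct, and it is necessarily a different route from the paper's, because the paper does not prove this statement at all: it quotes it from \cite[Theorems 5.11--5.12]{WZ25}, and the machinery it imports for such comparisons is the fully explicit expansion of $B_{i}^{(a)}B_{\tau i}^{(b)}B_{i}^{(c)}\eta$ in $F$-monomials with closed-form coefficients (Lemma \ref{prop59}, cf. \eqref{lem311}--\eqref{lem312}), from which unitriangularity and the vanishing criterion are read off coefficient by coefficient. You avoid all coefficient computations: the level-filtration argument gives unitriangularity with leading coefficient $1$, hence both the implication $F_{i}^{(a)}F_{\tau i}^{(b)}F_{i}^{(c)}\eta\neq 0\Rightarrow B_{i}^{(a)}B_{\tau i}^{(b)}B_{i}^{(c)}\eta\neq 0$ and the linear independence of the constrained $B$-monomials; the converse vanishing direction is handled by the genuinely quasi-split facts $E_{\tau i}F_{i}=F_{i}E_{\tau i}$ and $E_{\tau i}\eta=0$ (so $B_{i}^{(k)}\eta=F_{i}^{(k)}\eta$), together with the two-term recursion for $B_{\tau i}$ on the vectors $F_{\tau i}^{p}F_{i}^{q}\eta$, which correctly forces vanishing when $b-c>m_{\tau i}$; the classical nonvanishing criterion for $\mathfrak{sl}_3$ canonical basis vectors applied to $\eta$ plus the dimension count then closes the argument. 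This is a clean and appreciably more elementary derivation of everything except one clause.

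The clause to tighten is the identification at $b=a+c$, which is part of the assertion rather than bookkeeping. Your justification via a ``higher iSerre relation'' $B_{i}^{(a)}B_{\tau i}^{(a+c)}B_{i}^{(c)}=B_{\tau i}^{(c)}B_{i}^{(a+c)}B_{\tau i}^{(a)}$ in $\Ui$ is an overclaim as stated: in the quasi-split rank one $\Ui$ the Serre-type relations acquire lower-order correction terms involving $k_{i}$, so the $\mathfrak{sl}_3$ identity for the $F_{i}$'s does not transport formally, and appealing to \cite{WZ25} here is borderline circular since the identification is part of the very theorems being proved. The identity you need does hold on $L(m,n)$ (equivalently on $\dot{\U}^{\imath}1_{\zeta}$), and the quickest honest justification is exactly the expansion you otherwise avoid: when $b=a+c$ every monomial $F_{i}^{(a-x)}F_{\tau i}^{(b-x-y)}F_{i}^{(c-y)}$ occurring in Lemma \ref{prop59} again has middle exponent equal to the sum of the outer ones, and the coefficients in \eqref{c2} for the two products coincide under $(x,y)\mapsto(y,x)$, so the two $B$-monomials agree term by term. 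With that step either verified this way or cited to a result independent of the statement itself, your proof is complete.
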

\noindent Here $B^{(r)}_{i}:=\frac{B^{r}}{[r]!}$ for $r\geq 0$, called the idivided powers. (We also have the convention that $B^{(r)}_{i}=0$ when $r<0$.) The set in \eqref{mb} is called the monomial basis of $L(m,n)$. For any $\zeta\in X^{\imath}$, one can extend the above result to the modified iquantum group $\dot{\U}^{\imath}1_{\zeta}$: $\{B_1^{(a)}B_2^{(b)}B_1^{(c)}1_{\zeta}, B_2^{(c)} B_1^{(b)} B_2^{(a)} 1_{\zeta}: b\geq a+c\}$, modulo the identification $B_1^{(a)}B_2^{(b)}B_1^{(c)}1_{\zeta}=B_2^{(c)} B_1^{(b)} B_2^{(a)} 1_{\zeta}$ when $b=a+c$, forms a basis for $\dot{\U}^{\imath}1_{\zeta}$, called the monomial basis of $\dot{\U}^{\imath}1_{\zeta}$.

Finally, we introduce the standardized canonical basis. We need the following theorem first:
\begin{thm}[\text{cf. \cite[Theorem 5.12]{BWW25}}]\label{bww25}
Fix $\zeta\in X^{\imath}$, there exists a unique $\mathbb{Q}(q)$-linear isomorphism of vector space $\jmath: \dot{\U}^{\imath}1_{\zeta}\to \U^{-}$ such that $\forall x\in \Ui$ and $y\in \U^{-}$:
\begin{align}\label{bww25thm5120}
(x,y)_{\zeta}^{\imath}=(\jmath(x1_{\zeta}),y).
\end{align}
Furthermore, $\jmath$ is an isometry, and $\jmath(1_{\zeta})=1_{\zeta}$. 
\end{thm}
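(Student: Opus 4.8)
The plan is to construct $\jmath$ directly from the two bilinear forms, using the non-degeneracy of $(\,,\,)$ on $\U^-$, and then to establish bijectivity and the isometry property as two separate steps. First I would fix $x\in\Ui$ and observe that, since $B_i=F_i+q^{\varsigma_i}E_{\tau i}K_i^{-1}$ shifts the total degree by $\pm 1$ when acting on $L(m,n)$, the vector $x\eta$ has nonzero components only in finitely many weight spaces $L(m,n)_{\lambda-\nu}$, with $\nu$ in a finite subset $S_x\subseteq\mathbb Z_{\ge 0}\mathbb I$ bounded in terms of $\deg x$ and independent of $m,n$. Hence $(x\eta,y\eta)_{m,n}=0$ whenever $y\in\U^-_\nu$ with $\nu\notin S_x$, and the limit $\lim_{m,n\to\infty,\,m-n=\zeta}(x\eta,y\eta)_{m,n}$ exists for every $y\in\U^-$; this extends the pairing $(\,,\,)^\imath_\zeta$, and its existence, like that of $(u,v)^\imath_\zeta$ for $u,v$ in the relevant completion, is part of the inverse-limit construction of \cite[Section 6]{BW18b}. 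Since $(\,,\,)$ on $\U^-$ is weight-orthogonal and non-degenerate on each finite-dimensional $\U^-_\nu$, there is then a unique element $\jmath(x1_\zeta)\in\bigoplus_{\nu\in S_x}\U^-_\nu\subseteq\U^-$ with $(\jmath(x1_\zeta),y)=(x,y)^\imath_\zeta$ for all $y\in\U^-$; it is independent of the representative $x$ (two representatives act equally on $\eta$), it is $\mathbb Q(q)$-linear in $x1_\zeta$, and it is manifestly the only map satisfying the stated identity, so uniqueness comes for free. The normalization $\jmath(1_\zeta)=1_\zeta$ is just the identity $(\eta,y\eta)_{m,n}=(1,y)$, both sides extracting the constant term of $y$.

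Next I would prove that $\jmath$ is an isomorphism by comparing the monomial basis of $\dot{\U}^{\imath}1_\zeta$ with Lusztig's canonical basis $\mathcal B$ of $\U^-$. These are indexed by the same combinatorial set --- the two reduced-word families $B^{(a)}_1B^{(b)}_2B^{(c)}_11_\zeta,\ B^{(c)}_2B^{(b)}_1B^{(a)}_21_\zeta$ on one side and $F^{(a)}_1F^{(b)}_2F^{(c)}_1,\ F^{(c)}_2F^{(b)}_1F^{(a)}_2$ on the other, both subject to $b\ge a+c$ and to compatible identifications at $b=a+c$. Expanding each $B_i$ and commuting the $E$'s to the right on $\eta$, every occurrence of $E_{\tau i}K_i^{-1}$ strictly lowers the total degree, so $B^{(a)}_1B^{(b)}_2B^{(c)}_1\eta=F^{(a)}_1F^{(b)}_2F^{(c)}_1\eta+(\text{lower-degree terms})$ in $L(m,n)$, and likewise for the other family. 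Feeding this into the defining identity for $\jmath$ and using weight-orthogonality gives $\jmath\big(B^{(a)}_1B^{(b)}_2B^{(c)}_11_\zeta\big)=F^{(a)}_1F^{(b)}_2F^{(c)}_1+(\text{strictly lower total degree})$, so the matrix expressing $\jmath$ of the monomial basis in terms of $\mathcal B$ is uni-triangular for the total-degree order. Hence $\jmath$ carries a basis to a basis and is an isomorphism; this also re-proves injectivity and surjectivity without appealing to non-degeneracy of $(\,,\,)^\imath_\zeta$.

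The remaining and most delicate step is that $\jmath$ is an isometry, and this is where I expect the main obstacle to lie. For $v\in\dot{\U}^{\imath}1_\zeta$ and $m,n$ large with $m-n=\zeta$, I would expand $v\eta=\sum_{\beta\in\mathcal B}a^{(m,n)}_\beta\,\beta\eta$ in the canonical basis of $L(m,n)$, a finite sum. The inverse-limit construction of \cite[Section 6]{BW18b}, together with the good behaviour of canonical bases under the transition maps of the projective system $\{L(m,n)\}_{m-n=\zeta}$ (cf.\ \cite{Lus94}), ensures that the coefficients $a^{(m,n)}_\beta$ converge to some $a_\beta\in\mathbb Q(q)$; comparing with the defining property of $\jmath$ then forces $\jmath(v)=\sum_\beta a_\beta\beta$. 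Consequently, for $u,v\in\dot{\U}^{\imath}1_\zeta$,
$$
(u,v)^\imath_\zeta=\lim_{m,n}(u\eta,v\eta)_{m,n}=\lim_{m,n}\sum_\beta a^{(m,n)}_\beta\,(u\eta,\beta\eta)_{m,n}=\sum_\beta a_\beta\,(u,\beta)^\imath_\zeta=\sum_\beta a_\beta\,(\jmath(u),\beta)=(\jmath(u),\jmath(v)),
$$
all sums being finite. The crux is precisely the convergence of the canonical-basis coefficients of $v\eta$ along $m-n=\zeta$ --- equivalently, the fact that $\dot{\U}^{\imath}1_\zeta$ really is the inverse limit of the modules $L(m,n)$ in the sense of \cite[Section 6]{BW18b} --- and once that input is granted, the isometry is a short computation with the non-degenerate form on $\U^-$.
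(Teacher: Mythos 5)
Your proposal addresses a statement that the paper itself does not prove: Theorem \ref{bww25} is imported from \cite[Theorem 5.12]{BWW25}, where $\jmath$ comes out of an algebraic construction of the standardized structure, so any self-contained argument is by necessity a different route. Your route is a sensible one and its skeleton is sound: define $\jmath(x1_{\zeta})$ by representing the functional $y\mapsto \lim_{m,n}(x\eta,y\eta)_{m,n}$ against Lusztig's non-degenerate form (uniqueness is then automatic), prove bijectivity from the uni-triangularity of $B$-monomials against $F$-monomials, and deduce the isometry by expanding $v\eta$ in the canonical basis and passing to the limit. This is in fact how the theorem is exploited in the paper afterwards, cf.\ the characterization \eqref{defscb} of $\Delta_{i}^{(a,b,c)}$ and the proof of Theorem \ref{thm:stdCB1}, so your construction and the paper's use of the cited result fit together well.

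Two inputs in your argument are asserted with the wrong (or at least too vague) justification, and they carry essentially all the analytic content. First, the existence \emph{and the $\mathbb{Q}(q)$-rationality} of $\lim_{m,n\to\infty,\,m-n=\zeta}(x\eta,y\eta)_{m,n}$ for mixed pairs $x\in\Ui$, $y\in\U^{-}$ is not literally ``part of the inverse-limit construction of \cite[Section 6]{BW18b}''; the limits exist only in the $q^{-1}$-adic sense, and a priori lie in $\mathbb{Q}((q^{-1}))$, in which case your $\jmath(x1_{\zeta})$ would live in $\U^{-}\otimes_{\mathbb{Q}(q)}\mathbb{Q}((q^{-1}))$ rather than $\U^{-}$. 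In this rank-one setting both existence and rationality should be extracted from the explicit expansions of Lemma \ref{prop59} and \eqref{lem311} (from \cite{WZ25}), whose $m,n$-dependent coefficients converge to explicit rational functions --- exactly the computation carried out in the proof of Theorem \ref{thm:stdCB1}. Second, the convergence of the canonical-basis coefficients $a^{(m,n)}_{\beta}$ of $v\eta$ does not follow from ``good behaviour of canonical bases under the transition maps of the projective system'': those coefficients are not stable along the system (they involve $q$-binomials in $m,n$) and again only converge $q^{-1}$-adically; the correct justification is once more Lemma \ref{prop59}, since the transition between $F$-monomials and $\mathcal{B}$ inside $\U^{-}$ is independent of $m,n$. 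With these two points repaired by quoting \cite{WZ25} as the paper itself does, your argument goes through; note also that your triangularity step should be phrased as an argument on the finite-dimensional truncations by total degree (which have equal dimensions on both sides) to turn ``uni-triangular'' into ``bijective''.
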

Fix $\zeta\in X^{\imath}$, when $b\geq a+c$, set $\Delta_{i}^{(a,b,c)}:=\jmath^{-1}(F^{(a)}_{\tau i}F^{(b)}_{i}F^{(c)}_{\tau i})$. From Theorem \ref{bww25}, we know that the set $\{\Delta_{2}^{(a,b,c)}, \Delta_{1}^{(a,b,c)}:b\geq a+c\}$, modulo the identification $\Delta_{2}^{(a,b,c)}=\Delta_{1}^{(c,b,a)}$ when $b=a+c$, forms a basis of $\dot{\U}^{\imath}1_{\zeta}$, called standardized canonical basis of $\dot{\U}^{\imath}1_{\zeta}$.  

From \eqref{bww25thm5120}, we further get
\begin{align*}
(\Delta_{i}^{(a,b,c)},y)_{\zeta}^{\imath}&= \lim_{\substack{m,n\to \infty \\ m-n=\zeta}}(\Delta_{i}^{(a,b,c)}\eta,y\eta)_{m,n}=\lim_{m,n\to \infty}(F^{(a)}_{\tau i}F^{(b)}_{i}F^{(c)}_{\tau i}\eta,y\eta)_{m,n}=(F^{(a)}_{\tau i}F^{(b)}_{i}F^{(c)}_{\tau i},y).
\end{align*}
for any $y\in \U^{-}$. Hence, 
$$
\lim_{\substack{m,n\to \infty \\ m-n=\zeta}}(\Delta_{i}^{(a,b,c)}\eta-F^{(a)}_{\tau i}F^{(b)}_{i}F^{(c)}_{\tau i}\eta,y\eta)_{m,n}=0\quad\text{for any $y\in \U^{-}$}.
$$
Writing $\Delta_{i}^{(a,b,c)}$ in terms of monomial basis on $\dot{\U}^{\imath}1_{\zeta}$, and $F^{(a)}_{\tau i}F^{(b)}_{i}F^{(c)}_{\tau i}\eta$ in terms of monomial basis on $L(m,n)$, the above result shows that $\Delta_{i}^{(a,b,c)}$ is the unique element in $\dot{\U}^{\imath}1_{\zeta}$ satisfying 
\begin{align}\label{defscb}
\lim_{\substack{m,n\to \infty \\ m-n=\zeta}}(\Delta_{i}^{(a,b,c)}\eta-F^{(a)}_{\tau i}F^{(b)}_{i}F^{(c)}_{\tau i}\eta)=0.
\end{align}
Here the limit is understood in the sense that, as $m,n\to \infty$, $ m-n=\zeta$, all coefficients of the monomial basis expansion on $L(m,n)$ vanish.

\section{Standardized canonical basis vs monomial basis}\label{section3}
In this section, we determine the explicit transition matrices between the standardized canonical basis and the monomial basis at the level of the modified iquantum group.

We write $L(m,n)$ as the simple $\U$-module with the highest weight $\lambda=m\omega_{1}+n\omega_{2}$ and highest weight vector $\eta$, where $m,n\geq 0$. For the remainder of this section, we fix an arbitrary $\zeta\in X^{\imath}$, and work under the convention that the parameters satisfy $m-n=\zeta$. Before presenting the main theorem, we require the following lemma.

\begin{lem} \cite[Propositions 5.9-5.10]{WZ25}
For any $a,c,r\geq 0$, we have the following relations on $L(m,n)$:
\begin{align}\label{lem311}
F_{1}^{(a)}F_{2}^{(a+c+r)}F_{1}^{(c)}\eta
&=\sum_{x=0}^{a} \sum_{y=0}^{c} (-1)^{x+y}
   q^{x\varsigma_{1}+y\varsigma_{2}+\frac{x(-2m-3-2r+2c-2y+x)}{2}+\frac{y(-2n-3+2r+2a+y)}{2}}\nonumber\\
&\quad\quad\quad \times\qbinom{n-r-a+x}{x} \qbinom{m-c+y}{y}B_{1}^{(a-x)} B_{2}^{(a+c+r-x-y)} B_{2}^{(c-y)}\eta
\end{align}
\begin{align}\label{lem312}
B_{1}^{(a)}B_{2}^{(a+c+r)}B_{1}^{(c)}\eta
&=\sum_{x=0}^{a} \sum_{y=0}^{c}
   q^{x\varsigma_{1}+y\varsigma_{2}+\frac{x(-2m-1-2r+2c+2y-x)}{2}+\frac{y(-2n-1+2r+2a-y)}{2}}\nonumber\\
&\quad\quad\quad \times\qbinom{n-r-a+x}{x} \qbinom{m-c+y}{y}F_{1}^{(a-x)} F_{2}^{(a+c+r-x-y)} F_{2}^{(c-y)}\eta
\end{align}
\end{lem}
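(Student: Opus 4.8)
\textbf{Proof strategy for Lemma (Propositions 5.9--5.10 of \cite{WZ25}).}

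The plan is to prove the two identities \eqref{lem311} and \eqref{lem312} by induction, exploiting the fact that they are inverse to one another. More precisely, once \eqref{lem311} is established, the identity \eqref{lem312} can in principle be recovered by inverting the (finite, uni-triangular in an appropriate partial order) change-of-basis matrix with entries indexed by $(x,y)$; but it is cleaner to prove both simultaneously by induction on $a+c$, so that at each stage we may use whichever of the two relations is more convenient. The base case $a=c=0$ is the trivial identity $F_1^{(0)}F_2^{(r)}F_1^{(0)}\eta = B_2^{(r)}\eta$ (and similarly with $B$'s and $F$'s swapped), which holds because $B_2^{(r)}\eta = F_2^{(r)}\eta$ on the highest weight vector since the $E$-part of $B_2$ annihilates $\eta$ up to lower-order corrections that vanish here.

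The key computational input is the commutation of a single divided power $F_1^{(a)}$ (or $B_1^{(a)}$) past $F_2^{(\bullet)}$ (resp. $B_2^{(\bullet)}$), together with the action of $B_i = F_i + q^{\varsigma_i} E_{\tau i} K_i^{-1}$ on a weight vector of $L(m,n)$. The first step is to record the needed rank-one type commutator formulas: writing $B_2 = F_2 + q^{\varsigma_2} E_1 K_2^{-1}$, one computes $B_2^{(N)}$ acting on a vector of the form $F_1^{(c)}\eta$ (or more generally on the relevant weight space) by expanding and using the quantum Serre relation together with $E_1 F_1^{(c)}\eta = [\,\cdot\,] F_1^{(c-1)}\eta$; this produces precisely the $q$-binomial coefficients $\qbinom{m-c+y}{y}$ and the quadratic-in-$y$ power of $q$ appearing in the statement, with the weight of the intervening vector dictating the linear term $-2n-3+2r+2a$ (resp. $-2n-1+2r+2a$). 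The second step is to feed this into the induction: assuming the formula for $F_1^{(a-1)}F_2^{(a-1+c+r)}F_1^{(c)}\eta$ (a case with smaller $a+c$), multiply on the left by $F_1$, use $F_1 \cdot F_1^{(a-1)} = [a] F_1^{(a)}$, and re-collect terms; the shift $a\mapsto a-1$ inside the binomials $\qbinom{n-r-a+x}{x}$ and the bookkeeping of the exponent of $q$ must be matched against the claimed formula. The symmetry $1 \leftrightarrow 2$, $m\leftrightarrow n$, $a\leftrightarrow c$ of the setup lets us handle the $F_1^{(c)}$ on the right by the same argument applied on the other side.

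The main obstacle will be the exponent bookkeeping: verifying that the quadratic expressions $\frac{x(-2m-3-2r+2c-2y+x)}{2} + \frac{y(-2n-3+2r+2a+y)}{2}$ (and their counterparts in \eqref{lem312}) are exactly reproduced under the inductive step, including the cross term $-2xy$ hidden in the first quadratic form, and that the two sign conventions $(-1)^{x+y}$ in \eqref{lem311} versus the absence of signs in \eqref{lem312} are consistent with the inversion. A clean way to control this is to first prove \eqref{lem311} outright by the induction above, and then obtain \eqref{lem312} formally: substitute \eqref{lem311} into the right-hand side of \eqref{lem312}, and check that the resulting double sum over the composite index collapses to a single monomial via a $q$-binomial identity of Vandermonde type (the alternating sum $\sum (-1)^j q^{\binom{j}{2}+\cdots}\qbinom{N}{j}\qbinom{\cdots}{\cdots}=\delta$), which is standard. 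Since this is merely a restatement of \cite[Propositions 5.9--5.10]{WZ25}, one may alternatively simply cite those propositions; the sketch above indicates how the formulas are derived should a self-contained argument be desired.
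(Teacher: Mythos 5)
The paper does not prove this lemma at all: it is imported verbatim from \cite[Propositions 5.9--5.10]{WZ25}, so the "paper's proof" is simply the citation. Your closing remark — that one may just cite those propositions — therefore coincides exactly with what the paper does, and in that sense your proposal is consistent with it.

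Regarding the self-contained sketch you offer: the outline (prove \eqref{lem311} by induction on $a+c$, using the base case $B_2^{(r)}\eta=F_2^{(r)}\eta$, which is correct since $E_1$ commutes with $F_2$ and kills $\eta$, and then obtain \eqref{lem312} by inverting the uni-triangular change of basis) is plausible, and the observation that the two formulas are mutually inverse transition matrices is sound. But as written it is only a strategy, not a proof: the inductive step requires re-expressing $F_1\cdot\bigl(B_1^{(a-1-x)}B_2^{(\bullet)}B_1^{(\bullet)}\eta\bigr)$ in the $B$-monomial basis, i.e.\ controlling the correction term coming from $F_1=B_1-q^{\varsigma_1}E_2K_1^{-1}$, and this — together with the exact quadratic exponent bookkeeping and the production of the binomials $\qbinom{n-r-a+x}{x}$, $\qbinom{m-c+y}{y}$ — is precisely where the computational content of \cite{WZ25} lies; your sketch asserts rather than carries out these steps. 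So if a self-contained argument were actually required, substantial work would remain; as a justification matching the paper's treatment (citation), it is fine.
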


Having established the necessary preliminaries, we now present the main theorems of this section.
\begin{thm}\label{thm:stdCB1}
Fix $\zeta\in X^{\imath}$, for any $a,b,c\geq 0$ such that $b\geq a+c$, we have the following identity on $\dot{\U}^{\imath}1_{\zeta}$:
\begin{align}\label{scbvsm1}
\Delta_{i}^{(a,b,c)}=\sum_{0\leq x\leq a}\sum_{0\leq y\leq c}&(-1)^{x+y}\frac{q^{x(\varsigma_{\tau i}+(-1)^{\tau i}\zeta-2b+3c+a-1)+y(\varsigma_{i}+(-1)^{i}\zeta-2c+b-1)-xy+\binom{x}{2}+\binom{y}{2}}}{\prod\limits_{i=1}^{x}(1-q^{-2i})\prod\limits_{i=1}^{y}(1-q^{-2i})}\nonumber\\
&\times B_{\tau i}^{(a-x)}B_{i}^{(b-x-y)}B_{\tau i}^{(c-y)}1_{\zeta},
\end{align}
and
\begin{align}\label{scbvsm2}
B_{\tau i}^{(a)}B_{i}^{(b)}B_{\tau i}^{(c)}1_{\zeta}=\sum_{0\leq x \leq a}\sum_{0\leq y \leq c}&\frac{q^{x(\varsigma_{\tau i}+(-1)^{\tau i}\zeta-2b+3c+a-1)+y(\varsigma_{i}+(-1)^{i}\zeta-2c+b-1)+xy-\binom{x}{2}-\binom{y}{2}}}{\prod\limits_{i=1}^{x}(1-q^{-2i})\prod\limits_{i=1}^{y}(1-q^{-2i})}\nonumber\\
&\times \Delta_{i}^{(a-x,b-x-y,c-y)}.
\end{align}
\end{thm}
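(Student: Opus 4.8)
The plan is to derive both identities from the asymptotic characterization of $\Delta_i^{(a,b,c)}$ recalled around \eqref{defscb}, combined with the explicit formulas \eqref{lem311}--\eqref{lem312} relating $F$-monomials and $B$-monomials on $L(m,n)$. By symmetry it suffices to treat $i=2$ (so $\tau i=1$); the case $i=1$ will follow at the end. Write $r=b-a-c\ge 0$ and let $D\in\dot{\U}^{\imath}1_{\zeta}$ denote the right-hand side of \eqref{scbvsm1}, whose coefficients $c_{x,y}\in\mathbb{Q}(q)$ do not depend on $m,n$. Applying $D$ to the highest weight vector $\eta$ of $L(m,n)$ expresses $D\eta$ as an explicit combination of the monomial basis elements $B_1^{(a-x)}B_2^{(b-x-y)}B_1^{(c-y)}\eta$, while \eqref{lem311} expands $F_1^{(a)}F_2^{(b)}F_1^{(c)}\eta$ in the very same monomial basis, the coefficient of $B_1^{(a-x)}B_2^{(b-x-y)}B_1^{(c-y)}\eta$ being $(-1)^{x+y}q^{E(x,y)}\qbinom{n-r-a+x}{x}\qbinom{m-c+y}{y}$, where $E(x,y)$ is the exponent in \eqref{lem311}. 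So it suffices to show that, for each $(x,y)$, this coefficient converges to $c_{x,y}$ as $m,n\to\infty$ with $m-n=\zeta$; since $D\in\dot{\U}^{\imath}1_{\zeta}$, the uniqueness statement recalled at \eqref{defscb} then forces $D=\Delta_2^{(a,b,c)}$, which is \eqref{scbvsm1}.

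The heart of the matter is the asymptotics of $q$-binomials. From $[N]=(q^N-q^{-N})/(q-q^{-1})$ one obtains the exact identity $\qbinom{N}{x}=\dfrac{q^{xN-x^2}}{\prod_{l=1}^{x}(1-q^{-2l})}\prod_{j=0}^{x-1}(1-q^{-2(N-j)})$, in which the last product tends to $1$ as $N\to\infty$ and whose denominator is already the one in \eqref{scbvsm1}. Substituting $N=n-r-a+x$ and $N=m-c+y$ and collecting exponents, one checks that the factors $q^{xN-x^2}=q^{x(n-r-a)}$ and $q^{y(m-c)}$ combine with $q^{E(x,y)}$ so that the dependence on $m$ and on $n$ separately cancels, leaving only dependence through $m-n=\zeta$; after substituting $r=b-a-c$ and simplifying — tracking the half-integer linear terms, the $\binom{x}{2},\binom{y}{2}$ produced from $x^2,y^2$, and the cross term $-xy$ — the surviving exponent is exactly the one displayed for $c_{x,y}$. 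Consequently the coefficient of $B_1^{(a-x)}B_2^{(b-x-y)}B_1^{(c-y)}\eta$ in $D\eta-F_1^{(a)}F_2^{(b)}F_1^{(c)}\eta$ equals $c_{x,y}\bigl(1-\prod_{j}(1-q^{-2(n-r-a+x-j)})\prod_{j}(1-q^{-2(m-c+y-j)})\bigr)$, which tends to $0$. This exponent bookkeeping is the only genuinely laborious step, but it is entirely elementary.

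For \eqref{scbvsm2} there are two equivalent routes. One is the mirror of the above: let $D'$ be the right-hand side of \eqref{scbvsm2}, replace each $\Delta_2^{(a-x,b-x-y,c-y)}\eta$ by $F_1^{(a-x)}F_2^{(b-x-y)}F_1^{(c-y)}\eta$ up to an error tending to $0$ via \eqref{defscb}, compare with the expansion of $B_1^{(a)}B_2^{(b)}B_1^{(c)}\eta$ in the $F$-monomials furnished by \eqref{lem312}, and use the linear independence of the nonzero canonical basis elements $F_1^{(a-x)}F_2^{(b-x-y)}F_1^{(c-y)}\eta$ together with the identical $q$-binomial asymptotics; since the monomial basis of $\dot{\U}^{\imath}1_{\zeta}$ has $m,n$-independent coefficients when written out, the vanishing of all these asymptotic coefficients forces $D'=B_1^{(a)}B_2^{(b)}B_1^{(c)}1_{\zeta}$. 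Alternatively, since \eqref{scbvsm1} is already established and both families are bases, \eqref{scbvsm2} is equivalent to the statement that the two coefficient matrices are mutually inverse, a terminating $q$-binomial summation identity that can be checked directly. Finally, the case $i=1$ is obtained by the same argument applied to the node-swapped versions of \eqref{lem311}--\eqref{lem312} (equivalently, applying the Dynkin involution $\tau$ together with $\zeta\mapsto-\zeta$), which is precisely what produces the symbols $(-1)^{i}$, $(-1)^{\tau i}$ and the parameters $\varsigma_{i},\varsigma_{\tau i}$ in the stated formulas; one should also record that when $b=a+c$ the two formulas are compatible with the identifications $\Delta_2^{(a,b,c)}=\Delta_1^{(c,b,a)}$ and $B_1^{(a)}B_2^{(b)}B_1^{(c)}1_{\zeta}=B_2^{(c)}B_1^{(b)}B_2^{(a)}1_{\zeta}$.
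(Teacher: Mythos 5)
Your proposal is correct and takes essentially the same route as the paper: both identities are obtained by taking the limit $m,n\to\infty$ with $m-n=\zeta$ of the coefficients in \eqref{lem311}--\eqref{lem312}, using the $q$-binomial asymptotics that produce exactly the factors $\prod_{i}(1-q^{-2i})$, and then invoking the characterization \eqref{defscb} of $\Delta_i^{(a,b,c)}$ (with $i=1$ by symmetry). Your exponent bookkeeping claim indeed checks out, so nothing essential is missing.
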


\begin{proof}
We focus on the case $i=2$ and the case $i=1$ will follow by symmetry. We first prove \eqref{scbvsm1}. We take the limit $m,n\to \infty$ with $m-n=\zeta$ of the coefficients of $B_{1}^{(a-x)}B_{2}^{(b-x-y)}B_{1}^{(c-y)}\eta$ appearing in the RHS of \eqref{lem311}. We have 
\begin{align*}
\lim_{\substack{m,n\to \infty \\ m-n=\zeta}}&\sum_{x=0}^{a}\sum_{y=0}^{c}(-1)^{x+y}\frac{q^{x(\varsigma_{1}-m+a-b+2c)+y(\varsigma_{2}-n+b-c)+\frac{1}{2}(x^2+y^2)-\frac{3}{2}(x+y)-xy}}{\prod\limits_{i=1}^{x} (1-q^{-2i}) \prod\limits_{i=1}^{y} (1-q^{-2i})}\nonumber
\\
&\quad\quad\quad \times\frac{(q-q^{-1})^{x}}{q^{\frac{x(x+1)}{2}}}\frac{[n-r-a+x]!}{[n-r-a]!}\frac{(q-q^{-1})^{y}}{q^{\frac{y(y+1)}{2}}}\frac{[m-c+y]!}{[m-c]!}\nonumber
\\
&=\sum_{x=0}^{a}\sum_{y=0}^{c}(-1)^{x+y}\frac{q^{x(\varsigma_{1}-\zeta+a-2b+3c-1)+y(\varsigma_{2}+\zeta+b-2c-1)+\frac{1}{2}(x^2+y^2)-\frac{1}{2}(x+y)-xy}}{\prod\limits_{i=1}^{x} (1-q^{-2i}) \prod\limits_{i=1}^{y} (1-q^{-2i})}
\\
&\quad\times \lim_{\substack{m,n\to \infty \\ m-n=\zeta}} \prod_{i=1}^x(1-q^{-2(n-r-a)-2i})  \prod_{i=1}^y(1-q^{-2(m-c)-2i}) B_{1}^{(a-x)}
\\
&=\sum_{x=0}^{a}\sum_{y=0}^{c}(-1)^{x+y}\frac{q^{x(\varsigma_{1}-\zeta+a-2b+3c-1)+y(\varsigma_{2}+\zeta+b-2c-1)+\frac{1}{2}(x^2+y^2)-\frac{1}{2}(x+y)-xy}}{\prod\limits_{i=1}^{x} (1-q^{-2i}) \prod\limits_{i=1}^{y} (1-q^{-2i})}.
\end{align*}
Hence, \eqref{scbvsm1} follows from \eqref{defscb}. The proof of \eqref{scbvsm2} now by taking the limit of \eqref{lem312} is similar and therefore omitted.
\end{proof}

\section{iCanonical basis vs monomial basis: first observation}
\label{section4}
The main goal of the next three sections is to determine explicit transition matrices between the monomial basis and the icanonical basis at the module level. In this section, we begin with some general observations and then treat several special cases, which will prove useful in the analysis of the general situation in the subsequent sections.

\subsection{The first observation}
We write $L(m_{1},m_{2})$ as the simple $\U$-module with the highest weight $m_{1}\omega_{1}+m_{2}\omega_{2}$ and highest weight vector $\eta_{m_{1},m_{2}}$, for $m_{1},m_{2}\geq 0$. Recall \eqref{icb1}, for any simple $\U$-module $L(m_{1},m_{2})$, $m_{1}, m_{2}\geq 0$, the set $\{\mathfrak{B}_{2}^{(a,b,c)}, \mathfrak{B}_{1}^{(a,b,c)}: b\geq a+c\}$ modulo the relation $\mathfrak{B}_{2}^{(a,a+c,c)}=\mathfrak{B}_{1}^{(c,c+a,a)}$ is the icanonical basis on $L(m_{1},m_{2})$. For any $\zeta\in X^{\imath}$, $\{\mathfrak{B}_{2}^{(a,b,c)}, \mathfrak{B}_{1}^{(a,b,c)}: b\geq a+c\}$ modulo the relation $\mathfrak{B}_{2}^{(a,b,c)}=\mathfrak{B}_{1}^{c,b,a}$, is the icanonical basis on $\dot{\U}^{\imath}1_{\zeta}$. Setting $\bm{\varsigma}= (\varsigma_{1}, \varsigma_{2})$, $\bm{m}=(m_{1},m_{2})$, we define
\begin{align}\label{c2}
c^{a,b,c}_{\tau i,\bm{\varsigma},\bm{m},x,y}:=q^{x\varsigma_{i}+y\varsigma_{\tau i}-\frac{y(y+1+2m_{\tau i}+2c-2b)}{2}-\frac{x(x+1+2m_{i}+2b-4c-2a-2y)}{2}}\qbinom{m_{\tau i}-b+c+x}{x}\qbinom{m_{i}-c+y}{y}.
\end{align}
This section will rely heavily on the following statement. 

\begin{lem}[\text{cf. \cite[Proposition 5.9]{WZ25}}] \label{prop59}
Given any $m_{1},m_{2}\geq 0$, we have the following identity on $L(m_{1},m_{2})$:
\begin{align*}
B_{i}^{(a)}B_{\tau i}^{(b)}B_{i}^{(c)}\eta_{m_{1},m_{2}}&=c^{a,b,c}_{\tau i,\bm{\varsigma},\bm{m},x,y}F_{i}^{(a-x)}F_{\tau i}^{(b-x-y)}F_{i}^{(c-y)}\eta_{m_{1},m_{2}}.
\end{align*}
\end{lem}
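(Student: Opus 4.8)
\textbf{Proof proposal for Lemma \ref{prop59}.}

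The plan is to reduce the statement to the already-established formula \eqref{lem311} (equivalently \cite[Proposition 5.9]{WZ25} as recalled earlier), since Lemma \ref{prop59} is in fact a reindexed, parameter-unified restatement of \eqref{lem311} rather than a genuinely new computation. First I would fix $i=2$ (so $\tau i = 1$), write $\bm{m}=(m_1,m_2)=(m,n)$, and compare the two sides term by term. On the left-hand side of \eqref{lem311} we have $F_{1}^{(a)}F_{2}^{(a+c+r)}F_{1}^{(c)}\eta$; to match the shape $B_i^{(a)}B_{\tau i}^{(b)}B_i^{(c)}\eta$ of Lemma \ref{prop59} I would set $b=a+c+r$, i.e. $r=b-a-c\ge 0$, which is exactly the constraint $b\ge a+c$ appearing in the lemma. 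Then I would invert \eqref{lem311} (or rather use \eqref{lem312}, which is the $B$-to-$F$ direction already given in the cited lemma) to read off the coefficient $c^{a,b,c}_{\tau i,\bm{\varsigma},\bm{m},x,y}$ directly.

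The key step is then a purely algebraic verification: substituting $r=b-a-c$ into the exponent and binomial coefficients of \eqref{lem312} and checking that the resulting expression coincides with \eqref{c2}. Concretely, in \eqref{lem312} the $q$-exponent is
\[
x\varsigma_{1}+y\varsigma_{2}+\tfrac{x(-2m-1-2r+2c+2y-x)}{2}+\tfrac{y(-2n-1+2r+2a-y)}{2},
\]
and the binomials are $\qbinom{n-r-a+x}{x}\qbinom{m-c+y}{y}$. Plugging $r=b-a-c$ gives $n-r-a+x = n-b+c+x = m_i - b + c + x$ with $i$ here playing the role of... — wait, one must be careful with the index bookkeeping: in Lemma \ref{prop59} the roles of $\varsigma_i,\varsigma_{\tau i}$ and $m_i,m_{\tau i}$ are assigned so that the ``outer'' divided powers carry index $i$ and the ``middle'' one carries $\tau i$. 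I would carefully match: the $x$-summation variable (attached to the first/outer $F_1$, hence $B_i$ with $i=2$? no — with $i$ so that $B_i$ is the outer factor, i.e. $i=1$ in the pattern $B_1 B_2 B_1$)... The cleanest route is to not try to guess the dictionary but to observe that \eqref{lem311}/\eqref{lem312} is stated for the pattern $B_1 B_2 B_1$ (outer index $1$), whereas Lemma \ref{prop59} is stated for the pattern $B_i B_{\tau i} B_i$; setting $i=1$ makes them literally the same pattern, and then setting $i=2$ is the $\tau$-twisted version obtained by applying the diagram involution, which swaps $1\leftrightarrow 2$, $\varsigma_1 \leftrightarrow \varsigma_2$, $m \leftrightarrow n$. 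So the proof is: (i) for $i=1$, identify \eqref{lem312} with $r=b-a-c$ as exactly the claimed identity after matching exponents; (ii) for $i=2$, apply $\tau$.

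The main obstacle — really the only one — is the bookkeeping in step (i): verifying that
\[
\tfrac{x(-2m-1-2r+2c+2y-x)}{2} = -\tfrac{x(x+1+2m_i+2b-4c-2a-2y)}{2}
\]
and the analogous identity for the $y$-term, after the substitution $r = b-a-c$ and the assignment $m_i = n$, $m_{\tau i} = m$ (for $i=1$). Expanding: $-2m-1-2r+2c+2y-x$ with $m\to m_{\tau i}$, $r\to b-a-c$ becomes $-2m_{\tau i}-1-2b+2a+2c+2c+2y-x = -(x+1+2m_{\tau i}+2b-2a-4c-2y)$ — so one finds the outer binomial and exponent are governed by $m_{\tau i}$, not $m_i$, which forces the reading of \eqref{c2} in which the $\qbinom{m_{\tau i}-b+c+x}{x}$ factor is the one coming from the first summation, consistent with \eqref{c2} as printed. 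A brief sign/coefficient check of the cross term $-2xy$ (present in the exponent of \eqref{c2} through the $-2y$ inside the $x$-factor, contributing $+xy$, combined with... ) completes the match. Once the exponents agree symbol-for-symbol, the lemma follows and no convergence or limiting argument is needed — unlike Theorem \ref{thm:stdCB1}, this is an exact identity on each finite-dimensional $L(m_1,m_2)$.
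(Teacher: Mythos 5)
First, note that the paper does not actually prove Lemma \ref{prop59}: it is quoted as a citation of \cite[Proposition 5.9]{WZ25}, exactly like \eqref{lem311}--\eqref{lem312}. So your plan of deducing it from \eqref{lem312} by setting $r=b-a-c$ (for $i=1$) and then applying the diagram involution (swapping $1\leftrightarrow 2$, $\varsigma_1\leftrightarrow\varsigma_2$, $m_1\leftrightarrow m_2$) for $i=2$ is the right — indeed essentially the only — route, and it is consistent with the symmetry recorded in Proposition \ref{prop32}(2). The substitution does work: with $r=b-a-c$ the $x$-exponent of \eqref{lem312} becomes $-\tfrac{x(x+1+2m+2b-4c-2a-2y)}{2}$, the $y$-exponent becomes $-\tfrac{y(y+1+2n+2c-2b)}{2}$, and the binomial $\qbinom{n-r-a+x}{x}$ becomes $\qbinom{n-b+c+x}{x}$, which is precisely \eqref{c2} for $i=1$. (Two cosmetic points: the lemma as printed is missing the double sum over $x,y$, and the hypothesis $b\ge a+c$ is inherited from \eqref{lem312} rather than stated.)

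However, your explicit bookkeeping in step (i) is inverted, and as written the verification does not close. Since $L(m,n)$ in \eqref{lem312} and $L(m_1,m_2)$ in Lemma \ref{prop59} both have highest weight $m\omega_1+n\omega_2=m_1\omega_1+m_2\omega_2$, the dictionary is simply $m=m_1$, $n=m_2$, independent of $i$; hence for $i=1$ one has $m_i=m_1=m$ and $m_{\tau i}=m_2=n$, not ``$m_i=n$, $m_{\tau i}=m$'' as you assert. With the correct assignment, the $x$-exponent is governed by $m_i$ (matching the factor $-\tfrac{x(x+1+2m_i+2b-4c-2a-2y)}{2}$ in \eqref{c2}) while the $x$-binomial is governed by $m_{\tau i}$ (matching $\qbinom{m_{\tau i}-b+c+x}{x}$), and symmetrically for the $y$-terms. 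Your swapped dictionary leads you to conclude that both the outer exponent and the outer binomial involve $m_{\tau i}$, which contradicts \eqref{c2} as printed for the exponent; you only check consistency of the binomial and gloss over the exponent mismatch. The error is purely notational and the fix is immediate, but the ``only obstacle'' you identify — the exponent identity — is exactly where your write-up goes wrong, so as it stands the central verification is not correct.
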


Defining $\deg f$ for any $f\in \mathcal{A}$ as the highest power of $q$ with a nonzero coefficient, we have the following observations from Lemma \ref{prop59}.
\begin{prop} \label{prop32}
When $(\varsigma_{1},\varsigma_{2})=(1,0)$ or $(0,1)$, $\bm{m}=(m_{1},m_{2})$:
\begin{enumerate}
    \item For any $0\leq x\leq a$, $0\leq y\leq c$
    \begin{align*}
    \deg c^{a,b,c}_{2,\bm{\varsigma},\bm{m},x,y}&=-\frac{(y-x)^{2}}{2}+(y-x)(m_{1}-m_{2}+b-2c-\frac{1}{2}+\varsigma_{2})-x(b-a-c)\\
    &=-\frac{(x-y)^{2}}{2}+(x-y)(m_{2}-m_{1}+a-2b+3c-\frac{1}{2}+\varsigma_{1})-y(b-a-c).
    \end{align*}
    \item $c^{a,b,c}_{\tau i,\bm{\varsigma},\bm{m},x,y}=c^{a,b,c}_{i,\bm{\varsigma}',\bm{m}',x,y}$ for $\bm{\varsigma}'=1-\bm{\varsigma}$, $\bm{m}'=(m'_{1},m'_{2})$ such that $m'_{i}=m_{\tau i}$.
    \item When $i=1$, $a=0$, $m_{1}-m_{2}+b-2c+\varsigma_{2}\leq 0$, $B_{2}^{(b)}B_{1}^{(c)}\eta_{m_{1},m_{2}}$ belongs to the icanonical basis of $L(m_{1},m_{2})$.
    \item When $i=1$, $c=0$, $m_{2}-m_{1}+a-2b+\varsigma_{1}\leq 0$, $B_{1}^{(a)}B_{2}^{(b)}\eta_{m_{1},m_{2}}$ belongs to the icanonical basis of $L(m_{1},m_{2})$.
    \item When $m_{i}-m_{\tau i}+b-2c+\varsigma_{\tau i}\leq 0$, $m_{\tau i}-m_{i}+a-2b+3c+\varsigma_{i}\leq 0$, $B_{i}^{(a)}B_{\tau i}^{(b)}B_{i}^{(c)}\eta_{m_{1},m_{2}}$ belongs to the icanonical basis of $L(m_{1},m_{2})$.
\end{enumerate}
\end{prop}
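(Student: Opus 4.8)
The plan is to establish all five assertions as direct consequences of Lemma \ref{prop59} together with the degree formula in Proposition \ref{prop32}(1), using the defining property \eqref{icb1} of the icanonical basis. Before anything else, I would verify the two displayed formulas in part (1): since $c^{a,b,c}_{2,\bm{\varsigma},\bm{m},x,y}$ is an explicit product of a power of $q$ with two $q$-binomial coefficients, its degree is the degree of the $q$-power plus the degrees of $\qbinom{m_1-b+c+x}{x}$ and $\qbinom{m_i-c+y}{y}$; recalling that $\deg\qbinom{N}{k}=k(N-k)$ for $N\ge k\ge 0$, one simply collects terms. The two expressions in (1) are equal because $(-1)^i\zeta$ swaps sign under $i\mapsto\tau i$ and $b-2c$ versus $a-2b+3c$ differ exactly by $(b-a-c)$ after completing the square; I would check this identity once and then use whichever form is convenient for each subsequent part. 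Part (2) is immediate from the symmetry $\bm{\varsigma}\mapsto 1-\bm{\varsigma}$, $m_i\mapsto m_{\tau i}$ on the right-hand side of \eqref{c2}, and (in this paper's convention $(\varsigma_1,\varsigma_2)\in\{(1,0),(0,1)\}$) it says the $i=1$ and $i=2$ cases are genuinely interchangeable, so it suffices to prove (3), (4), (5) for $i=1$, or indeed just (5) with (3) and (4) as its degenerate cases $a=0$ and $c=0$.

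For part (5) (which subsumes (3) and (4)), the strategy is: by Lemma \ref{prop59},
\[
B_{i}^{(a)}B_{\tau i}^{(b)}B_{i}^{(c)}\eta_{m_{1},m_{2}}=\sum_{x,y}c^{a,b,c}_{\tau i,\bm{\varsigma},\bm{m},x,y}\,F_{i}^{(a-x)}F_{\tau i}^{(b-x-y)}F_{i}^{(c-y)}\eta_{m_{1},m_{2}},
\]
so this is already a bar-invariant element — bar-invariance follows because $B_i^{(a)}B_{\tau i}^{(b)}B_i^{(c)}$ is a product of $\psi^\imath$-fixed idivided powers acting on the $\psi^\imath$-fixed vector $\eta$ — expanded in the canonical basis $\{F_i^{(a-x)}F_{\tau i}^{(b-x-y)}F_i^{(c-y)}\eta\}$ of $L(m_1,m_2)$ with leading term ($x=y=0$) equal to the canonical basis vector $F_i^{(a)}F_{\tau i}^{(b)}F_i^{(c)}\eta$, coefficient $1$. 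By the uniqueness in \eqref{icb1}, to conclude that this element is precisely $\mathfrak{B}^{(a,b,c)}_i\eta$ it is enough to show that for every $(x,y)\neq(0,0)$ with $0\le x\le a$, $0\le y\le c$ the coefficient $c^{a,b,c}_{\tau i,\bm{\varsigma},\bm{m},x,y}$ lies in $q^{-1}\mathbb Z[q^{-1}]$, i.e. $\deg c^{a,b,c}_{\tau i,\bm{\varsigma},\bm{m},x,y}<0$. (A minor point to address: when $F_i^{(a-x)}F_{\tau i}^{(b-x-y)}F_i^{(c-y)}\eta=0$ the corresponding coefficient is irrelevant, so one may assume all the divided-power words appearing are nonzero; this is automatic whenever the $q$-binomials in \eqref{c2} are nonzero, so no separate bookkeeping is needed.)

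It remains to bound the degree. Using the first expression in part (1) with $m_1=m_i$, $m_2=m_{\tau i}$ (so $m_1-m_2=m_i-m_{\tau i}$) and abbreviating $A:=m_i-m_{\tau i}+b-2c+\varsigma_{\tau i}$, $\widetilde A:=m_{\tau i}-m_i+a-2b+3c+\varsigma_{1}$ (the two hypotheses say $A\le 0$ and $\widetilde A\le 0$), we get
\[
\deg c^{a,b,c}_{\tau i,\bm{\varsigma},\bm{m},x,y}=-\tfrac{(y-x)^2}{2}+(y-x)\bigl(A-\tfrac12\bigr)-x\,(b-a-c)
=-\tfrac{(x-y)^2}{2}+(x-y)\bigl(\widetilde A-\tfrac12\bigr)-y\,(b-a-c),
\]
and since $b\ge a+c$ the terms $-x(b-a-c)$ and $-y(b-a-c)$ are $\le 0$. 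If $x\ge y$, the second form gives $\deg\le -\tfrac{(x-y)^2}{2}+(x-y)(\widetilde A-\tfrac12)\le -\tfrac12(x-y)$ when $x>y$ (since $\widetilde A\le 0$), which is $<0$; if $x<y$, the first form gives $\deg\le -\tfrac12(y-x)<0$ likewise. The only remaining case is $x=y\neq 0$, where both forms give $\deg=-x(b-a-c)\le 0$; here equality with $0$ is possible (precisely when $b=a+c$), so the coefficient need not be in $q^{-1}\mathbb Z[q^{-1}]$ — this is the one genuine subtlety. I expect this $x=y$, $b=a+c$ degenerate case to be the main obstacle: one must check that even then the element still equals $\mathfrak B^{(a,b,c)}_i\eta$, which should follow because in the $b=a+c$ situation the canonical basis vectors $F_i^{(a-x)}F_{\tau i}^{(b-2x)}F_i^{(c-x)}\eta$ with the relevant identification $\mathfrak{B}^{(a,a+c,c)}_2=\mathfrak{B}^{(c,a+c,a)}_1$ are identified, so the extra $x=y$ terms are absorbed; alternatively one verifies directly that at $b=a+c$ the coefficient $c^{a,a+c,c}_{\tau i,\bm{\varsigma},\bm{m},x,x}$ is actually $0$ for $x>0$ by a separate computation with \eqref{c2}. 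Parts (3) and (4) then fall out by setting $a=0$ (forcing $x=0$, so only the $y$-sum with $\widetilde A$ constraint survives — but then $b-a-c=b-c\ge 0$ and one needs $A\le0$, matching the hypothesis $m_1-m_2+b-2c+\varsigma_2\le 0$) and symmetrically $c=0$.
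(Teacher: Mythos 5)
Your overall strategy (expand $B_{i}^{(a)}B_{\tau i}^{(b)}B_{i}^{(c)}\eta$ via Lemma \ref{prop59}, check bar-invariance, and show each coefficient with $(x,y)\neq(0,0)$ has negative degree so that the defining property \eqref{icb1} applies) is exactly the paper's route, and your degree computations for the cases $x>y$, $x<y$, and for parts (2)--(4) are fine. The genuine gap is your treatment of the case $x=y\neq 0$ with $b=a+c$, which you yourself flag as the ``main obstacle.'' Neither of your proposed resolutions works: the identification $\mathfrak{B}^{(a,a+c,c)}_{2}=\mathfrak{B}^{(c,a+c,a)}_{1}$ concerns the labelling of icanonical basis elements and does nothing to remove the offending terms from the expansion, and the coefficient $c^{a,a+c,c}_{\tau i,\bm{\varsigma},\bm{m},x,x}$ is \emph{not} zero for $x>0$ --- on the contrary, its constant term is $1$ (this is precisely why, in Proposition \ref{propaeqb+c}, the monomial at $b=a+c$ expands into a \emph{sum} $\sum_{l}\mathfrak{B}^{(a-l,b-2l,c-l)}_{2}$ rather than a single icanonical basis element). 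So as written the proof of part (5) is incomplete at this point, and the fallback claim is false.

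What you missed is that this case simply cannot occur under the hypotheses of (5): adding the two inequalities $m_{i}-m_{\tau i}+b-2c+\varsigma_{\tau i}\leq 0$ and $m_{\tau i}-m_{i}+a-2b+3c+\varsigma_{i}\leq 0$ and using $\varsigma_{1}+\varsigma_{2}=1$ gives $a+c-b+1\leq 0$, i.e.\ $b\geq a+c+1$, so $b=a+c$ is excluded (this also justifies the inequality $b\geq a+c$ you used without comment in the other cases). Equivalently, and more cleanly, use the mixed form of the degree, as in \eqref{degc}:
\begin{align*}
\deg c^{a,b,c}_{\tau i,\bm{\varsigma},\bm{m},x,y}
=-\tfrac{(y-x)^{2}}{2}+y\bigl(A-\tfrac12\bigr)+x\bigl(\widetilde A-\tfrac12\bigr),
\end{align*}
with $A, \widetilde A$ the two quantities in the hypotheses; when both are $\leq 0$ this is at most $-\tfrac{(y-x)^2}{2}-\tfrac{x+y}{2}<0$ for every $(x,y)\neq(0,0)$, with no case distinction at all. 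This is the argument the paper intends, and with this replacement your proof closes; the rest of your write-up (parts (1)--(4), bar-invariance, uniqueness) is correct.
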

\begin{proof}
First, We compute 
\begin{align*}
\deg c^{a,b,c}_{2,\bm{\varsigma},\bm{m},x,y}&=(m_{2}-b+c)x+(m_{1}-c)y+x\varsigma_{1}+y\varsigma_{2}\\
&\quad\quad -\frac{y(y+1+2m_{2}+2c-2b)}{2}-\frac{x(x+1+2m_{1}+2b-4c-2a-2y)}{2}\\
&=-\frac{(y-x)^{2}}{2}+(y-x)(m_{1}-m_{2}+b-2c-\frac{1}{2}+\varsigma_{2})-x(b-a-c).
\end{align*}
Furthermore, notice that 
\begin{align}\label{degc}
\deg c^{a,b,c}_{2,\bm{\varsigma},\bm{m},x,y}&=-\frac{(y-x)^{2}}{2}+y(m_{1}-m_{2}+b-2c-\frac{1}{2}+\varsigma_{2})+x(m_{2}-m_{1}+a-2b+3c-\frac{1}{2}+\varsigma_{1}),
\end{align}
the rest of the statements follow from \eqref{degc} as well as the definition of icanonical basis in \eqref{icb1}.
\end{proof}

\begin{rem}\label{iandtaui}
From now on, we mainly focus on the case $i=1$, as later we will see that all the statements for $i=2$ follows by an entirely analogous argument and will not be repeated. For simplicity reason, we denote $m_{1}=m$, $m_{2}=n$.
\end{rem}

Thanks to Proposition \ref{prop32}, we only need to focus on the case $m-n+b-2c+\varsigma_{2}>0$ or $n-m+a-2b+3c+\varsigma_{1}>0$ in later sections. Our first observation is that when $n-m+a-2b+3c+\varsigma_{1}>0$
$$
m-n+b-2c+\varsigma_{2}=-(n-m+a-2b+3c+\varsigma_{1})+1+a-b+c\leq 0.
$$
When $m-n+b-2c+\varsigma_{2}>0$
$$
n-m+a-2b+3c+\varsigma_{1}=-(m-n+b-2c+\varsigma_{2})+1+a-b+c\leq 0.
$$
Hence, we only need to focus on the case $m-n+b-2c+\varsigma_{2}>0$ XOR $n-m+a-2b+3c+\varsigma_{1}>0$. 

Given $\bm{\varsigma}=(\varsigma_{1},\varsigma_{2})$, we define 
\begin{align}\label{defofa}
\alpha_{b,c,\bm{\varsigma}}:=m-n+b-2c+\varsigma_{2},
\end{align}
\begin{align}\label{defofbb}
\beta_{a,b,c,\bm{\varsigma}}:=n-m+a-2b+3c+\varsigma_{1}. 
\end{align}
Once we restrict to a particular choice of $\bm{\varsigma}$, $\alpha_{b,c,\bm{\varsigma}}$ and $\beta_{a,b,c,\bm{\varsigma}}$ can be abbreviated as $\alpha_{b,c}$, $\beta_{a,b,c}$ respectively. In summary, we will deal with the following cases in the following sections:
\begin{enumerate}
    \item $b=a+c$ or $b>a+c$,
    \item $\alpha_{b,c,\bm{\varsigma}}>0$ or $\beta_{a,b,c,\bm{\varsigma}}>0$,
    \item $\bm{\varsigma}=(0,1)$ or $\bm{\varsigma}=(1,0)$,
    \item when $\alpha_{b,c,\bm{\varsigma}}>0$ (or $\beta_{a,b,c,\bm{\varsigma}}>0$), the parity of $\alpha_{b,c,\bm{\varsigma}}$ (or $\beta_{a,b,c,\bm{\varsigma}}$).
\end{enumerate}

\subsection{Case for $a=0$, $(\varsigma_{1},\varsigma_{2})=(1,0)$, $\alpha_{b,c,\bm{\varsigma}}$ is even} 

Before moving to the case for general $a,b,c$ in next section, we focus on a special case in this subsection: $a=0$, $(\varsigma_{1},\varsigma_{2})=(1,0)$ and $\alpha_{b,c}$ is an even positive integer. Recall that $L(m,n)$ is the simple $\U$-module with highest weight vector $\eta$ and highest weight $m\omega_{1}+n\omega_{2}$, for $m,n\geq 0$. Further recall \eqref{icb1}, for each $b,c\geq 0$, $\mathfrak{B}^{(0,b,c)}_{2}$ is the icanonical basis element in $L(m,n)$ satisfying
\begin{align} \label{defomega}
\mathfrak{B}^{(0,b,c)}_{2}=F^{(b)}_{2}F^{(c)}_{1}\eta+\sum_{\theta> 0}q^{-1}\mathbb{Z}[q^{-1}]F^{(b-\theta)}_{2}F^{(c-\theta)}_{1}\eta.
\end{align}

We want to find the transition matrices between $B^{(b)}_{2}B^{(c)}_{1}\eta$ and $\mathfrak{B}^{(0,b,c)}_{2}$ on $L(m,n)$. 
This subsection relies heavily on the following lemma:
\begin{lem} [\text{cf. \cite[Lemma 5.8]{WZ25}}] 
When $(\varsigma_{1},\varsigma_{2})=(1,0)$, we have
\begin{align}\label{wz25}
B^{(b)}_{2}B^{(c)}_{1}\eta=\sum_{y\geq 0}q^{-\frac{y(y+1+2n+2c-2b)}{2}}\qbinom{m-c+y}{y}F^{(b-y)}_{2}F^{(c-y)}_{q}\eta.
\end{align}
\end{lem}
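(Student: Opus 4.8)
The final statement is the lemma cited from \cite[Lemma 5.8]{WZ25} expressing $B_2^{(b)}B_1^{(c)}\eta$ in terms of the PBW-type basis $F_2^{(b-y)}F_1^{(c-y)}\eta$ when $(\varsigma_1,\varsigma_2)=(1,0)$. The plan is to derive it as the $a=0$ (equivalently $i=1$) specialization of Lemma~\ref{prop59}. Setting $i=1$, $a=0$ in Lemma~\ref{prop59} forces $x=0$ (since $0\le x\le a=0$), so the double sum collapses to a single sum over $y$, and one is left with
\[
B_2^{(b)}B_1^{(c)}\eta_{m,n}=\sum_{y\ge 0}c^{0,b,c}_{2,\bm\varsigma,\bm m,0,y}\,F_2^{(b-y)}F_1^{(c-y)}\eta_{m,n}.
\]
Thus the main task is to unwind the coefficient $c^{0,b,c}_{2,\bm\varsigma,\bm m,0,y}$ from its definition in \eqref{c2} and check it equals $q^{-\frac{y(y+1+2n+2c-2b)}{2}}\qbinom{m-c+y}{y}$.

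First I would substitute $i=1$ (so $\tau i=2$), $a=0$, and $(\varsigma_1,\varsigma_2)=(1,0)$ into \eqref{c2}. The $q$-binomial factor $\qbinom{m_{\tau i}-b+c+x}{x}$ becomes $\qbinom{n-b+c}{0}=1$ (since $x=0$), leaving $\qbinom{m_i-c+y}{y}=\qbinom{m-c+y}{y}$. The exponent of $q$ in \eqref{c2} is $x\varsigma_i+y\varsigma_{\tau i}-\frac{y(y+1+2m_{\tau i}+2c-2b)}{2}-\frac{x(x+1+2m_i+2b-4c-2a-2y)}{2}$; with $x=0$ and $\varsigma_{\tau i}=\varsigma_2=0$ this is simply $-\frac{y(y+1+2n+2c-2b)}{2}$, matching \eqref{wz25} exactly. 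So the derivation is a direct specialization with no genuine obstacle — it is bookkeeping in the indices, the only subtlety being to track the role-swap $i\leftrightarrow\tau i$ correctly (here $i=1$ plays the role of the "outer" index and $\tau i=2$ the "middle" index, consistent with the LHS $B_2^{(b)}B_1^{(c)}=B_{\tau i}^{(b)}B_i^{(c)}$ when $a=0$).

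Alternatively, and perhaps cleaner for the reader, one could prove \eqref{wz25} directly by induction on $c$ (or $b$), using the commutation relations in $\dot{\U}^\imath$ and the known action of $B_1,B_2$ on weight vectors of $L(m,n)$, expanding $B_2^{(b)}B_1^{(c)}\eta$ via the definition $B_i=F_i+q^{\varsigma_i}E_{\tau i}K_i^{-1}$ and collecting terms; but since Lemma~\ref{prop59} is already available in the excerpt, the specialization route is shortest. The one point that warrants a sentence of care is the summation range: in \eqref{wz25} the sum is written over all $y\ge 0$, but the divided-power convention $F_i^{(r)}=0$ for $r<0$ together with $\qbinom{m-c+y}{y}$ and the idivided-power vanishing ensures only finitely many terms ($0\le y\le\min\{b,c\}$, further constrained so the $q$-binomial is nonzero) survive, so the two expressions genuinely agree termwise. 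I expect no real difficulty; the "hard part," to the extent there is one, is simply verifying that the sign/parity conventions and the index substitution $\bm m=(m_1,m_2)=(m,n)$ with $m_{\tau 1}=n$ are applied without slip.
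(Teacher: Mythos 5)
Your specialization is arithmetically correct: taking $i=1$, $a=0$ (hence $x=0$) in Lemma~\ref{prop59} and substituting $(\varsigma_1,\varsigma_2)=(1,0)$, $m_1=m$, $m_2=n$ into \eqref{c2} does give exactly the coefficient $q^{-\frac{y(y+1+2n+2c-2b)}{2}}\qbinom{m-c+y}{y}$, so \eqref{wz25} is indeed the $a=0$ case of the three-factor formula. Note, however, that the paper offers no proof of this statement at all: it is imported verbatim as \cite[Lemma 5.8]{WZ25}, just as Lemma~\ref{prop59} is imported as \cite[Proposition 5.9]{WZ25}, so there is nothing internal to compare your argument against. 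Within the present paper your derivation is a harmless (and reassuring) consistency check between two cited results; but as a genuine proof it is potentially circular, since in the source \cite{WZ25} the two-factor identity (Lemma 5.8 there) almost certainly precedes and feeds into the proof of the three-factor identity (Proposition 5.9 there), so deducing the former from the latter does not give an independent argument. Your alternative suggestion — a direct induction expanding $B_i=F_i+q^{\varsigma_i}E_{\tau i}K_i^{-1}$ on $L(m,n)$ — is the route that would constitute a self-contained proof, but that is the content of \cite{WZ25}, not of this paper. Minor points: the sum in \eqref{wz25} is effectively over $0\le y\le c$ (with the conventions $F^{(r)}=0$ for $r<0$ and vanishing $q$-binomials), as you observed, and the subscript $F^{(c-y)}_{q}$ in \eqref{wz25} is a typo for $F^{(c-y)}_{1}$.
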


Recall \eqref{defofa},
\begin{align} \label{defalpha}
\alpha_{b,c}=m-n+b-2c,
\end{align}
we define 
\begin{align}\label{deffff}
\Omega^{(b,c)}_{m,n,\theta}&=\sum_{k+y=\theta}(-1)^{k}\frac{[\alpha_{b,c}][\alpha_{b,c}+2]\cdots [\alpha_{b,c}+2k-2]}{[k]!}q^{-\frac{y(y+1+2n+2c-2b)}{2}}\qbinom{m-c+y}{y}\in \mathcal{A}
\end{align}
for any $0\leq \theta\leq c$. When $\theta=c$, we also allow the abbreviation $\Omega^{(b,c)}_{m,n}:=\Omega^{(b,c)}_{m,n,c}$. We need a few preparatory lemmas before moving to the main theorem:
\begin{lem} \label{lemrecur}
When $(\varsigma_{1},\varsigma_{2})=(1,0)$, for any $m,n,b,c\geq 0$ such that $m\geq c$, $n-b+c\geq 0$, $\alpha_{b,c}> 0$, and $\alpha_{b,c}$ is even, we have
\begin{align}\label{recur1}
\Omega^{(b,c)}_{m-1,n+1,\theta}=\Omega^{(b,c+1)}_{m,n,\theta}.
\end{align}
\end{lem}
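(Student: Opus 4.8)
\textbf{Proof proposal for Lemma \ref{lemrecur}.} The plan is to prove the identity \eqref{recur1} by expanding both sides via the definition \eqref{deffff} and comparing coefficients term by term in $\theta$. Fix $\theta$ with $0 \le \theta \le c$; on each side, the sum runs over pairs $(k,y)$ with $k+y = \theta$, $k,y \ge 0$. The key observation is that the data entering $\Omega^{(b,c)}_{m-1,n+1,\theta}$ and $\Omega^{(b,c+1)}_{m,n,\theta}$ differ only through the pairs $(m,n)$ and $(c)$, and these two changes are calibrated to cancel. First I would note that $\alpha_{b,c+1}$ computed with $(m,n)$ equals $(m - n) + b - 2(c+1) = \alpha_{b,c} - 2$ while $\alpha_{b,c}$ computed with $(m-1, n+1)$ equals $(m-1) - (n+1) + b - 2c = \alpha_{b,c} - 2$; so the ``$\alpha$'' appearing on the two sides of \eqref{recur1} is \emph{the same integer}, call it $\alpha' := \alpha_{b,c} - 2$, which is still even. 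Hence the rational prefactors $(-1)^k [\alpha'][\alpha'+2]\cdots[\alpha'+2k-2]/[k]!$ agree verbatim on both sides, and we are reduced to showing, for each $k+y = \theta$, that the remaining factor agrees:
\[
q^{-\frac{y(y+1+2(n+1)+2c-2b)}{2}}\qbinom{(m-1)-c+y}{y}
\;=\;
q^{-\frac{y(y+1+2n+2(c+1)-2b)}{2}}\qbinom{m-(c+1)+y}{y}.
\]
This last equality is immediate: in the $q$-exponent, $2(n+1) + 2c = 2n + 2(c+1)$, and in the $q$-binomial, $(m-1) - c = m - (c+1)$. Summing over $k+y = \theta$ then gives \eqref{recur1}.

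The only point requiring a word of care is that the two sides of \eqref{recur1} are \emph{a priori} elements of $\mathcal{A} = \mathbb{Z}[q,q^{-1}]$, and we should make sure the term-by-term comparison is legitimate, i.e.\ that no cancellation or reindexing is hidden. Since both sides are, by \eqref{deffff}, finite sums indexed by the same finite set $\{(k,y) : k+y = \theta,\ k,y \ge 0\}$, and we have matched the summands pointwise, the identity holds in $\mathbb{Q}(q)$ and hence in $\mathcal{A}$. I would also remark that the hypotheses $m \ge c$, $n - b + c \ge 0$, $\alpha_{b,c} > 0$ and $\alpha_{b,c}$ even are inherited so that both sides are genuinely the coefficients occurring in \eqref{deffff} for admissible parameters (in particular $\alpha' = \alpha_{b,c} - 2 \ge 0$ is still even, so the prefactor is the one appearing in the later formulas), but they play no role beyond ensuring we are in the regime where $\Omega$ is defined; the algebraic identity itself is a formal consequence of the two substitutions $(m,n,c) \mapsto (m-1, n+1, c)$ versus $(m,n,c)\mapsto (m, n, c+1)$ leaving $m - c$, $n + c$, and $m - n - 2c$ all unchanged.

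I do not anticipate a genuine obstacle here: this is a bookkeeping lemma whose entire content is that the three quantities $m - c$, $n + c$, and $\alpha_{b,c} = (m-n) + b - 2c$ are simultaneously invariant under both of the parameter moves in question. The mild subtlety — and the thing worth stating explicitly in the write-up — is simply identifying \emph{which} combinations of $m, n, c$ the formula \eqref{deffff} actually depends on, so that the invariance is visible; once that is observed, \eqref{recur1} falls out with no computation. This lemma will presumably be used in the subsequent recursion (e.g.\ to set up an induction on $c$ or on $m+n$ identifying $\Omega^{(b,c)}_{m,n}$ with $\mathfrak{B}^{(0,b,c)}_{2}$), so it is natural to isolate it here.
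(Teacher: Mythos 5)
Your proposal is correct and matches the paper's own proof: the appendix argument likewise expands $\Omega^{(b,c+1)}_{m,n,\theta}$ via \eqref{deffff} and observes that $m-n+b-2(c+1)=(m-1)-(n+1)+b-2c$, $2n+2(c+1)=2(n+1)+2c$, and $m-(c+1)=(m-1)-c$, so the summands agree termwise. Your additional remarks on the invariant combinations $m-c$, $n+c$, $\alpha_{b,c}$ are accurate but add nothing beyond the paper's computation.
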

\begin{proof}
See \ref{applemrecur}.
\end{proof}

\begin{lem} \label{lemrecur2}
When $(\varsigma_{1},\varsigma_{2})=(1,0)$, for any $m,n,b,c\geq 0$ such that $m\geq c$, $n-b+c\geq 0$, $\alpha_{b,c}>0$, and $\alpha_{b,c}$ is even, we have
\begin{align}\label{lem34}
\Omega^{(b,c)}_{m,n}&=\frac{q^{-m-1+c}}{[c]}[n-b+c+1]\Omega^{(b,c-1)}_{m,n+2}+\frac{q^{-m+n-b+c+1}}{[c]}[m-n+b-2c]\Omega^{(b,c-2)}_{m,n+2}.
\end{align}
\end{lem}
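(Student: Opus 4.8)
The plan is to view Lemma~\ref{lemrecur2} as a combinatorial $q$-binomial identity and prove it by a direct manipulation of the defining sum \eqref{deffff}, working over $\mathbb{Q}(q)$ (membership in $\mathcal{A}$ then follows from \eqref{deffff} itself). The first step is to record the numerology under the two parameter changes appearing on the right-hand side. Writing $\alpha:=\alpha_{b,c}=m-n+b-2c$, one checks that replacing $(n,c)$ by $(n+2,c-1)$ leaves the relevant $\alpha$-value equal to $\alpha$, while replacing $(n,c)$ by $(n+2,c-2)$ produces $\alpha+2$; moreover the quadratic exponent $-\tfrac{y(y+1+2n+2c-2b)}{2}$ of \eqref{deffff} changes only by the linear term $-y$ under the first substitution and $+y$ under the second, and the $q$-binomial $\qbinom{m-c+y}{y}$ becomes $\qbinom{m-c+1+y}{y}$, respectively $\qbinom{m-c+2+y}{y}$. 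The hypotheses $m\ge c$ and $n-b+c\ge 0$ ensure that all $q$-binomials occurring have top $\ge$ bottom $\ge 0$, so the $q$-Pascal rules and the $q$-Chu--Vandermonde summation apply in their standard form.

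The heart of the proof is to clear the denominator, i.e.\ to show that $[c]\,\Omega^{(b,c)}_{m,n}$ equals the bracketed expression, and to split the left-hand side by applying the $q$-integer addition rule $[c]=[k+y]=q^{k}[y]+q^{-y}[k]$ termwise (with $k=c-y$). The $q^{k}[y]$-part can be rewritten using $[y]\qbinom{m-c+y}{y}=[m-c+y]\qbinom{m-c+y-1}{y-1}$ together with $m-c+y=m-(c-1)+(y-1)$ and a single $q$-Pascal step; after reindexing $y\mapsto y-1$ this should collapse to $q^{-m-1+c}[n-b+c+1]$ times the antidiagonal sum defining $\Omega^{(b,c-1)}_{m,n+2}$. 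The $q^{-y}[k]$-part can be handled using $\frac{[\alpha][\alpha+2]\cdots[\alpha+2k-2]}{[k]!}[k]=[\alpha+2k-2]\cdot\frac{[\alpha][\alpha+2]\cdots[\alpha+2k-4]}{[k-1]!}$ and the identity $[\alpha+2k-2]=[(\alpha+2)+2(k-1)-2]$, so that after $k\mapsto k-1$ the product becomes the one appearing in $\Omega^{(b,c-2)}_{m,n+2}$ (whose relevant $\alpha$-value is $\alpha+2$), the leftover $q$-powers assembling into the coefficient $q^{-m+n-b+c+1}[m-n+b-2c]$. If these two sub-computations do not close up directly, I would instead run the argument as an induction on $c$: the base case $c=1$, where the $\Omega^{(b,c-2)}$-term vanishes because $\Omega^{(b,-1)}_{m,n+2}=0$, reduces to the elementary $q$-integer identity $q^{b-n-2}[m]-[m-n+b-2]=q^{-m}[n-b+2]$, and Lemma~\ref{lemrecur} can be invoked to absorb one of the parameter shifts and thereby shorten the induction.

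I expect the main obstacle to be the $q$-power bookkeeping. The exponent of $q$ in \eqref{deffff} is genuinely quadratic in $y$, and each operation above --- the $[c]=q^{k}[y]+q^{-y}[k]$ split, each $q$-Pascal step, each reindexing $y\mapsto y-1$ or $k\mapsto k-1$, and the joint shift $c\mapsto c-1,\ n\mapsto n+2$ --- perturbs this quadratic by a controlled linear amount that must be carried along exactly and then matched against the quadratic exponent inside $\Omega^{(b,c-1)}_{m,n+2}$ or $\Omega^{(b,c-2)}_{m,n+2}$. Together with the alternating signs $(-1)^{k}$ and the need to isolate the $k=0$ and $y=0$ boundary terms of each antidiagonal, this is precisely where sign errors and off-by-one slips occur, so this is the part I would write out most carefully, presumably in the appendix alongside the proof of Lemma~\ref{lemrecur}.
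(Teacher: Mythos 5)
Your overall strategy --- manipulate the defining sum directly, split $[c]=q^{k}[y]+q^{-y}[k]$ termwise, and reindex --- has the same computational flavor as the paper's argument, but the plan has a genuine gap: the two halves of your split do not match the two target terms, and the missing recombination is exactly the hard part. In the $q^{k}[y]$-half, $[y]\qbinom{m}{y}=[m-y+1]\qbinom{m}{y-1}$ leaves, after $y\mapsto y-1$, the $y$-dependent factor $[m-y]$ where the target $q^{-m-1+c}[n-b+c+1]\,\Omega^{(b,c-1)}_{m,n+2}$ carries the constant $[n-b+c+1]$; one must further split $[m-c+1+k]=q^{\alpha_{b,c}+k}[n-b+c+1]+q^{-(n-b+c+1)}[\alpha_{b,c}+k]$ and cancel the $[\alpha_{b,c}+k]$-part against the other half. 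In the $q^{-y}[k]$-half, your factorization extracts the $k$-dependent factor $[\alpha_{b,c}+2k-2]$ rather than the constant $[\alpha_{b,c}]$ that appears as the prefactor in the lemma, and even with the correct factorization the reindexed sum runs over the antidiagonal $k+y=c-1$ while $\Omega^{(b,c-2)}_{m,n+2}$ runs over $k+y=c-2$, so no termwise identification is possible. This forced cross-cancellation between the two halves is precisely what the paper's proof organizes via the auxiliary sequence $\epsilon_{d}=(-1)^{d-1}q^{(d-1)(c-d)}\qbinom{c-1}{d-1}$ (an Abel-type resummation), after which the two resulting sums are recognized as the partial sums $\Omega^{(b,c)}_{m+1,n+1,c-1}$ and $\Omega^{(b,c)}_{m+2,n,c-2}$ and converted to the stated form by Lemma \ref{lemrecur}; nothing playing this role appears in your plan.

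Your fallback is also not viable as described: an induction on $c$ needs an induction step that relates $\Omega^{(b,c)}_{m,n}$ to the smaller-$c$ full sums, and Lemma \ref{lemrecur} cannot provide it, since it only re-parametrizes $(m,n,c)$ at fixed $\theta$ and never shortens the antidiagonal. Two bookkeeping points are symptomatic. First, under $(n,c)\mapsto(n+2,c-2)$ the quadratic exponent is unchanged (the combination $2n+2c$ is preserved), not shifted by $+y$ as you assert. Second, you track the binomial of \eqref{deffff} literally as $\qbinom{m-c+y}{y}$, whereas the coefficient actually produced by substituting \eqref{wz25} (and used in the paper's appendix, consistently with \eqref{defomegaabc}) is $\qbinom{m-c+\theta}{y}$, i.e.\ $\qbinom{m}{y}$ when $\theta=c$; with the literal reading the identity you are trying to prove is false --- for $(m,n,b,c)=(4,0,2,2)$ the left side acquires positive powers of $q$ while the right side equals $q^{-3}+q^{-7}$ --- so the computation must be set up with the corrected summand before any of the manipulations can close.
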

The long and difficult proof for Lemma \ref{lemrecur2} is postponed to Appendix \ref{app4}.

We have the following observation based on \eqref{lem34}, which turns out to be important in later sections.
\begin{lem}\label{lem36}
When $(\varsigma_{1},\varsigma_{2})=(1,0)$, for any $m,n,b,c\geq 0$ such that $m\geq c$, $n-b+c\geq 0$, $\alpha_{b,c}>0$, and $\alpha_{b,c}$ is even, we have $c+\deg(\Omega^{(b,c)}_{m,n})<0$. 
\end{lem}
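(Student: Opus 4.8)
\textbf{Proof proposal for Lemma \ref{lem36}.}

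The plan is to establish the degree bound $c + \deg(\Omega^{(b,c)}_{m,n}) < 0$ by induction on $c$, using the recursion \eqref{lem34} from Lemma \ref{lemrecur2} as the engine. First I would dispose of the base cases $c = 0$ and $c = 1$ by direct computation. When $c=0$, the sum defining $\Omega^{(b,0)}_{m,n}$ in \eqref{deffff} has only the term $k=y=0$, so $\Omega^{(b,0)}_{m,n} = 1$ and we need $0 + 0 < 0$ — wait, that fails, so I must look more carefully: in fact $\deg(\Omega^{(b,0)}_{m,n})=0$ and the claim $c+\deg < 0$ would read $0<0$, which is false. So the correct base case must be $c \ge 1$, or the hypotheses ($\alpha_{b,c}>0$ with $c=0$ meaning $\alpha_{b,0}=m-n+b$ etc.) need to be read in conjunction with $n-b+c \ge 0$; I would double-check the intended range and start the induction at $c=1$, computing $\Omega^{(b,1)}_{m,n}$ explicitly from \eqref{deffff}: it equals $q^{-\frac{1+2n+2c-2b}{2}}[m-c+1]$ minus $[\alpha_{b,c}]$ (from the $k=1$, $y=0$ term), and I would check that its $q$-degree is at most $-2$, giving $1 + \deg < 0$.

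For the inductive step, I would feed the recursion \eqref{lem34},
\[
\Omega^{(b,c)}_{m,n}=\frac{q^{-m-1+c}}{[c]}[n-b+c+1]\,\Omega^{(b,c-1)}_{m,n+2}+\frac{q^{-m+n-b+c+1}}{[c]}[m-n+b-2c]\,\Omega^{(b,c-2)}_{m,n+2},
\]
and estimate the degree of each of the two summands separately, using $\deg([r]) = r-1$ for $r \ge 1$ and the inductive hypothesis applied to $\Omega^{(b,c-1)}_{m,n+2}$ and $\Omega^{(b,c-2)}_{m,n+2}$ (note that increasing $n$ by $2$ and decreasing $c$ leaves $\alpha$ shifted appropriately — one must check that $\alpha_{b,c-1}$ computed at $(m,n+2)$, namely $m-(n+2)+b-2(c-1) = \alpha_{b,c}$, stays positive and even, and similarly $\alpha_{b,c-2}$ at $(m,n+2)$ equals $\alpha_{b,c}+2$, still positive and even; the hypotheses are preserved). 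For the first summand: $\deg\bigl(\frac{q^{-m-1+c}}{[c]}[n-b+c+1]\bigr) = (-m-1+c) - (c-1) + (n-b+c) = -m+n-b+c$, and by induction $\deg(\Omega^{(b,c-1)}_{m,n+2}) < -(c-1)$, so the total degree is $< -m+n-b+c -(c-1) = -m+n-b+1 = -\alpha_{b,c} - c + 1 + \dots$; I need to track this against $-c$. Since $\alpha_{b,c} = m-n+b-2c > 0$, we have $-m+n-b < -2c$, hence $-m+n-b+1 \le -2c$, so the first summand has degree $< -2c + (\text{correction})$; I would verify this is $< -c$ (using $c \ge 1$). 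The second summand is handled analogously: $\deg\bigl(\frac{q^{-m+n-b+c+1}}{[c]}[m-n+b-2c]\bigr) = (-m+n-b+c+1) - (c-1) + (m-n+b-2c-1) = -2c+1$, and by induction $\deg(\Omega^{(b,c-2)}_{m,n+2}) < -(c-2)$, giving total degree $< -2c+1 -(c-2) = -3c+3 \le -c$ for $c \ge 1$ (indeed $-3c+3 < -c \iff c > 3/2 \iff c \ge 2$, so $c=1$ needs the direct base-case check, consistent with starting the induction at $c=1$ or $c=2$).

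The main obstacle I anticipate is \textbf{bookkeeping the precise degree of the leading term and making sure the inequalities are strict}, because the estimate for the first summand is genuinely tight: we get $-m+n-b+1 - (c-1) = -\alpha_{b,c} - 3c + 1 + \ldots$ type expressions that are only marginally below $-c$, and a crucial point is that $\alpha_{b,c}$ is \emph{even} and positive, hence $\ge 2$, which is exactly what upgrades $-m+n-b+1 \le -2c+1$ to something usable; without the evenness-plus-positivity forcing $\alpha_{b,c} \ge 2$ the bound would be false. A secondary subtlety is confirming that the two summands cannot have cancelling leading terms in a way that would actually \emph{help} (cancellation only lowers the degree, so it is harmless here) versus confirming they don't conspire to \emph{not} cancel and thereby realize the worst-case bound — but since we only need an upper bound on $\deg$, $\deg(f+g) \le \max(\deg f, \deg g)$ suffices and cancellation is a non-issue. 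I would also need to check the edge case where $\Omega^{(b,-1)}_{\cdot}$ or similar out-of-range terms appear (for $c=1$ the second summand involves $\Omega^{(b,-1)}$, which should be interpreted as $0$ by the conventions on divided powers, so that summand drops and only the first, directly-computable, term survives), which again reinforces that the induction is cleanest started at $c=1$ with $c=1$ done by hand.
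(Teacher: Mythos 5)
Your proposal follows essentially the same route as the paper: induction on $c$ driven by the recursion \eqref{lem34} of Lemma \ref{lemrecur2}, with the base case $c=1$ done by hand and the two summands estimated separately by degree counting (your prefactor degrees $-m+n-b+c$ and $-2c+1$ agree with the paper's), and your observation that the claim fails at $c=0$ and the induction must start at $c=1$ matches the scope of the paper's argument.

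Two points need attention. First, in the base case the honest computation is not a crude degree estimate: each of the two terms $q^{-(n-b+2)}[m]$ and $[\alpha_{b,1}]$ has degree $\alpha_{b,1}-1\geq 1$, and only after their leading parts cancel does one get $\Omega^{(b,1)}_{m,n}=q^{-m}[n-b+2]$, whence $1+\deg=-\alpha_{b,1}<0$; your planned ``check'' would reveal this, but the cancellation is the whole content of the step (also note your exponent for the $y=1$ term should be $-(n-b+2)$, not $-\tfrac{1+2n+2c-2b}{2}$). Second, and more substantively, for $c=2$ your inductive step invokes $\deg(\Omega^{(b,c-2)}_{m,n+2})<-(c-2)$, i.e.\ the statement at $c=0$ — precisely the case you yourself observed is false, since $\Omega^{(b,0)}=1$. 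This is easily repaired: use $\deg(\Omega^{(b,0)}_{m,n+2})=0$ directly, which together with the prefactor degree $-2c+1=-3$ still gives a contribution of $-1<0$ after adding $c$; the paper is equally terse here, but since you flagged the $c=0$ failure you should not then quote it as an induction hypothesis. Finally, a small correction to your ``obstacle'' remark: evenness of $\alpha_{b,c}$ is not what saves the estimates — positivity ($\alpha_{b,c}\geq 1$) already suffices in every inequality above; parity matters only because \eqref{lem34} and the coefficients $\Omega^{(b,c)}_{m,n}$ themselves are the even-parity objects (the odd case is handled by $O^{(b,c)}_{m,n}$ and Lemma \ref{lemrecur3}).
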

\begin{proof}
From \eqref{lem34}, when $c\geq 2$, we check
$$
\deg\left(\frac{q^{-m-1+c}}{[c]}[n-b+c+1]\Omega^{(b,c-1)}_{m,n+2}\right)+c=-(m-n+b-2c)+\deg(\Omega^{(b,c-1)}_{m,n+2})<0,
$$
$$
\deg\left(\frac{q^{-m+n-b+c+1}}{[c]}[m-n+b-2c]\Omega^{(b,c-2)}_{m,n+2}\right)+c=\deg(\Omega^{(b,c-2)}_{m,n+2})-c+1<0,
$$
and thus $c+\deg(\Omega^{(b,c)}_{m,n})<0$. When $c=1$, $\Omega^{(1,1)}_{m,n}=q^{-m}[n-b+2]$ and $1+\deg(\Omega^{(1,1)}_{m,n})=-(m-n+b-2)<0$.
\end{proof}

\begin{thm}\label{thm:iCBtoMB}
When $(\varsigma_{1},\varsigma_{2})=(1,0)$, for any $m,n,b,c\geq 0$ such that $m\geq c$, $n-b+c\geq 0$, $\alpha_{b,c}>0$, and $\alpha_{b,c}$ is even, we have
\begin{align} \label{eq:Bbc}
\mathfrak{B}^{(0,b,c)}_{2}&=\sum_{0\leq k\leq c}(-1)^{k}\frac{[\alpha_{b,c}][\alpha_{b,c}+2]\cdots [\alpha_{b,c}+2k-2]}{[k]!}B^{(b-k)}_{2}B^{(c-k)}_{1}\eta.
\end{align}
\end{thm}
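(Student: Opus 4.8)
The plan is to prove the identity by showing that the right-hand side of \eqref{eq:Bbc}, which I will call $X$, satisfies the two properties that characterize $\mathfrak{B}^{(0,b,c)}_{2}$ among the elements of $L(m,n)$, namely (cf.\ \eqref{defomega} and \cite[Theorem~5.10]{BW18b}): that $X$ is $\psi^{\imath}$-invariant, and that $X = F^{(b)}_{2}F^{(c)}_{1}\eta + \sum_{\theta>0} q^{-1}\mathbb{Z}[q^{-1}]\,F^{(b-\theta)}_{2}F^{(c-\theta)}_{1}\eta$ with respect to the canonical basis $\{F^{(b-\theta)}_{2}F^{(c-\theta)}_{1}\eta\}_{\theta\ge0}$ of $L(m,n)$. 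Recall that here $a=0$, so $b\ge a+c=c$; combined with $m\ge c$ and $n-b+c\ge0$, every $F^{(b-\theta)}_{2}F^{(c-\theta)}_{1}\eta$ with $0\le\theta\le c$ is a nonzero canonical basis vector, and these lie in pairwise distinct weight spaces, so the uniqueness underlying this characterization applies; hence establishing the two properties forces $X=\mathfrak{B}^{(0,b,c)}_{2}$.

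The $\psi^{\imath}$-invariance is immediate. Each idivided power $B^{(r)}_{i}$ is $\psi^{\imath}$-fixed, since $\psi^{\imath}(B_{i})=B_{i}$ and $[r]!$ is bar-invariant; since $\psi^{\imath}(\eta)=\eta$ and $\psi^{\imath}$ is an anti-linear algebra map, every monomial $B^{(b-k)}_{2}B^{(c-k)}_{1}\eta$ is $\psi^{\imath}$-fixed. The scalar coefficients $(-1)^{k}[\alpha_{b,c}][\alpha_{b,c}+2]\cdots[\alpha_{b,c}+2k-2]/[k]!$ are quotients of products of quantum integers, hence bar-invariant, so $\psi^{\imath}(X)=X$.

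The substantive part is the triangularity. Substituting \eqref{wz25}, applied to each monomial $B^{(b-k)}_{2}B^{(c-k)}_{1}\eta$ (that is, with $(b,c)$ replaced by $(b-k,c-k)$), into the definition of $X$ and collecting the terms with $k+y=\theta$, one finds that the coefficient of $F^{(b-\theta)}_{2}F^{(c-\theta)}_{1}\eta$ in $X$ is exactly $\Omega^{(b,c)}_{m,n,\theta}$ from \eqref{deffff} — this is the reason that quantity was introduced — and for $\theta=0$ only the pair $(k,y)=(0,0)$ survives, giving coefficient $1$. It therefore remains to show $\deg\Omega^{(b,c)}_{m,n,\theta}<0$ for $\theta\ge1$. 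For this I would iterate \eqref{recur1}: the three quantities $m-c$, $n-b+c$ and $\alpha_{b,c}$ are each unchanged by the substitution $(m,n,c)\mapsto(m-1,n+1,c-1)$, so the hypotheses $m\ge c$, $n-b+c\ge0$, $\alpha_{b,c}>0$ and $\alpha_{b,c}$ even all persist, and after $c-\theta$ applications one obtains $\Omega^{(b,c)}_{m,n,\theta}=\Omega^{(b,\theta)}_{m-c+\theta,\,n+c-\theta,\,\theta}$. Applying Lemma~\ref{lem36} with parameters $(b,\theta,m-c+\theta,n+c-\theta)$ then yields $\theta+\deg\Omega^{(b,\theta)}_{m-c+\theta,n+c-\theta,\theta}<0$, that is, $\deg\Omega^{(b,c)}_{m,n,\theta}<-\theta\le-1$. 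This gives the required triangularity, and the theorem follows from the uniqueness statement above.

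The genuine obstacle does not lie in this argument, which is bookkeeping once the auxiliary results are in hand, but in Lemma~\ref{lem36} — equivalently in the recursion \eqref{lem34} behind it — whose proof is postponed to the appendix and is the hardest ingredient. Within the present proof the points deserving attention are the precise identification of the canonical-basis coefficient of $X$ with $\Omega^{(b,c)}_{m,n,\theta}$ (a reindexing of \eqref{wz25} summed against the coefficients occurring in \eqref{eq:Bbc}) and the verification that each hypothesis of Lemmas~\ref{lemrecur} and~\ref{lem36} is preserved under the iterated shift.
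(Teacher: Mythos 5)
Your argument is correct and essentially coincides with the paper's proof: both expand the right-hand side via \eqref{wz25}, identify the coefficients of the canonical basis with $\Omega^{(b,c)}_{m,n,\theta}$, reduce to the case $\theta=c$ via Lemma \ref{lemrecur}, and conclude from the estimate supplied by Lemma \ref{lemrecur2}. The only cosmetic differences are that you invoke Lemma \ref{lem36} (itself derived from Lemma \ref{lemrecur2}) instead of running the induction on $c$ directly, and that you make explicit the $\psi^{\imath}$-invariance of the right-hand side and the persistence of the hypotheses under the shift $(m,n,c)\mapsto(m-1,n+1,c-1)$, both of which the paper leaves implicit.
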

\begin{proof}
Substituting \eqref{wz25} into the RHS of \eqref{eq:Bbc}, we get 
\begin{align} \label{Omega}
\text{RHS of \eqref{eq:Bbc}}=F^{(b)}_{2}F^{(c)}_{1}\eta+\sum_{0<\theta\leq c}\Omega^{(b,c)}_{m,n,\theta}F^{(b-\theta)}_{2}F^{(c-\theta)}_{1}\eta,
\end{align} 
where $\Omega^{(b,c)}_{m,n,\theta}$ was defined in \eqref{deffff}. Since $\Omega^{(b,c)}_{m,n,\theta}\in \mathcal{A}$, it suffices to show that $\Omega^{(b,c)}_{m,n,\theta}\in q^{-1}\mathbb{Z}[q^{-1}]$. Once this is established, by the defining properties and uniqueness of the icanonical basis, \eqref{eq:Bbc} follows immediately. Using Lemma \ref{lemrecur} above, we only need to prove $\Omega^{(b,c)}_{m,n}\in q^{-1}\mathbb{Z}[q^{-1}]$, for any $c\in \mathbb Z_{\ge 0}$. 

From Lemma \ref{lemrecur2}, we found (recall the condition that $m-n+b-2c>0$) $q^{-m-1+c}[n-b+c+1],\quad q^{-m+n-b+c+1}[m-n+b-2c+1]\in q^{-1}\mathbb{Z}[q^{-1}]$ when $c\geq 2$ and thus $\Omega^{(b,c)}_{m,n}\in q^{-1}\mathbb{Z}[q^{-1}]$ by using induction on $c$. So we are done.
\end{proof}

\subsection{Case for $a=0$, $(\varsigma_{1},\varsigma_{2})=(1,0)$, $\alpha_{b,c,\bm{\varsigma}}$ is odd}
We focus on a special case in this subsection: $a=0$, $(\varsigma_{1},\varsigma_{2})=(1,0)$ and $\alpha_{b,c,\bm{\varsigma}}$ is odd. We want to find the transition matrices between $B^{(b)}_{2}B^{(c)}_{1}\eta$ and $\mathfrak{B}^{(0,b,c)}_{2}$ on $L(m,n)$. Recall that $L(m,n)$ is the simple $\U$-module with highest weight vector $\eta$ and highest weight $m\omega_{1}+n\omega_{2}$, for $m,n\geq 0$. Again, we have the same notation $\alpha_{b,c}=m-n+b-2c$ as \eqref{defalpha} and $\mathfrak{B}^{(0,b,c)}_{2}$ as element in icanonical basis of $L(m,n)$.

We define 
\begin{align}\label{deffffo}
O^{(b,c)}_{m,n,\theta}&=\sum_{k+y=\theta}(-1)^{k}[\alpha_{b,c}+k-1]\frac{[\alpha_{b,c}+1][\alpha_{b,c}+3]\cdots [\alpha_{b,c}+2k-3]}{[k]!}q^{-\frac{y(y+1+2n+2c-2b)}{2}}\qbinom{m-c+y}{y}
\end{align}
for any $0\leq \theta\leq c$. When $\theta=c$, we also allow the abbreviation $O^{(b,c)}_{m,n}:=O^{(b,c)}_{m,n,c}$. The following lemma unravel the integrability of $O^{(b,c)}_{m,n,\theta}$:
\begin{lem} \label{qmn}
For odd positive integers $m$, we have 
$\frac{[m+n-1]}{[n]!} \prod_{j=0}^{n-2}[m+1+2j] \in\mathcal{A}$.
\end{lem}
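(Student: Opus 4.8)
The plan is to translate membership in $\mathcal{A}=\mathbb{Z}[q,q^{-1}]$ into a divisibility count via the cyclotomic factorization of quantum integers, and then verify that count. Recall that in $\mathcal{A}$ one has $[k]=(\text{unit})\cdot\prod_{1<d\mid k}\Phi_{d}(q^{2})$, where $\Phi_{d}$ is the $d$-th cyclotomic polynomial and a unit means $\pm q^{\ell}$; since $\Phi_{d}(q^{2})=\Phi_{d}(q)\Phi_{2d}(q)$ for $d$ odd and $\Phi_{d}(q^{2})=\Phi_{2d}(q)$ for $d$ even, each $[k]$ is, up to a unit, a product of pairwise distinct irreducibles of $\mathcal{A}$, and the multiplicity $v_{\Phi_{e}}([k])$ of $\Phi_{e}(q)$ in $[k]$ equals $1$ exactly when $e$ is odd and $e\mid k$, or $e$ is even and $\tfrac e2\mid k$ with $\tfrac e2>1$, and equals $0$ otherwise. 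Since $\mathcal{A}$ is a UFD (and the $[k]$ are primitive in $q$, so no rational prime intervenes), it follows that for positive integers $a_{1},\dots,a_{r}$ and $b_{1},\dots,b_{r}$ one has $\prod_{i}[a_{i}]\big/\prod_{j}[b_{j}]\in\mathcal{A}$ if and only if $\#\{i:d\mid a_{i}\}\ge\#\{j:d\mid b_{j}\}$ for every integer $d\ge 2$. I would apply this with numerator the multiset $T:=\{m+1,m+3,\dots,m+2n-3\}\cup\{m+n-1\}$ of size $n$ (note that $m+n-1$ is the arithmetic mean of the progression, so it coincides with one of the progression terms exactly when $n$ is even) and denominator $\{1,2,\dots,n\}$. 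Thus it remains to prove
\[
\#\{\,t\in T:\ d\mid t\,\}\ \ge\ \lfloor n/d\rfloor\qquad\text{for every integer }d\ge 2.
\]

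To prove this I would split on the parity of $d$; the cases $n\le 1$ are trivial, so assume $n\ge 2$. If $d$ is even, this is where the hypothesis that $m$ is odd enters: then $\tfrac{m+1}2\in\mathbb{Z}$ and $m+1+2j=2\big(\tfrac{m+1}2+j\big)$, so $d\mid m+1+2j$ iff $\tfrac d2\mid\tfrac{m+1}2+j$; as $j$ runs through $0,\dots,n-2$ the integer $\tfrac{m+1}2+j$ runs through $n-1$ consecutive integers, at least $\lfloor 2(n-1)/d\rfloor\ge\lfloor n/d\rfloor$ of which are divisible by $\tfrac d2$, and this already gives the bound (even without the extra term $m+n-1$). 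If $d$ is odd, then $2$ is invertible modulo $d$, so $d\mid m+1+2j$ holds for $j$ in exactly one residue class mod $d$; among the $n-1$ consecutive values $j=0,\dots,n-2$ this occurs at least $\lfloor(n-1)/d\rfloor$ times. If $d\nmid n$ then $\lfloor(n-1)/d\rfloor=\lfloor n/d\rfloor$ and we are done. If $d\mid n$, then $\lfloor n/d\rfloor=n/d$, and the number of such $j$ in $\{0,\dots,n-2\}$ equals $n/d-1$ precisely when $n-1$ lies in that residue class, which a short congruence computation shows is equivalent to $d\mid m-1$; but $d\mid n$ also yields $m+n-1\equiv m-1\pmod d$, so the extra element $m+n-1\in T$ is divisible by $d$ in exactly that case, and adding it back restores the count to $n/d$.

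The only genuinely delicate point is this last sub-case (odd $d$ with $d\mid n$): because the progression in $T$ has step $2$, its divisibility count can fall one short of $\lfloor n/d\rfloor$, and the single extra element $m+n-1$ must absorb exactly that deficit. The mechanism is the coincidence $m+n-1\equiv m-1\equiv m+2(n-1)+1\pmod d$, valid when $d\mid n$, which makes "the progression count is short" and "$m+n-1$ is divisible by $d$" the same event. It is also worth noting that the parity hypothesis on $m$ is essential and is used in the even-$d$ case: if $m$ were even, every $m+1+2j$ would be odd, so $\#\{t\in T:d\mid t\}$ could be far below $\lfloor n/d\rfloor$ for even $d$, and the conclusion would fail.
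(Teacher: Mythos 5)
Your argument is correct, but it takes a genuinely different route from the paper's. You reduce membership in $\mathcal{A}$ to a divisibility-counting criterion via the cyclotomic factorization of quantum integers, and then verify, for every $d\ge 2$, that the multiset $\{m+1,m+3,\dots,m+2n-3\}\cup\{m+n-1\}$ contains at least $\lfloor n/d\rfloor$ multiples of $d$; the parity hypothesis on $m$ enters only for even $d$, and the extra factor $[m+n-1]$ repairs the single possible deficit in the case of odd $d$ with $d\mid n$. This analysis is sound (one cosmetic point: in your multiplicity rule ``$e$ odd and $e\mid k$'' you should exclude $e=1$, since $\Phi_1(q)=q-1$ divides no $[k]$; this is harmless because your final criterion only quantifies over $d\ge 2$). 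The paper instead proceeds by a short explicit manipulation: writing $m=2t+1$ and using $[2(t+j)]=[t+j](q^{t+j}+q^{-t-j})$, it rewrites the expression as $\frac{[2t+n]}{[t+n]}\qbinom{t+n}{n}\prod_{j=1}^{n-1}(q^{t+j}+q^{-t-j})$, and then the identity $[2t+n]=q^{-t}[t+n]+q^{t+n}[t]$ gives $\frac{[2t+n]}{[t+n]}\qbinom{t+n}{n}=q^{-t}\qbinom{t+n}{n}+q^{t+n}\qbinom{t+n-1}{n}\in\mathcal{A}$, which finishes the proof. The paper's computation is shorter and exhibits the element explicitly as an $\mathcal{A}$-combination of $q$-binomials (times manifestly integral factors), while your valuation argument is more general and conceptual: it applies verbatim to any ratio of products of quantum integers and makes transparent exactly where the oddness of $m$ and the extra factor $[m+n-1]$ are used.
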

\begin{proof}
See \ref{app1}.
\end{proof}

We still need a few preparatory lemmas before moving to the main theorem:
\begin{lem}\label{lemoddddd}
When $(\varsigma_{1},\varsigma_{2})=(1,0)$, for any $m,n,b,c\geq 0$ such that $m\geq c$, $n-b+c\geq 0$, $\alpha_{b,c}>0$, and $\alpha_{b,c}$ is odd, we have
\begin{align}\label{recur1}
O^{(b,c)}_{m-1,n+1,\theta}=O^{(b,c+1)}_{m,n,\theta}.
\end{align}
\end{lem}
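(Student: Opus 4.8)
\textbf{Proof proposal for Lemma \ref{lemoddddd}.}

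The plan is to mimic, as closely as possible, the proof of its even-case analogue, Lemma \ref{lemrecur}, which establishes the recursion $\Omega^{(b,c)}_{m-1,n+1,\theta}=\Omega^{(b,c+1)}_{m,n,\theta}$. The key point is that both sides of \eqref{recur1} are explicit finite sums indexed by pairs $(k,y)$ with $k+y=\theta$, and the only data depending on $(m,n,c)$ through the shift $(m,n,c)\mapsto(m-1,n+1,c)$ versus $(m,n,c)\mapsto(m,n,c+1)$ are the coefficient $\alpha_{b,c}=m-n+b-2c$ and the $q$-binomial/power factor $q^{-\frac{y(y+1+2n+2c-2b)}{2}}\qbinom{m-c+y}{y}$. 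First I would observe that $\alpha_{b,c}$ is \emph{invariant} under both substitutions: $\alpha_{b,c}\big|_{(m-1,n+1,c)} = (m-1)-(n+1)+b-2c = m-n+b-2c-2$, wait — this needs care, so in fact the two substitutions shift $\alpha$ by $-2$ in each case, and crucially by the \emph{same} amount, so the $\alpha$-dependent scalar $[\alpha_{b,c}+k-1]\prod_{j}[\alpha_{b,c}+2j-1]/[k]!$ transforms identically on both sides. Hence the content of the lemma reduces entirely to the identity of the $q$-binomial factors, exactly as in the even case.

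So the second step is the purely combinatorial identity: for the relevant range of $y$,
\[
q^{-\frac{y(y+1+2(n+1)+2c-2b)}{2}}\qbinom{(m-1)-c+y}{y}
= q^{-\frac{y(y+1+2n+2(c+1)-2b)}{2}}\qbinom{m-(c+1)+y}{y},
\]
which is immediate since $(m-1)-c = m-(c+1)$ and $2(n+1)+2c = 2n+2(c+1)$, making the two sides literally equal term by term. Matching this against the $\alpha$-scalars (which, as noted, agree) and summing over $k+y=\theta$ yields \eqref{recur1}. I would phrase this cleanly by noting that $O^{(b,c)}_{m,n,\theta}$ depends on $(m,n,c)$ only through the combination $\alpha_{b,c}$ and through $(m-c,\, n+c)$, both of which are preserved under the two comparison substitutions; this is the conceptual one-line reason and I would present the proof in that form, with the displayed term-by-term check relegated to a short verification.

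I do not expect any genuine obstacle here: unlike Lemma \ref{lemrecur2} (whose proof is deferred to an appendix because it requires a delicate $q$-series manipulation), this lemma is a formal bookkeeping statement about how the defining sum \eqref{deffffo} behaves under a coordinate shift, and the odd case differs from the even case only in the shape of the $\alpha$-dependent scalar, which plays no role in the argument. The one place to be slightly careful is the range of summation — one must check that the constraint $0\le\theta\le c$ (resp.\ $\le c+1$) and the vanishing conventions for $q$-binomials with negative or out-of-range arguments are consistent on both sides, so that no spurious terms appear; but this is routine and handled exactly as in the even case. I would therefore keep the proof short, referring to the proof of Lemma \ref{lemrecur} for the parallel details.
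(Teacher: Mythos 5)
Your proposal is correct and matches the paper's argument: the paper proves this lemma by the same term-by-term comparison used for Lemma \ref{lemrecur} (see Appendix \ref{applemrecur}), noting exactly as you do that both substitutions shift $\alpha_{b,c}$ by $-2$ and leave $m-c$ and $n+c$ unchanged, so the summands coincide. Your conceptual reformulation (dependence only through $\alpha_{b,c}$, $m-c$, $n+c$) is a clean way to state the same proof.
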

\begin{proof}
The proof is identical to the proof of Lemma \ref{lemrecur}.
\end{proof}
\begin{lem} \label{lemrecur3}
When $(\varsigma_{1},\varsigma_{2})=(1,0)$, for any $m,n,b,c\geq 0$ such that $m\geq c$, $n-b+c\geq 0$, $\alpha_{b,c}>0$, and $\alpha_{b,c}$ is odd, we have
\begin{align*}
O^{(b,c)}_{m,n}&=\frac{q^{-m-1+c}}{[c]}O^{(b,c-1)}_{m,n+2}[n-b+c+1]\\
&\qquad+\frac{q^{-m+n-b+2c}}{[c]}[m-n+b-2c+1](O^{(b,c-2)}_{m,n+3}-q^{-m+n-b+c+1}O^{(b,c-3)}_{m,n+3}).
\end{align*}
\end{lem}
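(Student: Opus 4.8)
The plan is to establish the identity by a direct, if somewhat lengthy, manipulation of the defining sum, running in close parallel to the proof of Lemma~\ref{lemrecur2} carried out in Appendix~\ref{app4}. Throughout, write $\alpha := \alpha_{b,c} = m-n+b-2c$ (an odd positive integer by hypothesis). Setting $\theta = c$ in \eqref{deffffo} and substituting $y = c-k$, so that $m-c+y = m-k$, one has
\begin{equation*}
O^{(b,c)}_{m,n}=\sum_{k=0}^{c}(-1)^{k}\,[\alpha+k-1]\,\frac{[\alpha+1][\alpha+3]\cdots[\alpha+2k-3]}{[k]!}\,q^{-\frac{(c-k)(c-k+1+2n+2c-2b)}{2}}\,\qbinom{m-k}{c-k},
\end{equation*}
and likewise each of $O^{(b,c-1)}_{m,n+2}$, $O^{(b,c-2)}_{m,n+3}$, $O^{(b,c-3)}_{m,n+3}$ is re-expanded in this form. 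A small but essential preliminary is the bookkeeping of how $\alpha$ moves under the shifts occurring on the right-hand side: $\alpha_{b,c-1}$ evaluated at $(m,n+2)$ is again $\alpha$, while $\alpha_{b,c-2}$ evaluated at $(m,n+3)$ is $\alpha+1$ and $\alpha_{b,c-3}$ at $(m,n+3)$ is $\alpha+3$, and the accompanying $q$-exponents shift accordingly.

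There are two key manipulations. First, apply a $q$-Pascal recursion to $\qbinom{m-k}{c-k}$ to split off the dependence on $c$; one of the two resulting sub-sums reassembles — after collecting the monomial prefactors and using that the $\alpha$-parameter of $O^{(b,c-1)}_{m,n+2}$ is again $\alpha$ — into $\tfrac{q^{-m-1+c}}{[c]}[n-b+c+1]\,O^{(b,c-1)}_{m,n+2}$, while the other is a genuine remainder that still carries the factor $[\alpha+k-1]$. Second, split $[\alpha+k-1]$ as a $\mathbb{Z}[q,q^{-1}]$-combination of a bracket and a monomial (via the elementary relations $[\ell]=q^{-1}[\ell-1]+q^{\ell-1}=q[\ell-1]+q^{1-\ell}$) and apply a second $q$-Pascal step; the two pieces so produced, after reindexing and recognising that the surviving bracket products are exactly those attached to the parameters $\alpha+1$ and $\alpha+3$, assemble into $\tfrac{q^{-m+n-b+2c}}{[c]}[\alpha+1]\bigl(O^{(b,c-2)}_{m,n+3}-q^{-m+n-b+c+1}O^{(b,c-3)}_{m,n+3}\bigr)$, noting $[m-n+b-2c+1]=[\alpha+1]$. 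The reason the odd case descends three steps in the $c$-index (rather than two, as in Lemma~\ref{lemrecur2}) is precisely that the product $[\alpha+1][\alpha+3]\cdots[\alpha+2k-3]$ together with the extra prefactor $[\alpha+k-1]$ fails to telescope under $c\mapsto c-1$ as cleanly as the even-case product $[\alpha][\alpha+2]\cdots[\alpha+2k-2]$ does, so a combination of two consecutive $O$'s is forced.

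The main obstacle, exactly as anticipated for the even case, is purely computational: one must track four competing sources of powers of $q$ — the explicit exponent $-\tfrac{(c-k)(c-k+1+2n+2c-2b)}{2}$, the two exponents produced by the $q$-Pascal splittings, and the exponent generated when $[\alpha+k-1]$ is split — and verify that after summation over $k$ all $k$-dependent contributions cancel and the total collapses to the three closed exponents $-m-1+c$, $-m+n-b+2c$, $-m+n-b+c+1$ in the statement. The additional $[\alpha+k-1]$ factor (which is absent from $\Omega$) is what makes this strictly more delicate than Lemma~\ref{lemrecur2}, since its splitting must be simultaneously compatible with the $n\mapsto n+2$ regime (feeding $O^{(b,c-1)}_{m,n+2}$) and the $n\mapsto n+3$ regime (feeding $O^{(b,c-2)}_{m,n+3}$ and $O^{(b,c-3)}_{m,n+3}$). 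I would organise the verification by matching, on the two sides, the coefficient of each monomial $q^{t}$ times a fixed bracket product $[\alpha+1][\alpha+3]\cdots$; as a cross-check one can confirm the identity directly for $c=1,2,3$ (where $O^{(b,c)}_{m,n}$ has few terms) and then run an induction on $c$ using Lemma~\ref{lemoddddd}, which transports the identity between the parameter sets $(m,n,c)$ and $(m-1,n+1,c+1)$. Owing to its length, the detailed verification is best relegated to Appendix~\ref{app4}, alongside that of Lemma~\ref{lemrecur2}.
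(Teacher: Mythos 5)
Your overall strategy does coincide with the paper's: the paper's own argument (in the appendix) is exactly a direct expansion of $O^{(b,c)}_{m,n}$, a summation-by-parts organized through the auxiliary sequence $\epsilon_{d}$ (equivalently, iterated $q$-Pascal identities applied to the product $[m][m-1]\cdots[m-c+d+1]$), and an elementary splitting of the extra bracket factor (the paper uses $[\alpha+2d-2]=q^{d-1}[\alpha+d-1]+q^{-(\alpha+d-1)}[d-1]$ and later the combination $q^{-m+n-b+2c+1-d}[\alpha+d-2]+1$), and your bookkeeping of the shifted parameters ($\alpha_{b,c-1}$ at $(m,n+2)$ equals $\alpha$, $\alpha_{b,c-2}$ at $(m,n+3)$ equals $\alpha+1$, $\alpha_{b,c-3}$ at $(m,n+3)$ equals $\alpha+3$) is correct. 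However, as written your text is a plan rather than a proof: every statement of the form "reassembles into $\ldots$" is precisely the exponent-tracking cancellation that constitutes the lemma, and you defer all of it, so nothing is actually established.

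Two concrete points would not survive contact with the computation. First, the fallback "induction on $c$ using Lemma \ref{lemoddddd}" cannot close the argument: that lemma identifies coefficients with the \emph{same} $\theta$ under $(m,n,c)\mapsto(m-1,n+1,c+1)$, whereas $O^{(b,c)}_{m,n}=O^{(b,c)}_{m,n,c}$ is the top coefficient $\theta=c$, which is genuinely new at each $c$; hence the identity at $c+1$ is not transported from the identity at $c$, and the shift lemma can at best normalize parameters, never replace the resummation. Second, your claim that the remainder assembles into $O^{(b,c-2)}_{m,n+3}-q^{-m+n-b+c+1}O^{(b,c-3)}_{m,n+3}$ "with bracket products attached to $\alpha+1$ and $\alpha+3$" glosses over the real subtlety: since those shifted $\alpha$-parameters are \emph{even}, the products $[\alpha+1][\alpha+3]\cdots$ that survive the manipulation are the even-type ones, i.e. what the computation (as carried out in the paper's appendix) naturally produces are the sums $\Omega^{(b,c-2)}_{m,n+3}$ and $\Omega^{(b,c-3)}_{m,n+3}$ at those parameters, not literally the odd-type sums $O^{(b,c-2)}_{m,n+3}$ and $O^{(b,c-3)}_{m,n+3}$ appearing in the statement; the $O$- and $\Omega$-type sums differ term by term (the former carry the extra factor $[\alpha_{b,c'}+k-1]$ and a shorter product), so reconciling the computed form with the form you set out to prove — or recognizing that the $\Omega$-form is the identity one actually obtains and uses for the integrality induction — is a nontrivial step that your outline neither notices nor addresses.
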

The long and difficult proof for Lemma \ref{lemrecur3} is postponed to Appendix \ref{app5}.

\begin{thm}\label{thm:iCBtoMBodd}
When $(\varsigma_{1},\varsigma_{2})=(1,0)$, for any $m,n,b,c\geq 0$ such that $m\geq c$, $n-b+c\geq 0$, $\alpha_{b,c}>0$, and $\alpha_{b,c}$ is odd, we have
\begin{align} \label{eq:oddBbc}
\mathfrak{B}^{(0,b,c)}_{2}=\sum_{0\leq k\leq c}(-1)^{k}[\alpha_{b,c}+k-1]\frac{[\alpha_{b,c}+1][\alpha_{b,c}+3]\cdots[\alpha_{b,c}+2k-3]}{[k]!}B^{(b-k)}_{2}B^{(c-k)}_{1}\eta.
\end{align}
\end{thm}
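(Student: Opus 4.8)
The plan is to mirror the proof of Theorem~\ref{thm:iCBtoMB} almost verbatim, now working with the odd analogue $O^{(b,c)}_{m,n,\theta}$ in place of $\Omega^{(b,c)}_{m,n,\theta}$. First I would substitute \eqref{wz25} into the right-hand side of \eqref{eq:oddBbc}. Collecting the coefficient of $F^{(b-\theta)}_{2}F^{(c-\theta)}_{1}\eta$ for each $\theta$ produces exactly $O^{(b,c)}_{m,n,\theta}$ as defined in \eqref{deffffo}; in particular the $\theta=0$ term has coefficient $[\alpha_{b,c}-1]/[0]!\cdot\,(\text{empty product})$, which one must check equals $1$ — here the convention in \eqref{deffffo} is designed so that the $k=0$ summand contributes $[\alpha_{b,c}-1]$ times an empty product, and one should verify this normalizes correctly (if not, the leading coefficient is a unit and can be absorbed, but the stated formula suggests it is already $1$). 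Thus
\begin{align*}
\text{RHS of \eqref{eq:oddBbc}}=F^{(b)}_{2}F^{(c)}_{1}\eta+\sum_{0<\theta\leq c}O^{(b,c)}_{m,n,\theta}\,F^{(b-\theta)}_{2}F^{(c-\theta)}_{1}\eta,
\end{align*}
so by the defining uni-triangularity property of the icanonical basis in \eqref{defomega} (equivalently \eqref{icb1}) together with its uniqueness, it suffices to prove $O^{(b,c)}_{m,n,\theta}\in q^{-1}\mathbb{Z}[q^{-1}]$ for all $0<\theta\le c$, and that the right-hand side is $\psi^{\imath}$-invariant — the latter being automatic since $B^{(b-k)}_{2}B^{(c-k)}_{1}\eta$ is $\psi^{\imath}$-invariant and the scalar coefficients are bar-invariant (each bracket $[\,\cdot\,]$ is bar-invariant and the signs are integers).

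Next, as in the even case, Lemma~\ref{lemoddddd} reduces the integrality statement for all $\theta$ to the single case $\theta=c$, i.e. it suffices to show $O^{(b,c)}_{m,n}\in q^{-1}\mathbb{Z}[q^{-1}]$ for every $c\ge 0$. I would run an induction on $c$ using the recursion in Lemma~\ref{lemrecur3}. The base cases $c=0$ (where $O^{(b,0)}_{m,n}=[\alpha_{b,0}-1]=[m-n+b-1]$, which lies in $\mathcal{A}$ but one must check it is actually in $q^{-1}\mathbb Z[q^{-1}]$ — note $\theta=c=0$ is excluded from the integrality claim, so this is vacuous) and $c=1$ can be checked directly: $O^{(b,1)}_{m,n}=[\alpha_{b,1}-1]\cdot q^{-(\cdots)/2}\binom{m-1}{1}_{\!}$-type expression minus the $y=1$ term, and one confirms its degree is negative using $\alpha_{b,1}=m-n+b-2>0$. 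For the inductive step, the recursion expresses $O^{(b,c)}_{m,n}$ as $\frac{q^{-m-1+c}}{[c]}[n-b+c+1]\,O^{(b,c-1)}_{m,n+2}$ plus $\frac{q^{-m+n-b+2c}}{[c]}[m-n+b-2c+1]\bigl(O^{(b,c-2)}_{m,n+3}-q^{-m+n-b+c+1}O^{(b,c-3)}_{m,n+3}\bigr)$. By the induction hypothesis each $O^{(b,c')}_{\bullet}$ with $c'<c$ is in $q^{-1}\mathbb Z[q^{-1}]$, so I only need the prefactors to be in $\mathbb Z[q^{-1}]$ (indeed with strictly negative or zero degree): using $\alpha_{b,c}=m-n+b-2c>0$ one has $m\ge n-b+2c+1$, hence $q^{-m-1+c}[n-b+c+1]$ has degree $-m-1+c+(n-b+c)=-(m-n+b-2c)-1<0$, and similarly $q^{-m+n-b+2c}[m-n+b-2c+1]$ has degree $-m+n-b+2c+(m-n+b-2c)=0$ while the factor it multiplies, $O^{(b,c-2)}_{m,n+3}-q^{-m+n-b+c+1}O^{(b,c-3)}_{m,n+3}$, is in $q^{-1}\mathbb Z[q^{-1}]$; one checks the $[c]$ in the denominator divides cleanly because the whole expression is a priori known to lie in $\mathcal A$ (it is a sum of integral divided-power coefficients), so integrality of the numerator suffices. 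This, combined with Lemma~\ref{qmn} to handle the genuinely odd-bracket products $\frac{[\alpha_{b,c}+k-1]}{[k]!}\prod[\alpha_{b,c}+2j-1]$ that appear when one expands $O^{(b,c)}_{m,n,\theta}$ directly (needed for the base cases or as a sanity cross-check), yields $O^{(b,c)}_{m,n}\in q^{-1}\mathbb Z[q^{-1}]$.

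The main obstacle is the same as in the even case: establishing the recursion of Lemma~\ref{lemrecur3}, which the text itself defers to Appendix~\ref{app5} as ``long and difficult.'' For the theorem proper, the delicate point is the bookkeeping of which $\theta$ and $c$ values are actually being asserted integral — the diagonal entry ($\theta=0$) must be exactly $1$ and is not claimed to be in $q^{-1}\mathbb Z[q^{-1}]$, whereas all off-diagonal entries ($\theta>0$) must be; getting the conventions in \eqref{deffffo} (the empty products, the index shift $\alpha_{b,c}+k-1$ versus $\alpha_{b,c}+2k-3$) to line up so that this split is clean is where care is required. A secondary subtlety is verifying that the degree estimates genuinely give $q^{-1}\mathbb Z[q^{-1}]$ and not merely $\mathcal A$: the crucial input is the standing hypothesis $\alpha_{b,c}>0$ (combined with $\alpha_{b,c}$ odd), which forces $m-n+b-2c\ge 1$ and makes the prefactor degrees strictly negative after the $[c]$ cancellation — this is the exact analogue of Lemma~\ref{lem36}, and indeed one should record the parallel statement $c+\deg(O^{(b,c)}_{m,n})<0$, proved identically from Lemma~\ref{lemrecur3}, as it will be needed in the subsequent sections just as Lemma~\ref{lem36} was.
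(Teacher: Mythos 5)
Your proposal is correct and follows essentially the same route as the paper: substitute \eqref{wz25} into the right-hand side of \eqref{eq:oddBbc} to identify the coefficients with $O^{(b,c)}_{m,n,\theta}$, invoke the defining uni-triangularity and uniqueness of the icanonical basis, reduce via Lemma \ref{lemoddddd} to the single case $\theta=c$, and then prove $O^{(b,c)}_{m,n}\in q^{-1}\mathbb{Z}[q^{-1}]$ by induction on $c$ using the recursion of Lemma \ref{lemrecur3} together with the degree bounds forced by $\alpha_{b,c}>0$. The extra points you flag (the $k=0$ normalization, $\psi^{\imath}$-invariance, and the division by $[c]$) are treated at the same level of detail as in the paper's own argument, so no new ingredient is needed.
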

\begin{proof}
The proof uses the same strategy as Theorem \ref{thm:iCBtoMB}. Substituting \eqref{wz25} into the RHS of \eqref{eq:oddBbc}, we get
\begin{align} \label{O}
\text{RHS of \eqref{eq:oddBbc}}=F^{(b)}_{2}F^{(c)}_{1}\eta+\sum_{0<\theta\leq c}O^{(b,c)}_{m,n,\theta}F^{(b-\theta)}_{2}F^{(c-\theta)}_{1}\eta,
\end{align} 
where $O^{(b,c)}_{m,n,\theta}$ was defined in \eqref{deffffo}. Since $O_{m,n,\theta}\in \mathcal{A}$, it suffices to show that $O^{(b,c)}_{m,n,\theta}\in q^{-1}\mathbb{Z}[q^{-1}]$. Once this is established, by the defining properties and uniqueness of the icanonical basis, \eqref{eq:oddBbc} follows immediately. Using Lemma \ref{lemoddddd} above, we only need to prove $O^{(b,c)}_{m,n}\in q^{-1}\mathbb{Z}[q^{-1}]$, for any $c\in \mathbb Z_{\ge 0}$. 

From Lemma \ref{lemrecur2}, we found (recall the condition that $m-n+b-2c>0$) $q^{-m-1+c}[n-b+c+1],\quad \frac{q^{-m+n-b+2c}}{[c]}[m-n+b-2c+1]\in q^{-1}\mathbb{Z}[q^{-1}]$ when $c\geq 2$ and thus $O^{(b,c)}_{m,n}\in q^{-1}\mathbb{Z}[q^{-1}]$ by using induction on $c$. So we are done.
\end{proof}

\subsection{The inverse transition matrix}
In the previous two subsections, we have already found the expression of icanonical basis $\mathfrak{B}^{(0,b,c)}_{2}$ (defined in \eqref{icb1} ) in terms of monomial basis $B^{(b)}_{2}B^{(c)}_{1}\eta$ when $a=0$, $(\varsigma_{1},\varsigma_{2})=(1,0)$, $\alpha_{b,c}=m-n+b-2c>0$. In this subsection, we want to prove the inverse formula. We need the following $q$-binomial identity first:
\begin{lem}\label{lem1}
For any $k,m,c\in \mathbb{Z}$, 
\begin{align*}
\qbinom{k+1}{m+1}&\big([c][k-m]-[m+1][c+k-1]\big)\\
&=\frac{[k-(m+1)][k-m]\cdots [k-1]}{[m+1]!}[k+1][c-(m+1)]\\
&\quad\quad-\frac{[k-(m-1)][k-m+2]\cdots [k-1]}{[m-1]!}[k+1][c+2k-(m+1)].
\end{align*}
\end{lem}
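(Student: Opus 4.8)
The plan is to strip the identity down to a compact identity involving only products of three quantum integers, and then close it with the standard product-to-sum formula for $[a][b]$. Throughout, $[n]=\tfrac{q^n-q^{-n}}{q-q^{-1}}\in\mathbb{Q}(q)$, and I will freely use $q^x-q^{-x}=(q-q^{-1})[x]$ and $[-x]=-[x]$.

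\textbf{Step 1: clear factorials and factor out a common product.} First I would multiply both sides by $[m+1]!$. Using $\qbinom{k+1}{m+1}=\frac{[k+1][k]\cdots[k-m+1]}{[m+1]!}$ on the left, and rewriting $\frac{[k-m-1][k-m]\cdots[k-1]}{[m+1]!}=\qbinom{k-1}{m+1}$ and $\frac{[k-m+1]\cdots[k-1]}{[m-1]!}=\qbinom{k-1}{m-1}$ on the right, one sees that all three resulting terms share the common factor
\[
D:=[k+1]\prod_{j=1}^{m-1}[k-j],
\]
with quotients $[k]$, $[k-m-1][k-m][c-m-1]$, and $[m][m+1][c+2k-m-1]$ respectively (the product $\prod_{j=1}^{m-1}[k-j]$ being the empty product $1$ when $m\le 1$). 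Hence it suffices to prove the \emph{reduced identity}
\[
[k]\bigl([c][k-m]-[m+1][c+k-1]\bigr)=[k-m][k-m-1][c-m-1]-[m][m+1][c+2k-m-1];
\]
multiplying it back by $D$ and dividing by $[m+1]!$ (which is invertible in $\mathbb{Q}(q)$, so no cancellation subtlety arises) recovers the original identity, with the boundary cases $m\le 1$ subsumed.

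\textbf{Step 2: a product-to-sum lemma, applied twice.} The key elementary input is $(q-q^{-1})^2[a][b]=q^{a+b}+q^{-a-b}-q^{a-b}-q^{b-a}$, from which two consequences follow by factoring out $q-q^{-1}$ twice: if $a-b=a'-b'$ then $[a][b]-[a'][b']=\bigl[\tfrac{(a+b)-(a'+b')}{2}\bigr]\bigl[\tfrac{(a+b)+(a'+b')}{2}\bigr]$, and if $a+b=a'+b'$ then $[a][b]-[a'][b']=\bigl[\tfrac{(a'-b')-(a-b)}{2}\bigr]\bigl[\tfrac{(a'-b')+(a-b)}{2}\bigr]$; in each case the hypothesis forces the displayed halves to be integers. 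Now I would rearrange the reduced identity as
\[
[k-m]\bigl([k][c]-[k-m-1][c-m-1]\bigr)=[m+1]\bigl([k][c+k-1]-[m][c+2k-m-1]\bigr),
\]
apply the first consequence to the left bracket (take $(a,b)=(k,c)$, $(a',b')=(k-m-1,c-m-1)$; then $a-b=a'-b'=k-c$) to get $[m+1][k+c-m-1]$, and apply the second consequence to the right bracket (take $(a,b)=(k,c+k-1)$, $(a',b')=(m,c+2k-m-1)$; then $a+b=a'+b'=c+2k-1$) to get $[k-m][k+c-m-1]$ after normalizing a sign via $[-x]=-[x]$. Both sides then equal $[m+1][k-m][k+c-m-1]$, completing the proof.

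\textbf{Expected difficulty.} There is no conceptual obstacle here — the identity is elementary once the reduction in Step 1 is set up. The only places needing genuine care are the bookkeeping in Step 1 (correctly identifying the consecutive-integer product $D$ common to all three terms, and checking the empty-product boundary cases $m\in\{0,1\}$, where one should note $\qbinom{k-1}{-1}=0$ so that the second term on the right disappears for $m=0$) and verifying that the parity/half-integer conditions in the two consequences of the product-to-sum formula do hold in the two applications — both of which are automatic but should be recorded.
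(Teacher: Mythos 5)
Your proposal is correct and essentially coincides with the paper's proof: both arguments clear the factorials, cancel the common product of consecutive quantum integers, and reduce the lemma to the same three-term identity $[k][k-m][c]-[k][m+1][c+k-1]=[k-m][k-m-1][c-m-1]-[m][m+1][c+2k-m-1]$. The only cosmetic difference lies in how this reduced identity is verified: the paper expands both sides using $s(a)s(b)s(c)=s(a+b+c)-s(a+b-c)-s(a-b+c)+s(a-b-c)$ with $s(x)=q^{x}-q^{-x}$, whereas you rearrange and factor two differences of two-term products; both verifications are routine and valid.
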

\begin{proof}
See \ref{app2}.
\end{proof}

The inverse matrix is given by:
\begin{prop}\label{inverse}
When $(\varsigma_{1},\varsigma_{2})=(1,0)$, for any $m,n,b,c\geq 0$ such that $m\geq c$, $n-b+c\geq 0$, and $\alpha_{b,c}>0$, we have:
\begin{align*}
B^{(b)}_{2}B^{(c)}_{1}\eta=&[\alpha_{b,c}]\sum_{\substack{k \ge 0 \\ k\equiv \alpha_{b,c} \pmod 2}}\frac{[\alpha_{b,c}-k+2][\alpha_{b,c}-k+4]\cdots [\alpha_{b,c}+k-4][\alpha_{b,c}+k-2]}{[k]!}\mathfrak{B}_{2}^{(0,b-k,c-k)}\\
&+\sum_{\substack{k\geq 0 \\ k\equiv \alpha_{c}+1 \pmod 2}}\frac{[\alpha_{b,c}-k+1][\alpha_{b,c}-k+3]\cdots [\alpha_{b,c}+k-3][\alpha_{b,c}+k-1]}{[k]!}\mathfrak{B}_{2}^{(0,b-k,c-k)}.
\end{align*}
\end{prop}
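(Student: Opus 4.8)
The identity to be proved is the inverse of the transition formulas of Theorems \ref{thm:iCBtoMB} and \ref{thm:iCBtoMBodd}, so the plan is to invert an explicit triangular change of basis. Fix $m,n,b,c$ as in the statement, put $\alpha:=\alpha_{b,c}$, and for $0\le k\le\min\{b,c\}$ write $w_k:=\mathfrak B^{(0,b-k,c-k)}_2$ and $v_k:=B^{(b-k)}_2 B^{(c-k)}_1\eta$. The two families $\{w_k\}$ and $\{v_k\}$ span the same subspace of $L(m,n)$, and Theorems \ref{thm:iCBtoMB} and \ref{thm:iCBtoMBodd} express each $w_k$ in terms of $v_k, v_{k+1},\dots$, i.e. they give a triangular (in $k$) change of basis. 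Since $\alpha_{b-k,c-k}=\alpha+k$, the expansion of $w_k$ is governed by Theorem \ref{thm:iCBtoMB} precisely when $\alpha+k$ is even and by Theorem \ref{thm:iCBtoMBodd} when $\alpha+k$ is odd; this is exactly why the claimed formula splits into a sum over $k\equiv\alpha\pmod 2$ and a sum over $k\equiv\alpha+1\pmod 2$.

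First I would substitute these expansions of the $w_k$ into the right-hand side of the asserted identity and collect, for each $N\ge 0$, the coefficient of $v_N$. By triangularity only pairs $(k,l)$ with $k+l=N$ contribute, so the coefficient is a finite sum of products of $q$-integers in the single variable $\alpha$, and the proposition amounts to the scalar identity stating that this sum vanishes for $N\ge 1$ (the case $N=0$ being immediate and fixing the normalisation through the leading coefficients of Theorems \ref{thm:iCBtoMB} and \ref{thm:iCBtoMBodd}). Equivalently, one checks that the explicitly defined matrices produced by the forward theorems and by this proposition are mutually inverse; this is a purely combinatorial statement that no longer uses any property of the icanonical basis.

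To establish the resulting $q$-identity, I would treat separately the contributions of the intermediate terms $w_k$ of even $\alpha$-value and of odd $\alpha$-value, rewriting in each case the inner sum as a $q$-binomial sum whose summand contains a factor of the shape $[c][k-m]-[m+1][c+k-1]$; Lemma \ref{lem1} rewrites this factor so that the sum telescopes into a difference of two products of the type $[\alpha][\alpha+2]\cdots$ or $[\alpha+1][\alpha+3]\cdots$. The argument is then finished by induction on $N$ (equivalently on $c$), the cases $N\le 1$ being direct and the telescoping cancelling the inductive remainder. As an alternative, essentially equivalent, route one can expand every $v_k$ in the canonical basis via \eqref{wz25}, expand every $w_k$ in the canonical basis via the forward theorems, and verify the identity coefficientwise there, using the recursions of Lemmas \ref{lemrecur2} and \ref{lemrecur3} to drive the induction.

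The hard part will be the interleaving of the even and odd formulas: the parity of $\alpha+k$ flips at every step $k\mapsto k+1$, so the combined transition matrix carries an alternating ``diagonal'', and the two sub-identities (for the two parities) must be fed into one another in the induction with considerable care. Lemma \ref{lem1} is the single nontrivial algebraic ingredient; granting it, the remaining verification is routine but lengthy, the real work lying in organising the bookkeeping rather than in any one computation.
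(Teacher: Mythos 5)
Your proposal is correct and follows essentially the same route as the paper: one substitutes the expansions of Theorems \ref{thm:iCBtoMB} and \ref{thm:iCBtoMBodd} (with parameter $\alpha_{b-k,c-k}=\alpha_{b,c}+k$, whence the parity interleaving) into the right-hand side and shows that the coefficient of each $B_2^{(b-N)}B_1^{(c-N)}\eta$ with $N\ge 1$ vanishes, the vanishing being exactly the telescoping $q$-identity supplied by Lemma \ref{lem1}. The paper organizes this as four parity cases (of $\alpha_{b,c}$ and of $N$) and verifies the cancellation directly rather than by induction on $N$, but this is only a difference of presentation, not of substance.
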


\begin{proof}
The above identity can be understood as the triangular transition matrix from a basis $C:=\{\mathfrak{B}_{2}^{(0,b-k,c-k)}|0\leq k\leq c\}$ to another basis $P:=\{B_{2}^{(b-k)}B_{1}^{(c-k)}\eta |0\leq k\leq c\}$. Meanwhile, the second identity gives a transition matrix from $P$ to $C$. Now the remaining thing is to prove that there two transition matrices are inverses to each other. By Theorem \ref{thm:iCBtoMB} and \ref{thm:iCBtoMBodd}, we can replace all $\mathfrak{B}_{2}^{(0,b-k,c-k)}$ appearing in the RHS of the first identity by the summation in terms of elements in $P$. Now, it remains to prove that the coefficients of each $B_{2}^{(b-k)}B_{1}^{(c-k)}\eta$, $k>0$ is $0$. We divide into four cases.

When $\alpha_{b,c}$ is even, $k$ is even, the coefficient of $B_{2}^{(b-k)}B_{1}^{(c-k)}\eta$, $k>0$ is given by
\begin{align*}
&\frac{[\alpha_{b,c}][\alpha_{b,c}+2]\cdots [\alpha_{b,c}+2k-2]}{[k]!}
\\
&\quad\quad +[\alpha_{b,c}]\sum_{\substack{1\leq m\leq k 
\\ 
\text{$m$ is even}}}\frac{[\alpha_{b,c}-m+2][\alpha_{b,c}-m+4]\cdots [\alpha_{b,c}+m]\cdots [\alpha_{b,c}+2k-m-2]}{[m]![k-m]!}
\\
&\quad\quad -[\alpha_{b,c}+k-1]\sum_{\substack{1\leq m\leq k 
\\ 
\text{$m$ is odd}}}\frac{[\alpha_{b,c}-m+1][\alpha_{b,c}-m+3]\cdots [\alpha_{b,c}+m+1]\cdots [\alpha_{b,c}+2k-m-3]}{[m]![k-m]!}
\\
&\quad =\frac{1}{[k+1]!}\bigg[[\alpha_{b,c}][\alpha_{b,c}+2]\cdots [\alpha_{b,c}+2k-2][k+1]
\\
&\quad\quad\quad\quad +\sum_{\substack{1\leq m\leq k 
\\ 
\text{$m$ is odd}}}\bigg([\alpha_{b,c}-m+1][\alpha_{b,c}-m+3]\cdots [\alpha_{b,c}+m+1]\cdots [\alpha_{b,c}+2k-m-3]
\\
&\quad\quad\quad\quad \quad\quad\times \qbinom{k+1}{m+1}\big([\alpha_{b,c}][k-m]-[m+1][\alpha_{b,c}+k-1]\big)\bigg)\bigg]
\\
&=0.
\end{align*}
For the last step, we used Lemma \ref{lem1}.

When $\alpha_{b,c}$ is even, $k$ is odd, the coefficient of $B_{2}^{(b-k)}B_{1}^{(c-k)}\eta$, for $k>0$, is given by
\begin{align*}
&-\frac{[\alpha_{b,c}][\alpha_{b,c}+2]\cdots [\alpha_{b,c}+2k-2]}{[k]!}\\
&\quad\quad -[\alpha_{b,c}]\sum_{\substack{1\leq m\leq k-1 \\ \text{$m$ is even}}}\frac{[\alpha_{b,c}-m+2][\alpha_{b,c}-m+4]\cdots [\alpha_{b,c}+m]\cdots [\alpha_{b,c}+2k-m-2]}{[m]![k-m]!}\\
&\quad\quad +[\alpha_{b,c}+k-1]\sum_{\substack{1\leq m\leq k-1 \\ \text{$m$ is odd}}}\frac{[\alpha_{b,c}-m+1][\alpha_{b,c}-m+3]\cdots [\alpha_{b,c}+m+1]\cdots [\alpha_{b,c}+2k-m-3]}{[m]![k-m]!}\\
&\quad\quad + \frac{[\alpha_{b,c}-k+1][\alpha_{b,c}-k+3]\cdots [\alpha_{b,c}+k-1]}{[k]!}\\
&\quad =\frac{1}{[k+1]!}\bigg[-[\alpha_{b,c}][\alpha_{b,c}+2]\cdots [\alpha_{b,c}+2k-2][k+1]\\
&\quad\quad\quad\quad +\sum_{\substack{1\leq m\leq k \\ \text{$m$ is odd}}}\bigg([\alpha_{b,c}-m+1][\alpha_{b,c}-m+3]\cdots [\alpha_{b,c}+m+1]\cdots [\alpha_{b,c}+2k-m-3]\\
&\quad\quad\quad\quad \quad\quad\times \qbinom{k+1}{m+1}\big([m+1][\alpha_{b,c}+k-1]-[\alpha_{b,c}][k-m]\big)\bigg)\\
&\quad\quad\quad\quad +[\alpha_{b,c}-k+1][\alpha_{b,c}-k+3]\cdots [\alpha_{b,c}+k-1][k+1]\bigg]=0.
\end{align*}

When $\alpha_{b,c}$ is odd, $k$ is odd, the coefficient of $B_{2}^{(b-k)}B_{1}^{(c-k)}\eta$, $k>0$ is given by
\begin{align*}
&-[\alpha_{b,c}+k-1]\sum_{\substack{0\leq m\leq k \\ \text{$m$ is even}}}\frac{[\alpha_{b,c}-m+1][\alpha_{b,c}-m+3]\cdots [\alpha_{b,c}+m+1]\cdots [\alpha_{b,c}+2k-m-3]}{[m]![k-m]!}\\
&\quad\quad +[\alpha_{b,c}]\sum_{\substack{0\leq m\leq k\\ \text{$m$ is odd}}}\frac{[\alpha_{b,c}-m+2][\alpha_{b,c}-m+4]\cdots [\alpha_{b,c}+m]\cdots [\alpha_{b,c}+2k-m-2]}{[m]![k-m]!}\\
&\quad =\frac{1}{[k+1]!}\bigg[\sum_{\substack{0\leq m\leq k \\ \text{$m$ is even}}}\bigg([\alpha_{b,c}-m+1][\alpha_{b,c}-m+3]\cdots [\alpha_{b,c}+m+1]\cdots [\alpha_{b,c}+2k-m-3]\\
&\quad\quad\quad\quad \quad\quad\times \qbinom{k+1}{m+1}\big([\alpha_{b,c}][k-m]-[m+1][\alpha_{b,c}+k-1]\big)\bigg)\bigg]=0.
\end{align*}

When $\alpha_{b,c}$ is odd, $k$ is even, the coefficient of $B_{2}^{(b-k)}B_{1}^{(c-k)}\eta$, $k>0$ is given by
\begin{align*}
&-[\alpha_{b,c}]\sum_{\substack{0\leq m\leq k \\ \text{$m$ is odd}}}\frac{[\alpha_{b,c}-m+2][\alpha_{b,c}-m+4]\cdots [\alpha_{b,c}+m]\cdots [\alpha_{b,c}+2k-m-2]}{[m]![k-m]!}\\
&\quad\quad +[\alpha_{b,c}+k-1]\sum_{\substack{0\leq m\leq k \\ \text{$m$ is even}}}\frac{[\alpha_{b,c}-m+1][\alpha_{b,c}-m+3]\cdots [\alpha_{b,c}+m+1]\cdots [\alpha_{b,c}+2k-m-3]}{[m]![k-m]!}\\
&\quad\quad + \frac{[\alpha_{b,c}-k+1][\alpha_{b,c}-k+3]\cdots [\alpha_{b,c}+k-1]}{[k]!}\\
&\quad =\frac{1}{[k+1]!}\bigg[\sum_{\substack{0\leq m\leq k \\ \text{$m$ is even}}}\bigg([\alpha_{b,c}-m+1][\alpha_{b,c}-m+3]\cdots [\alpha_{b,c}+m+1]\cdots [\alpha_{b,c}+2k-m-3]\\
&\quad\quad\quad\quad \quad\quad\times \qbinom{k+1}{m+1}\big([[m+1][\alpha_{b,c}+k-1]-[\alpha_{b,c}][k-m]\big)\bigg)\bigg]=0.
\end{align*}
\end{proof}

\begin{rem}
It is worth noting that the above proof of Proposition \ref{inverse} does not rely on the specific value of $\alpha_{b,c}$.  
\end{rem}

\subsection{Case for $a=0$, $(\varsigma_{1},\varsigma_{2})=(0,1)$} 
We focus on a special case in this subsection: $a=0$, $(\varsigma_{1},\varsigma_{2})=(0,1)$. We want to find the transition matrices between $B^{(b)}_{2}B^{(c)}_{1}\eta$ and $\mathfrak{B}^{(0,b,c)}_{2}$. Recall that $L(m,n)$ is the simple $\U$-module with highest weight vector $\eta$ and highest weight $m\omega_{1}+n\omega_{2}$, for $m,n\geq 0$. Again, we have the same notation $\mathfrak{B}^{(0,b,c)}_{2}$ as element in icanonical basis of $L(m,n)$ and $\alpha'_{b,c}=m-n+b-2c+1$. We need the following lemma first.
\begin{lem} [\text{cf. \cite[Lemma 5.8]{WZ25}}] 
When $(\varsigma_{1},\varsigma_{2})=(0,1)$, taking $b'=b+1$, we have
\begin{align}\label{wz255}
B^{(b)}_{2}B^{(c)}_{1}\eta=\sum_{y\geq 0}q^{-\frac{y(y+1+2n+2c-2b')}{2}}\qbinom{m-c+y}{y}F^{(b-y)}_{2}F^{(c-y)}_{1}\eta.
\end{align}
\end{lem}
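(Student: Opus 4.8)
The plan is to derive \eqref{wz255} as a direct specialization of Lemma~\ref{prop59} (equivalently \cite[Proposition~5.9]{WZ25}); it is the $\bm{\varsigma}=(0,1)$ analogue of \eqref{wz25} and arises in precisely the same way.

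First I would take $i=1$ in Lemma~\ref{prop59}, so that $\tau i=2$, and set $a=0$. Since $B_1^{(0)}=1$, the left-hand side becomes $B_2^{(b)}B_1^{(c)}\eta$, and the constraint $0\le x\le a=0$ forces $x=0$, collapsing the double sum to a single sum over $y$. Next I would substitute $(\varsigma_1,\varsigma_2)=(0,1)$ and $m_1=m$, $m_2=n$ into the coefficient \eqref{c2}: with $x=0$ the factor $\qbinom{m_{\tau i}-b+c+x}{x}=\qbinom{n-b+c}{0}$ is $1$, the exponent term containing $x$ vanishes, and $x\varsigma_i+y\varsigma_{\tau i}=y\varsigma_2=y$, so that
\[
c^{0,b,c}_{2,(0,1),(m,n),0,y}=q^{\,y-\frac{y(y+1+2n+2c-2b)}{2}}\qbinom{m-c+y}{y}.
\]

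Finally I would rearrange the $q$-exponent. Since $y-\frac{y(y+1+2n+2c-2b)}{2}=-\frac{y\bigl(y+1+2n+2c-2(b+1)\bigr)}{2}$, introducing $b'=b+1$ turns the coefficient into $q^{-\frac{y(y+1+2n+2c-2b')}{2}}\qbinom{m-c+y}{y}$, which is exactly what appears in \eqref{wz255}; the vectors $F_2^{(b-y)}$, $F_1^{(c-y)}$ are just $F_{\tau i}^{(b-x-y)}F_i^{(c-y)}$ evaluated at $i=1$, $x=0$. I do not expect a genuine obstacle: this is the same computation that produces \eqref{wz25}, and the only steps requiring care are the bookkeeping of which of $\varsigma_1,\varsigma_2$ and $m_1,m_2$ enters where once $i=1$ is fixed, together with the one-line manipulation that absorbs the extra factor $q^{y}$ (coming from $\varsigma_2=1$) into the shift $b\mapsto b'$.
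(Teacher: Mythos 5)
Your derivation is correct, and the exponent bookkeeping checks out: with $i=1$, $a=0$ the sum in Lemma~\ref{prop59} collapses to $x=0$ (since $F_1^{(a-x)}=0$ for $x>a$), the coefficient \eqref{c2} becomes $q^{y\varsigma_2-\frac{y(y+1+2n+2c-2b)}{2}}\qbinom{m-c+y}{y}$, and for $\varsigma_2=1$ the extra $q^{y}$ is exactly absorbed by the shift $b\mapsto b'=b+1$. Note, however, that the paper does not prove this lemma at all: it is quoted directly from \cite[Lemma 5.8]{WZ25}, just like \eqref{wz25}, and the paper's subsequent argument only compares the two cited formulas to reduce $\bm{\varsigma}=(0,1)$ to $\bm{\varsigma}=(1,0)$. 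So your route is not the paper's, but it is a legitimate one-line specialization of the other cited input (Lemma~\ref{prop59}, i.e.\ \cite[Proposition 5.9]{WZ25}), which has the mild advantage of making the $(0,1)$ case self-contained given that proposition; the only care needed, which you correctly exercised, is that fixing $i=1$ puts $\varsigma_2$ and $m_{\tau i}=n$ (not $\varsigma_1$, $m$) into the $y$-part of the exponent and the surviving binomial $\qbinom{m-c+y}{y}$.
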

Recall when $(\varsigma_{1},\varsigma_{2})=(1,0)$, $\alpha_{b,c}=m-n+b-2c$ defined in \eqref{defalpha}, we have $\alpha'_{b,c}=\alpha_{b',c}$. By further comparing the coefficient of $F^{(b-y)}_{2}F^{(c-y)}_{1}\eta$ in \eqref{wz255} with that in \eqref{wz25}, and taking $b'=b+1$, we reduce the problem to the case $(\varsigma_{1},\varsigma_{2})=(1,0)$. Now Theorems \ref{thm:iCBtoMB}, \ref{thm:iCBtoMBodd} and Proposition \ref{inverse} can be generalized to the following:

\begin{thm}\label{thm41717}
When $(\varsigma_{1},\varsigma_{2})=(1,0)$ or $(0,1)$, for any $m,n,b,c\geq 0$ such that $m\geq c$, $n-b+c\geq 0$, and $\alpha_{b,c,\bm{\varsigma}}> 0$, if $\alpha_{b,c,\bm{\varsigma}}$ is even, then we have 
\begin{align} 
\mathfrak{B}^{(0,b,c)}_{2}&=\sum_{0\leq k\leq c}(-1)^{k}\frac{[\alpha_{b,c,\bm{\varsigma}}][\alpha_{b,c,\bm{\varsigma}}+2]\cdots [\alpha_{b,c,\bm{\varsigma}}+2k-2]}{[k]!}B^{(b-k)}_{2}B^{(c-k)}_{1}\eta.
\end{align}
If $\alpha_{b,c,\bm{\varsigma}}$ is odd, then we have
\begin{align} 
\mathfrak{B}^{(0,b,c)}_{2}=\sum_{0\leq k\leq c}(-1)^{k}[\alpha_{b,c,\bm{\varsigma}}+k-1]\frac{[\alpha_{b,c,\bm{\varsigma}}+1][\alpha_{b,c,\bm{\varsigma}}+3]\cdots[\alpha_{b,c,\bm{\varsigma}}+2k-3]}{[k]!}B^{(b-k)}_{2}B^{(c-k)}_{1}\eta.
\end{align}
Furthermore, the inverse formula is given by
\begin{align*}
B^{(b)}_{2}B^{(c)}_{1}\eta=&[\alpha_{b,c,\bm{\varsigma}}]\sum_{\substack{k \ge 0 \\ k\equiv \alpha_{c} \pmod 2}}\frac{[\alpha_{b,c,\bm{\varsigma}}-k+2][\alpha_{b,c,\bm{\varsigma}}-k+4]\cdots [\alpha_{b,c,\bm{\varsigma}}+k-2]}{[k]!}\mathfrak{B}_{2}^{(0,b-k,c-k)}\\
&+\sum_{\substack{k\geq 0 \\ k\equiv \alpha_{c}+1 \pmod 2}}\frac{[\alpha_{b,c,\bm{\varsigma}}-k+1][\alpha_{b,c,\bm{\varsigma}}-k+3]\cdots [\alpha_{b,c,\bm{\varsigma}}+k-1]}{[k]!}\mathfrak{B}_{2}^{(0,b-k,c-k)}.
\end{align*}
\end{thm}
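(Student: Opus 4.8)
The plan is to split along the two admissible parameters $\bm{\varsigma}$. For $(\varsigma_1,\varsigma_2)=(1,0)$ there is nothing new to prove: the even and odd triangular formulas are precisely Theorems~\ref{thm:iCBtoMB} and \ref{thm:iCBtoMBodd} (here $\alpha_{b,c,\bm{\varsigma}}=\alpha_{b,c}$), and the inverse formula is Proposition~\ref{inverse}. So the real content is the case $(\varsigma_1,\varsigma_2)=(0,1)$, and I would reduce it to the $(1,0)$ case via the substitution $b'=b+1$ flagged just before the statement.

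For the reduction, fix $(\varsigma_1,\varsigma_2)=(0,1)$ and set $b'=b+1$. Comparing \eqref{wz255} with \eqref{wz25} shows that, for each $k$, the expansion of the $(0,1)$-monomial $B_2^{(b-k)}B_1^{(c-k)}\eta$ in the PBW family $\{F_2^{(b-j)}F_1^{(c-j)}\eta\}_{j\ge k}$ has the very same coefficients, as Laurent polynomials in $q$, as the expansion of the $(1,0)$-monomial $B_2^{(b'-k)}B_1^{(c-k)}\eta$ in $\{F_2^{(b'-j)}F_1^{(c-j)}\eta\}_{j\ge k}$; moreover $\alpha_{b,c,\bm{\varsigma}}=\alpha_{b',c}$, where $\alpha_{b',c}=m-n+b'-2c$ is the quantity of \eqref{defalpha}. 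Substituting \eqref{wz255} into the proposed right-hand side of the $(0,1)$ triangular formula then produces $F_2^{(b)}F_1^{(c)}\eta+\sum_{0<\theta\le c}\Omega^{(b',c)}_{m,n,\theta}F_2^{(b-\theta)}F_1^{(c-\theta)}\eta$ in the even case, and the analogous expression with $O^{(b',c)}_{m,n,\theta}$ in the odd case, where $\Omega$ and $O$ are the polynomials of \eqref{deffff} and \eqref{deffffo} evaluated at $\alpha_{b',c}=\alpha_{b,c,\bm{\varsigma}}$. By Theorems~\ref{thm:iCBtoMB} and \ref{thm:iCBtoMBodd} these coefficients lie in $q^{-1}\mathbb{Z}[q^{-1}]$, and the proposed right-hand side is $\psi^{\imath}$-invariant: the idivided powers are $\psi^{\imath}$-invariant, so each $B_2^{(b-k)}B_1^{(c-k)}\eta$ is bar-invariant in $L(m,n)$, and the scalar coefficients are symmetric under $q\leftrightarrow q^{-1}$. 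By the characterization \eqref{icb1} and the uniqueness of the icanonical basis, the right-hand side therefore equals $\mathfrak{B}_2^{(0,b,c)}$. Finally, I would deduce the inverse formula by rerunning the computation of Proposition~\ref{inverse} verbatim with $\alpha_{b,c}$ replaced throughout by $\alpha_{b,c,\bm{\varsigma}}$; as observed in the remark after that proposition, the cancellations there use only the $q$-binomial identity of Lemma~\ref{lem1} and not the value of $\alpha$, so the two triangular matrices just produced are automatically mutual inverses.

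The hard part is the bookkeeping at the edge of the admissible range. Theorems~\ref{thm:iCBtoMB} and \ref{thm:iCBtoMBodd}, applied at the shifted parameter $b'$, require $n-b'+c\ge 0$, i.e.\ $n-b+c\ge 1$, which is strictly stronger than the hypothesis $n-b+c\ge 0$ of the statement. The boundary value $n-b+c=0$ (where $\alpha_{b,c,\bm{\varsigma}}=m-c+1$) thus needs separate treatment: I would either verify the integrality of the coefficients $\Omega^{(b',c)}_{m,n,\theta}$ and $O^{(b',c)}_{m,n,\theta}$ directly in this degenerate case, or check that the auxiliary recursions of Lemmas~\ref{lemrecur2} and \ref{lemrecur3} remain valid there and then conclude as before. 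Along the way I would also confirm that the constraint $b\ge a+c$ (here $b\ge c$, since $a=0$) and the index ranges $0\le b'-k$ survive the shift $b\mapsto b+1$ — routine, but precisely the places where a substitution of this kind tends to break.
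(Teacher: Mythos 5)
Your proposal follows essentially the same route as the paper: the paper's entire proof of Theorem \ref{thm41717} is the reduction you describe, namely comparing \eqref{wz255} with \eqref{wz25}, observing $\alpha_{b,c,\bm{\varsigma}}=\alpha_{b',c}$ for $b'=b+1$, and then invoking Theorems \ref{thm:iCBtoMB}, \ref{thm:iCBtoMBodd} and Proposition \ref{inverse} (whose cancellation argument, as remarked there, is insensitive to the value of $\alpha$). Your bar-invariance remark and the inverse-matrix step are exactly what the paper leaves implicit, so on the main line there is nothing missing.

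The boundary issue you flag is real and is in fact a gap in the paper's own two-line argument, not just in your reduction: when $n-b+c=0$ the shifted data $(m,n,b',c)$ violate the hypothesis $n-b'+c\ge 0$ of the $(1,0)$ theorems (indeed $F_2^{(b')}F_1^{(c)}\eta=0$ in $L(m,n)$, so those theorems are vacuous there), while the $(0,1)$ statement at $(b,c)$ is not vacuous. Your second proposed patch is the one that works: the recursions of Lemmas \ref{lemrecur2} and \ref{lemrecur3} are formal Laurent-polynomial identities whose proofs in the appendix never use $n-b+c\ge 0$, and at the boundary the potentially troublesome summand carries the factor $[n-b'+c+1]=[0]=0$, so the degree/integrality induction (as in Lemma \ref{lem36} and the proofs of Theorems \ref{thm:iCBtoMB}, \ref{thm:iCBtoMBodd}) still yields coefficients in $q^{-1}\mathbb{Z}[q^{-1}]$; a low-rank check (e.g.\ $c=1,2$ with $\alpha$ small) confirms this. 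So your plan is correct, coincides with the paper's approach, and is actually more careful than the paper at the edge of the admissible range.
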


\subsection{Case for $c=0$}
We focus on a special case in this subsection: $c=0$. Recall that $L(m,n)$ is the simple $\U$-module with highest weight vector $\eta$ and highest weight $m\omega_{1}+n\omega_{2}$, for $m,n\geq 0$. 
Further recall \eqref{icb1}, for each $a,b\geq 0$, $\mathfrak{B}^{(a,b,0)}_{2}$ is the icanonical basis element in $L(m,n)$ such that 
\begin{align} 
\mathfrak{B}^{(a,b,0)}_{2}=F^{(a)}_{1}F^{(b)}_{2}\eta+\sum_{\theta> 0}q^{-1}\mathbb{Z}[q^{-1}]F^{(a-\theta)}_{1}F^{(b-\theta)}_{2}\eta.
\end{align}
Next, we have the following lemma:
\begin{lem} [\text{cf. \cite[Proposition 5.9]{WZ25}}] 
When $(\varsigma_{1},\varsigma_{2})=(1,0)$, taking $m'=m-1$, we have
\begin{align}\label{bbtoff1}
B^{(a)}_{1}B^{(b)}_{2}\eta=\sum_{x\geq 0}q^{-\frac{x(x+1+2m'+2b-2a)}{2}}\qbinom{n-b+x}{x}F^{(a-x)}_{1}F^{(b-x)}_{2}\eta.
\end{align}
When $(\varsigma_{1},\varsigma_{2})=(0,1)$, we have
\begin{align}\label{bbtoff2}
B^{(a)}_{1}B^{(b)}_{2}\eta=\sum_{x\geq 0}q^{-\frac{x(x+1+2m+2b-2a)}{2}}\qbinom{n-b+x}{x}F^{(a-x)}_{1}F^{(b-x)}_{2}\eta.
\end{align}
\end{lem}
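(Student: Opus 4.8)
The plan is to derive both \eqref{bbtoff1} and \eqref{bbtoff2} as the $i=1$, $c=0$ specialization of Lemma~\ref{prop59}, so that nothing beyond $q$-exponent bookkeeping is required. Taking $i=1$ (hence $\tau i=2$) and $\bm{m}=(m,n)$ in Lemma~\ref{prop59}, and then putting $c=0$, the right-hand side reads
\[
B_{1}^{(a)}B_{2}^{(b)}\eta=\sum_{x,y\ge 0}c^{a,b,0}_{2,\bm{\varsigma},\bm{m},x,y}\,F_{1}^{(a-x)}F_{2}^{(b-x-y)}F_{1}^{(-y)}\eta .
\]
Since $F_{1}^{(-y)}=0$ for $y>0$ while $F_{1}^{(0)}=1$, only the $y=0$ summands survive, so the double sum collapses to $\sum_{x\ge 0}c^{a,b,0}_{2,\bm{\varsigma},\bm{m},x,0}\,F_{1}^{(a-x)}F_{2}^{(b-x)}\eta$.

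Next I would simplify the surviving coefficient. Substituting $c=0$, $y=0$, $m_{\tau i}=n$, $m_{i}=m$, $\varsigma_{\tau i}=\varsigma_{2}$, $\varsigma_{i}=\varsigma_{1}$ into the definition \eqref{c2} makes the factor $\qbinom{m}{0}=1$ drop out and leaves
\[
c^{a,b,0}_{2,\bm{\varsigma},\bm{m},x,0}=q^{\,x\varsigma_{1}-\frac{x(x+1+2m+2b-2a)}{2}}\qbinom{n-b+x}{x}.
\]
When $(\varsigma_{1},\varsigma_{2})=(0,1)$ the factor $q^{x\varsigma_{1}}$ is trivial, and this is exactly the coefficient appearing in \eqref{bbtoff2}. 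When $(\varsigma_{1},\varsigma_{2})=(1,0)$ I would absorb the factor $q^{x}$ into the quadratic exponent, turning it into $-\tfrac12 x(x-1+2m+2b-2a)$, and then use $x-1+2m=x+1+2m'$ with $m'=m-1$ to recognize the exponent in \eqref{bbtoff1}.

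I do not expect a genuine obstacle here: the entire substantive content is carried by Lemma~\ref{prop59}, and what remains is the exponent arithmetic together with the reindexing $m'=m-1$ that repackages the $(\varsigma_{1},\varsigma_{2})=(1,0)$ shift into the stated form. (If a self-contained derivation were wanted instead, the identities also follow from a rank-one calculation: first $B_{2}^{(b)}\eta=F_{2}^{(b)}\eta$, because the $E_{1}K_{2}^{-1}$-part of $B_{2}$ commutes with $F_{2}$ and annihilates $\eta$; then $B_{1}^{(a)}F_{2}^{(b)}\eta$ is expanded by straightening the $E_{2}K_{1}^{-1}$-part of $B_{1}$ past $F_{2}^{(b)}$ via the standard quantum $\mathfrak{sl}_{2}$ relations. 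Routing through Lemma~\ref{prop59} is the shorter path.)
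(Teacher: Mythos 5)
Your derivation is correct and matches the paper's route: the paper states this lemma as a specialization of the cited \cite[Proposition 5.9]{WZ25} (restated as Lemma~\ref{prop59}), and your $c=0$, $y=0$ collapse of \eqref{c2} with $i=1$, $m_{i}=m$, $m_{\tau i}=n$ reproduces exactly the coefficients in \eqref{bbtoff1}--\eqref{bbtoff2}, including the absorption of $q^{x\varsigma_{1}}$ via $m'=m-1$. Nothing further is needed.
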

After comparing \eqref{bbtoff1}, \eqref{bbtoff2} with \eqref{wz25}, Theorems \ref{thm:iCBtoMB}, \ref{thm:iCBtoMBodd} and Proposition \ref{inverse} can be extended to the following theorems. ($\beta_{a,b,0,\bm{\varsigma}}=n-m+a-2b+\varsigma_{1}$ from \eqref{defofbb})
\begin{thm}
When $(\varsigma_{1},\varsigma_{2})=(1,0)$ or $(0,1)$, for any $m,n,b,c\geq 0$ such that $m\geq c$, $n-b+c\geq 0$, and $\beta_{a,b,0,\bm{\varsigma}}>0$, if $\beta_{a,b,0,\bm{\varsigma}}$ is even, then we have 
\begin{align} 
\mathfrak{B}_{2}^{(a,b,0)}&=\sum_{0\leq k\leq b}(-1)^{k}\frac{[\beta_{a,b,0,\bm{\varsigma}}][\beta_{a,b,0,\bm{\varsigma}}+2]\cdots [\beta_{a,b,0,\bm{\varsigma}}+2k-2]}{[k]!}B^{(a-k)}_{1}B^{(b-k)}_{2}\eta.
\end{align}
If $\beta_{a,b,0,\bm{\varsigma}}$ is odd, then we have
\begin{align} 
\mathfrak{B}_{2}^{(a,b,0)}=\sum_{0\leq k\leq b}(-1)^{k}[\beta_{a,b,0,\bm{\varsigma}}+k-1]\frac{[\beta_{a,b,0,\bm{\varsigma}}+1][\beta_{a,b,0,\bm{\varsigma}}+3]\cdots[\beta_{a,b,0,\bm{\varsigma}}+2k-3]}{[k]!}B^{(a-k)}_{1}B^{(b-k)}_{2}\eta.
\end{align}
Furthermore, the inverse formula is given by
\begin{align*}
B^{(a)}_{1}B^{(b)}_{2}\eta=&[\beta_{a,b,0,\bm{\varsigma}}]\sum_{\substack{k \ge 0 \\ k\equiv \beta_{c} \pmod 2}}\frac{[\beta_{a,b,0,\bm{\varsigma}}-k+2][\beta_{a,b,0,\bm{\varsigma}}-k+4]\cdots [\beta_{a,b,0,\bm{\varsigma}}+k-2]}{[k]!}\mathfrak{B}_{2}^{(a-k,b-k,0)}\\
&+\sum_{\substack{k\geq 0 \\ k\equiv \beta_{c}+1 \pmod 2}}\frac{[\beta_{a,b,0,\bm{\varsigma}}-k+1][\beta_{a,b,0,\bm{\varsigma}}-k+3]\cdots [\beta_{a,b,0,\bm{\varsigma}}+k-1]}{[k]!}\mathfrak{B}_{2}^{(a-k,b-k,0)}.
\end{align*}
\end{thm}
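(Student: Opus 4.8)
The plan is to deduce this from the $a=0$ statements already established --- Theorems~\ref{thm:iCBtoMB} and \ref{thm:iCBtoMBodd} together with Proposition~\ref{inverse} --- by a change of variables. The essential observation is that \eqref{bbtoff1} (for $\bm{\varsigma}=(1,0)$) and \eqref{bbtoff2} (for $\bm{\varsigma}=(0,1)$) both express $B^{(a)}_{1}B^{(b)}_{2}\eta$ as a \emph{single} sum whose $x$-th term is $q^{-x(x+1+2m'+2b-2a)/2}\,\qbinom{n-b+x}{x}\,F^{(a-x)}_{1}F^{(b-x)}_{2}\eta$ with $m'=m-\varsigma_{1}$, and this has exactly the combinatorial shape of the expansion \eqref{wz25} of $B^{(b)}_{2}B^{(c)}_{1}\eta$ underlying the $a=0$ analysis. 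First I would record the precise dictionary: substituting $(m,n,b,c)\mapsto(n,m',a,b)$ in \eqref{wz25} and interchanging the two simple-root indices turns \eqref{wz25} into \eqref{bbtoff1}/\eqref{bbtoff2}, and under this substitution the parameter $\alpha_{b,c}=m-n+b-2c$ governing the $a=0$ case becomes precisely $\beta_{a,b,0,\bm{\varsigma}}=n-m+a-2b+\varsigma_{1}$. Concretely, a one-line computation using $\deg\qbinom{n-b+x}{x}=(n-b)x$ shows that the leading $q$-power of the coefficient of $F^{(a-x)}_{1}F^{(b-x)}_{2}\eta$ in $B^{(a)}_{1}B^{(b)}_{2}\eta$ equals $-\tfrac{x(x+1)}{2}+x\,\beta_{a,b,0,\bm{\varsigma}}$, the exact analogue of the degree $-\tfrac{y(y+1)}{2}+y\,\alpha_{b,c}$ that drives everything in the $a=0$ case.

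Next I would transport the $a=0$ machinery across this dictionary. One introduces the $\beta$-analogues of the auxiliary quantities $\Omega^{(b,c)}_{m,n,\theta}$ of \eqref{deffff} and $O^{(b,c)}_{m,n,\theta}$ of \eqref{deffffo}, obtained by the same substitution together with $\alpha_{b,c}\mapsto\beta_{a,b,0,\bm{\varsigma}}$; the recursions of Lemmas~\ref{lemrecur}, \ref{lemrecur2}, \ref{lemoddddd}, \ref{lemrecur3}, whose proofs use only the shape of \eqref{wz25} and the positivity hypotheses $\alpha_{b,c}>0$, $m\ge c$, $n-b+c\ge 0$, then hold verbatim once those hypotheses are read in the new coordinates. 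Substituting \eqref{bbtoff1}/\eqref{bbtoff2} into the claimed right-hand sides produces, exactly as in \eqref{Omega} and \eqref{O}, an expression $F^{(a)}_{1}F^{(b)}_{2}\eta+\sum_{\theta>0}(\,\cdot\,)\,F^{(a-\theta)}_{1}F^{(b-\theta)}_{2}\eta$ with coefficients in $\mathcal{A}$, and the inductive $q^{-1}\mathbb{Z}[q^{-1}]$-estimate of Theorems~\ref{thm:iCBtoMB} and \ref{thm:iCBtoMBodd} (invoking Lemma~\ref{qmn} in the odd case) goes through unchanged; by the defining properties and uniqueness of the icanonical basis, just as in those proofs, the right-hand side is then identified with $\mathfrak{B}^{(a,b,0)}_{2}$, giving the even and odd formulas. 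The inverse formula drops out of Proposition~\ref{inverse}, whose proof --- as the remark following it notes --- is insensitive to the value of the controlling parameter and rests only on the $q$-binomial identity Lemma~\ref{lem1}, so it applies word for word with $\alpha_{b,c}$ replaced by $\beta_{a,b,0,\bm{\varsigma}}$. The two choices of $\bm{\varsigma}$ are handled uniformly by $m'=m-\varsigma_{1}$, and the companion statement for the $\tau$-swapped index follows from the symmetry in Proposition~\ref{prop32}(2), so no further case analysis is needed.

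The one place that calls for genuine care --- and hence the main, if modest, obstacle --- is the bookkeeping under the substitution. One has to check that the positivity conditions used in Lemmas~\ref{lemrecur2} and \ref{lemrecur3} in the original variables ($m-n+b-2c>0$, $m\ge c$, $n-b+c\ge 0$) become honest consequences of the hypotheses imposed here, namely $\beta_{a,b,0,\bm{\varsigma}}>0$, $n\ge b$, and $b\ge a$ (the last forcing the translated constraint $m\ge a-b$ automatically). One also has to track the parity shift induced by $m\mapsto m'=m-1$ when $\bm{\varsigma}=(1,0)$: under it an even value of $\beta_{a,b,0,(1,0)}$ corresponds to the \emph{odd} branch of the $a=0$ statements and vice versa. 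Since both parity branches of Theorems~\ref{thm:iCBtoMB} and \ref{thm:iCBtoMBodd} are already in hand, this shift is harmless; but it is precisely the detail a careful write-up must spell out rather than leave implicit. Apart from this transcription, the statement carries no new content beyond the $a=0$ case.
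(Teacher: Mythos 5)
Your overall route is the paper's own: the paper proves this theorem precisely by comparing \eqref{bbtoff1}--\eqref{bbtoff2} with \eqref{wz25} and transporting Theorems \ref{thm:iCBtoMB}, \ref{thm:iCBtoMBodd} and Proposition \ref{inverse} across that dictionary, and your first two paragraphs carry this out correctly; in particular your degree computation showing the coefficient of $F^{(a-x)}_{1}F^{(b-x)}_{2}\eta$ has leading power $-\tfrac{x(x+1)}{2}+x\,\beta_{a,b,0,\bm{\varsigma}}$ already identifies $\beta_{a,b,0,\bm{\varsigma}}$ as the transported controlling parameter.

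The genuine problem is your final claim of a ``parity shift,'' which is false and, if implemented as written, would make the argument prove the wrong identity when $\bm{\varsigma}=(1,0)$. Under your substitution $(m,n,b,c)\mapsto(n,m',a,b)$ with $m'=m-\varsigma_{1}$, the parameter $\alpha_{b,c}=m-n+b-2c$ of Theorems \ref{thm:iCBtoMB} and \ref{thm:iCBtoMBodd} becomes $n-m'+a-2b=n-m+a-2b+\varsigma_{1}=\beta_{a,b,0,\bm{\varsigma}}$ on the nose: the shift $m\mapsto m-1$ is exactly what produces the $+\varsigma_{1}$ already built into $\beta_{a,b,0,\bm{\varsigma}}$, so it cannot flip the parity a second time. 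Consequently an even $\beta_{a,b,0,\bm{\varsigma}}$ must be fed into the even branch (Theorem \ref{thm:iCBtoMB}) and an odd one into the odd branch (Theorem \ref{thm:iCBtoMBodd}), exactly as the statement asserts. Had you paired an even $\beta:=\beta_{a,b,0,(1,0)}$ with the odd branch, the transported expansion would carry the coefficients $(-1)^{k}[\beta+k-1]\tfrac{[\beta+1][\beta+3]\cdots[\beta+2k-3]}{[k]!}$ instead of $(-1)^{k}\tfrac{[\beta][\beta+2]\cdots[\beta+2k-2]}{[k]!}$, which is not the formula being proved; so the ``shift'' is not a harmless bookkeeping detail---it simply does not occur, and asserting it contradicts your own earlier identification of $\beta_{a,b,0,\bm{\varsigma}}$. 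A smaller slip of the same kind: the translated inequality is $m'-a+b\geq 0$, i.e.\ $m\geq a-b+\varsigma_{1}$ rather than $m\geq a-b$, which with $b\geq a$ is automatic except in the boundary case $b=a$, $m=0$, $\bm{\varsigma}=(1,0)$ (a case the paper's hypotheses also gloss over). With the parity claim deleted and replaced by the correct observation that the transported $\alpha$ equals $\beta_{a,b,0,\bm{\varsigma}}$, your argument coincides with the paper's.
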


\section{iCanonical basis vs monomial basis I: even parity case}\label{section5}

In this section we determine the explicit transition matrices between the monomial basis $B_{i}^{(a)}B_{\tau i}^{(b)}B_{i}^{(c)}\eta$ and icanonical basis $\mathfrak{B}^{(a,b,c)}_{\tau i}$ on module level under the condition that $a,c\geq 1$ and the even parity of $\alpha_{b,c,\bm{\varsigma}}$ and $\beta_{a,b,c,\bm{\varsigma}}$. According to Remark \ref{iandtaui}, we focus on the case $i=1$ in the first two subsections and move to the case $i=2$ in the last subsection. 
\subsection{Case for $\alpha_{b,c,\bm{\varsigma}}>0$, $\alpha_{b,c,\bm{\varsigma}}$ is even} Recall $L(m,n)$ denotes the simple $\U$-module with highest weight vector $\eta$ and highest weight $m\omega_{1}+n\omega_{2}$, for $m,n\geq 0$. Further recall \eqref{icb1}, for each $a,b,c\geq 0$, $\mathfrak{B}^{(a,b,c)}_{2}$ is the icanonical basis element in $L(m,n)$ such that 
\begin{align} 
\mathfrak{B}^{(a,b,c)}_{2}=F^{(a)}_{1}F^{(b)}_{2}F^{(c)}_{1}\eta+\sum_{\substack{x,y\geq 0 \\ (x,y)\not=(0,0)}}q^{-1}\mathbb{Z}[q^{-1}]F^{(a-x)}_{1}F^{(b-x-y)}_{2}F^{(c-y)}_{1}\eta.
\end{align}

In this subsection, $\alpha_{b,c,\bm{\varsigma}}=m-n+b-2c+\varsigma_{2}$ (recall \eqref{defofa}) is always an even natural number. For any $m,n\geq 0$, $0\leq x\leq a$, $0\leq \theta\leq c$, we define 
\begin{align}\label{defomegaabc}
\Omega^{(a,b,c)}_{m,n,x,\theta,\bm{\varsigma}}&=\sum_{k+y=\theta}(-1)^{k}\frac{[\alpha_{b,c,\bm{\varsigma}}][\alpha_{b,c,\bm{\varsigma}}+2]\cdots [\alpha_{b,c,\bm{\varsigma}}+2k-2]}{[k]!}q^{\varsigma_{2}(y-x)}\nonumber\\
&\quad\quad\times q^{-\frac{y(y+1+2n+2c-2b)}{2}-\frac{x(x-1+2m+2b-4c-2a-2y+2k)}{2}}\qbinom{n-b+c+x}{x}\qbinom{m-c+\theta}{y}\in \mathcal{A}.
\end{align}
Once we focus on a fixed $\bm{\varsigma}$, $\Omega^{(a,b,c)}_{m,n,x,\theta,\bm{\varsigma}}$ can be abbreviated as $\Omega^{(a,b,c)}_{m,n,x,\theta}$. When $(x,\theta)=(a,c)$, $\Omega^{(a,b,c)}_{m,n,x,\theta}$ can be abbreviated as $\Omega^{(a,b,c)}_{m,n}$. Next, we list some recursions on $\Omega^{(a,b,c)}_{m,n,x,\theta}$.

\begin{lem} \label{lemrecursive}
For any $m,n,a,b,c\in \mathbb{Z}_{\geq 0}$ such that $b\geq a+c$, $0\leq x\leq a$, and $0\leq \theta\leq c$, we have 
$$
\Omega^{(a+1,b,c-1)}_{m-1,n+1,x,\theta}=\Omega^{(a,b,c)}_{m,n,x,\theta},
\qquad
\Omega^{(a,b,c)}_{m,n,x,c}=\Omega^{(a+1,b+1,c)}_{m,n+1,x,c},
\qquad
\Omega^{(a,b,c)}_{m,n,x,\theta}=\Omega^{(x,b-a-c+\theta+x,\theta)}_{m-c+\theta,n-a+x,x,\theta}.
$$
\end{lem}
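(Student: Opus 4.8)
The plan is to prove the three recursions in Lemma~\ref{lemrecursive} directly from the closed formula \eqref{defomegaabc} for $\Omega^{(a,b,c)}_{m,n,x,\theta}$, by tracking how each factor transforms under the indicated substitutions and then matching the resulting summations term by term.

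\textbf{Step 1: the first recursion $\Omega^{(a+1,b,c-1)}_{m-1,n+1,x,\theta}=\Omega^{(a,b,c)}_{m,n,x,\theta}$.} Here I would substitute $(a,c,m,n)\mapsto(a+1,c-1,m-1,n+1)$ into \eqref{defomegaabc} and check that every ingredient is invariant. First note $\alpha_{b,c-1,\bm\varsigma}=(m-1)-(n+1)+b-2(c-1)+\varsigma_2=m-n+b-2c+\varsigma_2=\alpha_{b,c,\bm\varsigma}$, so the entire prefactor $[\alpha][\alpha+2]\cdots[\alpha+2k-2]/[k]!$ and the power $q^{\varsigma_2(y-x)}$ are unchanged; this is exactly the parallel of Lemma~\ref{lemrecur}. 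Then I would verify that the two $q$-exponents in the second line are each preserved: in $-\tfrac{y(y+1+2n+2c-2b)}{2}$ the combination $n+c$ is invariant under $(n,c)\mapsto(n+1,c-1)$, and in $-\tfrac{x(x-1+2m+2b-4c-2a-2y+2k)}{2}$ the combination $m-2c-a$ is invariant under $(m,c,a)\mapsto(m-1,c-1,a+1)$. Finally the two $q$-binomials: $\binom{n-b+c+x}{x}$ uses $n+c$ (invariant) and $\binom{m-c+\theta}{y}$ uses $m-c$ (invariant). Summing over $k+y=\theta$ is unaffected, so the two expressions agree. This is the easiest of the three.

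\textbf{Step 2: the second recursion $\Omega^{(a,b,c)}_{m,n,x,c}=\Omega^{(a+1,b+1,c)}_{m,n+1,x,c}$} (valid at the top value $\theta=c$). Here $\theta=c$ is fixed, so $y=c-k$ on both sides. Under $(a,b,n)\mapsto(a+1,b+1,n+1)$ one checks $\alpha_{b+1,c,\bm\varsigma}=m-(n+1)+(b+1)-2c+\varsigma_2=\alpha_{b,c,\bm\varsigma}$, so the prefactor is again unchanged. For the exponent $-\tfrac{y(y+1+2n+2c-2b)}{2}$ with $y=c-k$, the combination $n-b$ is invariant under $(n,b)\mapsto(n+1,b+1)$; for $-\tfrac{x(x-1+2m+2b-4c-2a-2y+2k)}{2}$ the combination $b-a$ is invariant under $(b,a)\mapsto(b+1,a+1)$. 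For the $q$-binomials, $\binom{n-b+c+x}{x}$ uses $n-b$ (invariant) and $\binom{m-c+\theta}{y}=\binom{m-c+c}{y}=\binom{m}{y}$ which involves neither $a$, $b$, nor $n$ — so it is trivially invariant. Hence both sides coincide summand-by-summand. The one subtlety to flag is that this identity is only asserted at $\theta=c$ (the last $q$-binomial becomes $\binom m y$ and the cancellation of $b$ relies on this), so I would state clearly that we do not claim it for general $\theta$.

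\textbf{Step 3: the third recursion $\Omega^{(a,b,c)}_{m,n,x,\theta}=\Omega^{(x,\,b-a-c+\theta+x,\,\theta)}_{m-c+\theta,\,n-a+x,\,x,\,\theta}$.} This is the one I expect to be the main obstacle: it changes all of $a,b,c,m,n$ at once (with $a\mapsto x$, $c\mapsto\theta$, $b\mapsto b-a-c+\theta+x=:b'$, $m\mapsto m-c+\theta=:m'$, $n\mapsto n-a+x=:n'$) and the new ``$x$-slot'' is filled by the old $x$. The strategy is the same — show the prefactor, the two $q$-exponents, and the two $q$-binomials are each unchanged — but the bookkeeping is heavier. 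I would first compute $\alpha_{b',\theta,\bm\varsigma}=m'-n'+b'-2\theta+\varsigma_2$ and simplify using $m'-n'=m-n-c+\theta+a-x$ and $b'-2\theta=b-a-c-\theta+x$; the $a$, $c$, $x$, $\theta$ terms should cancel to leave $\alpha_{b,c,\bm\varsigma}$, so the prefactor matches. Then I would substitute into $-\tfrac{y(y+1+2n'+2\theta-2b')}{2}$, using $n'+\theta-b'=n-a+x+\theta-(b-a-c+\theta+x)=n+c-b$, so this exponent is literally the same as the original $-\tfrac{y(y+1+2n+2c-2b)}{2}$; similarly for $-\tfrac{x(x-1+2m'+2b'-4\theta-2x-2y+2k)}{2}$ one checks $m'+b'-2\theta-x=m-c+\theta+b-a-c+\theta+x-2\theta-x=m+b-2c-a$, matching the original (with the old $a$ now playing the role of both the new $a$-slot value $x$ and... — here care is needed since the coefficient $-2a$ in the original becomes $-2x$ in the new formula; I would double-check that $b'-2a_{\mathrm{new}} = b'-2x$ combines correctly, i.e.\ that the identity $m+b-2c-a = m'+b'-2\theta-x$ holds, which it does). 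Finally $\binom{n'-b'+\theta+x}{x}$: since $n'-b'+\theta = n+c-b$ (computed above), this is $\binom{n-b+c+x}{x}$, unchanged; and $\binom{m'-\theta+\theta}{y}=\binom{m'}{y}=\binom{m-c+\theta}{y}$, which is exactly the original last binomial. So everything matches, and the sum over $k+y=\theta$ is identical on both sides.

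In all three cases the proof is a routine but careful substitution check; the only real work is organizing the algebra in Step~3 so that the simultaneous change of five parameters is seen to preserve each factor, and being precise about the range restrictions ($b\ge a+c$, $0\le x\le a$, $0\le\theta\le c$) under which the new parameter values remain nonnegative and the stated identity is meaningful. I would present Steps 1 and 2 briefly and devote the bulk of the written proof to laying out the substitutions $b'=b-a-c+\theta+x$, $m'=m-c+\theta$, $n'=n-a+x$ of Step~3 and the four resulting equalities $\alpha_{b',\theta}=\alpha_{b,c}$, $n'+\theta-b'=n+c-b$, $m'+b'-2\theta-x=m+b-2c-a$, and $m'=m-c+\theta$.
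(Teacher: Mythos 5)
Your verification is correct and is exactly the routine substitution check the paper intends (the lemma is stated there without proof, in parallel with the appendix proof of Lemma \ref{lemrecur}); the invariances you isolate — $\alpha_{b',\theta,\bm\varsigma}=\alpha_{b,c,\bm\varsigma}$, $n'+\theta-b'=n+c-b$, $m'+b'-2\theta-x=m+b-2c-a$, and the two $q$-binomials — are precisely the right ones and all hold. One small correction to a side remark: nothing in the second identity actually requires $\theta=c$, since the factor $\qbinom{m-c+\theta}{y}$ involves neither $a$, $b$ nor $n$ and the combinations $m-n+b$, $n-b$, $b-a$ are invariant under $(a,b,n)\mapsto(a+1,b+1,n+1)$, so $\Omega^{(a,b,c)}_{m,n,x,\theta}=\Omega^{(a+1,b+1,c)}_{m,n+1,x,\theta}$ holds for every $\theta$; the paper merely records the case $\theta=c$ that it later uses.
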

\noindent Recall the condition $n-b+c\geq 0$, together with $b\geq a+c$, imply that $n-a+x\geq0$. 

From now on, we allow the following abbreviation: 
\begin{align}\label{defome1}
\Omega^{a,b,c}_{m,n}=\Omega^{(a,b,c)}_{m,n,a,c}=\sum_{k+y=c}\omega^{(a,b,c)}_{m,n,k,y},
\end{align}
where
\begin{align}\label{defome2}
\omega^{(a,b,c)}_{m,n,k,y}&:=(-1)^{k}\frac{[\alpha_{b,c,\bm{\varsigma}}][\alpha_{b,c,\bm{\varsigma}}+2]\cdots [\alpha_{b,c,\bm{\varsigma}}+2k-2]}{[k]!}q^{\varsigma_{2}(y-a)}\nonumber\\
&\quad\quad\times q^{-\frac{y(y+1+2n+2c-2b)}{2}-\frac{a(-1+2m+2b-4c-a-2y+2k)}{2}}\qbinom{n-b+c+a}{a}\qbinom{m}{y}.
\end{align}
We have the following observation:
\begin{lem}\label{degomega}
When $b>a+c$ and $a\geq c$, we have $\deg(\omega^{(a,b,c)}_{m,n,k,y})<0$ for any $k+y=c$. When $b=a+c$ and $a\geq c$, we have $\deg(\omega^{(a,b,c)}_{m,n,k,y})\leq 0$ for any $k+y=c$. 
\end{lem}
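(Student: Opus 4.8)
The plan is to estimate the $q$-degree of the individual summand $\omega^{(a,b,c)}_{m,n,k,y}$ directly from its closed form in \eqref{defome2}. The factor $q^{\varsigma_2(y-a)}$ contributes $\varsigma_2(y-a)$; the two explicit powers of $q$ contribute $-\tfrac{y(y+1+2n+2c-2b)}{2}-\tfrac{a(-1+2m+2b-4c-a-2y+2k)}{2}$; each factor $[\alpha_{b,c,\bm\varsigma}+2j]$ in the numerator contributes degree $\alpha_{b,c,\bm\varsigma}+2j-1$ (for $0\le j\le k-1$), summing to $k\alpha_{b,c,\bm\varsigma}+k(k-1)-k = k(\alpha_{b,c,\bm\varsigma}+k-2)$; the factor $[k]!$ in the denominator contributes $-\binom{k+1}{2}+k=-\tfrac{k(k-1)}{2}$ — wait, more precisely $\deg[k]! = \binom{k}{2}$, so $-\binom{k}{2}$; the $q$-binomial $\qbinom{n-b+c+a}{a}$ has degree $a(n-b+c)$ (using $n-b+c\ge 0$); and $\qbinom{m}{y}$ has degree $y(m-y)$. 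Adding everything, substituting $\alpha_{b,c,\bm\varsigma}=m-n+b-2c+\varsigma_2$ and $k=c-y$, the plan is to simplify the resulting quadratic expression in $y$ (with $a,b,c,m,n$ as parameters) and show it is $<0$ when $b>a+c$, $a\ge c$, and $\le 0$ when $b=a+c$, $a\ge c$.

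First I would carry out the bookkeeping above carefully to obtain a single polynomial $D(y)$ for $\deg(\omega^{(a,b,c)}_{m,n,k,y})$ with $k=c-y$, $0\le y\le c$. I expect, after cancellation, that $D(y)$ collapses to something of the shape $-(\text{something})^2/2 + (\text{linear in the parameters}) - a(b-a-c) - (\text{nonnegative slack})$ — compare the structure already seen in Proposition \ref{prop32} and Lemma \ref{lem36}, where $c+\deg(\Omega^{(b,c)}_{m,n})<0$ was proved by exactly this kind of degree arithmetic. The key leverage points are: the hypothesis $\alpha_{b,c,\bm\varsigma}>0$ (even, so in fact $\alpha_{b,c,\bm\varsigma}\ge 2$), which controls the sign of the terms carrying a factor of $\alpha_{b,c,\bm\varsigma}$; the hypothesis $a\ge c\ge y$, which makes the coefficient $-a(b-a-c)$ dominate the $y$-dependent remainder; and the case split $b>a+c$ versus $b=a+c$, where in the latter the term $-a(b-a-c)$ vanishes and one lands exactly on the boundary $D(y)\le 0$.

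The main obstacle I anticipate is the algebraic simplification itself: the expression for $D(y)$ is a sum of roughly seven quadratic-in-$y$ (and quadratic-in-the-parameters) contributions, and getting the cross terms to cancel into a clean negative-definite-plus-slack form requires care — in particular correctly tracking the degree of $[k]!$ (namely $\binom{k}{2}$, not $k(k-1)$) and of the $q$-binomials, and being consistent about $\deg[j]=j-1$ versus $j$. A secondary subtlety is that the bound must be uniform over all admissible $(x,\theta)=(a,c)$-specializations and over the full range $0\le y\le c$, so after producing $D(y)$ I would check monotonicity/concavity in $y$ and evaluate at the endpoints $y=0$ and $y=c$ (equivalently $k=c$ and $k=0$) to conclude. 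If the direct quadratic estimate proves unwieldy, the fallback is to use the recursions in Lemma \ref{lemrecursive} — especially $\Omega^{(a,b,c)}_{m,n,x,\theta}=\Omega^{(x,b-a-c+\theta+x,\theta)}_{m-c+\theta,\,n-a+x,\,x,\theta}$ — to reduce to a normalized case (e.g. $x=\theta$ reducing to the already-handled $a$-smaller situations or to the $a=0$ computations of Lemmas \ref{lem36} and its odd analogue), and then induct on $a$ or $c$.
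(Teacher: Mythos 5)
Your plan is exactly the paper's: compute $\deg(\omega^{(a,b,c)}_{m,n,k,y})$ term-by-term from \eqref{defome2}, obtain a concave quadratic $-y^2+(2a+1)y+(\text{const})$ in $y$ whose vertex $a+\tfrac12$ lies at or beyond $c$ (using $a\ge c$), bound it by its value at $y=c$, and collapse the remainder to $-\tfrac{(a-c)^2}{2}+(a-c)\bigl(n-m+a-2b+3c+\tfrac12\bigr)-c(b-a-c)\le 0$, where $\alpha_{b,c,\bm\varsigma}>0$ together with $b\ge a+c$ forces $n-m+a-2b+3c\le -1$. Your bookkeeping for $[k]!$, the $q$-binomials, and the product $[\alpha]\cdots[\alpha+2k-2]$ is correct; the two small slips — the predicted slack term is $-c(b-a-c)$ rather than $-a(b-a-c)$, and evenness of $\alpha_{b,c,\bm\varsigma}$ plays no role (only $\alpha_{b,c,\bm\varsigma}>0$ is used) — do not affect the soundness of the approach.
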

\begin{proof}
When $\varsigma_{2}=0$, 
\begin{align*}
\deg(\omega^{(a,b,c)}_{m,n,k,y})&=-\frac{a(a-1)}{2}+(a-c)(n-m+a-2b+3c)-c(b-c)-y^2+(2a+1)y+\frac{c^2-3c}{2}\\
&\leq -\frac{a(a-1)}{2}+(a-c)(n-m+a-2b+3c)-c(b-c)-c^2+(2a+1)c+\frac{c^2-3c}{2}\\
&=-\frac{(a-c)^2}{2}+(a-c)(n-m+a-2b+3c+\frac{1}{2})-c(b-a-c)\leq0.
\end{align*}
When $\varsigma_{2}=1$, the argument is analogous to that of the previous case and is therefore omitted.
\end{proof}

Next, we want to prove a recursive relation about $\Omega^{a,b,c}_{m,n}$. To that end, we first establish a preparatory lemma.
\begin{lem}\label{lemepsilon533}
For any $c\in \mathbb Z_{\ge 1}$, $0\leq a\leq c-1$, consider the sequence $\{\epsilon_{a,k}\}_{k=1}^{c+1}$ defined by $\epsilon_{a,1}=1$, 
\begin{align}\label{epsilon}
\epsilon_{a,k+1}=q^{-c+1+2a}\epsilon_{a,k}+(-1)^{k}\qbinom{c}{k},
\end{align}
we have:
\begin{itemize}
    \item when $0\leq a< c$ and $1\leq k\leq c$, $\epsilon_{a,k}=(-1)^{k-1}\sum\limits_{j=0}^{a} q^{-(c+1)j+(k-1)(a+1)}\qbinom{a}{j}\qbinom{c-1-a}{k-1-j}$,
    \item when $c\leq 2a$, $\epsilon_{a,k}=\bar{\epsilon}_{c-1-a,k}$,
    \item $\epsilon_{a,c+1}=0$.
\end{itemize}
\end{lem}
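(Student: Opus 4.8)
The plan is to prove the three assertions in order, since each one is naturally a consequence of the explicit closed formula claimed in the first bullet, and that first formula is the real content. First I would establish the closed formula
\[
\epsilon_{a,k}=(-1)^{k-1}\sum_{j=0}^{a} q^{-(c+1)j+(k-1)(a+1)}\qbinom{a}{j}\qbinom{c-1-a}{k-1-j}
\]
for $0\le a<c$ and $1\le k\le c$ by induction on $k$. The base case $k=1$ reduces to $\sum_{j}q^{-(c+1)j}\qbinom{a}{j}\qbinom{c-1-a}{-j}=1$, which holds because only $j=0$ survives. For the inductive step I would substitute the formula for $\epsilon_{a,k}$ into the recursion \eqref{epsilon}, so that I must verify
\[
q^{-c+1+2a}(-1)^{k-1}\sum_{j}q^{-(c+1)j+(k-1)(a+1)}\qbinom{a}{j}\qbinom{c-1-a}{k-1-j}+(-1)^{k}\qbinom{c}{k}
=(-1)^{k}\sum_{j}q^{-(c+1)j+k(a+1)}\qbinom{a}{j}\qbinom{c-1-a}{k-j}.
\]
After cancelling $(-1)^{k}$ and the common factor $q^{k(a+1)}$, this becomes a $q$-binomial identity: the claim is precisely the $q$-Vandermonde / Pascal recursion $\qbinom{c}{k}=\sum_{j}q^{?}\qbinom{a}{j}\qbinom{c-1-a}{k-j}+\text{(shifted term)}$ relating $\qbinom{c}{k}$ to convolutions of $\qbinom{a}{\cdot}$ and $\qbinom{c-1-a}{\cdot}$. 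The bookkeeping of exponents is the one genuinely fiddly point, so I would track the power of $q$ carefully: the shift $-c+1+2a-(c+1)j+(k-1)(a+1)$ versus $-(c+1)j+k(a+1)$ differs by $-c+1+2a-(a+1)=a-c$, which is exactly the exponent appearing in the $q$-Pascal rule $\qbinom{n}{k}=q^{k}\qbinom{n-1}{k}+q^{k-n}\cdots$; matching this is where I expect the main obstacle to lie, though it is ultimately routine.

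Once the closed formula is in hand, the second bullet, $\epsilon_{a,k}=\bar\epsilon_{c-1-a,k}$ for $c\le 2a$, follows by applying the bar involution $q\mapsto q^{-1}$ to the closed formula and reindexing the sum $j\mapsto a-j$: since $\overline{\qbinom{a}{j}}=\qbinom{a}{j}$, one gets $\bar\epsilon_{a,k}=(-1)^{k-1}\sum_j q^{(c+1)j-(k-1)(a+1)}\qbinom{a}{j}\qbinom{c-1-a}{k-1-j}$, and after the substitution $j\mapsto a-j$ together with the identity $(c+1)(a-j)-(k-1)(a+1)=-(c+1)j+(k-1)(\,(c-1-a)+1\,)+\text{const}$, comparing with the closed formula for $\epsilon_{c-1-a,k}$ (note $c-1-(c-1-a)=a$, so the roles of $a$ and $c-1-a$ swap) gives the claim; the constant must vanish, which is a short check using $c\le 2a$ only to ensure $c-1-a\ge 0$ so the symbol is legitimate.

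For the third bullet, $\epsilon_{a,c+1}=0$, I would plug $k=c+1$ into the closed formula (valid by continuing the induction one more step, or by directly noting the recursion defines $\epsilon_{a,c+1}$): this yields $(-1)^{c}\sum_{j=0}^{a}q^{-(c+1)j+c(a+1)}\qbinom{a}{j}\qbinom{c-1-a}{c-j}$, and since $0\le a\le c-1$ we have $c-1-a\le c-1<c-j$ for every $j\le a$ except possibly... in fact $c-j\ge c-a\ge c-(c-1)=1$ and $c-j>c-1-a$ whenever $j<a+1$, i.e.\ always, so $\qbinom{c-1-a}{c-j}=0$ termwise and the sum vanishes. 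This is immediate and requires no further computation. Assembling the three parts completes the proof of the lemma.
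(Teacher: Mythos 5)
Your overall plan matches the paper's for the substantive step: both prove the closed formula of the first bullet by induction on $k$, reducing the inductive step to the $q$-binomial identity (the paper's \eqref{lemqbinom}) and verifying it via the $q$-Pascal and $q$-Vandermonde rules. The third bullet is likewise treated as an immediate consequence, and your vanishing argument there is correct — each term has $\qbinom{c-1-a}{c-j}$ with $c-j>c-1-a$.

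Where you diverge is the second bullet. The paper proves $\epsilon_{a,k}=\bar\epsilon_{c-1-a,k}$ \emph{first}, directly by induction on the recursion: since $\overline{q^{-c+1+2(c-1-a)}}=q^{-c+1+2a}$ and the inhomogeneous term $(-1)^k\qbinom{c}{k}$ is bar-invariant, one line closes the induction without ever invoking the closed formula. Your route — apply the bar involution to the closed formula and reindex — is a legitimate alternative, but your reindexing is wrong: the substitution $j\mapsto a-j$ does not carry $\bar\epsilon_{a,k}$ to the formula for $\epsilon_{c-1-a,k}$, and the additive constant you assert must vanish actually comes out to $(c+1)(a-k+1)\neq 0$. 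The correct reindexing is $j'=k-1-j$ inside the sum for $\epsilon_{c-1-a,k}$: with $j'=k-1-j$ one has
\begin{align*}
\epsilon_{c-1-a,k}
&=(-1)^{k-1}\sum_{j'} q^{-(c+1)j'+(k-1)(c-a)}\qbinom{c-1-a}{j'}\qbinom{a}{k-1-j'}\\
&=(-1)^{k-1}\sum_{j} q^{(c+1)j-(k-1)(a+1)}\qbinom{a}{j}\qbinom{c-1-a}{k-1-j}
=\bar\epsilon_{a,k},
\end{align*}
using $-(c+1)(k-1)+ (k-1)(c-a)=-(k-1)(a+1)$. With this fix your argument goes through, but the paper's direct one-line induction is cleaner and avoids needing bullet one at all for bullet two. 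Note also that the hypothesis $c\le 2a$ plays no logical role in either proof beyond choosing a normalization; the identity is symmetric in $a\leftrightarrow c-1-a$.
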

\begin{proof}
See \ref{app3}.
\end{proof}

\begin{lem}\label{lemrecursive1}
Let $m,n,a,b,c\in \mathbb{Z}_{\geq 0}$ such that $m\geq c$, $n-b+c\geq 0$, and $\alpha_{b,c,\bm{\varsigma}}>0$. If $\bm{\varsigma}=(1,0)$ and $\alpha_{b,c}$ is even (recall the convention $\alpha_{b,c}=\alpha_{b,c,\bm{\varsigma}}=m-n+b-2c$), we have
$$
\Omega^{a,b,c}_{m,n}=q^{-\frac{a(-1+2m+2b-4c-a)}{2}}\qbinom{n-b+c+a}{a}\sum_{j=0}^{a}\lambda^{a,b,c}_{m,n,j}\Omega^{0,b,c-a}_{m-a+j,n+a-j},
$$
where
\begin{align*} 
\lambda^{a,b,c}_{m,n,j}&:=(-1)^{j}q^{-(2+3a+c)j+a^{2}-a+j(1+a)}q^{-\frac{(a-j)(a-j+1+2n+2c-2b)}{2}}[m][m-1]\cdots[m+1-a+j]\\
&\quad\quad\quad\quad\times [\alpha_{b,c}][\alpha_{b,c}+2]\cdots [\alpha_{b,c}+2j-2]\qbinom{a}{j}\frac{[c-1-a]!}{[c-1]!}.
\end{align*}
If $\bm{\varsigma}=(0,1)$ and $\alpha_{b,c}$ is even (recall the convention $\alpha_{b,c}=\alpha_{b,c,\bm{\varsigma}}=m-n+b-2c+1$), we have 
$$
\Omega^{a,b,c}_{m,n}=q^{-\frac{a(-1+2m+2b-4c-a)}{2}-a}\qbinom{n-b+c+a}{a}\sum_{j=0}^{a}\lambda^{a,b,c}_{m,n-1,j}\Omega^{0,b,c-a}_{m-a+j,n-1+a-j}.
$$
\end{lem}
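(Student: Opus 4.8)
Looking at Lemma \ref{lemrecursive1}, the goal is to express the "full" quantity $\Omega^{a,b,c}_{m,n}$ (with $c$ copies of the summation collapsed at $(x,\theta)=(a,c)$) as a weighted sum of the $a=0$ quantities $\Omega^{0,b,c-a}_{m-a+j,n+a-j}$, with explicit coefficients $\lambda^{a,b,c}_{m,n,j}$. This is the engine that will later reduce the general-$a$ case to the $a=0$ case already handled in Section \ref{section4}. Here is how I would organize the proof.

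\textbf{The plan.} First I would reduce the $\bm{\varsigma}=(0,1)$ statement to the $\bm{\varsigma}=(1,0)$ statement. The third recursion in Lemma \ref{lemrecursive} (together with the shift $b\mapsto b+1$ comparison already exploited in Section \ref{section4}, e.g.\ around \eqref{wz255}) shows that the $(0,1)$ version of $\Omega^{a,b,c}_{m,n}$ equals the $(1,0)$ version with $n$ replaced by $n-1$ (and the $\alpha$'s match because $\alpha_{b,c,(0,1)}=m-n+b-2c+1=\alpha_{b,c,(1,0)}|_{n\to n-1}$); this is exactly the form in which the $(0,1)$ conclusion is stated, so it follows formally once the $(1,0)$ case is done. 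So the substance is the $\bm{\varsigma}=(1,0)$ identity.

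\textbf{Main steps for $\bm{\varsigma}=(1,0)$.} Start from the definition \eqref{defome1}--\eqref{defome2}: $\Omega^{a,b,c}_{m,n}=\sum_{k+y=c}\omega^{(a,b,c)}_{m,n,k,y}$. Factor out the $x=a$ part of the $q$-power and the $q$-binomial $\qbinom{n-b+c+a}{a}$, which do not depend on the inner index, and isolate the remaining sum $\sum_{k+y=c}(-1)^k\frac{[\alpha_{b,c}]\cdots[\alpha_{b,c}+2k-2]}{[k]!}q^{(\text{stuff}(y))}\qbinom{m}{y}$. Now use the telescoping sequence $\{\epsilon_{a,k}\}$ from Lemma \ref{lemepsilon533}: the recursion \eqref{epsilon} together with $\epsilon_{a,c+1}=0$ lets one rewrite the products $[\alpha_{b,c}][\alpha_{b,c}+2]\cdots[\alpha_{b,c}+2k-2]$ and the $q$-binomials $\qbinom{c}{k}$ appearing implicitly (after expanding $\qbinom{m}{y}=\qbinom{m}{c-k}$ and regrouping) so that the whole inner sum collapses into a sum over $j$ of shifted quantities $\Omega^{0,b,c-a}_{m-a+j,n+a-j}$. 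The closed form for $\epsilon_{a,k}$ in the first bullet of Lemma \ref{lemepsilon533} — the double sum over $j$ with binomials $\qbinom{a}{j}\qbinom{c-1-a}{k-1-j}$ — is precisely what produces the $\qbinom{a}{j}$ and the $\frac{[c-1-a]!}{[c-1]!}=1/\qbinom{c-1}{a}$-type factor in $\lambda^{a,b,c}_{m,n,j}$, while the explicit $q$-exponent $-(c+1)j+(k-1)(a+1)$ inside $\epsilon_{a,k}$ feeds the $q$-power $q^{-(2+3a+c)j+a^2-a+j(1+a)}$. One must also re-index: in $\Omega^{0,b,c-a}_{m-a+j,n+a-j}$ the "$c$" of the $a=0$ definition is $c-a$, so $\alpha_{b,c-a}=m-n+b-2(c-a)$ shifts correctly under $m\mapsto m-a+j$, $n\mapsto n+a-j$; tracking this shift is where the $[m][m-1]\cdots[m+1-a+j]$ factor and the $q^{-\frac{(a-j)(a-j+1+2n+2c-2b)}{2}}$ factor come from (the latter being a reassembled piece of the original $y$-dependent exponent with $y=c-a+(\text{shift})$).

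\textbf{The main obstacle.} The bookkeeping of the $q$-exponents is the hard part: one has to verify that the exponent of $q$ in each term of the collapsed sum, after substituting the closed form of $\epsilon_{a,k}$ and re-indexing $y=c-k$, matches on the nose the exponent hidden in $\omega^{(0,b,c-a)}_{m-a+j,n+a-j,\,k',\,y'}$ times $\lambda^{a,b,c}_{m,n,j}$. This is a purely mechanical but error-prone check involving completing several quadratics in $a,c,y,j$; I would do it by writing both sides' $q$-exponents as explicit quadratic forms and subtracting. A secondary subtlety is making sure the ranges of summation agree — in particular that terms with $k-1-j<0$ or $k-1-j>c-1-a$ vanish automatically because of the $\qbinom{c-1-a}{k-1-j}$ in Lemma \ref{lemepsilon533}, so that the inner sum genuinely telescopes to exactly the range $0\le j\le a$ claimed. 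Once the $q$-exponent identity and the range-matching are checked, the lemma follows by direct substitution; the $(0,1)$ case then follows from the $(1,0)$ case by the $n\mapsto n-1$ reduction described above. I would defer the quadratic-form verification to an appendix, as is done for the analogous Lemmas \ref{lemrecur2} and \ref{lemrecur3}.
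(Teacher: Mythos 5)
Your proposal matches the paper's approach in Appendix \ref{app6}: the $(0,1)$ case is reduced to the $(1,0)$ case by the shift $n\mapsto n-1$, and for $(1,0)$ the paper likewise factors out the $q$-power prefactor and $\qbinom{n-b+c+a}{a}$, reorganizes the remaining $(k,y)$-sum via the sequence $\{\epsilon_{a,k}\}$ from Lemma \ref{lemepsilon533}, substitutes the closed form of $\epsilon_{a,k+1}$ to bring a $j$-sum outside, and reindexes (with $c'=c-a-1$, $m'=m-a+j$, $n'=n+2+a-j$, so $\alpha'_{b,c}=\alpha_{b,c}+2j$) to identify the inner sum as $q^{a-c+1}[c]\,\Omega^{0,b,c-a}_{m-a+j,n+a-j}$. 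The $q$-exponent bookkeeping you defer is exactly what fills the paper's appendix computation, so this is essentially the same proof.
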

The long and difficult proof for Lemma \ref{lemrecursive1} is postponed to Appendix \ref{app6}.

\begin{lem}\label{lemdegg}
For $0\leq j\leq a$, we have
\begin{align*}
\deg&\left(q^{-\frac{a(-1+2m+2b-4c-a)}{2}}\qbinom{n-b+c+a}{a}\lambda^{a,b,c}_{m,n,j}\right)\\
&=\deg\left(q^{-\frac{a(-1+2m+2b-4c-a)}{2}-a}\qbinom{n-b+c+a}{a}\lambda^{a,b,c}_{m,n-1,j}\right)\\
&=a(a-b+c)+j(a-2c-j-2).
\end{align*}
\end{lem}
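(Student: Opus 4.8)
The plan is to prove Lemma~\ref{lemdegg} by a direct computation of the $q$-degree, using only the elementary rules $\deg[k]=k-1$ for $k\in\mathbb{Z}_{\ge 1}$, $\deg\qbinom{A}{B}=B(A-B)$ for integers $A\ge B\ge 0$, $\deg(q^{N}f)=N+\deg f$, and the additivity $\deg(fg)=\deg f+\deg g$, $\deg(f/g)=\deg f-\deg g$ (valid here since every $q$-integer occurring has leading coefficient $1$, so no cancellation can happen). First I would verify that, under the standing hypotheses of Lemma~\ref{lemrecursive1} --- in particular $0\le a\le c-1$, needed for $\lambda^{a,b,c}_{m,n,j}$ to be defined, together with $m\ge c$, $n-b+c\ge 0$, and $\alpha_{b,c,\bm{\varsigma}}>0$ --- every $q$-integer and $q$-binomial that appears has nonnegative index: $n-b+c+a\ge a\ge 0$; $\qbinom{a}{j}$ has $0\le j\le a$; the factors in $\tfrac{[c-1-a]!}{[c-1]!}=\big(\prod_{l=c-a}^{c-1}[l]\big)^{-1}$ have index $\ge c-a\ge 1$; the factors $[m],[m-1],\dots,[m+1-a+j]$ all have index $\ge m+1-a\ge m-c+2\ge 2$; and the factors $[\alpha_{b,c}],[\alpha_{b,c}+2],\dots,[\alpha_{b,c}+2j-2]$ all have index $\ge\alpha_{b,c}>0$. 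Hence the degree rules above apply factor by factor.

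For the first equality I would then record the degree contributed by each factor of $q^{-\frac{a(-1+2m+2b-4c-a)}{2}}\qbinom{n-b+c+a}{a}\lambda^{a,b,c}_{m,n,j}$: the outer $q$-monomial contributes $-\tfrac{a(-1+2m+2b-4c-a)}{2}$; the binomial $\qbinom{n-b+c+a}{a}$ contributes $a(n-b+c)$; inside $\lambda^{a,b,c}_{m,n,j}$ the $q$-monomial contributes $-(2+3a+c)j+a^{2}-a+j(1+a)-\tfrac{(a-j)(a-j+1+2n+2c-2b)}{2}$; the product $[m][m-1]\cdots[m+1-a+j]$, which has $a-j$ factors, contributes $(a-j)(m-1)-\binom{a-j}{2}$; the product $[\alpha_{b,c}][\alpha_{b,c}+2]\cdots[\alpha_{b,c}+2j-2]$, which has $j$ factors, contributes $j\alpha_{b,c}+j^{2}-2j$; $\qbinom{a}{j}$ contributes $j(a-j)$; and $\tfrac{[c-1-a]!}{[c-1]!}$ contributes $-a(c-a-1)-\binom{a}{2}$. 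Adding these, substituting $\alpha_{b,c}=m-n+b-2c$, and simplifying will make all occurrences of $m$ and $n$ cancel, together with the terms that are linear in $a$, leaving exactly $a(a-b+c)+j(a-2c-j-2)$. To keep the bookkeeping under control I would proceed in stages: first merge the product $[m]\cdots[m+1-a+j]$ with the $-\tfrac{(a-j)(a-j+1+2n+2c-2b)}{2}$ summand of $\lambda$'s $q$-monomial into $-(a-j)^{2}+(a-j)(m-n+b-c-1)$; then fold in the $[\alpha_{b,c}]$-product, $\qbinom{a}{j}$ and the remaining $j$-linear pieces to extract the term $j(a-2c-j-2)$; and finally assemble the purely $a$-dependent contributions from the two $q$-monomial prefactors, from $\qbinom{n-b+c+a}{a}$, and from $\tfrac{[c-1-a]!}{[c-1]!}$ to obtain $a(a-b+c)$.

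For the second equality (the form of $\lambda$ occurring in the $\bm{\varsigma}=(0,1)$ case of Lemma~\ref{lemrecursive1}) I would reduce to the first via the observation that $\deg\lambda^{a,b,c}_{m,n-1,j}=\deg\lambda^{a,b,c}_{m,n,j}+a$. Indeed, the only $n$-dependent factors of $\lambda^{a,b,c}_{m,n,j}$ are the product $[\alpha_{b,c}]\cdots[\alpha_{b,c}+2j-2]$, whose degree is affine in $n$ with slope $-j$, and the inner $q$-monomial through its summand $-\tfrac{(a-j)(a-j+1+2n+2c-2b)}{2}$, which has $n$-slope $-(a-j)$; replacing $n$ by $n-1$ therefore raises the total degree of $\lambda$ by $j+(a-j)=a$. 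Since the remaining factor $q^{-\frac{a(-1+2m+2b-4c-a)}{2}}\qbinom{n-b+c+a}{a}$ is unchanged while the $\bm{\varsigma}=(0,1)$ expression carries an extra $q^{-a}$, the net degree is again $a(a-b+c)+j(a-2c-j-2)$. The only genuine subtlety in the whole proof is the positivity check of the first paragraph --- one must be sure no $q$-integer of index $0$ or of negative index sneaks in, which would invalidate $\deg[k]=k-1$; granting that, what remains is a long but entirely mechanical simplification, and that is the step I expect to be error-prone rather than conceptually hard.
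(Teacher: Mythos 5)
Your proposal is correct and follows essentially the same route as the paper: a factor-by-factor degree count (the contributions you list do sum exactly to $a(a-b+c)+j(a-2c-j-2)$, matching the paper's intermediate expression), with the standing hypotheses $0\le a\le c-1$, $m\ge c$, $n-b+c\ge 0$, $\alpha_{b,c}>0$ ensuring every $q$-integer and $q$-binomial involved has positive index so the degree rules apply without cancellation. Your $n$-slope argument for the second expression (degree of $\lambda^{a,b,c}_{m,n-1,j}$ exceeds that of $\lambda^{a,b,c}_{m,n,j}$ by $a$, offset by the extra $q^{-a}$) is only a mild streamlining of what the paper leaves implicit, not a different method.
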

\begin{proof}
We compute
\begin{align*}
\deg&\left(q^{-\frac{a(-1+2m+2b-4c-a)}{2}}\qbinom{n-b+c+a}{a}\lambda^{a,b,c}_{m,n,j}\right)\\
&=\frac{a(1-a)}{2}+a(n-m+a-2b+3c)-(2+3a+c)j+a^{2}-a+j(1+a)\\
&\quad\quad +(a-j)(m-n-a+b-c+j-1)+j(m-n+b-2c-1)\\
&\quad\quad +j(j-1)+j(a-j)-\frac{a(-a+2c-3)}{2}\\
&=a(a-b+c+j)+j(-2c-2)=a(a-b+c)+j(a-2c-j-2).
\end{align*}
\end{proof}

Finally, we can introduce the main results in this section:
\begin{thm}\label{aeqb+c}
Let $m,n,a,b,c\in \mathbb{Z}_{\geq 0}$ such that $m\geq c$, $n-b+c\geq 0$, and $\alpha_{b,c,\bm{\varsigma}}> 0$. When $\alpha_{b,c,\bm{\varsigma}}$ is even, $b>a+c$, we have
\begin{align}\label{generalabc1}
B^{(a)}_{1}B^{(b)}_{2}B^{(c)}_{1}\eta=&[\alpha_{b,c,\bm{\varsigma}}]\sum_{\substack{k \ge 0 \\ k\equiv 0 \pmod 2}}\frac{[\alpha_{b,c,\bm{\varsigma}}-k+2][\alpha_{b,c,\bm{\varsigma}}-k+4]\cdots[\alpha_{b,c,\bm{\varsigma}}+k-2]}{[k]!}\mathfrak{B}^{(a,b-k,c-k)}_{2},\nonumber\\
&\quad\quad+\sum_{\substack{k\geq 0 \\ k\equiv 1 \pmod 2}}\frac{[\alpha_{b,c,\bm{\varsigma}}-k+1][\alpha_{b,c,\bm{\varsigma}}-k+3]\cdots [\alpha_{b,c,\bm{\varsigma}}+k-1]}{[k]!}\mathfrak{B}^{(a,b-k,c-k)}_{2},,
\end{align}
and
\begin{align}\label{generalabc2}
\mathfrak{B}_{2}^{(a,b,c)}=\sum_{0\leq k\leq c}(-1)^{k}\frac{[\alpha_{b,c,\bm{\varsigma}}][\alpha_{b,c,\bm{\varsigma}}+2]\cdots [\alpha_{b,c,\bm{\varsigma}}+2k-2]}{[k]!}B_{1}^{(a)}B^{(b-k)}_{2}B^{(c-k)}_{1}\eta. 
\end{align}
\end{thm}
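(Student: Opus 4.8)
The plan is to prove the second identity \eqref{generalabc2} first --- the expansion of $\mathfrak{B}_{2}^{(a,b,c)}$ in the monomial basis --- and then to deduce \eqref{generalabc1} by inverting the resulting triangular change of basis. For \eqref{generalabc2}, I would substitute the monomial-to-divided-power formula of Lemma \ref{prop59} into the right-hand side and collect terms: the coefficient of $F_{1}^{(a-x)}F_{2}^{(b-x-\theta)}F_{1}^{(c-\theta)}\eta$ comes out to be precisely the element $\Omega^{(a,b,c)}_{m,n,x,\theta}$ of \eqref{defomegaabc}, with $\Omega^{(a,b,c)}_{m,n,0,0}=1$. Since the right-hand side of \eqref{generalabc2} is an integral combination, with bar-invariant coefficients, of the $\psi^{\imath}$-invariant monomials $B_{1}^{(a)}B_{2}^{(b-k)}B_{1}^{(c-k)}\eta$, it is itself $\psi^{\imath}$-invariant, so by the characterization of the icanonical basis in \eqref{icb1} it remains only to prove $\Omega^{(a,b,c)}_{m,n,x,\theta}\in q^{-1}\mathbb{Z}[q^{-1}]$ for all $(x,\theta)\neq(0,0)$.

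Because each $\Omega^{(a,b,c)}_{m,n,x,\theta}$ lies in $\mathcal{A}$, it suffices to show its degree is negative. Here I would use the recursions of Lemma \ref{lemrecursive}: the third one rewrites $\Omega^{(a,b,c)}_{m,n,x,\theta}$ as a ``balanced'' coefficient $\Omega^{x,\,b-a-c+\theta+x,\,\theta}_{m-c+\theta,\,n-a+x}$, whose middle index exceeds $x+\theta$ (since $b>a+c$). Then I split into two cases. If $x\geq\theta$ --- which in particular covers $\theta=0$ --- Lemma \ref{degomega} applies to each summand of the balanced coefficient and yields negative degree. If $x<\theta$, I would feed the balanced coefficient into Lemma \ref{lemrecursive1} (legitimate because $x\leq\theta-1$ and $\alpha_{b-a-c+\theta+x,\theta,\bm{\varsigma}}=\alpha_{b,c,\bm{\varsigma}}>0$ stays even), expanding it as an explicit combination of the $a=0$ coefficients $\Omega^{0,\bullet,\theta-x}_{\bullet,\bullet}$; since $\theta-x\geq1$, Theorem \ref{thm:iCBtoMB} (and its $\bm{\varsigma}=(0,1)$ version in Theorem \ref{thm41717}) together with the sharpened estimate of Lemma \ref{lem36} put those $a=0$ coefficients in $q^{-1}\mathbb{Z}[q^{-1}]$ with degree below $-(\theta-x)$, and combining this with the prefactor degrees from Lemma \ref{lemdegg} closes the argument. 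With $\Omega^{(a,b,c)}_{m,n,x,\theta}\in q^{-1}\mathbb{Z}[q^{-1}]$ established, \eqref{generalabc2} follows.

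For \eqref{generalabc1}: applying \eqref{generalabc2} to every shifted triple $(a,b-k,c-k)$, $0\leq k\leq c$, and adjoining the odd-parity counterpart of \eqref{generalabc2}, exhibits the transition from $\{\mathfrak{B}_{2}^{(a,b-k,c-k)}\}$ to $\{B_{1}^{(a)}B_{2}^{(b-k)}B_{1}^{(c-k)}\eta\}$ as a unitriangular matrix, and \eqref{generalabc1} is its inverse. The combinatorial fact that the two triangular matrices are mutually inverse is exactly what is verified in the proof of Proposition \ref{inverse}; as observed in the remark following that proposition, that verification uses neither the value of $\alpha_{b,c}$ nor (after carrying along the spectator index $a$) the value of $a$.

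I expect the heart of the difficulty to be the integrality step, and in particular the case $x<\theta$: everything there rests on the recursion of Lemma \ref{lemrecursive1}, whose proof is long and is deferred to Appendix \ref{app6}, and on the careful degree bookkeeping (Lemmas \ref{lemdegg} and \ref{lem36}) needed to ensure that the prefactors never overcome the strictly negative degree gained by reducing to the $a=0$ base case.
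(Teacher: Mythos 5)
Your proposal is correct and follows essentially the same route as the paper: reduce to proving \eqref{generalabc2} by substituting Lemma \ref{prop59}, show $\Omega^{(a,b,c)}_{m,n,x,\theta}\in q^{-1}\mathbb{Z}[q^{-1}]$ via the recursions of Lemma \ref{lemrecursive} together with the case split handled by Lemma \ref{degomega} (for $x\geq\theta$) and Lemmas \ref{lemrecursive1}, \ref{lemdegg} (for $x<\theta$), and then obtain \eqref{generalabc1} from the inversion argument of Proposition \ref{inverse}, which is independent of the value of $\alpha_{b,c,\bm{\varsigma}}$ and of the spectator index $a$. Your added remarks (the $\psi^{\imath}$-invariance of the right-hand side, the use of Lemma \ref{lem36} for the base-case degree bound, and the need for the odd-parity expansion when inverting) only make explicit steps the paper leaves implicit.
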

\begin{proof}
First, observe that we only need to prove \eqref{generalabc2}, and \eqref{generalabc1} follow from the proof of Proposition \ref{inverse}. Combining the RHS of \eqref{generalabc2} with \eqref{prop59}, we get
$$
\text{RHS of \eqref{generalabc2}}=\sum_{0\leq x\leq a}\sum_{0\leq \theta\leq c}\Omega^{(a,b,c)}_{m,n,x,\theta}F^{(a-x)}_{1}F^{(b-x-\theta)}_{2}F_{1}^{(c-\theta)}\eta,
$$
where $\Omega^{(a,b,c)}_{m,n,x,\theta}$ was defined in \eqref{defomegaabc}.

Now we want to show that $\Omega^{(a,b,c)}_{m,n,x,\theta}\in q^{-1}\mathbb{Z}[q^{-1}]$ for any $a,b,c\geq 0$, $0\leq x\leq a$, $0\leq \theta\leq c$, and $k+y=\theta$. Using Lemma \ref{lemrecursive} above,  we only need to prove $\Omega^{(a,b,c)}_{m,n,a,c}=\Omega^{a,b,c}_{m,n}\in q^{-1}\mathbb{Z}[q^{-1}]$. 

For $a\geq c$, we have $\Omega^{a,b,c}_{m,n}\in q^{-1}\mathbb{Z}[q^{-1}]$, by Lemma \ref{degomega}. For $0\leq a\leq c-1$, we know $\Omega^{a,b,c}_{m,n}\in q^{-1}\mathbb{Z}[q^{-1}]$ by Lemma \ref{lemrecursive1} and Lemma \ref{lemdegg}. We are done.
\end{proof}

Before moving to the case $b=a+c$, we need the following additional lemma:
\begin{lem}\label{lemdegqbinom}
The highest degree term of any $q$-binomial has coefficient 1.
\end{lem}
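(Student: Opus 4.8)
The plan is to reduce the claim to the shape of a single quantum integer together with the fact that leading terms are multiplicative in the Laurent polynomial ring $\mathcal A=\mathbb Z[q,q^{-1}]$. Recall that for any $t\ge 1$ and any $r\ge 1$ one has $[r]_{q^t}=q^{t(r-1)}+q^{t(r-3)}+\cdots+q^{-t(r-1)}$, so $[r]_{q^t}$ is a nonzero element of $\mathcal A$ whose highest-degree monomial is $q^{t(r-1)}$ with coefficient $1$. Since $\mathcal A$ is an integral domain, the highest-degree term of a product of nonzero elements of $\mathcal A$ is the product of their highest-degree terms; and if $g$ divides $f$ in $\mathcal A$, then the highest-degree term of $f/g$ is the quotient of those of $f$ and $g$. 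These two observations are the whole engine of the proof.

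First I would dispose of the degenerate cases: a $q$-binomial $\qbinom{a}{b}_{q^t}$ is nonzero only for $0\le b\le a$ (for $a\ge 0$ the values $b<0$ or $b>a$ give $0$ by convention, and the statement concerns the nonzero $q$-binomials), and $b=0$ gives $\qbinom{a}{0}_{q^t}=1$, for which there is nothing to prove. Then, for $1\le b\le a$, I would write $\qbinom{a}{b}_{q^t}=P/Q$ with $P:=[a]_{q^t}[a-1]_{q^t}\cdots[a-b+1]_{q^t}$ and $Q:=[b]_{q^t}!=[1]_{q^t}\cdots[b]_{q^t}$. Every factor of $P$ has argument $\ge a-b+1\ge 1$ and every factor of $Q$ has argument $\ge 1$, so by the first observation above $P$ and $Q$ are nonzero with highest-degree coefficient $1$ (and a one-line computation gives $\deg P-\deg Q=t\,b(a-b)$). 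Finally, using the classical integrality $\qbinom{a}{b}_{q^t}\in\mathcal A$, the relation $P=Q\cdot\qbinom{a}{b}_{q^t}$ holds in $\mathcal A$, and comparing highest-degree terms via the second observation forces the highest-degree coefficient of $\qbinom{a}{b}_{q^t}$ to equal $1$.

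The hard part is essentially nonexistent: the argument is bookkeeping with leading terms, and the only ingredient that is not a direct computation is the integrality of Gaussian binomials, which is classical (see \cite{Lus94}). If one prefers a self-contained route, one can induct on $b$ using the balanced $q$-Pascal identity $\qbinom{a}{b}_{q^t}=q^{-tb}\qbinom{a-1}{b}_{q^t}+q^{t(a-b)}\qbinom{a-1}{b-1}_{q^t}$: the second summand has degree larger than the first by $2tb>0$, so the highest-degree coefficient of $\qbinom{a}{b}_{q^t}$ equals that of $\qbinom{a-1}{b-1}_{q^t}$, which is $1$ by induction (the base case $b=0$ being trivial). The one subtlety worth flagging is the sign: the conclusion genuinely uses that all the relevant quantum integers have positive argument, so that their leading coefficients are $+1$ rather than $-1$; this is precisely why restricting to the range in which the $q$-binomial is nonzero is the correct hypothesis.
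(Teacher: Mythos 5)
Your proof is correct; the paper states this lemma without proof, treating it as a standard fact, so any complete argument is already ``different from the paper.'' Both routes you give work. The quotient argument ($\qbinom{a}{b}_{q^t}=P/Q$ with $P,Q$ products of $[r]_{q^t}$, $r\ge 1$, each monic in the highest degree, plus integrality of the Gaussian binomial) is the quickest way in; the self-contained induction via the balanced Pascal identity
\[
\qbinom{a}{b}_{q^t}=q^{-tb}\qbinom{a-1}{b}_{q^t}+q^{t(a-b)}\qbinom{a-1}{b-1}_{q^t}
\]
is cleaner still, since the degree gap $2tb>0$ isolates the second summand and the recursion lands on $\qbinom{a-b}{0}_{q^t}=1$ without ever invoking integrality or division.

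One small precision is worth flagging. You write that ``a $q$-binomial is nonzero only for $0\le b\le a$,'' but under the paper's definition $\qbinom{a}{b}_{q^t}$ is also nonzero for $a<0$, $b\ge 1$, and there the leading coefficient is $(-1)^b$, not $1$; for example $\qbinom{-1}{1}_{q^t}=[-1]_{q^t}=-1$. So the lemma as literally stated is false for negative top argument and odd $b$, and your proof implicitly (and correctly) restricts to $a\ge b\ge 0$. This is exactly the range in which the paper applies the lemma (e.g.\ to $\qbinom{n-b+c+a}{a}$ with $n-b+c\ge 0$ and $a\ge 0$ in the proof of Proposition~\ref{propaeqb+c}), so the restriction is harmless, but it would be better to state it as a hypothesis rather than to assert that those are the only nonzero cases.
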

\begin{prop}\label{propaeqb+c}
Let $m,n,a,b,c\in \mathbb{Z}_{\geq 0}$ such that $m\geq c$, $n-b+c\geq 0$, and $\alpha_{b,c,\bm{\varsigma}}> 0$. When $\alpha_{b,c,\bm{\varsigma}}$ is even and $b=a+c$, we have
\begin{align}\label{generalabc22}
\sum_{0\leq l\leq\min\{a,c\}}\mathfrak{B}_{2}^{(a-l,b-2l,c-l)}=\sum_{0\leq k\leq c}(-1)^{k}\frac{[\alpha_{b,c,\bm{\varsigma}}][\alpha_{b,c,\bm{\varsigma}}+2]\cdots [\alpha_{b,c,\bm{\varsigma}}+2k-2]}{[k]!}B_{1}^{(a)}B^{(b-k)}_{2}B^{(c-k)}_{1}\eta.
\end{align}
\end{prop}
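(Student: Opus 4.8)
Write $\Phi(a,b,c)$ for the right-hand side of \eqref{generalabc22}; by Theorem~\ref{aeqb+c} one has $\Phi(a,b,c)=\mathfrak{B}^{(a,b,c)}_{2}$ whenever $b>a+c$. Since here $b=a+c$, every index $k$ satisfies $b-k=a+(c-k)$, so after substituting Lemma~\ref{prop59} each monomial $F_{1}^{(a-x)}F_{2}^{(b-x-\theta)}F_{1}^{(c-\theta)}\eta$ appearing in $\Phi(a,b,c)$ has its middle exponent equal to the sum of the outer two, i.e.\ is a boundary canonical basis vector; thus \eqref{generalabc22} is an identity inside the span of $\{F_{1}^{(a')}F_{2}^{(a'+c')}F_{1}^{(c')}\eta\}$. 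The plan is to reduce \eqref{generalabc22} to the recursion
\[
\mathfrak{B}^{(a',b',c')}_{2}=\Phi(a',b',c')-\Phi(a'-1,b'-2,c'-1)\qquad(b'=a'+c'),
\]
which suffices because summing it over the triples $(a-l,b-2l,c-l)$, $0\le l\le\min\{a,c\}$, telescopes — using $\Phi(a',b',c')=0$ once $a'<0$ or $c'<0$ — to precisely \eqref{generalabc22}; and then to prove the recursion by induction on $\min\{a',c'\}$, uniformly over all admissible $L(m,n)$. The conditions $m\ge c'$, $n-b'+c'\ge 0$, $\alpha_{b',c',\bm{\varsigma}}>0$ and its parity are preserved under $(a',b',c')\mapsto(a'-1,b'-2,c'-1)$. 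The base case $\min\{a',c'\}=0$ comes from Section~\ref{section4}: if $c'=0$ then $\Phi(a',a',0)=B_{1}^{(a')}B_{2}^{(a')}\eta$, and as $\alpha_{a',0,\bm{\varsigma}}>0$ forces $\beta_{a',a',0,\bm{\varsigma}}=1-\alpha_{a',0,\bm{\varsigma}}\le 0$, Proposition~\ref{prop32}(4) identifies it with $\mathfrak{B}^{(a',a',0)}_{2}$; if $a'=0$ then $b'=c'$ and $\Phi(0,c',c')$ is exactly the expression produced by Theorem~\ref{thm41717}, hence equals $\mathfrak{B}^{(0,c',c')}_{2}$; in both cases the subtracted $\Phi$ is $0$.

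For the inductive step put $v:=\Phi(a,b,c)-\Phi(a-1,b-2,c-1)$. It is $\psi^{\imath}$-invariant because the scalars $(-1)^{k}[\alpha_{b,c,\bm{\varsigma}}][\alpha_{b,c,\bm{\varsigma}}+2]\cdots[\alpha_{b,c,\bm{\varsigma}}+2k-2]/[k]!$ are bar-invariant and the idivided powers $B_{i}^{(r)}$ and $\eta$ are $\psi^{\imath}$-invariant. Substituting Lemma~\ref{prop59} into both $\Phi$'s and reindexing the second sum by $(x,\theta)\mapsto(x+1,\theta+1)$ gives
\[
v=\sum_{0\le x\le a}\sum_{0\le\theta\le c}\widetilde{\Omega}_{x,\theta}\,F_{1}^{(a-x)}F_{2}^{(b-x-\theta)}F_{1}^{(c-\theta)}\eta,\qquad \widetilde{\Omega}_{x,\theta}:=\Omega^{(a,b,c)}_{m,n,x,\theta}-\Omega^{(a-1,b-2,c-1)}_{m,n,x-1,\theta-1},
\]
with the $\Omega$'s from \eqref{defomegaabc} and negative indices read as $0$. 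As $\widetilde{\Omega}_{0,0}=\Omega^{(a,b,c)}_{m,n,0,0}=1$, the uniqueness characterization \eqref{icb1} of $\mathfrak{B}^{(a,b,c)}_{2}$ reduces the recursion to the claim $\widetilde{\Omega}_{x,\theta}\in q^{-1}\mathbb{Z}[q^{-1}]$ for all $(x,\theta)\neq(0,0)$.

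To prove this, the third relation of Lemma~\ref{lemrecursive} identifies each summand of $\widetilde{\Omega}_{x,\theta}$, over the module $L(m-c+\theta,n-a+x)$, with the corresponding quantity for the smaller boundary triple $(x,x+\theta,\theta)$, respectively $(x-1,x+\theta-2,\theta-1)$; hence $\widetilde{\Omega}_{x,\theta}$ is the coefficient of $\eta$ in $\Phi(x,x+\theta,\theta)-\Phi(x-1,x+\theta-2,\theta-1)$ over that module. If $\min\{x,\theta\}<\min\{a,c\}$, the inductive hypothesis makes this an off-diagonal coefficient of $\mathfrak{B}^{(x,x+\theta,\theta)}_{2}$, which lies in $q^{-1}\mathbb{Z}[q^{-1}]$ by \eqref{icb1}. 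If $\min\{x,\theta\}=\min\{a,c\}$ — assuming $a\le c$, the case $c\le a$ following by the $i\leftrightarrow\tau i$ symmetry (Remark~\ref{iandtaui}, Proposition~\ref{prop32}(2)) together with the boundary identification $\mathfrak{B}^{(a',a'+c',c')}_{2}=\mathfrak{B}^{(c',a'+c',a')}_{1}$ — one has $(x,\theta)=(a,\theta)$ with $\theta\ge a$: for $\theta>a$ the two reduced $\Omega$'s already lie in $q^{-1}\mathbb{Z}[q^{-1}]$ by Lemmas~\ref{lemrecursive1}, \ref{lemdegg} and \ref{lem36}, while for $\theta=a$ Lemma~\ref{degomega} gives degree $\le 0$ for both and a direct computation (using Lemma~\ref{lemdegqbinom}, as the unique degree-$0$ summand is a power of $q$ times two $q$-binomials of leading coefficient $1$) shows each has constant term $1$, so $\widetilde{\Omega}_{a,a}\in q^{-1}\mathbb{Z}[q^{-1}]$. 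This yields the triangularity, the recursion follows from \eqref{icb1}, and summing and telescoping gives \eqref{generalabc22}.

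The main obstacle is this last, diagonal case. In the interior situation $b>a+c$ treated in Theorem~\ref{aeqb+c} every off-diagonal $\Omega^{(a,b,c)}_{m,n,x,\theta}$ already lies in $q^{-1}\mathbb{Z}[q^{-1}]$; on the boundary $b=a+c$, by contrast, the coefficients along the diagonal $x=\theta$ acquire a nonzero constant term, and the substance of the argument is that these constant terms cancel in the difference $\widetilde{\Omega}_{x,\theta}$. Checking this requires the explicit leading-coefficient evaluation afforded by Lemma~\ref{lemdegqbinom}, and it cannot be absorbed into the induction alone because the reductions coming from Lemma~\ref{lemrecursive} do not always strictly decrease $\min\{x,\theta\}$; the residual diagonal case must therefore be handled directly via the degree estimates of Lemmas~\ref{degomega}, \ref{lemrecursive1}, \ref{lemdegg} and \ref{lem36}.
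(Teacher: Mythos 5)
Your argument is correct in its technical core, but it is organized quite differently from the paper, and one reduction step is misjustified. The paper proves \eqref{generalabc22} in one shot: substituting Lemma~\ref{prop59} into the right-hand side, it shows that every coefficient $\Omega^{(a,b,c)}_{m,n,x,\theta}$ with $x\neq\theta$ lies in $q^{-1}\mathbb{Z}[q^{-1}]$ and every diagonal coefficient ($x=\theta$) lies in $1+q^{-1}\mathbb{Z}[q^{-1}]$, using the reduction $\Omega^{(a,b,c)}_{m,n,x,\theta}=\Omega^{(x,x+\theta,\theta)}_{m-c+\theta,n-a+x,x,\theta}$ of Lemma~\ref{lemrecursive} together with Lemmas~\ref{degomega}, \ref{lemrecursive1}, \ref{lemdegg}, \ref{lemdegqbinom}; bar-invariance and the uniqueness in \eqref{icb1} then force the right-hand side to equal $\sum_{l}\mathfrak{B}_{2}^{(a-l,b-2l,c-l)}$. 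You instead prove the difference form $\mathfrak{B}^{(a',b',c')}_{2}=\Phi(a',b',c')-\Phi(a'-1,b'-2,c'-1)$ (which is the paper's Theorem~\ref{thmm59}, deduced there \emph{from} this proposition) by induction on $\min\{a',c'\}$ and then telescope. This is logically sound and not circular, since your proof of the recursion uses only Section~\ref{section4} results and the same coefficient lemmas; but the induction scaffolding buys you nothing: the coefficients you delegate to the inductive hypothesis ($\min\{x,\theta\}<\min\{a,c\}$) are exactly the ones the paper disposes of directly by the same degree estimates, and the ``hard'' diagonal and top-stratum cases you must still treat by hand, precisely as the paper does. So the substance of both proofs is the same family of $\Omega$-estimates; yours just wraps them in an extra induction and a telescoping step.

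The one genuine defect is the WLOG ``$a\le c$, the case $c\le a$ following by the $i\leftrightarrow\tau i$ symmetry together with $\mathfrak{B}^{(a',a'+c',c')}_{2}=\mathfrak{B}^{(c',a'+c',a')}_{1}$.'' That symmetry exchanges the $i=2$ statement over $L(m,n)$ with the $i=1$ statement over $L(n,m)$ (with $\bm{\varsigma}$ swapped); combined with the boundary identifications it converts your $\alpha$-form identity for $(a,b,c)$ into the $\beta$-form identity for $(c,b,a)$ (note $\hat{\beta}_{c,b,a,\bm{\varsigma}}=\alpha_{b,c,\bm{\varsigma}}$ when $b=a+c$), i.e.\ into the statement of Theorem~\ref{thmm513}-type, not into the $a\leftrightarrow c$-swapped instance of the identity you are inducting on. So the case $c\le a$ is not reduced to the case $a\le c$ by this route. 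Fortunately the gap is local and the fix is already in your toolbox: when $c\le a$ and $\min\{x,\theta\}=\min\{a,c\}=c$, one has $\theta=c\le x\le a$, and the two reduced coefficients $\Omega^{x,x+c,c}_{m',n'}$ and $\Omega^{x-1,x+c-2,c-1}_{m',n'}$ have $a'\ge c'$, so for $x>c$ they lie in $q^{-1}\mathbb{Z}[q^{-1}]$ by Lemma~\ref{degomega} (the strict inequality in its proof, exactly as the paper uses for its $a>c$ case), while for $x=c$ both are diagonal boundary coefficients in $1+q^{-1}\mathbb{Z}[q^{-1}]$ and the constants cancel in $\widetilde{\Omega}_{c,c}$. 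With that replacement your proof goes through.
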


\begin{proof}
Using \eqref{prop59}, we have
\begin{align*}
\sum_{0\leq k\leq c}&(-1)^{k}\frac{[\alpha_{b,c,\bm{\varsigma}}][\alpha_{b,c,\bm{\varsigma}}+2]\cdots [\alpha_{b,c,\bm{\varsigma}}+2k-2]}{[k]!}B_{1}^{(a)}B^{(b-k)}_{2}B^{(c-k)}_{1}\eta\\
&=\sum_{0\leq x\leq a}\sum_{0\leq \theta\leq c}\Omega^{(a,b,c)}_{m,n,x,\theta}F^{(a-x)}_{1}F^{(b-x-\theta)}_{2}F_{1}^{(c-\theta)}\eta,
\end{align*}
where $\Omega^{(a,b,c)}_{m,n,x,\theta}$ can be found in \eqref{defomegaabc}. In order to prove \eqref{generalabc22}, we only need to prove the following: 
\begin{itemize}
	\item If $x\not=\theta$, $\Omega^{(a,b,c)}_{m,n,x,\theta}\in q^{-1}\mathbb{Z}[q^{-1}]$,
	\item If $x=\theta$, $\Omega^{(a,b,c)}_{m,n,x,\theta}\in 1+q^{-1}\mathbb{Z}[q^{-1}]$.
\end{itemize}

Meanwhile, recall $\Omega^{(a,b,c)}_{m,n,x,\theta}=\Omega^{(x,\theta+x,\theta)}_{m-c+\theta,n-a+x,x,\theta}$ from Lemma \ref{lemrecursive}, we only need to focus on $\Omega^{(a,a+c,c)}_{m,n}$. If $a>c$, (i.e. $x>\theta$), $\Omega^{(a,a+c,c)}_{m,n}\in q^{-1}\mathbb{Z}[q^{-1}]$ from Lemma \ref{degomega}. 
If $a<c$, (i.e. $x<\theta$), $\Omega^{(a,a+c,c)}_{m,n}\in q^{-1}\mathbb{Z}[q^{-1}]$ follows from Lemmas \ref{lemrecursive1} and \ref{lemdegg}. Finally, if $a=c$, i.e. $x=\theta$, we know $\Omega^{0,b,c-a}_{m-a+j,n+a-j}=\Omega^{0,b,0}_{m-a+j,n+a-j}=1$ and thus $\deg (\Omega^{(a,b,a)}_{m,n})=0$ from Lemmas \ref{lemrecursive1}. By Lemma \ref{lemdegqbinom}, the multiplicity of highest degree term is $1$. Hence, \eqref{generalabc22} is proved.
\end{proof}
We introduce the following shorthand notation: 
\begin{align}\label{defofb}
\mathbb{B}^{a,b,c}_{2}:=\sum_{0\leq l\leq\min\{a,c\}}\mathfrak{B}_{2}^{(a-l,b-2l,c-l)}.
\end{align}
we will get the following results:

\begin{thm}\label{thmm59}
Let $m,n,a,b,c\in \mathbb{Z}_{\geq 0}$ such that $m\geq c$, $n-b+c\geq 0$, $\alpha_{b,c,\bm{\varsigma}}> 0$. When $\alpha_{b,c,\bm{\varsigma}}$ is even and $b=a+c$, we have
\begin{align}\label{generalabc222}
\mathfrak{B}_{2}^{(a,b,c)}&=\sum_{0\leq k\leq c}(-1)^{k}\frac{[\alpha_{b,c,\bm{\varsigma}}][\alpha_{b,c,\bm{\varsigma}}+2]\cdots [\alpha_{b,c,\bm{\varsigma}}+2k-2]}{[k]!}B_{1}^{(a)}B^{(b-k)}_{2}B^{(c-k)}_{1}\eta\nonumber\\
&\quad\quad -\sum_{0\leq k\leq c-1}\bigg((-1)^{k}\frac{[\alpha_{b,c,\bm{\varsigma}}][\alpha_{b,c,\bm{\varsigma}}+2]\cdots [\alpha_{b,c,\bm{\varsigma}}+2k-2]}{[k]!}\nonumber\\
&\quad\quad\quad\quad\quad\quad\times B_{1}^{(a-1)}B^{(b-2-k)}_{2}B^{(c-1-k)}_{1}\eta\bigg).
\end{align}
\end{thm}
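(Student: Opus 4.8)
The strategy is to leverage the identity already established in Proposition~\ref{propaeqb+c}, namely that
$\mathbb{B}^{a,b,c}_{2}=\sum_{0\leq l\leq\min\{a,c\}}\mathfrak{B}_{2}^{(a-l,b-2l,c-l)}$
equals the right-hand side of \eqref{generalabc22}. Writing $M^{a,b,c}:=\sum_{0\leq k\leq c}(-1)^{k}\frac{[\alpha_{b,c,\bm{\varsigma}}][\alpha_{b,c,\bm{\varsigma}}+2]\cdots[\alpha_{b,c,\bm{\varsigma}}+2k-2]}{[k]!}B_{1}^{(a)}B^{(b-k)}_{2}B^{(c-k)}_{1}\eta$ for the monomial-side expression, Proposition~\ref{propaeqb+c} says $\mathbb{B}^{a,b,c}_{2}=M^{a,b,c}$. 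Since we want to isolate $\mathfrak{B}_{2}^{(a,b,c)}$, the natural move is to telescope: from the definition \eqref{defofb} of $\mathbb{B}$ we have
$\mathfrak{B}_{2}^{(a,b,c)}=\mathbb{B}^{a,b,c}_{2}-\sum_{1\leq l\leq\min\{a,c\}}\mathfrak{B}_{2}^{(a-l,b-2l,c-l)}=\mathbb{B}^{a,b,c}_{2}-\mathbb{B}^{a-1,b-2,c-1}_{2}$,
because the tail sum is exactly $\mathbb{B}^{a-1,b-2,c-1}_{2}$ after reindexing $l\mapsto l+1$. So it remains to substitute $\mathbb{B}^{a,b,c}_{2}=M^{a,b,c}$ and $\mathbb{B}^{a-1,b-2,c-1}_{2}=M^{a-1,b-2,c-1}$.

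The one subtlety to check is that the parameter $\alpha$ is unchanged under the shift $(a,b,c)\mapsto(a-1,b-2,c-1)$: indeed $\alpha_{b-2,c-1,\bm{\varsigma}}=m-n+(b-2)-2(c-1)+\varsigma_{2}=m-n+b-2c+\varsigma_{2}=\alpha_{b,c,\bm{\varsigma}}$, so the coefficients $[\alpha_{b,c,\bm{\varsigma}}][\alpha_{b,c,\bm{\varsigma}}+2]\cdots[\alpha_{b,c,\bm{\varsigma}}+2k-2]/[k]!$ appearing in $M^{a-1,b-2,c-1}$ coincide with those in $M^{a,b,c}$. I also need to verify that the hypotheses of Proposition~\ref{propaeqb+c} are satisfied at the shifted parameters: from $b=a+c$ we get $b-2=(a-1)+(c-1)$, so the relation $b=a+c$ is preserved; the constraints $m\geq c-1$ and $n-(b-2)+(c-1)=n-b+c+1\geq 0$ follow from $m\geq c$ and $n-b+c\geq 0$; and $\alpha_{b,c,\bm{\varsigma}}>0$ is preserved and stays even. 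One should also note the degenerate case $\min\{a,c\}=0$: if $a=0$ or $c=0$ then the tail sum is empty, $\mathbb{B}^{a,b,c}_{2}=\mathfrak{B}_{2}^{(a,b,c)}$, and $M^{a-1,b-2,c-1}$ has a negative index so vanishes by the divided-power convention; the stated formula then reduces correctly to \eqref{generalabc22} (or to the $c=0$ case treated earlier).

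Assembling these pieces gives
$\mathfrak{B}_{2}^{(a,b,c)}=M^{a,b,c}-M^{a-1,b-2,c-1}$,
which upon writing out $M^{a-1,b-2,c-1}=\sum_{0\leq k\leq c-1}(-1)^{k}\frac{[\alpha_{b,c,\bm{\varsigma}}][\alpha_{b,c,\bm{\varsigma}}+2]\cdots[\alpha_{b,c,\bm{\varsigma}}+2k-2]}{[k]!}B_{1}^{(a-1)}B^{(b-2-k)}_{2}B^{(c-1-k)}_{1}\eta$ (the upper limit is $c-1$ since $B^{(c-1-k)}_{1}=0$ for $k\geq c$) is precisely \eqref{generalabc222}. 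The main obstacle is not any hard computation — it is purely the bookkeeping of the reindexing $l\mapsto l+1$ that turns the tail of $\mathbb{B}^{a,b,c}_{2}$ into $\mathbb{B}^{a-1,b-2,c-1}_{2}$, together with the observation that $\alpha$ is invariant under this shift. Everything else is a direct appeal to Proposition~\ref{propaeqb+c}.
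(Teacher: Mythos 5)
Your proposal is correct and takes essentially the same route as the paper: the paper likewise observes $\mathfrak{B}_{2}^{(a,b,c)}=\mathbb{B}^{a,b,c}_{2}-\mathbb{B}^{a-1,b-2,c-1}_{2}$ (by the same reindexing $l\mapsto l+1$ of the tail of the sum defining $\mathbb{B}^{a,b,c}_{2}$) and then applies Proposition~\ref{propaeqb+c} to both terms. You have simply made explicit the routine checks the paper leaves implicit — that $\alpha_{b,c,\bm{\varsigma}}$ is invariant under $(a,b,c)\mapsto(a-1,b-2,c-1)$, that $b=a+c$ and the inequalities on $m,n$ persist, and that the degenerate case $\min\{a,c\}=0$ collapses correctly — all of which are accurate.
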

\begin{proof}
We have the following observation: 
\begin{align}\label{thm411}
\mathfrak{B}_{2}^{(a,b,c)}=\sum_{0\leq l\leq\min\{a,c\}}\mathfrak{B}_{2}^{(a-l,b-2l,c-l)}-\sum_{1\leq l\leq\min\{a,c\}}\mathfrak{B}_{2}^{(a-l,b-2l,c-l)}=\mathbb{B}^{a,b,c}_{2}-\mathbb{B}^{a-1,b-2,c-1}_{2}.
\end{align}
Now applying \eqref{generalabc22} on the RHS of \eqref{thm411} gives us \eqref{generalabc222} as desired.
\end{proof}
\begin{thm}\label{thmm510}
Let $m,n,a,b,c\in \mathbb{Z}_{\geq 0}$ such that $m\geq c$, $n-b+c\geq 0$, $\alpha_{b,c,\bm{\varsigma}}> 0$. When $\alpha_{b,c,\bm{\varsigma}}$ is even and $b=a+c$, we have
\begin{align}\label{generalabc223}
B^{(a)}_{1}B^{(b)}_{2}B^{(c)}_{1}\eta=&[\alpha_{b,c,\bm{\varsigma}}]\sum_{\substack{k \ge 0 \\ k\equiv 0 \pmod 2}}\sum_{0\leq l\leq\min\{a,c-k\}}\frac{[\alpha_{b,c,\bm{\varsigma}}-k+2][\alpha_{b,c,\bm{\varsigma}}-k+4]\cdots[\alpha_{b,c,\bm{\varsigma}}+k-2]}{[k]!}\nonumber\\
&\quad\quad\quad\quad\quad\quad\quad \times \mathfrak{B}_{2}^{(a-l,b-k-2l,c-k-l)}\nonumber\\
&\quad\quad+\sum_{\substack{k\geq 0 \\ k\equiv 1 \pmod 2}}\sum_{0\leq l\leq\min\{a,c-k\}}\frac{[\alpha_{b,c,\bm{\varsigma}}-k+1][\alpha_{b,c,\bm{\varsigma}}-k+3]\cdots [\alpha_{b,c,\bm{\varsigma}}+k-1]}{[k]!}\nonumber\\
&\quad\quad\quad\quad\quad\quad\quad \times \mathfrak{B}_{2}^{(a-l,b-k-2l,c-k-l)}.
\end{align}
\end{thm}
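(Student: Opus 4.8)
The plan is to deduce \eqref{generalabc223} from Theorem~\ref{thmm59} (equivalently, from Proposition~\ref{propaeqb+c}) by inverting a triangular transition matrix, in exactly the same way that Proposition~\ref{inverse} is obtained from Theorems~\ref{thm:iCBtoMB} and \ref{thm:iCBtoMBodd}.

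First I would recast \eqref{generalabc223} in terms of the shorthand $\mathbb{B}^{a,b,c}_{2}$ of \eqref{defofb}. Since $b=a+c$, we have $b-k=a+(c-k)$ for every $k\ge 0$, so by \eqref{defofb} the inner sum over $l$ in \eqref{generalabc223} is precisely $\mathbb{B}^{a,b-k,c-k}_{2}$. Thus \eqref{generalabc223} is equivalent to
\begin{align*}
B^{(a)}_{1}B^{(b)}_{2}B^{(c)}_{1}\eta
&=[\alpha_{b,c,\bm{\varsigma}}]\sum_{\substack{k\ge 0 \\ k\equiv 0 \pmod 2}}\frac{[\alpha_{b,c,\bm{\varsigma}}-k+2][\alpha_{b,c,\bm{\varsigma}}-k+4]\cdots[\alpha_{b,c,\bm{\varsigma}}+k-2]}{[k]!}\,\mathbb{B}^{a,b-k,c-k}_{2}\\
&\quad+\sum_{\substack{k\ge 0 \\ k\equiv 1 \pmod 2}}\frac{[\alpha_{b,c,\bm{\varsigma}}-k+1][\alpha_{b,c,\bm{\varsigma}}-k+3]\cdots[\alpha_{b,c,\bm{\varsigma}}+k-1]}{[k]!}\,\mathbb{B}^{a,b-k,c-k}_{2}.
\end{align*}
This expresses $B^{(a)}_{1}B^{(b)}_{2}B^{(c)}_{1}\eta$ through the family $\{\mathbb{B}^{a,b-k,c-k}_{2}:0\le k\le c\}$, which --- just as $\{\mathfrak{B}^{(0,b-k,c-k)}_{2}\}$ did in the $a=0$ case, and which it reduces to when $a=0$ --- spans the same subspace as the monomials $\{B^{(a)}_{1}B^{(b-k)}_{2}B^{(c-k)}_{1}\eta:0\le k\le c\}$, with triangular transition in both directions; it then remains only to check that the two matrices are mutually inverse.

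The other direction is furnished by Proposition~\ref{propaeqb+c} together with its odd-parity counterpart (proved by the identical method): for each $k$ these express $\mathbb{B}^{a,b-k,c-k}_{2}$ in the monomials $B^{(a)}_{1}B^{(b-k-j)}_{2}B^{(c-k-j)}_{1}\eta$ ($j\ge 0$), the $\imath$weight parameter now being $\alpha_{b-k,c-k,\bm{\varsigma}}=\alpha_{b,c,\bm{\varsigma}}+k$, with $a$ and $\bm{\varsigma}$ playing no role. Substituting these into the displayed identity and, for $k>0$, collecting the coefficient of $B^{(a)}_{1}B^{(b-k)}_{2}B^{(c-k)}_{1}\eta$ yields an identity among products of quantum integers that must vanish. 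As noted in the remark following Proposition~\ref{inverse}, this identity is independent of the value of $\alpha_{b,c,\bm{\varsigma}}$ (and it visibly does not involve $a$), so it is precisely the one verified there: splitting according to the parities of $\alpha_{b,c,\bm{\varsigma}}$ and $k$ into four cases, each coefficient collapses to $0$ via the $q$-binomial identity of Lemma~\ref{lem1}, while the $k=0$ coefficient is $1$. This establishes the displayed identity, hence \eqref{generalabc223}; the standing hypotheses $m\ge c$, $n-b+c\ge 0$ guarantee that all basis elements involved are the expected ones (or vanish consistently on both sides), and the relation \eqref{thm411} then translates the statement back into the $\mathfrak{B}_{2}$-form as written.

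The main obstacle is the organizational bookkeeping around the parameter shift $\alpha_{b,c,\bm{\varsigma}}\mapsto\alpha_{b,c,\bm{\varsigma}}+k$: one must match it carefully against the four-case computation in the proof of Proposition~\ref{inverse}, and confirm that the odd-parameter instances of the expansion of $\mathbb{B}^{a,b-k,c-k}_{2}$ are genuinely covered by the odd-parity analogue of Proposition~\ref{propaeqb+c}. Beyond that, no new computation is needed: the argument is the proof of Proposition~\ref{inverse} transcribed with $\mathfrak{B}^{(0,\cdot,\cdot)}_{2}$ replaced by $\mathbb{B}^{a,\cdot,\cdot}_{2}$ and the index $a$ carried along as an inert spectator.
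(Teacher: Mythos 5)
Your proposal is correct and follows essentially the same route as the paper: rewrite the target in terms of the shorthand $\mathbb{B}^{a,b-k,c-k}_{2}$ of \eqref{defofb}, then note that Proposition~\ref{propaeqb+c} (together with its odd-parity counterpart Proposition~\ref{propaeqb+codd}, needed because $\alpha_{b-k,c-k,\bm{\varsigma}}=\alpha_{b,c,\bm{\varsigma}}+k$ changes parity with $k$) plays the role that Theorems~\ref{thm:iCBtoMB}--\ref{thm:iCBtoMBodd} play in the proof of Proposition~\ref{inverse}, so the same four-case cancellation via Lemma~\ref{lem1} inverts the triangular system, with $a$ an inert parameter. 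Your write-up is if anything slightly more careful than the paper's, which phrases the step as "apply the proof of Proposition~\ref{inverse}" and then unpacks $\mathbb{B}$ via \eqref{defofb} at the end, without spelling out the parity shift that forces use of the odd-case proposition as well.
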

\begin{proof}
First, we have the following relation from Theorem \ref{aeqb+c} and \eqref{defofb}
\begin{align}\label{thm412}
\mathbb{B}^{a,b,c}_{2}=\sum_{0\leq k\leq c}(-1)^{k}\frac{[\alpha_{b,c,\bm{\varsigma}}][\alpha_{b,c,\bm{\varsigma}}+2]\cdots [\alpha_{b,c,\bm{\varsigma}}+2k-2]}{[k]!}B_{1}^{(a)}B^{(b-k)}_{2}B^{(c-k)}_{1}\eta.
\end{align}
Comparing \eqref{thm412} with Proposition \ref{inverse}, we can apply the proof of Proposition \ref{inverse} and get the similar inverse formula:
\begin{align}\label{thm4122}
B^{(a)}_{1}B^{(b)}_{2}B^{(c)}_{1}\eta&=[\alpha_{b,c,\bm{\varsigma}}]\sum_{\substack{k \ge 0 \\ k\equiv 0 \pmod 2}}\frac{[\alpha_{b,c,\bm{\varsigma}}-k+2][\alpha_{b,c,\bm{\varsigma}}-k+4]\cdots[\alpha_{b,c,\bm{\varsigma}}+k-2]}{[k]!}\mathbb{B}_{2}^{a,b-k,c-k}\nonumber\\
&+\sum_{\substack{k\geq 0 \\ k\equiv 1 \pmod 2}}\frac{[\alpha_{b,c,\bm{\varsigma}}-k+1][\alpha_{b,c,\bm{\varsigma}}-k+3]\cdots [\alpha_{b,c,\bm{\varsigma}}+k-1]}{[k]!}\mathbb{B}_{2}^{a,b-k,c-k}
\end{align}
Now applying \eqref{defofb} on the RHS of \eqref{thm4122} will give us the desired result.
\end{proof}

\subsection{Case for $\beta_{a,b,c,\bm{\varsigma}}$ is even, $\beta_{a,b,c,\bm{\varsigma}}>0$} In this subsection, $\beta_{a,b,c,\bm{\varsigma}}=n-m+a-2b+3c+\varsigma_{1}$ is always an even natural number. For any $m,n\geq 0$, $0\leq y\leq c$, $0\leq \theta\leq a$, we define 
\begin{align}\label{defoabc}
O^{(a,b,c)}_{m,n,\theta,y,\bm{\varsigma}}&=\sum_{x+k=\theta}(-1)^{k}\frac{[\beta_{a,b,c,\bm{\varsigma}}][\beta_{a,b,c,\bm{\varsigma}}+2]\cdots [\beta_{a,b,c,\bm{\varsigma}}+2k-2]}{[k]!}q^{\varsigma_{1}(x-y)}\nonumber\\
&\quad\quad\times q^{-\frac{y(y-1+2n+2c-2b+2k)}{2}-\frac{x(x+1+2m+2b-4c-2a-2y)}{2}}\qbinom{n-b+c+\theta}{x}\qbinom{m-c+y}{y}\in \mathcal{A}.
\end{align}
Once we focus on a fixed $\bm{\varsigma}$, $O^{(a,b,c)}_{m,n,\theta,y,\bm{\varsigma}}$ can be abbreviated as $O^{(a,b,c)}_{m,n,\theta,y}$. When $(\theta,y)=(a,c)$, $O^{(a,b,c)}_{m,n,\theta,y}$ can be abbreviated as $O^{(a,b,c)}_{m,n}$. Next, we list some recursions about $O_{m,n,\theta,y}$ that we just defined.
\begin{lem} \label{lemrecursiveo}
When $b\geq a+c$, for any $m,n,a,b,c\in \mathbb{Z}_{\geq 0}$, $0\leq x\leq a$, $0\leq \theta\leq c$, we have 
$$
O^{(a-1,b,c+1)}_{m+1,n-1,\theta,y}=O^{(a,b,c)}_{m,n,\theta,y},
\qquad
O^{(a,b,c)}_{m,n,a,y}=O^{(a,b+1,c+1)}_{m+1,n,a,y},
\qquad
O^{(a,b,c)}_{m,n,\theta,y}=O^{(\theta,b-a-c+\theta+y,y)}_{m-c+y,n-a+\theta,\theta,y},
$$
and
\begin{align}\label{oandomega}
{\qbinom{m+b-a-c}{c}O^{a,b,c}_{m,n,\bm{\varsigma}}=\qbinom{m}{c}}\Omega^{c,a+c,a}_{n-b+c+a,m+b-a-c,1-\bm{\varsigma}}.
\end{align}
\end{lem}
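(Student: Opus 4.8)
The plan is to obtain all four identities by direct comparison of the defining sums, in the spirit of the proof of Lemma~\ref{lemrecursive} for $\Omega$. The first three are reparametrization invariances of the formula \eqref{defoabc}. For $O^{(a-1,b,c+1)}_{m+1,n-1,\theta,y}=O^{(a,b,c)}_{m,n,\theta,y}$ I would substitute $(a,b,c,m,n)\mapsto(a-1,b,c+1,m+1,n-1)$; for $O^{(a,b,c)}_{m,n,a,y}=O^{(a,b+1,c+1)}_{m+1,n,a,y}$, substitute $(b,c,m)\mapsto(b+1,c+1,m+1)$ (with $\theta=a$, as stated); and for $O^{(a,b,c)}_{m,n,\theta,y}=O^{(\theta,\,b-a-c+\theta+y,\,y)}_{m-c+y,\,n-a+\theta,\,\theta,\,y}$, substitute $(a,b,c,m,n)\mapsto(\theta,\,b-a-c+\theta+y,\,y,\,m-c+y,\,n-a+\theta)$. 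In each case one checks that the five data $\beta_{a,b,c,\bm{\varsigma}}$, the shifts $2n+2c-2b$ and $2m+2b-4c-2a$ appearing in the two $q$-powers, and the upper entries $n-b+c+\theta$ and $m-c+y$ of the two $q$-binomials are simultaneously preserved; then the summation ranges match and the two sums agree term by term. These are the exact analogues of the three relations in Lemma~\ref{lemrecursive}, so the verification is the same routine bookkeeping and I would not reproduce it in full.

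The substantive identity is \eqref{oandomega}, which I read as a duality between $O$ (built from $\beta_{a,b,c,\bm{\varsigma}}$) and $\Omega$ (built from $\alpha$) under the parameter swap $\bm{\varsigma}\leftrightarrow 1-\bm{\varsigma}$. First I would simplify both sides. On the left, $O^{a,b,c}_{m,n,\bm{\varsigma}}=O^{(a,b,c)}_{m,n,a,c,\bm{\varsigma}}$ has second $q$-binomial $\qbinom{m-c+c}{c}=\qbinom{m}{c}$, which is independent of the summation index and factors out, leaving a sum over $x+k=a$ whose remaining $q$-binomial is $\qbinom{n-b+c+a}{x}$ and whose exponent is $q^{\varsigma_1(x-c)}$ times a quadratic in $(x,k)$. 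Expanding $\Omega^{c,a+c,a}_{n-b+c+a,\,m+b-a-c,\,1-\bm{\varsigma}}$ through \eqref{defomegaabc} (here the frozen index $x$ equals $c$ and the summation runs over $k+y=a$), the first $q$-binomial collapses to $\qbinom{m+b-a-c}{c}$ and factors out, leaving a sum over $k+y=a$ with remaining $q$-binomial $\qbinom{n-b+c+a}{y}$. Thus, after cancelling the common prefactor $\qbinom{m}{c}\qbinom{m+b-a-c}{c}$ — which is exactly what the two binomials $\qbinom{m+b-a-c}{c}$ and $\qbinom{m}{c}$ in \eqref{oandomega} supply — the claim becomes an equality of two finite sums indexed by pairs summing to $a$.

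The key preliminary observation is that $\alpha_{a+c,a,1-\bm{\varsigma}}=\beta_{a,b,c,\bm{\varsigma}}$, which uses only $\varsigma_1+\varsigma_2=1$; this makes the $q$-integer blocks $[\beta][\beta+2]\cdots[\beta+2k-2]$ and the signs $(-1)^k$ of the two summands agree for equal $k$, and it also identifies the factor $q^{(1-\varsigma_2)(y-c)}=q^{\varsigma_1(y-c)}$ on the $\Omega$ side with $q^{\varsigma_1(x-c)}$ on the $O$ side once $x$ is frozen at $c$. Pairing the two sums via $x\leftrightarrow y$, the identity then reduces to the single polynomial identity that the two quadratic $q$-exponents in $(x,k)$ and $(y,k)$ coincide; expanding both and simplifying, everything cancels (the only terms not obviously equal are the $cy$-terms, and these cancel as well). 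Finally, since $\beta_{a,b,c,\bm{\varsigma}}$ is even throughout this subsection, so is $\alpha_{a+c,a,1-\bm{\varsigma}}$, so the $\Omega$-formula applies without caveat.

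I expect the only real obstacle to be this last exponent-matching step inside \eqref{oandomega}: it is a straightforward but slightly lengthy quadratic identity, and the one place where care is needed is in tracking the $\bm{\varsigma}$-dependence when passing from $\beta$ to $\alpha$ (via $\varsigma_1=1-\varsigma_2$) and in keeping straight which of the two indices is frozen on each side — a stray sign or shift could easily slip in otherwise. Everything else is the same term-by-term bookkeeping already carried out for $\Omega$ in Lemma~\ref{lemrecursive}.
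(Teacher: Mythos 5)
Your proposal is correct and takes essentially the same route as the paper: the paper likewise proves only \eqref{oandomega} in detail, using precisely the key observation $\alpha_{a+c,a,1-\bm{\varsigma}}=\beta_{a,b,c,\bm{\varsigma}}$ together with the fact that the $q$-binomial $\qbinom{m}{c}$ on the $O$-side and $\qbinom{m+b-a-c}{c}$ on the $\Omega$-side are frozen and can be pulled out, reducing to a term-by-term match of the two sums. Your explicit verification of the first three reparametrization identities (which the paper omits as routine) and your identification of the $cy$-cancellation in the exponent check are both accurate.
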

\begin{proof}
See \ref{app5}.
\end{proof}
Recall condition $n-b+c\geq 0$, together with $b\geq a+c$, tell us that $n-a+\theta\geq0$. 
From now on, we allow the following abbreviation: 
\begin{align}\label{defoome1}
O^{a,b,c}_{m,n}=O^{(a,b,c)}_{m,n,a,c}=\sum_{x+k=a}o^{(a,b,c)}_{m,n,x,k},
\end{align}
where
\begin{align}\label{defoome2}
o^{(a,b,c)}_{m,n,x,k}&:=(-1)^{k}\frac{[\beta_{a,b,c,\bm{\varsigma}}][\beta_{a,b,c,\bm{\varsigma}}+2]\cdots [\beta_{a,b,c,\bm{\varsigma}}+2k-2]}{[k]!}q^{\varsigma_{1}(x-c)}\nonumber\\
&\quad\quad\times q^{-\frac{c(-1+2n+3c-2b+2k)}{2}-\frac{x(x+1+2m+2b-4c-2a-2y)}{2}}\qbinom{n-b+c+a}{x}\qbinom{m}{c}.
\end{align}

From \eqref{oandomega}, if $b>a+c$, we have $O^{(a,b,c)}_{m,n}\in q^{-1}\mathbb{Z}[q^{-1}]$. This leads to the following theorem:
\begin{thm}\label{thmm512}
Let $m,n,a,b,c\in \mathbb{Z}_{\geq 0}$ such that $m\geq c$, $n-b+c\geq 0$, $\beta_{a,b,c,\bm{\varsigma}}>0$, $\beta_{a,b,c,\bm{\varsigma}}$ is even, and $b>a+c$, then
\begin{align}\label{generalabc55}
B^{(a)}_{1}B^{(b)}_{2}B^{(c)}_{1}\eta&=[\beta_{a,b,c,\bm{\varsigma}}]\sum_{\substack{k \ge 0 \\ k\equiv 0 \pmod 2}}\frac{[\beta_{a,b,c,\bm{\varsigma}}-k+2][\beta_{a,b,c,\bm{\varsigma}}-k+4]\cdots [\beta_{a,b,c,\bm{\varsigma}}+k-2]}{[k]!}\mathfrak{B}_{2}^{a-k,b-k,c}\nonumber\\
&\quad\quad+\sum_{\substack{k \ge 0 \\ k\equiv 1 \pmod 2}}\frac{[\beta_{a,b,c,\bm{\varsigma}}-k+1][\beta_{a,b,c,\bm{\varsigma}}-k+3]\cdots [\beta_{a,b,c,\bm{\varsigma}}+k-1]}{[k]!}\mathfrak{B}_{2}^{a-k,b-k,c},
\end{align}
\begin{align}\label{generalabc66}
\mathfrak{B}_{2}^{(a,b,c)}=\sum_{0\leq k\leq a}(-1)^{k}\frac{[\beta_{a,b,c,\bm{\varsigma}}][\beta_{a,b,c,\bm{\varsigma}}+2]\cdots [\beta_{a,b,c,\bm{\varsigma}}+2k-2]}{[k]!}B_{1}^{(a-k)}B^{(b-k)}_{2}B^{(c)}_{1}\eta. 
\end{align}
\end{thm}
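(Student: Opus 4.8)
The plan is to prove the closed expansion \eqref{generalabc66} of $\mathfrak{B}_2^{(a,b,c)}$ in the monomial basis, and then to deduce the inverse expansion \eqref{generalabc55} from it exactly as in Proposition \ref{inverse}. For \eqref{generalabc66} I would first substitute Lemma \ref{prop59} into the right-hand side, rewriting each $B_1^{(a-k)}B_2^{(b-k)}B_1^{(c)}\eta$ in the $F$-divided-power basis of $L(m,n)$; collecting the coefficient of $F_1^{(a-\theta)}F_2^{(b-\theta-y)}F_1^{(c-y)}\eta$, the inner sum over $k$ regroups, by construction, into the polynomial $O^{(a,b,c)}_{m,n,\theta,y,\bm{\varsigma}}$ of \eqref{defoabc} --- the $k$-dependence in the quadratic exponents of \eqref{c2} cancels, leaving only the index bookkeeping $\theta=x+k$. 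Thus the right-hand side of \eqref{generalabc66} equals $\sum_{0\le\theta\le a}\sum_{0\le y\le c}O^{(a,b,c)}_{m,n,\theta,y}\,F_1^{(a-\theta)}F_2^{(b-\theta-y)}F_1^{(c-y)}\eta$, and it is $\psi^{\imath}$-invariant since each $B_i^{(r)}$ and each coefficient (a $\pm$ monomial in $q$-integers divided by $[k]!$) is bar-invariant.

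Next I would verify the triangularity required by the defining property and uniqueness of the icanonical basis \eqref{icb1}: the top coefficient $O^{(a,b,c)}_{m,n,0,0}$ equals $1$ by inspection of \eqref{defoabc}, and for $(\theta,y)\neq(0,0)$ we need $O^{(a,b,c)}_{m,n,\theta,y}\in q^{-1}\mathbb{Z}[q^{-1}]$. Here the collapsing recursion $O^{(a,b,c)}_{m,n,\theta,y}=O^{(\theta,\,b-a-c+\theta+y,\,y)}_{m-c+y,\,n-a+\theta,\theta,y}$ of Lemma \ref{lemrecursiveo} rewrites this coefficient as a fully collapsed $O^{a',b',c'}_{m',n'}$ whose gap $b'-a'-c'=b-a-c$ is positive; since $b>a+c$ is assumed, this is precisely the integrality statement recorded just before the theorem --- a consequence of \eqref{oandomega}, the degree bounds for $\Omega^{c,a+c,a}$ from Lemmas \ref{degomega} and \ref{lem36}, and Lemma \ref{lemdegqbinom} (the extra $q$-binomial ratio in \eqref{oandomega} lowers the degree by $c'(b-a-c)>0$). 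This gives \eqref{generalabc66}.

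Finally, \eqref{generalabc55} follows from \eqref{generalabc66} by the combinatorial argument of Proposition \ref{inverse}: the two expansions are mutually inverse unitriangular transition matrices between $\{\mathfrak{B}_2^{(a-k,b-k,c)}\}_{0\le k\le a}$ and $\{B_1^{(a-k)}B_2^{(b-k)}B_1^{(c)}\eta\}_{0\le k\le a}$, with entries built from the balanced strings $[\beta_{a,b,c,\bm{\varsigma}}],[\beta_{a,b,c,\bm{\varsigma}}\pm 1],\dots$; substituting one into the other and using the $q$-binomial identity of Lemma \ref{lem1} kills every off-diagonal coefficient. As the remark after Proposition \ref{inverse} points out, this is insensitive to the value of the exponent and to which index is varied; equivalently, \eqref{generalabc55}--\eqref{generalabc66} can be transported from Theorems \ref{thmm59} and \ref{thmm510} via the symmetry $c^{a,b,c}_{\tau i,\bm{\varsigma},\bm{m},x,y}=c^{a,b,c}_{i,\,1-\bm{\varsigma},\,\bm{m}',x,y}$ of Proposition \ref{prop32}(2), i.e.\ \eqref{oandomega} at the polynomial level. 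I expect no serious obstacle in the theorem itself: the one delicate point, the integrality of the collapsed $O^{a',b',c'}_{m',n'}$, has already been reduced via \eqref{oandomega} to $\Omega$-side degree estimates proved in the appendices, so the work here is purely organizational.
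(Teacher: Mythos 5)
Your plan matches the paper's (quite terse) argument step for step: substitute Lemma \ref{prop59} into the right side of \eqref{generalabc66}, regroup the coefficients of $F_1^{(a-\theta)}F_2^{(b-\theta-y)}F_1^{(c-y)}\eta$ into $O^{(a,b,c)}_{m,n,\theta,y}$, collapse with the third identity of Lemma \ref{lemrecursiveo} (which preserves the gap $b-a-c$ and the value of $\beta$), pass to the $\Omega^{c,a+c,a}$ side by \eqref{oandomega}, and invoke the already-established degree bounds to obtain $O^{(a,b,c)}_{m,n,\theta,y}\in q^{-1}\mathbb{Z}[q^{-1}]$ for $(\theta,y)\neq(0,0)$; the inverse expansion \eqref{generalabc55} then comes from the $q$-binomial cancellation of Proposition \ref{inverse}, exactly as you say. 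Two small imprecisions worth flagging: the parenthetical heuristic that the binomial ratio "lowers the degree by $c'(b-a-c)>0$" fails when $c'=y=0$ (there the ratio is $1$, and the strict negativity instead comes from $\Omega^{0,a',a'}$ being off-diagonal, i.e.\ $x=0\neq\theta=a'$, in Proposition \ref{propaeqb+c}), and the relevant degree estimates for $\Omega^{c,a+c,a}$ with general $a,c$ live in Lemma \ref{degomega}, Lemmas \ref{lemrecursive1}--\ref{lemdegg}, and the proof of Proposition \ref{propaeqb+c} rather than in Lemma \ref{lem36}, which treats only the $a=0$ case.
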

If $b=a+c$, we have $O^{(a,a+c,c)}_{m,n}=\Omega^{c,c+a,a}_{n,m}$. This leads to the following theorem.
\begin{thm}\label{thmm513}
Let $m,n,a,b,c\in \mathbb{Z}_{\geq 0}$ such that $m\geq c$, $n-b+c\geq 0$, $\beta_{a,b,c,\bm{\varsigma}}>0$, $\beta_{a,b,c,\bm{\varsigma}}$ is even, and $b=a+c$, then
\begin{align}\label{generalabc229}
\sum_{0\leq l\leq\min\{a,c\}}\mathfrak{B}_{2}^{(a-l,b-2l,c-l)}=\sum_{0\leq k\leq c}(-1)^{k}\frac{[\beta_{a,b,c,\bm{\varsigma}}][\beta_{a,b,c,\bm{\varsigma}}+2]\cdots [\beta_{a,b,c,\bm{\varsigma}}+2k-2]}{[k]!}B_{1}^{(a-k)}B^{(b-k)}_{2}B^{(c)}_{1}\eta,
\end{align}
\begin{align}\label{generalabc225}
\mathfrak{B}_{2}^{(a,b,c)}&=\sum_{0\leq k\leq a}(-1)^{k}\frac{[\beta_{a,b,c,\bm{\varsigma}}][\beta_{a,b,c,\bm{\varsigma}}+2]\cdots [\beta_{a,b,c,\bm{\varsigma}}+2k-2]}{[k]!}B_{1}^{(a-k)}B^{(b-k)}_{2}B^{(c)}_{1}\eta\nonumber\\
&\quad\quad -\sum_{0\leq k\leq a-1}\bigg((-1)^{k}\frac{[\beta_{a,b,c,\bm{\varsigma}}][\beta_{a,b,c,\bm{\varsigma}}+2]\cdots [\beta_{a,b,c,\bm{\varsigma}}+2k-2]}{[k]!}\nonumber\\
&\quad\quad\quad\quad\quad\quad\times B_{1}^{(a-1-k)}B^{(b-2-k)}_{2}B^{(c-1)}_{1}\eta\bigg),
\end{align}
\begin{align}\label{generalabc22333}
& B^{(a)}_{1}B^{(b)}_{2}B^{(c)}_{1}\eta
\notag \\
& =[\beta_{a,b,c,\bm{\varsigma}}]\sum_{\substack{k \ge 0 
\\ 
k\equiv 0 \pmod 2}}\sum_{0\leq l\leq\min\{a-k,c\}}\frac{[\beta_{a,b,c,\bm{\varsigma}}-k+2][\beta_{a,b,c,\bm{\varsigma}}-k+4]\cdots[\beta_{a,b,c,\bm{\varsigma}}+k-2]}{[k]!}\nonumber
\\
&\quad\quad\quad\quad\quad\quad\quad \times \mathfrak{B}_{2}^{a-k-l,b-k-2l,c-l}\nonumber
\\
&\quad\quad+\sum_{\substack{k\geq 0 \\ k\equiv 1 \pmod 2}}\sum_{0\leq l\leq\min\{a-k,c\}}\frac{[\beta_{a,b,c,\bm{\varsigma}}-k+1][\beta_{a,b,c,\bm{\varsigma}}-k+3]\cdots [\beta_{a,b,c,\bm{\varsigma}}+k-1]}{[k]!}\nonumber
\\
&\quad\quad\quad\quad\quad\quad\quad \times \mathfrak{B}_{2}^{a-k-l,b-k-2l,c-l}.
\end{align}
\end{thm}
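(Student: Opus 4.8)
The plan is to replay, on the $\beta$-side, the three-step template already used for the $\alpha$-even, $b=a+c$ case in Proposition~\ref{propaeqb+c} and Theorems~\ref{thmm59}, \ref{thmm510}, feeding in all the integrality input through the symmetry identity~\eqref{oandomega} of Lemma~\ref{lemrecursiveo}. First I would establish the ``diagonal'' identity~\eqref{generalabc229}; then deduce the icanonical-to-monomial expansion~\eqref{generalabc225} from the telescoping relation~\eqref{thm411}; finally deduce the monomial-to-icanonical expansion~\eqref{generalabc22333} by inverting~\eqref{generalabc229} exactly as in Proposition~\ref{inverse}.

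For~\eqref{generalabc229}: expanding its right-hand side with Lemma~\ref{prop59} rewrites it as $\sum_{0\le\theta\le a}\sum_{0\le y\le c}O^{(a,b,c)}_{m,n,\theta,y}\,F^{(a-\theta)}_{1}F^{(b-\theta-y)}_{2}F^{(c-y)}_{1}\eta$ with the coefficients of~\eqref{defoabc}. Since $\mathbb{B}^{a,b,c}_{2}=\sum_{0\le l\le\min\{a,c\}}\mathfrak{B}^{(a-l,b-2l,c-l)}_{2}$ is $\psi^{\imath}$-invariant with leading terms exactly the $F^{(a-l)}_{1}F^{(b-2l)}_{2}F^{(c-l)}_{1}\eta$, uniqueness of the icanonical basis reduces the claim to: $O^{(a,b,c)}_{m,n,\theta,y}\in 1+q^{-1}\mathbb{Z}[q^{-1}]$ if $\theta=y$, and $O^{(a,b,c)}_{m,n,\theta,y}\in q^{-1}\mathbb{Z}[q^{-1}]$ if $\theta\ne y$. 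The third identity in Lemma~\ref{lemrecursiveo} (using $b=a+c$) reduces $O^{(a,b,c)}_{m,n,\theta,y}$ to $O^{a',a'+c',c'}_{m',n'}$ for admissible smaller parameters, and then~\eqref{oandomega} — whose binomial prefactors become equal once $b=a+c$ — identifies $O^{a',a'+c',c'}_{m',n'}$ with $\Omega^{c',a'+c',a'}_{n',m',1-\bm{\varsigma}}$. The transposed parameters again satisfy ``$b=a+c$'', and since $\varsigma_1+\varsigma_2=1$ the relevant $\alpha$ for these parameters coincides with $\beta_{a,b,c,\bm{\varsigma}}$, so it is again positive and even. Hence the case analysis inside the proof of Proposition~\ref{propaeqb+c} — $a'>c'$ via Lemma~\ref{degomega}; $a'<c'$ via Lemmas~\ref{lemrecursive1} and~\ref{lemdegg}; $a'=c'$ via Lemma~\ref{lemrecursive1} together with Lemma~\ref{lemdegqbinom} for the top coefficient — applies verbatim and delivers the two required statements.

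The remaining two identities are then formal. Identity~\eqref{generalabc225} follows from $\mathfrak{B}^{(a,b,c)}_{2}=\mathbb{B}^{a,b,c}_{2}-\mathbb{B}^{a-1,b-2,c-1}_{2}$ (see~\eqref{thm411}) by applying~\eqref{generalabc229} to each term, using that $\beta_{a-1,b-2,c-1,\bm{\varsigma}}=\beta_{a,b,c,\bm{\varsigma}}$ so both summands carry the same $\beta$. For~\eqref{generalabc22333}, note that~\eqref{generalabc229} gives a uni-triangular transition from $\{\mathbb{B}^{a-k,b-k,c}_{2}\}_k$ to $\{B^{(a-k)}_{1}B^{(b-k)}_{2}B^{(c)}_{1}\eta\}_k$; its inverse has the shape of~\eqref{generalabc55}, the combinatorial verification being exactly that of Proposition~\ref{inverse}, which (as the remark after it notes) is insensitive to the value of the integer involved. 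Expanding the definition~\eqref{defofb} of $\mathbb{B}$ in the resulting formula yields~\eqref{generalabc22333}. The case $i=2$ is entirely parallel.

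I expect the first step to be the genuine obstacle. The work there is to pin down the $\imath$-symmetry $(a,b,c;m,n;\bm{\varsigma})\leftrightarrow(c,b,a;n,m;1-\bm{\varsigma})$ — already visible in Proposition~\ref{prop32}(2) and encapsulated in~\eqref{oandomega} — precisely enough to see that $O^{a,a+c,c}_{m,n}$ really is the very $\Omega$ whose integrality Proposition~\ref{propaeqb+c} settled, with careful attention to the boundary case $a'=c'$: there one must know that the top-degree coefficient of the governing $q$-binomial is exactly $1$ (Lemma~\ref{lemdegqbinom}), so that the coefficient lands in $1+q^{-1}\mathbb{Z}[q^{-1}]$ rather than merely in $\mathcal{A}$. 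Everything downstream of that reduction is bookkeeping.
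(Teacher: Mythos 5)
Your proposal is correct and follows precisely the route the paper intends: the crucial step is the $b=a+c$ specialization of \eqref{oandomega}, giving $O^{a,a+c,c}_{m,n,\bm{\varsigma}}=\Omega^{c,a+c,a}_{n,m,1-\bm{\varsigma}}$, after which one checks that under the swap $(a,b,c;m,n;\bm{\varsigma})\leftrightarrow(c,b,a;n,m;1-\bm{\varsigma})$ the hypotheses ($m\ge c$, $n-b+c\ge 0$, $\alpha$ even and positive) transpose exactly to the $\beta$-hypotheses, so that Proposition~\ref{propaeqb+c} and Theorems~\ref{thmm59}, \ref{thmm510} apply verbatim. This is exactly the argument indicated by the paper's sentence preceding Theorem~\ref{thmm513}.
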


\subsection{Case for $B_{2}^{(a)}B_{1}^{(b)}B_{2}^{(c)}\eta$ and $\mathfrak{B}^{(a,b,c)}_{1}$}
Recall \eqref{icb1}, for each $a,b,c\geq 0$, denote $\mathfrak{B}^{(a,b,c)}_{1}$ as the unique element in $L(m,n)$ such that $\psi^{\imath}(\mathfrak{B}^{(a,b,c)}_{1})=\mathfrak{B}^{(a,b,c)}_{1}$ and
\begin{align} 
\mathfrak{B}^{(a,b,c)}_{1}=F^{(a)}_{2}F^{(b)}_{1}F^{(c)}_{2}\eta+\sum_{\substack{x,y\geq 0 \\ (x,y)\not=(0,0)}}q^{-1}\mathbb{Z}[q^{-1}]F^{(a-x)}_{2}F^{(b-x-y)}_{1}F^{(c-y)}_{2}\eta.
\end{align}
All $\mathfrak{B}^{(a,b,c)}_{1}$ belong to the icanonical basis of $L(m,n)$. 
In this subsection, define 
\[
\hat{\alpha}_{b,c,\bm{\varsigma}}:=n-m+b-2c+\varsigma_{1},
\qquad
\hat{\beta}_{a,b,c,\bm{\varsigma}}:=m-n+a-2b+3c+\varsigma_{2}>0 
\]
and we only consider the case when $\hat{\alpha}_{b,c,\bm{\varsigma}}$ and $\hat{\beta}_{a,b,c,\bm{\varsigma}}$ are even. By symmetry, Theorems \ref{thmm59}, \ref{thmm510}, \ref{thmm512}, and \ref{thmm513} can be extended to the following:

\begin{thm}\label{thm5144}
Let $m,n,a,b,c\in \mathbb{Z}_{\geq 0}$ such that $n\geq c$, $m-b+c\geq 0$, $\hat{\alpha}_{b,c,\bm{\varsigma}}>0$, $\hat{\alpha}_{b,c,\bm{\varsigma}}$ is even, and $b>a+c$, then
\begin{align*}
B^{(a)}_{2}B^{(b)}_{1}B^{(c)}_{2}\eta&=[\hat{\alpha}_{b,c,\bm{\varsigma}}]\sum_{\substack{k \ge 0 \\ k\equiv 0 \pmod 2}}\frac{[\hat{\alpha}_{b,c,\bm{\varsigma}}-k+2]\cdots [\hat{\alpha}_{b,c,\bm{\varsigma}}+k-4][\hat{\alpha}_{b,c,\bm{\varsigma}}+k-2]}{[k]!}\mathfrak{B}_{1}^{a,b-k,c-k}\\
&\quad\quad+\sum_{\substack{k\geq 0 \\ k\equiv 1 \pmod 2}}\frac{[\hat{\alpha}_{b,c,\bm{\varsigma}}-k+1][\hat{\alpha}_{b,c,\bm{\varsigma}}-k+3]\cdots [\hat{\alpha}_{b,c,\bm{\varsigma}}+k-1]}{[k]!}\mathfrak{B}^{a,b-k,c-k}_{1}.
\end{align*}
\begin{align*}
\mathfrak{B}_{1}^{(a,b,c)}&=\sum_{0\leq k\leq c}(-1)^{k}\frac{[\hat{\alpha}_{b,c,\bm{\varsigma}}][\hat{\alpha}_{b,c,\bm{\varsigma}}+2]\cdots [\hat{\alpha}_{b,c,\bm{\varsigma}}+2k-2]}{[k]!}B_{2}^{(a)}B^{(b-k)}_{1}B^{(c-k)}_{2}\eta.
\end{align*} 
\end{thm}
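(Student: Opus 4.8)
The plan is to derive the two identities from Theorem~\ref{aeqb+c} (the case $b>a+c$, $\alpha_{b,c,\bm{\varsigma}}$ even) by the diagram symmetry interchanging the two nodes; once the $i=1$ statement is in hand the argument is purely formal. First I would record the symmetry at the algebra level. Let $\Phi$ be the $\mathbb{Q}(q)$-algebra automorphism of $\U$ determined by $E_{i}\mapsto E_{\tau i}$, $F_{i}\mapsto F_{\tau i}$, $K_{\mu}\mapsto K_{\tau\mu}$; it is well defined since $\tau i\cdot\tau j=i\cdot j$, it fixes $q$, and it commutes with the bar involution $\psi$ (both sides agree on generators and on $q$). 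A direct computation gives $\Phi(B_{i})=F_{\tau i}+q^{\varsigma_{i}}E_{i}K_{\tau i}^{-1}$, so $\Phi$ restricts to an isomorphism $\U^{\imath}_{\bm{\varsigma}}\to\U^{\imath}_{\tau\bm{\varsigma}}$ carrying $B_{i}\mapsto B_{\tau i}$ and $k_{i}\mapsto k_{i}^{-1}$, where $\tau\bm{\varsigma}=(\varsigma_{2},\varsigma_{1})$; under the standing convention $\bm{\varsigma}\in\{(1,0),(0,1)\}$ we have $\tau\bm{\varsigma}=1-\bm{\varsigma}$, which is again admissible, and $\Phi$ intertwines $\psi^{\imath}$ for $\bm{\varsigma}$ with $\psi^{\imath}$ for $\tau\bm{\varsigma}$.

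Next I would transport this to the modules $L(m,n)$. Twisting the $\U$-action by $\Phi$ identifies $L(m,n)$ with $L(n,m)$: there is a $\mathbb{Q}(q)$-linear isomorphism $\phi\colon L(m,n)\to L(n,m)$ with $\phi(u\cdot v)=\Phi(u)\cdot\phi(v)$ and $\phi(\eta)=\eta$, which is an isometry for the forms $(\cdot,\cdot)_{m,n}$, $(\cdot,\cdot)_{n,m}$ and intertwines the two $\imath$bar involutions. Since $\Phi$ permutes divided powers, $\phi$ sends $B_{2}^{(a)}B_{1}^{(b)}B_{2}^{(c)}\eta$ in $L(m,n)$ (parameter $\bm{\varsigma}$) to $B_{1}^{(a)}B_{2}^{(b)}B_{1}^{(c)}\eta$ in $L(n,m)$ (parameter $\tau\bm{\varsigma}$), and, because the $\imath$canonical basis of a simple module is characterized by $\psi^{\imath}$-invariance together with uni-triangularity against the monomials $F_{\tau i}^{(\ast)}F_{i}^{(\ast)}F_{\tau i}^{(\ast)}\eta$ with off-diagonal coefficients in $q^{-1}\mathbb{Z}[q^{-1}]$ (the defining property~\eqref{icb1}), it carries $\mathfrak{B}_{1}^{(a,b,c)}$ in $L(m,n)$ to $\mathfrak{B}_{2}^{(a,b,c)}$ in $L(n,m)$.

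Finally I would match the numerical parameters and conclude. For the module $L(n,m)$ with parameter $\tau\bm{\varsigma}$ the quantity called $\alpha_{b,c,\bm{\varsigma}}$ in Theorem~\ref{aeqb+c} becomes $n-m+b-2c+\varsigma_{1}=\hat{\alpha}_{b,c,\bm{\varsigma}}$, and the hypotheses $m\ge c$, $n-b+c\ge0$, $\alpha_{b,c,\bm{\varsigma}}>0$, $\alpha_{b,c,\bm{\varsigma}}$ even, $b>a+c$ translate precisely into $n\ge c$, $m-b+c\ge0$, $\hat{\alpha}_{b,c,\bm{\varsigma}}>0$, $\hat{\alpha}_{b,c,\bm{\varsigma}}$ even, $b>a+c$. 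Thus Theorem~\ref{aeqb+c} supplies, on $L(n,m)$, the expansion of $B_{1}^{(a)}B_{2}^{(b)}B_{1}^{(c)}\eta$ in terms of the $\mathfrak{B}_{2}$'s and that of $\mathfrak{B}_{2}^{(a,b,c)}$ in terms of the $B_{1}B_{2}B_{1}$-monomials, with all coefficients built from $\hat{\alpha}_{b,c,\bm{\varsigma}}$; applying the $\mathbb{Q}(q)$-linear map $\phi^{-1}$, which does not affect the scalar coefficients, converts these verbatim into the two displayed formulas. The only step requiring genuine care, hence the main (but routine) obstacle, is the claim in the second paragraph that $\phi$ preserves the $\imath$canonical basis, i.e.\ that $\Phi$ intertwines the two versions of $\psi^{\imath}$ and sends the normalization in~\eqref{icb1} for $i=1$ to that for $i=2$; uniqueness of the $\imath$canonical basis then finishes it. Alternatively one can bypass modules entirely: substitute Lemma~\ref{prop59} into the right-hand side of the second displayed identity (the one for $\mathfrak{B}_{1}^{(a,b,c)}$) and invoke Proposition~\ref{prop32}(2) to reduce the required $q^{-1}\mathbb{Z}[q^{-1}]$-bound on the transition coefficients to the bound already proved for the $B_{1}B_{2}B_{1}$-case in Theorem~\ref{aeqb+c}, the companion identity following from the proof of Proposition~\ref{inverse} word for word.
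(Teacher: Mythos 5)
Your proposal is correct and matches the paper's intended approach. The paper dispatches Theorems \ref{thm5144}--\ref{thm51717} with a single sentence ("by symmetry"), and your proof supplies exactly the content that sentence leaves implicit: the automorphism $\Phi$ of $\U$ swapping nodes $1\leftrightarrow 2$, the induced isomorphism $L(m,n)\to L(n,m)$ twisting $\U^{\imath}_{\bm\varsigma}$ to $\U^{\imath}_{\tau\bm\varsigma}$ with $B_i\mapsto B_{\tau i}$, and the bookkeeping identifying $\alpha_{b,c,\tau\bm\varsigma}$ on $L(n,m)$ with $\hat\alpha_{b,c,\bm\varsigma}$ together with the hypothesis translation $m\ge c\mapsto n\ge c$, $n-b+c\ge0\mapsto m-b+c\ge0$. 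The one step you rightly flag as needing care — that $\phi$ carries $\mathfrak{B}_1^{(a,b,c)}$ in $L(m,n)$ to $\mathfrak{B}_2^{(a,b,c)}$ in $L(n,m)$, i.e.\ that $\Phi$ intertwines the two module-level $\psi^{\imath}$'s — is the only nontrivial verification, and your alternative route via Lemma~\ref{prop59} and Proposition~\ref{prop32}(2) (working purely with the transition coefficients $c^{a,b,c}_{1,\bm\varsigma,\bm m,x,y}=c^{a,b,c}_{2,1-\bm\varsigma,\bm m',x,y}$ and then citing the $q^{-1}\mathbb{Z}[q^{-1}]$ bound from Theorem~\ref{aeqb+c}) sidesteps that verification entirely and is arguably the cleaner reduction. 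Either version is a sound filling-in of the paper's "by symmetry".
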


\begin{thm}\label{thm51515}
Let $m,n,a,b,c\in \mathbb{Z}_{\geq 0}$ such that $n\geq c$, $m-b+c\geq 0$, $\hat{\alpha}_{b,c,\bm{\varsigma}}>0$, $\hat{\alpha}_{b,c,\bm{\varsigma}}$ is even, and $b=a+c$, then
\begin{align*}
B^{(a)}_{2}B^{(b)}_{1}B^{(c)}_{2}\eta=&[\hat{\alpha}_{b,c,\bm{\varsigma}}]\sum_{\substack{k \ge 0 \\ k\equiv 0 \pmod 2}}\sum_{0\leq l\leq\min\{a,c-k\}}\frac{[\hat{\alpha}_{b,c,\bm{\varsigma}}-k+2][\hat{\alpha}_{b,c,\bm{\varsigma}}-k+4]\cdots[\hat{\alpha}_{b,c,\bm{\varsigma}}+k-2]}{[k]!}\nonumber\\
&\quad\quad\quad\quad\quad\quad\quad \times \mathfrak{B}_{1}^{a-l,b-k-2l,c-k-l}\nonumber\\
&\quad\quad+\sum_{\substack{k\geq 0 \\ k\equiv 1 \pmod 2}}\sum_{0\leq l\leq\min\{a,c-k\}}\frac{[\hat{\alpha}_{b,c,\bm{\varsigma}}-k+1][\hat{\alpha}_{b,c,\bm{\varsigma}}-k+3]\cdots [\hat{\alpha}_{b,c,\bm{\varsigma}}+k-1]}{[k]!}\nonumber\\
&\quad\quad\quad\quad\quad\quad\quad \times \mathfrak{B}_{1}^{a-l,b-k-2l,c-k-l}.
\end{align*}
\begin{align*}
\mathfrak{B}_{1}^{(a,b,c)}&=\sum_{0\leq k\leq c}(-1)^{k}\frac{[\hat{\alpha}_{b,c,\bm{\varsigma}}][\hat{\alpha}_{b,c,\bm{\varsigma}}+2]\cdots [\hat{\alpha}_{b,c,\bm{\varsigma}}+2k-2]}{[k]!}B_{2}^{(a)}B^{(b-k)}_{1}B^{(c-k)}_{2}\eta\nonumber\\
&\quad\quad -\sum_{0\leq k\leq c-1}\bigg((-1)^{k}\frac{[\hat{\alpha}_{b,c,\bm{\varsigma}}][\hat{\alpha}_{b,c,\bm{\varsigma}}+2]\cdots [\hat{\alpha}_{b,c,\bm{\varsigma}}+2k-2]}{[k]!}\nonumber\\
&\quad\quad\quad\quad\quad\quad\times B_{2}^{(a-1)}B^{(b-2-k)}_{1}B^{(c-1-k)}_{2}\eta\bigg).
\end{align*}
\end{thm}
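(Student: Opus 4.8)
The plan is to obtain Theorem~\ref{thm51515} directly from Theorems~\ref{thmm59} and \ref{thmm510} by transporting those statements along the diagram automorphism $\tau$, as already anticipated in Remark~\ref{iandtaui}. First I would record that $\tau\colon\mathbb{I}\to\mathbb{I}$ lifts to a $\mathbb{Q}(q)$-algebra automorphism of $\U$ determined by $E_i\mapsto E_{\tau i}$, $F_i\mapsto F_{\tau i}$, $K_\mu\mapsto K_{\tau(\mu)}$; this is well defined precisely because $\tau i\cdot\tau j=i\cdot j$, so that $\langle h_i,\alpha_j\rangle=\langle h_{\tau i},\alpha_{\tau j}\rangle$. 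This automorphism sends the highest-weight module $L(m,n)$ to $L(n,m)$ (identifying highest weight vectors), carries $F^{(a)}_1F^{(b)}_2F^{(c)}_1\eta$ to $F^{(a)}_2F^{(b)}_1F^{(c)}_2\eta$, and, since $B_i=F_i+q^{\varsigma_i}E_{\tau i}K_i^{-1}$, intertwines the iquantum group with parameter $\bm{\varsigma}=(\varsigma_1,\varsigma_2)$ with the one having the swapped parameter $(\varsigma_2,\varsigma_1)$, matching $B_1\leftrightarrow B_2$; in particular it sends $B^{(a)}_1B^{(b)}_2B^{(c)}_1\eta$ to $B^{(a)}_2B^{(b)}_1B^{(c)}_2\eta$. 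Since $\tau$ is compatible with the bar involutions $\psi$ and $\psi^{\imath}$, the defining uniqueness property \eqref{icb1} forces it to send the icanonical basis vector $\mathfrak{B}^{(a,b,c)}_2$ of $L(m,n)$ to the icanonical basis vector $\mathfrak{B}^{(a,b,c)}_1$ of $L(n,m)$.

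Next I would track the numerical data through the substitution $(m,n)\mapsto(n,m)$, $(\varsigma_1,\varsigma_2)\mapsto(\varsigma_2,\varsigma_1)$: the quantity $\alpha_{b,c,\bm{\varsigma}}=m-n+b-2c+\varsigma_2$ turns into $n-m+b-2c+\varsigma_1=\hat{\alpha}_{b,c,\bm{\varsigma}}$, the hypotheses $m\ge c$ and $n-b+c\ge 0$ of Theorems~\ref{thmm59} and \ref{thmm510} turn into $n\ge c$ and $m-b+c\ge 0$, and the requirements that $\alpha_{b,c,\bm{\varsigma}}$ be positive and even and that $b=a+c$ are unaffected; moreover $(\varsigma_1,\varsigma_2)\in\{(1,0),(0,1)\}$ is stable under the coordinate swap, so no new case distinction is needed. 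Applying $\tau$ to the two displayed identities of Theorems~\ref{thmm59} and \ref{thmm510}, with $m,n,\varsigma_1,\varsigma_2$ relabeled as above, then reproduces verbatim the two displayed formulas in the statement of Theorem~\ref{thm51515}.

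I expect this to involve no new analytic input beyond Theorems~\ref{thmm59} and \ref{thmm510}; the only point that will need care---and the closest thing to an obstacle---is verifying that every ingredient of those statements (the monomials, the icanonical basis vectors, the $q$-integer factors, the summation ranges, and the hypotheses) transforms consistently under $\tau$, which is exactly the bookkeeping outlined in the second paragraph. Should one prefer to avoid invoking the automorphism, the same conclusion can be reached by rerunning the recursions of Section~\ref{section5} with $i=2$ in place of $i=1$, using Lemma~\ref{prop59} together with the $\tau$-twisted analogues of the auxiliary lemmas of that section; but this merely duplicates the combinatorics and is unnecessary.
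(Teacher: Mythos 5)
Your argument is correct and is exactly the precise version of the paper's one-phrase justification ("By symmetry, Theorems \ref{thmm59}, \ref{thmm510}, \ref{thmm512}, and \ref{thmm513} can be extended to the following"), cf.\ Remark~\ref{iandtaui} and Proposition~\ref{prop32}(2), which already records the analogous symmetry of the coefficients $c^{a,b,c}_{\tau i,\bm{\varsigma},\bm{m},x,y}$ under the swap $1\leftrightarrow 2$, $(m,n)\leftrightarrow(n,m)$, $\bm{\varsigma}\leftrightarrow 1-\bm{\varsigma}$. Your bookkeeping of how $\alpha_{b,c,\bm{\varsigma}}\mapsto\hat\alpha_{b,c,\bm{\varsigma}}$ and the hypotheses transform under the diagram automorphism $\tau$ is right, so this is essentially the paper's own route, merely spelled out.
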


\begin{thm}\label{thm51616}
Let $m,n,a,b,c\in \mathbb{Z}_{\geq 0}$ such that $n\geq c$, $m-b+c\geq 0$, $\hat{\beta}_{a,b,c,\bm{\varsigma}}>0$, $\hat{\beta}_{a,b,c,\bm{\varsigma}}$ is even, and $b>a+c$, then
\begin{align*}
B^{(a)}_{2}B^{(b)}_{1}B^{(c)}_{2}\eta&=[\hat{\beta}_{a,b,c,\bm{\varsigma}}]\sum_{\substack{k \ge 0 \\ k\equiv 0 \pmod 2}}\frac{[\hat{\beta}_{a,b,c,\bm{\varsigma}}-k+2]\cdots [\hat{\beta}_{a,b,c,\bm{\varsigma}}+k-4][\hat{\beta}_{a,b,c,\bm{\varsigma}}+k-2]}{[k]!}\mathfrak{B}_{1}^{a-k,b-k,c}\\
&\quad\quad+\sum_{\substack{k\geq 0 \\ k\equiv 1 \pmod 2}}\frac{[\hat{\beta}_{a,b,c,\bm{\varsigma}}-k+1][\hat{\beta}_{a,b,c,\bm{\varsigma}}-k+3]\cdots [\hat{\beta}_{a,b,c,\bm{\varsigma}}+k-1]}{[k]!}\mathfrak{B}_{1}^{a-k,b-k,c}.
\end{align*}
\begin{align*}
\mathfrak{B}_{1}^{(a,b,c)}&=\sum_{0\leq k\leq a}(-1)^{k}\frac{[\hat{\beta}_{a,b,c,\bm{\varsigma}}][\hat{\beta}_{a,b,c,\bm{\varsigma}}+2]\cdots [\hat{\beta}_{a,b,c,\bm{\varsigma}}+2k-2]}{[k]!}B_{2}^{(a-k)}B^{(b-k)}_{1}B^{(c)}_{2}\eta.
\end{align*}
\end{thm}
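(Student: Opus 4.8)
The plan is to deduce Theorem~\ref{thm51616} from Theorem~\ref{thmm512} by the symmetry that interchanges the two vertices of the rank-one Satake diagram, so that no new computation is required. First I would realize the diagram involution $\tau$ as a $\mathbb{Q}(q)$-algebra automorphism of $\U$ (swapping $E_1\leftrightarrow E_2$, $F_1\leftrightarrow F_2$, $K_\mu\leftrightarrow K_{\tau\mu}$) and observe that it restricts to an isomorphism $\dot{\U}^{\imath}_{(\varsigma_1,\varsigma_2)}1_{\zeta}\xrightarrow{\ \sim\ }\dot{\U}^{\imath}_{(\varsigma_2,\varsigma_1)}1_{-\zeta}$ carrying $B_1\mapsto B_2$, $B_2\mapsto B_1$, $k_i\mapsto k_{\tau i}$, and intertwining $\psi^{\imath}$; at the module level the $\tau$-twist identifies $L(m,n)$ with $L(n,m)$ compatibly with highest weight vectors and with the symmetric bilinear forms. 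The relevant compatibility on structure constants is already recorded in Proposition~\ref{prop32}(2), namely $c^{a,b,c}_{\tau i,\bm{\varsigma},\bm{m},x,y}=c^{a,b,c}_{i,\bm{\varsigma}',\bm{m}',x,y}$ with $\bm{\varsigma}'=1-\bm{\varsigma}$ and $m'_i=m_{\tau i}$.

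Next I would verify that the three bases transform as expected under this isomorphism. The monomial generator $B_1^{(a)}B_2^{(b)}B_1^{(c)}\eta$ goes to $B_2^{(a)}B_1^{(b)}B_2^{(c)}\eta$ by construction, and $\mathfrak{B}^{(a,b,c)}_2$ goes to $\mathfrak{B}^{(a,b,c)}_1$ because the defining characterization \eqref{icb1} --- bar invariance together with uni-triangularity against $F^{(a)}_{\tau i}F^{(b)}_{i}F^{(c)}_{\tau i}\eta$ with off-diagonal coefficients in $q^{-1}\mathbb{Z}[q^{-1}]$ --- is preserved by a map that intertwines $\psi^{\imath}$ and permutes the corresponding $F$-monomials; the uniqueness clause in \eqref{icb1} is precisely what forces the image to be the named icanonical basis rather than merely some bar-invariant basis. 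Finally I would push the linear functional through the substitution $(m,n,\varsigma_1,\varsigma_2)\mapsto(n,m,\varsigma_2,\varsigma_1)$: one computes
\[
\beta_{a,b,c,\bm{\varsigma}}=n-m+a-2b+3c+\varsigma_1\ \longmapsto\ m-n+a-2b+3c+\varsigma_2=\hat{\beta}_{a,b,c,\bm{\varsigma}},
\]
the hypotheses $m\ge c$, $n-b+c\ge 0$, ``$\beta_{a,b,c,\bm{\varsigma}}>0$ and even'', $b>a+c$ turn into $n\ge c$, $m-b+c\ge 0$, ``$\hat{\beta}_{a,b,c,\bm{\varsigma}}>0$ and even'', $b>a+c$, and the right-hand sides of \eqref{generalabc55} and \eqref{generalabc66} become exactly the two displayed identities of the present theorem. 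Since $\{(1,0),(0,1)\}$ is stable under $(\varsigma_1,\varsigma_2)\mapsto(\varsigma_2,\varsigma_1)$, applying Theorem~\ref{thmm512} with the swapped parameters yields the claim for all admissible $\bm{\varsigma}$.

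As an alternative route and a sanity check, I would note that one can instead rerun the proof of Theorem~\ref{thmm512} verbatim with the vertex $1$ replaced by $2$: the only substantive input there was that $O^{(a,b,c)}_{m,n}\in q^{-1}\mathbb{Z}[q^{-1}]$ when $b>a+c$, coming from the identity \eqref{oandomega} relating $O$ to an $\Omega$ with permuted arguments, and its analogue for $B_2^{(a)}B_1^{(b)}B_2^{(c)}\eta$ holds by the $i\leftrightarrow\tau i$ symmetry already present in Lemma~\ref{prop59} and Lemma~\ref{lemrecursiveo}; uni-triangularity of the icanonical basis then pins down the formula, and the inverse formula follows as in Proposition~\ref{inverse}. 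I do not anticipate a genuine obstacle here, since the argument is purely formal once Theorem~\ref{thmm512} is in place; the only point requiring care is bookkeeping --- applying the parameter swap $(1,0)\leftrightarrow(0,1)$ and the highest weight swap $m\leftrightarrow n$ consistently throughout, and confirming that the $\tau$-twist genuinely sends each named basis to the named basis.
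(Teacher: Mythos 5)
Your proposal is correct and takes essentially the same approach as the paper. The paper proves Theorem~\ref{thm51616} (together with its companions \ref{thm5144}, \ref{thm51515}, \ref{thm51717}) by the single sentence ``By symmetry, Theorems~\ref{thmm59}, \ref{thmm510}, \ref{thmm512}, and \ref{thmm513} can be extended to the following,'' and what you have written out --- realizing $\tau$ as an automorphism of $\U$, noting that it carries $\Ui_{(\varsigma_1,\varsigma_2)}$ to $\Ui_{(\varsigma_2,\varsigma_1)}$, identifies $L(m,n)^\tau\cong L(n,m)$, intertwines the bar involutions, and hence sends each named basis to the named basis by the uniqueness clause in \eqref{icb1}, together with the bookkeeping $\beta_{a,b,c,\bm{\varsigma}}\mapsto\hat\beta_{a,b,c,\bm{\varsigma}}$ and $m\ge c,\ n-b+c\ge 0\ \mapsto\ n\ge c,\ m-b+c\ge 0$ --- is precisely the detail the paper leaves to the reader.
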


\begin{thm}\label{thm51717}
Let $m,n,a,b,c\in \mathbb{Z}_{\geq 0}$ such that $n\geq c$, $m-b+c\geq 0$, $\hat{\beta}_{a,b,c,\bm{\varsigma}}>0$, $\hat{\beta}_{a,b,c,\bm{\varsigma}}$ is even, and $b=a+c$, then
\begin{align*}
B^{(a)}_{2}B^{(b)}_{1}B^{(c)}_{2}\eta=&[\hat{\beta}_{a,b,c,\bm{\varsigma}}]\sum_{\substack{k \ge 0 \\ k\equiv 0 \pmod 2}}\sum_{0\leq l\leq\min\{a-k,c\}}\frac{[\hat{\beta}_{a,b,c,\bm{\varsigma}}-k+2][\hat{\beta}_{a,b,c,\bm{\varsigma}}-k+4]\cdots[\hat{\beta}_{a,b,c,\bm{\varsigma}}+k-2]}{[k]!}\nonumber\\
&\quad\quad\quad\quad\quad\quad\quad \times \mathfrak{B}_{1}^{a-k-l,b-k-2l,c-l}\nonumber\\
&\quad\quad+\sum_{\substack{k\geq 0 \\ k\equiv 1 \pmod 2}}\sum_{0\leq l\leq\min\{a-k,c\}}\frac{[\hat{\beta}_{a,b,c,\bm{\varsigma}}-k+1][\hat{\beta}_{a,b,c,\bm{\varsigma}}-k+3]\cdots [\hat{\beta}_{a,b,c,\bm{\varsigma}}+k-1]}{[k]!}\nonumber\\
&\quad\quad\quad\quad\quad\quad\quad \times \mathfrak{B}_{1}^{a-k-l,b-k-2l,c-l}.
\end{align*}
\begin{align*}
\mathfrak{B}_{1}^{(a,b,c)}&=\sum_{0\leq k\leq a}(-1)^{k}\frac{[\hat{\beta}_{a,b,c,\bm{\varsigma}}][\hat{\beta}_{a,b,c,\bm{\varsigma}}+2]\cdots [\hat{\beta}_{a,b,c,\bm{\varsigma}}+2k-2]}{[k]!}B_{2}^{(a-k)}B^{(b-k)}_{1}B^{(c)}_{2}\eta\nonumber\\
&\quad\quad -\sum_{0\leq k\leq a-1}\bigg((-1)^{k}\frac{[\hat{\beta}_{a,b,c,\bm{\varsigma}}][\hat{\beta}_{a,b,c,\bm{\varsigma}}+2]\cdots [\hat{\beta}_{a,b,c,\bm{\varsigma}}+2k-2]}{[k]!}\nonumber\\
&\quad\quad\quad\quad\quad\quad\times B_{2}^{(a-1-k)}B^{(b-2-k)}_{1}B^{(c-1)}_{2}\eta\bigg).
\end{align*}
\end{thm}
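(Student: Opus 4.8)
The plan is to deduce Theorem~\ref{thm51717} from Theorem~\ref{thmm513} by transporting the latter through the diagram symmetry $\tau\colon 1\leftrightarrow 2$; no new computation is required, since the analytic core --- the $q^{-1}\mathbb{Z}[q^{-1}]$-triangularity estimates --- was already carried out in the proof of Theorem~\ref{thmm513}. First I would introduce the $\mathbb{Q}(q)$-algebra automorphism $\sigma$ of $\U=\U_q(\mathfrak{sl}_3)$ given on generators by $E_i\mapsto E_{\tau i}$, $F_i\mapsto F_{\tau i}$, $K_\mu\mapsto K_{\tau\mu}$. One checks at once that $\sigma$ commutes with the bar involution $\psi$ and that $\sigma(B_i)=F_{\tau i}+q^{\varsigma_i}E_iK_{\tau i}^{-1}$; since $(\varsigma_1,\varsigma_2)\in\{(1,0),(0,1)\}$ we have $1-\bm{\varsigma}=(\varsigma_2,\varsigma_1)$, so $\sigma$ carries the rank-one $\imath$quantum group with parameter $\bm{\varsigma}$ onto the one with parameter $1-\bm{\varsigma}$, interchanging $B_1$ and $B_2$, and a short check gives $\sigma\circ\psi^{\imath}_{1-\bm{\varsigma}}=\psi^{\imath}_{\bm{\varsigma}}\circ\sigma$. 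As ${}^{\sigma}L(n,m)\cong L(m,n)$, I would fix a $\mathbb{Q}(q)$-linear isomorphism $\phi\colon L(n,m)\to L(m,n)$ with $\phi(\eta_{n,m})=\eta_{m,n}$ and $\phi(uv)=\sigma(u)\phi(v)$ for $u\in\U$; the previous identity then shows $\phi$ intertwines the bar involutions of the two modules.

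Next I would record the effect of $\phi$ on the bases. On monomials it is immediate: $\phi\bigl(F_1^{(a)}F_2^{(b)}F_1^{(c)}\eta_{n,m}\bigr)=F_2^{(a)}F_1^{(b)}F_2^{(c)}\eta_{m,n}$, and likewise $\phi\bigl(B_1^{(a)}B_2^{(b)}B_1^{(c)}\eta_{n,m}\bigr)=B_2^{(a)}B_1^{(b)}B_2^{(c)}\eta_{m,n}$, where the left-hand $B$'s are the idivided powers for $1-\bm{\varsigma}$ and the right-hand ones for $\bm{\varsigma}$. For the $\imath$canonical basis I would invoke uniqueness: in $L(n,m)$, the vector $\mathfrak{B}_2^{(a,b,c)}\eta_{n,m}$ is the unique $\psi^{\imath}$-invariant element satisfying the triangularity \eqref{icb1} against the canonical monomials $F_1^{(a-x)}F_2^{(b-x-y)}F_1^{(c-y)}\eta_{n,m}$ with off-diagonal coefficients in $q^{-1}\mathbb{Z}[q^{-1}]$; applying $\phi$ yields a $\psi^{\imath}$-invariant vector with exactly the properties characterizing $\mathfrak{B}_1^{(a,b,c)}\eta_{m,n}$, so $\phi\bigl(\mathfrak{B}_2^{(a,b,c)}\eta_{n,m}\bigr)=\mathfrak{B}_1^{(a,b,c)}\eta_{m,n}$. (Concretely, the monomial side of this is precisely the symmetry of the coefficients in Lemma~\ref{prop59} recorded in Proposition~\ref{prop32}(2).)

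Finally I would match the numerical data and conclude. For the module $L(n,m)$ with parameter $1-\bm{\varsigma}$, the quantity $\beta_{a,b,c,1-\bm{\varsigma}}$ of \eqref{defofbb}, formed with $m$ and $n$ interchanged, equals $m-n+a-2b+3c+\varsigma_2=\hat{\beta}_{a,b,c,\bm{\varsigma}}$, and the hypotheses of Theorem~\ref{thm51717}, namely $n\ge c$, $m-b+c\ge 0$, $\hat{\beta}_{a,b,c,\bm{\varsigma}}>0$ even, $b=a+c$, are exactly the hypotheses of Theorem~\ref{thmm513} for that module. Applying $\phi$ to the identities \eqref{generalabc225} and \eqref{generalabc22333} of Theorem~\ref{thmm513} in $L(n,m)$ --- turning every $\mathfrak{B}_2$ into $\mathfrak{B}_1$ and every $B_1^{(\cdot)}B_2^{(\cdot)}B_1^{(\cdot)}\eta$ into $B_2^{(\cdot)}B_1^{(\cdot)}B_2^{(\cdot)}\eta$, while the scalar coefficients, depending only on $\hat{\beta}_{a,b,c,\bm{\varsigma}}$ and $q$-integers, are unchanged --- reproduces the two displayed identities of Theorem~\ref{thm51717}. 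I do not expect a substantive obstacle; the only care needed is the bookkeeping of the simultaneous substitutions $m\leftrightarrow n$, $\bm{\varsigma}\mapsto 1-\bm{\varsigma}$, $1\leftrightarrow 2$, together with the routine verifications that $\sigma$ is compatible with $\psi$, with $\psi^{\imath}$, and with both rank-one $\imath$quantum group structures.
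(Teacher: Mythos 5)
Your proposal is correct and is, at bottom, the same argument the paper uses: the paper's Section~\ref{section5}.3 simply invokes ``by symmetry'' (pointing to Remark~\ref{iandtaui} and the coefficient identity in Proposition~\ref{prop32}(2)), and your write-up makes the relevant diagram-automorphism $\sigma$ and the induced intertwiner $\phi\colon L(n,m)\to L(m,n)$ explicit. The dictionary $\beta_{a,b,c,1-\bm{\varsigma}}\big|_{m\leftrightarrow n}=\hat{\beta}_{a,b,c,\bm{\varsigma}}$ and the translation of hypotheses are all checked correctly, so transporting Theorem~\ref{thmm513} along $\phi$ yields both displays of Theorem~\ref{thm51717}. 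One small point worth tightening: the step ``$\sigma\circ\psi^{\imath}_{1-\bm{\varsigma}}=\psi^{\imath}_{\bm{\varsigma}}\circ\sigma$ shows $\phi$ intertwines the module bar involutions'' needs the compatibility $\psi^{\imath}_M(uv)=\psi^{\imath}(u)\psi^{\imath}_M(v)$ plus bar-invariance of the highest weight vectors to propagate from $\eta$; alternatively, you can avoid the module-level $\psi^\imath$-intertwining entirely by noting that the right-hand side of your target identity is automatically $\psi^\imath$-invariant (a $\mathbb{Z}[q+q^{-1}]$-combination of $\psi^\imath$-invariant monomials), so uniqueness of the icanonical basis only requires the triangularity check, which is exactly the coefficient symmetry of Proposition~\ref{prop32}(2).
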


\section{iCanonical basis vs monomial basis II: odd parity case} \label{section6}
Parallel to Section \ref{section5}, in this section we determine the explicit transition matrices between the monomial basis $B_{i}^{(a)}B_{\tau i}^{(b)}B_{i}^{(c)}\eta$ and icanonical basis $\mathfrak{B}^{(a,b,c)}_{\tau i}$ on module level under the condition that $a,c\geq 1$ and the odd parity of $\alpha_{b,c,\bm{\varsigma}}$ and $\beta_{a,b,c,\bm{\varsigma}}$. According to Remark \ref{iandtaui}, we focus on the case $i=1$ in the first two subsections and move to the case $i=2$ in the last subsection.

\subsection{Case for $\alpha_{b,c,\bm{\varsigma}}$ or $\beta_{a,b,c,\bm{\varsigma}}$ is odd}
In this subsection, $\alpha_{b,c,\bm{\varsigma}}=m-n+b-2c+\varsigma_{2}$ and $\beta_{a,b,c,\bm{\varsigma}}=n-m+a-2b+3c+\varsigma_{1}$ are always odd natural numbers. First, recall $L(m,n)$ is the simple $\U$-module with the highest weight $m\omega_{1}+n\omega_{2}$ and highest weight vector $\eta$. Further recall \eqref{icb1}, for each $a,b,c\geq 0$, $\mathfrak{B}^{(a,b,c)}_{i}$ is the icanonical basis element in $L(m,n)$ such that 
\begin{align} 
\mathfrak{B}^{(a,b,c)}_{i}=F^{(a)}_{\tau i}F^{(b)}_{i}F^{(c)}_{\tau i}\eta+\sum_{\substack{x,y\geq 0 \\ (x,y)\not=(0,0)}}q^{-1}\mathbb{Z}[q^{-1}]F^{(a-x)}_{\tau i}F^{(b-x-y)}_{i}F^{(c-y)}_{\tau i}\eta.
\end{align}

We focus on the case $\alpha_{b,c,\bm{\varsigma}}=m-n+b-2c+\varsigma_{2}$ is odd first. For any $m,n\geq 0$, $0\leq x\leq a$, $0\leq \theta\leq c$, we define 
\begin{align}\label{defxiabcodd}
\Xi^{(a,b,c)}_{m,n,x,\theta,\bm{\varsigma}}&=\sum_{k+y=\theta}(-1)^{k}[\alpha_{b,c,\bm{\varsigma}}+k-1]\frac{[\alpha_{b,c,\bm{\varsigma}}+1][\alpha_{b,c,\bm{\varsigma}}+3]\cdots [\alpha_{b,c,\bm{\varsigma}}+2k-3]}{[k]!}q^{\varsigma_{2}(y-x)}\nonumber\\
&\quad\quad\times q^{-\frac{y(y+1+2n+2c-2b)}{2}-\frac{x(x-1+2m+2b-4c-2a-2y+2k)}{2}}\qbinom{n-b+c+x}{x}\qbinom{m-c+\theta}{y}\in \mathcal{A}.
\end{align}

Once we focus on a fixed $\bm{\varsigma}$, $\Xi^{(a,b,c)}_{m,n,x,\theta,\bm{\varsigma}}$ can be abbreviated as $\Xi^{(a,b,c)}_{m,n,x,\theta}$. When $(x,\theta)=(a,c)$, $\Xi^{(a,b,c)}_{m,n,x,\theta}$ can be abbreviated as $\Xi^{(a,b,c)}_{m,n}$. It is easy to check that $\Xi^{(a,b,c)}_{m,n,x,\theta}$ satisfies the similar recursions as Lemma \ref{lemrecursive}:
\begin{lem} \label{lemrecursiveodd}
When $b\geq a+c$, for any $m,n,a,b,c\in \mathbb{Z}_{\geq 0}$, $0\leq x\leq a$, $0\leq \theta\leq c$, we have 
$$
\Xi^{(a+1,b,c-1)}_{m-1,n+1,x,\theta}=\Xi^{(a,b,c)}_{m,n,x,\theta},
\qquad
\Xi^{(a,b,c)}_{m,n,x,c}=\Xi^{(a+1,b+1,c)}_{m,n+1,x,c},
\qquad
\Xi^{(a,b,c)}_{m,n,x,\theta}=\Xi^{(x,b-a-c+\theta+x,\theta)}_{m-c+\theta,n-a+x,x,\theta}.
$$
\end{lem}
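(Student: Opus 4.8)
The plan is to prove all three identities by a direct, term-by-term comparison of the defining sum in \eqref{defxiabcodd}, exactly as for the even-parity recursions of Lemma~\ref{lemrecursive}. The crucial point is that each of the three prescribed substitutions preserves the integer $\alpha_{b,c,\bm{\varsigma}}=m-n+b-2c+\varsigma_{2}$: for the first identity, $(m-1)-(n+1)+b-2(c-1)+\varsigma_{2}=\alpha_{b,c,\bm{\varsigma}}$; for the second (where $\theta=c$), $m-(n+1)+(b+1)-2c+\varsigma_{2}=\alpha_{b,c,\bm{\varsigma}}$; and for the third, $(m-c+\theta)-(n-a+x)+(b-a-c+\theta+x)-2\theta+\varsigma_{2}=\alpha_{b,c,\bm{\varsigma}}$. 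Consequently the entire scalar coefficient $(-1)^{k}[\alpha_{b,c,\bm{\varsigma}}+k-1]\frac{[\alpha_{b,c,\bm{\varsigma}}+1][\alpha_{b,c,\bm{\varsigma}}+3]\cdots[\alpha_{b,c,\bm{\varsigma}}+2k-3]}{[k]!}$ attached to each summation index $k$ is literally unchanged on both sides.

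It then remains to check that, under the same substitutions, the factor $q^{\varsigma_{2}(y-x)}$, the two $q$-exponents $-\tfrac{y(y+1+2n+2c-2b)}{2}$ and $-\tfrac{x(x-1+2m+2b-4c-2a-2y+2k)}{2}$, and the two $q$-binomials $\qbinom{n-b+c+x}{x}$ and $\qbinom{m-c+\theta}{y}$ are each individually invariant, and that the index set $\{(k,y):k+y=\theta\}$ is unchanged. Each of these is a one-line computation. For instance, for the first identity one applies $m\to m-1$, $n\to n+1$, $a\to a+1$, $c\to c-1$ and observes that $y+1+2n+2c-2b$, $x-1+2m+2b-4c-2a-2y+2k$, $n-b+c+x$, and $m-c+\theta$ all map to themselves; for the second identity one applies $n\to n+1$, $a\to a+1$, $b\to b+1$ and the same expressions are again fixed (using $\qbinom{m-c+\theta}{y}=\qbinom{m}{y}$ when $\theta=c$); for the third identity one applies $a\to x$, $b\to b-a-c+\theta+x$, $c\to\theta$, $m\to m-c+\theta$, $n\to n-a+x$ and checks the four expressions are fixed once all substitutions are made together. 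Term-by-term equality of the two sums then gives each identity.

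There is no genuine difficulty here beyond careful bookkeeping; the one point requiring attention is that in the third identity the outer parameter $c$ is replaced by $\theta$ while $b$, $m$, $n$ are simultaneously shifted, so the cancellations appear only after all substitutions are performed at once rather than one at a time. Alternatively, the third identity can be obtained by iterating the first identity $c-\theta$ times (reducing $\Xi^{(a,b,c)}_{m,n,x,\theta}$ to $\Xi^{(a+c-\theta,b,\theta)}_{m-c+\theta,n+c-\theta,x,\theta}$) and then applying the second identity $a+c-\theta-x$ times (with $\theta$ now equal to the middle parameter, lowering $a$, $b$, $n$ in step), which lands exactly on $\Xi^{(x,b-a-c+\theta+x,\theta)}_{m-c+\theta,n-a+x,x,\theta}$; this is the same bootstrap available for Lemma~\ref{lemrecursive}, which is why both lemmas are stated without a written-out proof.
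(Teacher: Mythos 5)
Your proof is correct and is exactly the approach the paper implicitly intends; Lemma~\ref{lemrecursiveodd} is stated with no written proof (the text before it says merely that $\Xi$ satisfies the same recursions as $\Omega$ in Lemma~\ref{lemrecursive}), and the direct term-by-term verification you outline is the standard one. Your key observation — that each of the three substitutions fixes $\alpha_{b,c,\bm\varsigma}$ and hence fixes the entire $k$-dependent scalar involving $[\alpha_{b,c,\bm\varsigma}+k-1]$, leaving only the $q$-exponents and $q$-binomials to check — is the right organizing principle, and your computations for the five remaining factors are all correct; your alternative derivation of the third identity by iterating the first $c-\theta$ times and then the second $a+c-\theta-x$ times also lands where you say it does (though you likely meant the third, not the middle, superscript slot when you said "$\theta$ now equal to the middle parameter").
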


From now on, we allow the following abbreviation: 
\begin{align}\label{defxi1}
\Xi^{a,b,c}_{m,n}=\Xi^{(a,b,c)}_{m,n,a,c}=\sum_{k+y=c}\xi^{(a,b,c)}_{m,n,k,y},
\end{align}
where
\begin{align}\label{defxi2}
\xi^{(a,b,c)}_{m,n,k,y}&:=(-1)^{k}[\alpha_{b,c,\bm{\varsigma}}+k-1]\frac{[\alpha_{b,c,\bm{\varsigma}}+1][\alpha_{b,c,\bm{\varsigma}}+3]\cdots [\alpha_{b,c,\bm{\varsigma}}+2k-3]}{[k]!}q^{\varsigma_{2}(y-a)}\nonumber\\
&\quad\quad\times q^{-\frac{y(y+1+2n+2c-2b)}{2}-\frac{a(-1+2m+2b-4c-a-2y+2k)}{2}}\qbinom{n-b+c+a}{a}\qbinom{m}{y}.
\end{align}
Now we have the following lemma:
\begin{lem}\label{lem417}
When $b\geq a+c$,
\begin{align}\label{lem4177}
\Xi^{a,b,c}_{m,n}&=\frac{q^{-\frac{a(-1+2m+2b-4c-a)}{2}}\qbinom{n-b+c+a}{a}}{q^{-\frac{(a-1)(2m+2b-4c-a)}{2}}\qbinom{n-b+c+a-2}{a-1}q^{\varsigma_{2}}}\Omega^{a-1,b,c}_{m,n-1}\nonumber\\
&\quad\quad\quad\quad +\frac{q^{-\frac{a(-1+2m+2b-4c-a)}{2}}\qbinom{n-b+c+a}{a}q^{a(c-1)-(a-2)+\alpha_{b,c,\bm{\varsigma}}-1}}{q^{-\frac{(a-1)(2+2m+2b-4c-a)}{2}}\qbinom{n-b+c+a-1}{a-1}q^{\varsigma_{2}+(a-1)(c-1)}}\Omega^{a-1,b-1,c-1}_{m,n}.
\end{align}
\end{lem}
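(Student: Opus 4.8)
The plan is to reduce the polynomial identity \eqref{lem4177} to one elementary quantum-integer identity, followed by bookkeeping. Put $\gamma:=\alpha_{b,c,\bm{\varsigma}}+1$; since $\alpha_{b,c,\bm{\varsigma}}$ is odd in this subsection, $\gamma$ is even, and it is precisely the (even) parameter that \eqref{defome2} attaches to $\Omega^{a-1,b,c}_{m,n-1}$ and, because $\alpha_{b-1,c-1,\bm{\varsigma}}=\alpha_{b,c,\bm{\varsigma}}+1$, also to $\Omega^{a-1,b-1,c-1}_{m,n}$. Using $[\alpha_{b,c,\bm{\varsigma}}+2j-1]=[\gamma+2j-2]$ and $[\alpha_{b,c,\bm{\varsigma}}+k-1]=[\gamma+k-2]$, the odd coefficient appearing in $\xi^{(a,b,c)}_{m,n,k,y}$ equals $[\gamma+k-2]\tfrac{[\gamma][\gamma+2]\cdots[\gamma+2k-4]}{[k]!}$. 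The key input is the addition rule $[\gamma+k-2]=q^{k}[\gamma+2k-2]-q^{\gamma+2k-2}[k]$, i.e.\ the specialization of $[x+y]=q^{x}[y]+q^{-y}[x]$ at $x=\gamma+2k-2$, $y=-k$ (together with $[-k]=-[k]$); multiplying it by $\tfrac{[\gamma][\gamma+2]\cdots[\gamma+2k-4]}{[k]!}$ produces the two-term expansion
\begin{align}\label{oddeven-plan}
[\alpha_{b,c,\bm{\varsigma}}+k-1]&\frac{[\alpha_{b,c,\bm{\varsigma}}+1][\alpha_{b,c,\bm{\varsigma}}+3]\cdots[\alpha_{b,c,\bm{\varsigma}}+2k-3]}{[k]!}\nonumber\\
&=q^{k}\frac{[\gamma][\gamma+2]\cdots[\gamma+2k-2]}{[k]!}-q^{\gamma+2k-2}\frac{[\gamma][\gamma+2]\cdots[\gamma+2k-4]}{[k-1]!},
\end{align}
whose second summand vanishes at $k=0$.

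Next I would substitute \eqref{oddeven-plan} into $\Xi^{a,b,c}_{m,n}=\sum_{k+y=c}\xi^{(a,b,c)}_{m,n,k,y}$ from \eqref{defxi1}--\eqref{defxi2}, writing $\Xi^{a,b,c}_{m,n}=P_{1}+P_{2}$, where $P_{1}$ collects the $q^{k}\tfrac{[\gamma]\cdots[\gamma+2k-2]}{[k]!}$-terms and $P_{2}$ the others. For $P_{1}$ the aim is to identify each summand, after substituting $y=c-k$, with a $k$-independent scalar times $\omega^{(a-1,b,c)}_{m,n-1,k,y}$: the factor $\qbinom{m}{c-k}$ and the quadratic-in-$y$ $q$-power are common to $\xi$ and $\omega$, and the $q^{k}$ supplied by \eqref{oddeven-plan} is exactly what cancels the residual $k$-dependence produced by the shifts $a\mapsto a-1$, $n\mapsto n-1$ in passing from \eqref{defxi2} to \eqref{defome2}. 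For $P_{2}$ I would reindex $k\mapsto k+1$; then $y=c-k-1$ ranges over $\{k+y=c-1\}$, the coefficient becomes $\tfrac{[\gamma][\gamma+2]\cdots[\gamma+2k-2]}{[k]!}$ of index $k$, the overall sign flips back (the $-1$ in \eqref{oddeven-plan} being offset by the reindexing sign $(-1)^{k+1}$) to agree with $\Omega^{a-1,b-1,c-1}_{m,n}$, and the same comparison presents $P_{2}$ as a $k$-independent scalar times $\Omega^{a-1,b-1,c-1}_{m,n}$; the top boundary needs no correction and the bottom one drops out by the $k=0$ vanishing just noted.

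The remaining step is to compute the two scalars and match them with those in \eqref{lem4177}. The $q$-binomial parts are $\qbinom{n-b+c+a}{a}\big/\qbinom{n-b+c+a-2}{a-1}$ for $P_{1}$ and $\qbinom{n-b+c+a}{a}\big/\qbinom{n-b+c+a-1}{a-1}$ for $P_{2}$, and the $q$-power parts come from collecting the differences of the exponents $-\tfrac{y(y+1+2n+2c-2b)}{2}-\tfrac{a(-1+2m+2b-4c-a-2y+2k)}{2}$ and of $q^{\varsigma_{2}(y-a)}$ under the respective parameter shifts, combined with the $q^{k}$, resp.\ $q^{\gamma+2k-2}$, from \eqref{oddeven-plan}; these should reproduce verbatim the prefactors $\tfrac{q^{-a(-1+2m+2b-4c-a)/2}\qbinom{n-b+c+a}{a}}{q^{-(a-1)(2m+2b-4c-a)/2}\qbinom{n-b+c+a-2}{a-1}q^{\varsigma_{2}}}$ and $\tfrac{q^{-a(-1+2m+2b-4c-a)/2}\qbinom{n-b+c+a}{a}q^{a(c-1)-(a-2)+\alpha_{b,c,\bm{\varsigma}}-1}}{q^{-(a-1)(2+2m+2b-4c-a)/2}\qbinom{n-b+c+a-1}{a-1}q^{\varsigma_{2}+(a-1)(c-1)}}$.

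I do not anticipate any conceptual difficulty: \eqref{oddeven-plan} is the only idea, and it is a one-line consequence of the addition rule for quantum integers. The real work, and the main place where sign and exponent errors lurk, is the third step---matching the quadratic $q$-exponents and the $q$-binomials through the three-parameter shift; I would handle it by inserting $y=c-k$ into each $\xi$ and $\omega$, factoring both into a $k$-independent scalar times a common tail of the shape $q^{(\text{quadratic in }k)}\qbinom{m}{c-k}$, and then checking the tails agree. Finally I would record that, by the recursions of Lemma~\ref{lemrecursiveodd} (the odd counterparts of Lemma~\ref{lemrecursive}), it suffices to treat the case $(x,\theta)=(a,c)$ addressed by \eqref{lem4177}, which is what the subsequent results will invoke.
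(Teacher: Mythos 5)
Your proposal is correct and is essentially the same argument as the paper's proof in Appendix A.7: the identity \eqref{oddeven-plan} with $\gamma=\alpha_{b,c,\bm{\varsigma}}+1$ is precisely the paper's rewriting $[\alpha_{b,c,\bm{\varsigma}}+k-1]=q^{k}[\alpha_{b,c,\bm{\varsigma}}+2k-1]-q^{\alpha_{b,c,\bm{\varsigma}}+2k-1}[k]$, and the ensuing split of $\Xi^{a,b,c}_{m,n}$, the reindexing $k\mapsto k+1$ in the second piece, and the termwise matching against $\omega^{(a-1,b,c)}_{m,n-1,k,y}$ and $\omega^{(a-1,b-1,c-1)}_{m,n,k,y}$ are exactly the steps carried out there.
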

\begin{proof}
Equation \eqref{lem4177} can be verified by direct computation; the details are postponed to the Appendix \ref{app7}. 
\end{proof}
\begin{lem}\label{degxiodddd1}
Let $a,b,c \in \mathbb Z_{\ge 0}$ with $a \geq 1$. If $a<c$ and $b\geq a+c$, then $\Xi^{a,b,c}_{m,n}\in q^{-1}\mathbb{Z}[q^{-1}]$. If $a<c$ and $b=a+c$, then $\Xi^{a,b,c}_{m,n}\in q^{-1}\mathbb{Z}[q^{-1}]$.
\end{lem}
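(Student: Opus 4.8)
The plan is to prove this by reducing $\Xi^{a,b,c}_{m,n}$, via Lemma~\ref{lem417}, to the even‑parity objects $\Omega$ already controlled in Section~\ref{section5}. Since $\Xi^{a,b,c}_{m,n}\in\mathcal A$ is already known (this is the content of the last clause of \eqref{defxiabcodd}, itself justified by Lemma~\ref{qmn}), the assertion $\Xi^{a,b,c}_{m,n}\in q^{-1}\mathbb Z[q^{-1}]$ is equivalent to $\deg\Xi^{a,b,c}_{m,n}<0$. Lemma~\ref{lem417} writes $\Xi^{a,b,c}_{m,n}=P_1\Omega_1+P_2\Omega_2$ with $\Omega_1=\Omega^{a-1,b,c}_{m,n-1}$, $\Omega_2=\Omega^{a-1,b-1,c-1}_{m,n}$, and $P_1,P_2$ the explicit $q$-power‑times‑$q$-binomial‑ratio coefficients displayed there; it is harmless to cancel the $q$-binomial denominator of each $P_i$ against the $q$-binomial factor internal to the corresponding $\Omega$, so each summand is a $q$-power times $\qbinom{n-b+c+a}{a}$ times a reduced $\Omega$-sum. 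Since $\deg(f+g)\le\max(\deg f,\deg g)$, it suffices to show $\deg(P_i\Omega_i)<0$ for $i=1,2$.

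The first observation I would record is that the shifts in Lemma~\ref{lem417} turn the odd quantity $\alpha_{b,c,\bm\varsigma}$ into an even, positive one: for $\Omega_1$ the governing parameter is $\alpha_{b,c,\bm\varsigma}$ evaluated at $(m,n-1)$, i.e. $\alpha_{b,c,\bm\varsigma}+1$, and for $\Omega_2$ it is $\alpha_{b-1,c-1,\bm\varsigma}$ at $(m,n)$, again $\alpha_{b,c,\bm\varsigma}+1$. Next, $a\ge1$ together with $a<c$ forces $c\ge2$, so $a-1<c$ and $a-1<c-1$, i.e. both $\Omega$'s lie in the regime "$a'<c'$"; and $b\ge a+c$ gives $b>(a-1)+c$ and $b-1>(a-1)+(c-1)$, so both lie in the \emph{strict} range $b'>a'+c'$. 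The standing hypotheses $m\ge c$, $n-b+c\ge0$ survive the shifts, except for the borderline $n-b+c=0$, in which case $\Omega_1$ already vanishes because its $q$-binomial factor is $\qbinom{a-2}{a-1}=0$ and that summand simply drops out. Hence the even‑parity machinery of Section~\ref{section5} applies to both $\Omega_1$ and $\Omega_2$ verbatim (with the even parameter $\alpha_{b,c,\bm\varsigma}+1$).

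From the proof of Theorem~\ref{aeqb+c}, in the regime "$\alpha'$ even positive, $a'<c'$, $b'>a'+c'$'' one has, via Lemma~\ref{lemrecursive1} (expressing $\Omega^{a',b',c'}$ as a sum over $j$ of explicit coefficients times the $a{=}0$ objects $\Omega^{0,b',c'-a'}$), Lemma~\ref{lemdegg} (the $j$-th coefficient has degree $a'(a'-b'+c')+j(a'-2c'-j-2)$, which is maximal at $j=0$ since $a'-2c'-j-2<0$), and Lemma~\ref{lem36} (giving $\deg\Omega^{0,b',c'-a'}<-(c'-a')$ in the three‑index normalization), the quantitative bound $\deg\Omega^{a',b',c'}_{m',n'}<a'(a'-b'+c')-(c'-a')$. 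I would then plug in $(a',b',c')=(a-1,b,c)$ and $(a-1,b-1,c-1)$, compute $\deg P_1,\deg P_2$ directly — the $q$-binomial ratios telescope, e.g. $\qbinom{n-b+c+a}{a}\big/\qbinom{n-b+c+a-2}{a-1}=[n-b+c+a][n-b+c+a-1]\big/\big([a][n-b+c]\big)$, whose degree is elementary, and the $q$-power parts are read off directly — and combine, using the standing identity $\beta_{a,b,c,\bm\varsigma}=-\alpha_{b,c,\bm\varsigma}+1+a-b+c\le0$ (valid since $\alpha_{b,c,\bm\varsigma}>0$ and $b\ge a+c$) to absorb the residual linear terms, to conclude $\deg(P_i\Omega_i)<0$. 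Summing over $i$ gives $\deg\Xi^{a,b,c}_{m,n}<0$, which handles $b\ge a+c$ and hence also $b=a+c$ uniformly.

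The hard part is exactly this last estimate: a crude bound such as $\deg P_i\le0$ is false — one finds, for instance, $\deg P_1=a+\beta_{a,b,c,\bm\varsigma}-2$, which can be positive — so the argument genuinely requires the sharp $O\!\big(-(c'-a')\big)$ decay of $\deg\Omega^{a',b',c'}$ coming from Lemmas~\ref{lemrecursive1}, \ref{lemdegg}, \ref{lem36} in order to beat the growth of the prefactors. Keeping the cancellation between the external $q$-binomials of $P_i$ and the internal $q$-binomials of the $\Omega$'s transparent throughout, so that "integral'' is cleanly reduced to "negative degree'', is the main bookkeeping burden; the remaining ingredients are all already in place from Sections~\ref{section4} and~\ref{section5}.
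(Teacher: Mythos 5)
Your proposal reconstructs the paper's proof essentially verbatim: both reduce integrality to a degree estimate via the two-term decomposition of Lemma~\ref{lem417}, then run each resulting $\Omega$-factor through Lemmas~\ref{lemrecursive1}, \ref{lemdegg}, and \ref{lem36} to show both summands have strictly negative degree. One small caveat: your dismissal of the boundary case $n-b+c=0$ via $\qbinom{a-2}{a-1}=0$ fails when $a=1$ (there $\qbinom{-1}{0}=1$), though after the telescoping cancellation you perform against the $q$-binomial inside $\Omega_1$ the summand is well defined in any case, and the same degree count closes it.
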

\begin{proof}
First, recall in Lemma \ref{lemrecursive1}
$$
\Omega^{a',b',c'}_{m',n'}=q^{-\frac{a'(-1+2m'+2b'-4c'-a')}{2}-\varsigma_{2}a'}\qbinom{n'-b'+c'+a'}{a'}\sum_{j=0}^{a'}\lambda^{a',b',c'}_{m',n'-\varsigma_{2},j}\Omega^{0,b',c'-a'}_{m'-a'+j,n'-\varsigma_{2}+a'-j},
$$
where $\lambda^{a',b',c'}_{m',n'-\varsigma_{2},j}\in 
\mathcal{A}$ and has multiplicity-free highest order term. Furthermore, from Lemma \ref{lemdegg}, we have
$$
\deg\left(q^{-\frac{a'(-1+2m'+2b'-4c'-a')}{2}-\varsigma_{2}a'}\qbinom{n'-b'+c'+a'}{a'}\lambda^{a',b',c'}_{m',n'-\varsigma_{2},j}\right)=a'(a'-b'+c')+j(a'-2c'-j-2).
$$
Next, from \eqref{lem4177}, we check the degree of the two summands one-by-one:
\begin{align*}
\deg&\left(\frac{q^{-\frac{a(-1+2m+2b-4c-a)}{2}}\qbinom{n-b+c+a}{a}}{q^{-\frac{(a-1)(2m+2b-4c-a)}{2}}\qbinom{n-b+c+a-2}{a-1}q^{\varsigma_{2}}}\Omega^{a-1,b,c}_{m,n-1}\right)=n-m+a-2b+3c-\varsigma_{2}+a+\deg(\Omega^{a-1,b,c}_{m,n-1})\\
&\leq \max_{0\leq j\leq a-1}n-m+a-2b+3c-\varsigma_{2}+a\\
&\quad\quad+\deg\left(q^{-\frac{a'(-1+2m'+2b'-4c'-a')}{2}-\varsigma_{2}a'}\qbinom{n'-b'+c'+a'}{a'}\lambda^{a',b',c'}_{m',n'-\varsigma_{2},j}\right)+\deg(\Omega^{0,b',c'-a'}_{m'-a'+j,n'-\varsigma_{2}+a'-j})\\
&=\max_{0\leq j\leq a-1}n-m+a-2b+3c+\varsigma_{1}+(a-1)(a-b+c)+j(a-2c-j-2)+\deg(\Omega^{0,b,c-a+1}_{m-a+1+j,n-\varsigma_{2}+a-2-j})\\
&\leq \max_{0\leq j\leq a-1}n-m+a-2b+3c+\varsigma_{1}+(a-1)(a-b+c)+\deg(\Omega^{0,b,c-a+1}_{m-a+1+j,n-\varsigma_{2}+a-2-j})<0.
\end{align*}
where $a'=a-1$, $b'=b$, $c'=c$, $m'=m$, $n'=n-1$. Meanwhile, we also have
\begin{align*}
\deg&\left(\frac{q^{-\frac{a(-1+2m+2b-4c-a)}{2}}\qbinom{n-b+c+a}{a}q^{a(c-1)-(a-2)+\alpha_{b,c,\bm{\varsigma}}-1}}{q^{-\frac{(a-1)(2+2m+2b-4c-a)}{2}}\qbinom{n-b+c+a-1}{a-1}q^{\varsigma_{2}+(a-1)(c-1)}}\Omega^{a-1,b-1,c-1}_{m,n}\right)\\
&=a-b+2c-1+\deg(\Omega^{a-1,b-1,c-1}_{m,n})\\
&\leq \max_{0\leq j\leq c-1}a-b+2c-1\\
&\quad\quad+\deg\left(q^{-\frac{a'(-1+2m'+2b'-4c'-a')}{2}-\varsigma_{2}a'}\qbinom{n'-b'+c'+a'}{a'}\lambda^{a',b',c'}_{m',n'-\varsigma_{2},j}\right)+\deg(\Omega^{0,b',c'-a'}_{m'-a'+j,n'-\varsigma_{2}+a'-j})\\
&=a(a-b+c)+j(a-2c-j-1)+c-a+\deg(\Omega^{0,b-1,c-a}_{m-a+1+j,n-\varsigma_{2}+a-1-j})<0
\end{align*}
when $b>a+c$ or $b=a+c$ and $a<c$. Here $a'=a-1$, $b'=b-1$, $c'=c-1$, $m'=m$, $n'=n$. When $b=a+c$ and $a=c$, $a(a-b+c)+j(a-2c-j-1)+c-a+\deg(\Omega^{0,b-1,c-a}_{m-a+1+j,n-\varsigma_{2}+a-1-j})=j(a-2c-j-1)$. Hence, the leading term of $\Xi^{a,b,c}_{m,n}$ occurs when $j=0$: $\deg(\Xi^{a,b,c}_{m,n})=0$,
and the highest order term is multiplicity-free. So we are done.
\end{proof}

\begin{lem}\label{degxi}
When $b>a+c$, $a\geq c$, $\deg(\xi^{(a,b,c)}_{m,n,k,y})<0$ for any $k+y=c$. When $b=a+c$, $a\geq c$, $\deg(\xi^{(a,b,c)}_{m,n,k,y})\leq 0$ for any $k+y=c$. 
\end{lem}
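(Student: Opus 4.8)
The plan is to prove Lemma \ref{degxi} by an explicit degree computation, entirely parallel to the proof of Lemma \ref{degomega}, since $\xi^{(a,b,c)}_{m,n,k,y}$ differs from $\omega^{(a,b,c)}_{m,n,k,y}$ only in the replacement of the factor $[\alpha][\alpha+2]\cdots[\alpha+2k-2]/[k]!$ (of $q$-degree $k(\alpha-1)+\binom{k}{2}\cdot 0$, more precisely of degree $(k)(\alpha_{b,c,\bm\varsigma}-1)$ after accounting for $[k]!$... let me just compute) by $[\alpha+k-1]\cdot[\alpha+1][\alpha+3]\cdots[\alpha+2k-3]/[k]!$. First I would record that $\deg\!\big([\alpha_{b,c,\bm\varsigma}][\alpha_{b,c,\bm\varsigma}+2]\cdots[\alpha_{b,c,\bm\varsigma}+2k-2]/[k]!\big) = k(\alpha_{b,c,\bm\varsigma}-1)$, using $\deg[j]=j-1$ and $\deg([j]!)=\binom{j}{2}$ together with $\sum_{i=0}^{k-1}(\alpha+2i-1) = k(\alpha-1)+k(k-1)$ and $\binom{k}{2}=k(k-1)/2$, so the net degree is $k(\alpha-1)+k(k-1)-k(k-1)/2\cdot 2$... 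I will need to carry this arithmetic carefully, but the key point is that it equals $k(\alpha_{b,c,\bm\varsigma}-1)$, exactly the degree of the analogous product appearing in $\omega^{(a,b,c)}_{m,n,k,y}$.

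Next I would compute $\deg\!\big([\alpha_{b,c,\bm\varsigma}+k-1]\,[\alpha_{b,c,\bm\varsigma}+1][\alpha_{b,c,\bm\varsigma}+3]\cdots[\alpha_{b,c,\bm\varsigma}+2k-3]/[k]!\big)$; the single factor $[\alpha+k-1]$ has degree $\alpha+k-2$, the product $[\alpha+1]\cdots[\alpha+2k-3]$ over the $k-1$ odd offsets $1,3,\dots,2k-3$ has degree $\sum_{i=0}^{k-2}(\alpha+2i) = (k-1)(\alpha) + (k-1)(k-2)$ ... summing with the $-\binom{k}{2}$ from $1/[k]!$ I expect this to come out to $k(\alpha_{b,c,\bm\varsigma}-1)$ again, i.e. the \emph{same} degree as in the even case. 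Once this coincidence is verified, the remainder of the proof is identical to Lemma \ref{degomega}: substitute this degree into the formula for $\deg(\xi^{(a,b,c)}_{m,n,k,y})$, collect the $q$-power contributions from $q^{\varsigma_2(y-a)}$, from $q^{-\frac{y(y+1+2n+2c-2b)}{2}-\frac{a(-1+2m+2b-4c-a-2y+2k)}{2}}$, and from the two $q$-binomials $\qbinom{n-b+c+a}{a}\qbinom{m}{y}$ (whose degrees are $a(n-b+c)$ and $y(m-y)$), and then reorganize the resulting quadratic in $y$ (with $k=c-y$) into the form $-\tfrac{(a-c)^2}{2} + (a-c)(n-m+a-2b+3c+\tfrac12-\varsigma_2+\text{const})-c(b-a-c)$, which is $\le 0$ when $b=a+c$ and $<0$ when $b>a+c$, under the hypothesis $a\ge c$.

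The main obstacle I anticipate is purely bookkeeping: checking that the degree of the $[\alpha+k-1]$-product genuinely matches that of the even-parity product, and then tracking the $k$-dependence (through $2k$ inside the exponent $-\frac{a(\cdots-2y+2k)}{2}$ and through $k=c-y$) so that it cancels and leaves a manageable quadratic in $y$. Because $\xi$ and $\omega$ have the same $y$-dependent and $k$-dependent structure apart from this one product, I expect the cancellation to work verbatim, and I will phrase the argument to emphasize that: after establishing the degree identity for the scalar product, the inequality follows by \emph{the same computation as in the proof of Lemma \ref{degomega}}, treating $\varsigma_2\in\{0,1\}$ in the two cases exactly as there (and, as in that proof, I will simply remark that the $\varsigma_2=1$ case is entirely analogous and omit it). I would also remark that the $a=c$ boundary of the $b=a+c$ case is where the bound is sharp, giving $\deg(\xi^{(a,b,c)}_{m,n,k,y})=0$ precisely when $y=c$ (so $k=0$), which is consistent with $\Xi^{a,b,c}_{m,n}$ having a nonzero constant term there — the observation used later in Lemma \ref{degxiodddd1}.
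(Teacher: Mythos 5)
Your approach matches the paper's proof: the key step is exactly the observation that
\[
\deg\!\big([\alpha+k-1]\,[\alpha+1][\alpha+3]\cdots[\alpha+2k-3]\big)
=\deg\!\big([\alpha][\alpha+2]\cdots[\alpha+2k-2]\big),
\]
after which the bound follows by the verbatim computation in Lemma~\ref{degomega}, since $\xi^{(a,b,c)}_{m,n,k,y}$ and $\omega^{(a,b,c)}_{m,n,k,y}$ differ only in that scalar factor. One small caution: your guessed value $k(\alpha_{b,c,\bm\varsigma}-1)$ for the degree of the product divided by $[k]!$ is not correct (it is $k\alpha_{b,c,\bm\varsigma}+\tfrac{k(k-3)}{2}$, equivalently both undivided products have degree $k\alpha_{b,c,\bm\varsigma}+k(k-2)$), but since the argument uses only the \emph{equality} of the two degrees and not their common value, this slip is harmless; just make sure the final write-up records the correct number.
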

\begin{proof}
Noticing that
\begin{align*}
\deg (&[\alpha_{b,c,\bm{\varsigma}}+k-1][\alpha_{b,c,\bm{\varsigma}}+1][\alpha_{b,c,\bm{\varsigma}}+3]\cdots [\alpha_{b,c,\bm{\varsigma}}+2k-3])\\
&=k \deg (\alpha_{b,c,\bm{\varsigma}})+k-1+\frac{(2k-2)(k-1)}{2}-k\\
&=k \deg (\alpha_{b,c,\bm{\varsigma}})+k(k-1)-k\\
&=\deg ([\alpha_{b,c,\bm{\varsigma}}][\alpha_{b,c,\bm{\varsigma}}+2]\cdots [\alpha_{b,c,\bm{\varsigma}}+2k-2]).
\end{align*}
Comparing \eqref{defxi2} with \eqref{defome2} and Lemma \ref{degomega}, we are done.
\end{proof}

With these preparations, we can now present the main theorem of this section.
\begin{thm}\label{thm6155}
Let $m,n,a,b,c\in \mathbb{Z}_{\geq 0}$ such that $n\geq c$, $m-b+c\geq 0$, $\alpha_{b,c,\bm{\varsigma}}>0$, $\alpha_{b,c,\bm{\varsigma}}$ is odd, and $b>a+c$, then
\begin{align}\label{generalabc1odd}
B^{(a)}_{1}B^{(b)}_{2}B^{(c)}_{1}\eta=&[\alpha_{b,c,\bm{\varsigma}}]\sum_{\substack{k \ge 0 \\ k\equiv 1 \pmod 2}}\frac{[\alpha_{b,c,\bm{\varsigma}}-k+2][\alpha_{b,c,\bm{\varsigma}}-k+4]\cdots[\alpha_{b,c,\bm{\varsigma}}+k-2]}{[k]!}\mathfrak{B}^{(a,b-k,c-k)}_{2}\nonumber\\
&\quad\quad+\sum_{\substack{k\geq 0 \\ k\equiv 0 \pmod 2}}\frac{[\alpha_{b,c,\bm{\varsigma}}-k+1][\alpha_{b,c,\bm{\varsigma}}-k+3]\cdots [\alpha_{b,c,\bm{\varsigma}}+k-1]}{[k]!}\mathfrak{B}^{(a,b-k,c-k)}_{2},
\end{align}
\begin{align}\label{generalabc2odd}
\mathfrak{B}_{2}^{(a,b,c)}=\sum_{0\leq k\leq c}(-1)^{k}[\alpha_{b,c,\bm{\varsigma}}+k-1]\frac{[\alpha_{b,c,\bm{\varsigma}}+1][\alpha_{b,c,\bm{\varsigma}}+3]\cdots [\alpha_{b,c,\bm{\varsigma}}+2k-3]}{[k]!}B_{1}^{(a)}B^{(b-k)}_{2}B^{(c-k)}_{1}\eta. 
\end{align}
\end{thm}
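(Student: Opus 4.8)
The plan is to imitate the proof of Theorem~\ref{aeqb+c}, with the even-parity auxiliary coefficients $\Omega$ replaced throughout by the odd-parity coefficients $\Xi$ introduced in \eqref{defxiabcodd}.

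First I would observe that it suffices to prove \eqref{generalabc2odd}; the companion identity \eqref{generalabc1odd} then follows by running the argument in the proof of Proposition~\ref{inverse} verbatim, using \eqref{generalabc2odd} for each summand $\mathfrak{B}^{(a,b-j,c-j)}_2$ with $j$ even (for which $\alpha_{b-j,c-j,\bm{\varsigma}}=\alpha_{b,c,\bm{\varsigma}}+j$ is again odd and positive, and $b-j>a+(c-j)$ still holds) and \eqref{generalabc2} of Theorem~\ref{aeqb+c} for the summands with $j$ odd (for which that shifted parameter is even and positive). The algebraic cancellations making the two matrices mutually inverse are exactly those established in Proposition~\ref{inverse}, which, as noted in the remark following it, do not depend on the value of $\alpha_{b,c,\bm{\varsigma}}$.

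To prove \eqref{generalabc2odd}, I would substitute Lemma~\ref{prop59} into its right-hand side and collect the coefficient of $F^{(a-x)}_1F^{(b-x-\theta)}_2F^{(c-\theta)}_1\eta$, obtaining
\[
\text{RHS of \eqref{generalabc2odd}}=\sum_{0\le x\le a}\sum_{0\le \theta\le c}\Xi^{(a,b,c)}_{m,n,x,\theta}\,F^{(a-x)}_1F^{(b-x-\theta)}_2F^{(c-\theta)}_1\eta,
\]
with $\Xi^{(a,b,c)}_{m,n,x,\theta}$ exactly as in \eqref{defxiabcodd}; in particular the $(x,\theta)=(0,0)$ term is $F^{(a)}_1F^{(b)}_2F^{(c)}_1\eta$ and the whole expression is $\psi^{\imath}$-invariant. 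By the defining property \eqref{icb1} of the icanonical basis together with its uniqueness, the theorem reduces to showing $\Xi^{(a,b,c)}_{m,n,x,\theta}\in q^{-1}\mathbb{Z}[q^{-1}]$ for all $(x,\theta)\neq(0,0)$. Using the reduction $\Xi^{(a,b,c)}_{m,n,x,\theta}=\Xi^{(x,b-a-c+\theta+x,\theta)}_{m-c+\theta,n-a+x,x,\theta}$ of Lemma~\ref{lemrecursiveodd}, this is brought down to the ``top'' coefficients $\Xi^{a',b',c'}_{m',n'}$ with $(a',c')=(x,\theta)\neq(0,0)$ and $b'-a'-c'=b-a-c>0$.

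Here I would split into three cases. If $a'\ge c'$ (in particular if $c'=0$), Lemma~\ref{degxi} gives $\deg\xi^{(a',b',c')}_{m',n',k,y}<0$ for every $k+y=c'$, hence $\Xi^{a',b',c'}_{m',n'}\in q^{-1}\mathbb{Z}[q^{-1}]$. If $a'\ge1$ and $a'<c'$, this is precisely Lemma~\ref{degxiodddd1}. If $a'=0$ (so $c'\ge1$), then $\Xi^{0,b',c'}_{m',n'}$ is an off-diagonal coefficient in the expansion of the icanonical element $\mathfrak{B}^{(0,b',c')}_2$ given in Theorem~\ref{thm41717}, and therefore lies in $q^{-1}\mathbb{Z}[q^{-1}]$. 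These three cases exhaust all $(x,\theta)\neq(0,0)$, which finishes the argument. The real content is already packaged in Lemmas~\ref{lem417}, \ref{degxiodddd1}, and \ref{degxi} (which themselves rest on the $\Omega$-recursion of Lemma~\ref{lemrecursive1} and the degree computation of Lemma~\ref{lemdegg}); in assembling the proof the only point demanding care is checking that the case analysis over $(x,\theta)$ after the Lemma~\ref{lemrecursiveodd} reduction is genuinely exhaustive, in particular that the boundary cases $c'=0$ and $a'=0$ are covered.
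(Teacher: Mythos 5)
Your proof is correct and follows exactly the paper's strategy: reduce \eqref{generalabc1odd} to \eqref{generalabc2odd} via Proposition~\ref{inverse}, substitute Lemma~\ref{prop59} into the right-hand side of \eqref{generalabc2odd}, and verify $\Xi^{(a,b,c)}_{m,n,x,\theta}\in q^{-1}\mathbb{Z}[q^{-1}]$ by first reducing to the top coefficients via Lemma~\ref{lemrecursiveodd} and then splitting by $a'\gtrless c'$ using Lemmas~\ref{degxi} and~\ref{degxiodddd1}. Your explicit treatment of the boundary case $a'=0$ by invoking Theorem~\ref{thm41717} is actually slightly more careful than the paper, whose stated casework cites Lemma~\ref{degxiodddd1} (which requires $a\ge 1$) for all $0\le a\le c-1$ and thus leaves the $a'=0$ subcase to be implicitly covered by the earlier result.
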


\begin{proof}
The proof is almost the same as Theorem \ref{aeqb+c}. First, observe that we only need to prove \eqref{generalabc2odd}, and \eqref{generalabc1odd} follow from the proof of Proposition \ref{inverse}. Combining the RHS of \eqref{generalabc2odd} with \eqref{prop59}, we get
$$
\text{RHS of \eqref{generalabc2odd}}=\sum_{0\leq x\leq a}\sum_{0\leq \theta\leq c}\Omega^{(a,b,c)}_{m,n,x,\theta}F^{(a-x)}_{1}F^{(b-x-\theta)}_{2}F_{1}^{(c-\theta)}\eta,
$$
where $\Omega^{(a,b,c)}_{m,n,x,\theta}$ was defined in \eqref{defxiabcodd}.

Now we want to show that $\Xi^{(a,b,c)}_{m,n,x,\theta}\in q^{-1}\mathbb{Z}[q^{-1}]$ for any $a,b,c\geq 0$, $0\leq x\leq a$, $0\leq \theta\leq c$, and $k+y=\theta$. Using Lemma \ref{lemrecursiveodd} above,  we only need to prove $\Xi^{(a,b,c)}_{m,n,a,c}=\Xi^{a,b,c}_{m,n}\in q^{-1}\mathbb{Z}[q^{-1}]$. 

For $a\geq c$, we have $\Xi^{a,b,c}_{m,n}\in q^{-1}\mathbb{Z}[q^{-1}]$, by Lemma \ref{degxi}. For $0\leq a\leq c-1$, we know $\Omega^{a,b,c}_{m,n}\in q^{-1}\mathbb{Z}[q^{-1}]$ by Lemma \ref{degxiodddd1}. We are done.
\end{proof}

\begin{prop}\label{propaeqb+codd}
Let $m,n,a,b,c\in \mathbb{Z}_{\geq 0}$ such that $m\geq c$, $n-b+c\geq 0$, $\alpha_{b,c,\bm{\varsigma}}> 0$, $\alpha_{b,c,\bm{\varsigma}}$ is odd, and $b=a+c$, then
\begin{align}\label{propaeqb+codd1}
\sum_{0\leq l\leq\min\{a,c\}}\mathfrak{B}_{2}^{(a-l,b-2l,c-l)}=\sum_{0\leq k\leq c}&(-1)^{k}[\alpha_{b,c,\bm{\varsigma}}+k-1]\nonumber\\
&\times\frac{[\alpha_{b,c,\bm{\varsigma}}+1][\alpha_{b,c,\bm{\varsigma}}+3]\cdots [\alpha_{b,c,\bm{\varsigma}}+2k-3]}{[k]!}B_{1}^{(a)}B^{(b-k)}_{2}B^{(c-k)}_{1}\eta.
\end{align}
\end{prop}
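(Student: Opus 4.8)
The plan is to imitate the proof of Proposition~\ref{propaeqb+c}, with the odd-parity datum $\Xi$ in place of $\Omega$. First I would apply Lemma~\ref{prop59} to the right-hand side of \eqref{propaeqb+codd1}; since the expansion of $\sum_{k}(\cdots)B_{1}^{(a)}B_{2}^{(b-k)}B_{1}^{(c-k)}\eta$ furnished by \eqref{prop59} is purely formal and never uses $b>a+c$, one obtains
\[
\text{RHS of }\eqref{propaeqb+codd1}=\sum_{0\leq x\leq a}\ \sum_{0\leq\theta\leq c}\Xi^{(a,b,c)}_{m,n,x,\theta}\,F^{(a-x)}_{1}F^{(b-x-\theta)}_{2}F^{(c-\theta)}_{1}\eta
\]
with $\Xi^{(a,b,c)}_{m,n,x,\theta}$ as in \eqref{defxiabcodd}. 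Both sides of \eqref{propaeqb+codd1} are $\psi^{\imath}$-invariant: the left-hand side since every $\mathfrak{B}^{(a',b',c')}_{2}$ is, and the right-hand side since its scalar coefficients are bar-invariant (products and quotients of bar-invariant quantum integers) while $B^{(r)}_{i}$ and $\eta$ are $\psi^{\imath}$-invariant. Exactly as in the proof of Proposition~\ref{propaeqb+c}, uniqueness and uni-triangularity of the icanonical basis \eqref{icb1} then reduce \eqref{propaeqb+codd1} to the two membership claims: $\Xi^{(a,b,c)}_{m,n,x,\theta}\in q^{-1}\mathbb{Z}[q^{-1}]$ if $x\neq\theta$, and $\Xi^{(a,b,c)}_{m,n,x,\theta}\in 1+q^{-1}\mathbb{Z}[q^{-1}]$ if $x=\theta$.

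Next I would collapse the general $(x,\theta)$ to the diagonal one via the last recursion of Lemma~\ref{lemrecursiveodd}, which under $b=a+c$ reads $\Xi^{(a,b,c)}_{m,n,x,\theta}=\Xi^{x,\,x+\theta,\,\theta}_{m-c+\theta,\,n-a+x}$, again a corner datum with parameters $(a',b',c')=(x,\,x+\theta,\,\theta)$ at weights $(m',n')=(m-c+\theta,\,n-a+x)$. One checks the hypotheses persist: $m'\geq c'$ from $m\geq c$, $n'-b'+c'=n-a\geq0$ from $n-b+c\geq0$, and $\alpha_{b',c',\bm{\varsigma}}=\alpha_{b,c,\bm{\varsigma}}$ by a one-line computation, so $\alpha$ stays positive and odd, while $a'\neq c'$ (resp. $a'=c'$) corresponds to $x\neq\theta$ (resp. $x=\theta$). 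It therefore suffices to prove, for every corner triple $b=a+c$ under these hypotheses, that $\Xi^{a,b,c}_{m,n}\in q^{-1}\mathbb{Z}[q^{-1}]$ when $a\neq c$ and $\Xi^{a,b,c}_{m,n}\in 1+q^{-1}\mathbb{Z}[q^{-1}]$ when $a=c$. When $a>c$, Lemma~\ref{degxi} gives $\deg(\xi^{(a,b,c)}_{m,n,k,y})\leq0$, indeed $<0$ (the estimate is strict once $a>c$ and $b=a+c$, by the same computation as in Lemma~\ref{degomega} using $\alpha_{b,c,\bm{\varsigma}}>0$), whence $\Xi^{a,b,c}_{m,n}\in q^{-1}\mathbb{Z}[q^{-1}]$. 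When $a<c$, Lemma~\ref{degxiodddd1} gives $\Xi^{a,b,c}_{m,n}\in q^{-1}\mathbb{Z}[q^{-1}]$ directly. When $a=c\geq1$, the computation in the proof of Lemma~\ref{degxiodddd1} for the subcase $b=a+c$, $a=c$ shows $\deg(\Xi^{a,2a,a}_{m,n})=0$ with highest-order term of multiplicity one (Lemma~\ref{lemdegqbinom}), so $\Xi^{a,2a,a}_{m,n}\in 1+q^{-1}\mathbb{Z}[q^{-1}]$; the remaining instances $a'=0$, $c'=0$, or $a'=c'=0$ are covered by the $a=0$ and $c=0$ results of Section~\ref{section4}. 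This establishes both membership claims, hence \eqref{propaeqb+codd1}.

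I expect the main obstacle to be bookkeeping rather than new ideas: one must verify that the degree estimates of Lemmas~\ref{degxi} and~\ref{degxiodddd1} apply verbatim to the shifted data produced by the recursion (which is why tracking the invariance of $\alpha_{b,c,\bm{\varsigma}}$ and of the positivity/parity hypotheses matters), and---most delicately---that in the borderline case $x=\theta$ one obtains the \emph{exact} leading coefficient $1$, not merely $\deg\leq0$; this is precisely where the multiplicity-freeness built into the proof of Lemma~\ref{degxiodddd1}, together with Lemma~\ref{lemdegqbinom}, is indispensable.
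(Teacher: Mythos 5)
Your proof is correct and follows essentially the same strategy as the paper's: expand the right-hand side via Lemma~\ref{prop59}, reduce via the diagonal recursion of Lemma~\ref{lemrecursiveodd} to the corner data $\Xi^{a,a+c,c}_{m,n}$, and conclude by the membership claims $\Xi\in\delta_{x,\theta}+q^{-1}\mathbb{Z}[q^{-1}]$. In fact your case split ($a>c$ via Lemma~\ref{degxi}, $a<c$ via Lemma~\ref{degxiodddd1}, $a=c$ via the end of that lemma's proof together with Lemma~\ref{lemdegqbinom}) is somewhat more carefully stated than the paper's one-line citation of Lemma~\ref{degxiodddd1} alone, mirroring more faithfully the three-case structure used in the even-parity analogue (Proposition~\ref{propaeqb+c}).
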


\begin{proof}
The proof is almost the same as Proposition \ref{propaeqb+c}. Using \eqref{wz25}, we have
\begin{align*}
\sum_{0\leq k\leq c}&(-1)^{k}[\alpha_{b,c,\bm{\varsigma}}+k-1]\frac{[\alpha_{b,c,\bm{\varsigma}}+1][\alpha_{b,c,\bm{\varsigma}}+3]\cdots [\alpha_{b,c,\bm{\varsigma}}+2k-3]}{[k]!}B_{1}^{(a)}B^{(b-k)}_{2}B^{(c-k)}_{1}\eta\\
&=\sum_{0\leq x\leq a}\sum_{0\leq \theta\leq c}\Xi^{(a,b,c)}_{m,n,x,\theta}F^{(a-x)}_{1}F^{(b-x-\theta)}_{2}F_{1}^{(c-\theta)}\eta,
\end{align*}
where $\Xi^{(a,b,c)}_{m,n,x,\theta}$ can be found in \eqref{defxiabcodd}. In order to prove \eqref{propaeqb+codd1}, we only need to prove the following: 
\begin{itemize}
	\item If $x\not=\theta$, $\Xi^{(a,b,c)}_{m,n,x,\theta}\in q^{-1}\mathbb{Z}[q^{-1}]$,
	\item If $x=\theta$, $\Xi^{(a,b,c)}_{m,n,x,\theta}\in 1+q^{-1}\mathbb{Z}[q^{-1}]$.
\end{itemize}

Meanwhile, recall $\Xi^{(a,b,c)}_{m,n,x,\theta}=\Xi^{x,\theta+x,\theta}_{m-c+\theta,n-a+x}$ from Lemma \ref{lemrecursiveodd}, we only need to focus on the case $\Xi^{a,a+c,c}_{m,n}$. From Lemma \ref{degxiodddd1}, $\Xi^{x,\theta+x,\theta}_{m-c+\theta,n-a+x}\in \delta_{x,\theta}+q^{-1}\mathbb{Z}[q^{-1}]$. Hence, \eqref{propaeqb+codd1} is proved.
\end{proof}

The following theorems are direct consequences of Proposition \ref{propaeqb+codd}. Their proofs are identical to those of Theorems \ref{thmm59} and \ref{thmm510}, and will therefore be omitted.
\begin{thm}\label{thm677}
Let $m,n,a,b,c\in \mathbb{Z}_{\geq 0}$ such that $m\geq c$, $n-b+c\geq 0$, $\alpha_{b,c,\bm{\varsigma}}> 0$, $\alpha_{b,c,\bm{\varsigma}}$ is odd, and $b=a+c$, then
\begin{align}\label{generalabc222odd}
\mathfrak{B}_{2}^{(a,b,c)}&=\sum_{0\leq k\leq c}(-1)^{k}[\alpha_{b,c,\bm{\varsigma}}+k-1]\frac{[\alpha_{b,c,\bm{\varsigma}}+1][\alpha_{b,c,\bm{\varsigma}}+3]\cdots [\alpha_{b,c,\bm{\varsigma}}+2k-3]}{[k]!}B_{1}^{(a)}B^{(b-k)}_{2}B^{(c-k)}_{1}\eta\nonumber\\
&\quad\quad -\sum_{0\leq k\leq c-1}\bigg((-1)^{k}[\alpha_{b,c,\bm{\varsigma}}+k-1]\frac{[\alpha_{b,c,\bm{\varsigma}}+1][\alpha_{b,c,\bm{\varsigma}}+3]\cdots [\alpha_{b,c,\bm{\varsigma}}+2k-3]}{[k]!}\nonumber\\
&\quad\quad\quad\quad\quad\quad\times B_{1}^{(a-1)}B^{(b-2-k)}_{2}B^{(c-1-k)}_{1}\eta\bigg),
\end{align}
\begin{align}\label{generalabc223odd}
B^{(a)}_{1}B^{(b)}_{2}B^{(c)}_{1}\eta=&[\alpha_{b,c,\bm{\varsigma}}]\sum_{\substack{k \ge 0 \\ k\equiv 0 \pmod 2}}\sum_{0\leq l\leq\min\{a,c-k\}}\frac{[\alpha_{b,c,\bm{\varsigma}}-k+2][\alpha_{b,c,\bm{\varsigma}}-k+4]\cdots[\alpha_{b,c,\bm{\varsigma}}+k-2]}{[k]!}\nonumber\\
&\quad\quad\quad\quad\quad\quad\quad \times \mathfrak{B}_{2}^{(a-l,b-k-2l,c-k-l)}\nonumber\\
&\quad\quad+\sum_{\substack{k\geq 0 \\ k\equiv 1 \pmod 2}}\sum_{0\leq l\leq\min\{a,c-k\}}\frac{[\alpha_{b,c,\bm{\varsigma}}-k+1][\alpha_{b,c,\bm{\varsigma}}-k+3]\cdots [\alpha_{b,c,\bm{\varsigma}}+k-1]}{[k]!}\nonumber\\
&\quad\quad\quad\quad\quad\quad\quad \times \mathfrak{B}_{2}^{(a-l,b-k-2l,c-k-l)}
\end{align}
\end{thm}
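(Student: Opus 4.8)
The plan is to mirror the proofs of Theorems \ref{thmm59} and \ref{thmm510}, with Proposition \ref{propaeqb+codd} playing the role that Propositions \ref{propaeqb+c} and \ref{inverse} play there. First I would rewrite Proposition \ref{propaeqb+codd} using the shorthand $\mathbb{B}^{a,b,c}_{2}=\sum_{0\le l\le\min\{a,c\}}\mathfrak{B}_{2}^{(a-l,b-2l,c-l)}$ from \eqref{defofb}, so that \eqref{propaeqb+codd1} becomes the single triangular identity
\begin{align*}
\mathbb{B}^{a,b,c}_{2}=\sum_{0\le k\le c}(-1)^{k}[\alpha_{b,c,\bm{\varsigma}}+k-1]&\frac{[\alpha_{b,c,\bm{\varsigma}}+1][\alpha_{b,c,\bm{\varsigma}}+3]\cdots[\alpha_{b,c,\bm{\varsigma}}+2k-3]}{[k]!}\\
&\times B^{(a)}_{1}B^{(b-k)}_{2}B^{(c-k)}_{1}\eta .
\end{align*}
This has exactly the same shape as Theorem \ref{thm:iCBtoMBodd} (with $\mathfrak{B}^{(0,b,c)}_{2}$ replaced by $\mathbb{B}^{a,b,c}_{2}$ and $B^{(b)}_{2}B^{(c)}_{1}\eta$ by $B^{(a)}_{1}B^{(b)}_{2}B^{(c)}_{1}\eta$), and I would record the index behaviour $\alpha_{b-j,c-j,\bm{\varsigma}}=\alpha_{b,c,\bm{\varsigma}}+j$.

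To prove \eqref{generalabc222odd}, I would apply the telescoping identity \eqref{thm411}, namely $\mathfrak{B}^{(a,b,c)}_{2}=\mathbb{B}^{a,b,c}_{2}-\mathbb{B}^{a-1,b-2,c-1}_{2}$ (valid because $b=a+c$), and substitute the displayed form of Proposition \ref{propaeqb+codd} into both terms on the right. Since $\alpha_{b-2,c-1,\bm{\varsigma}}=\alpha_{b,c,\bm{\varsigma}}$, the two resulting sums carry identical coefficients, and after re-indexing the second sum by one step it becomes the subtracted sum in \eqref{generalabc222odd}.

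To prove \eqref{generalabc223odd}, I would invert the displayed triangular relation for $\mathbb{B}^{a,b,c}_{2}$. As noted in the remark following Proposition \ref{inverse}, the argument there showing that the two transition matrices are mutually inverse uses only the $q$-integer identity of Lemma \ref{lem1} and not the specific value (or parity) of the relevant parameter, so it applies here without change. This expresses $B^{(a)}_{1}B^{(b)}_{2}B^{(c)}_{1}\eta$ as a triangular combination of the $\mathbb{B}^{a,b-k,c-k}_{2}$ with the coefficients prescribed by Proposition \ref{inverse} at $\alpha_{b,c,\bm{\varsigma}}$; unfolding each $\mathbb{B}^{a,b-k,c-k}_{2}$ back into $\sum_{0\le l\le\min\{a,c-k\}}\mathfrak{B}_{2}^{(a-l,b-k-2l,c-k-l)}$ via \eqref{defofb} and merging the two double sums yields \eqref{generalabc223odd}. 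Finally, the case $i=2$ follows by the same symmetry used throughout Section \ref{section5}, and would be omitted.

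The argument is essentially formal once Proposition \ref{propaeqb+codd} is available; the only delicate point — and the mildest of obstacles — is the bookkeeping of how $\alpha_{b,c,\bm{\varsigma}}$, and hence its parity, shifts under the decrements $(b,c)\mapsto(b-k,c-k)$ and $(b,c)\mapsto(b-2,c-1)$, so that each appeal to Proposition \ref{inverse} lands in the correct one of its four parity cases and the coefficients telescope as claimed.
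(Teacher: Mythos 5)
Your proposal is correct and mirrors exactly the argument the paper has in mind (it states the proof is identical to those of Theorems \ref{thmm59} and \ref{thmm510}): the telescoping identity \eqref{thm411} together with Proposition \ref{propaeqb+codd} gives \eqref{generalabc222odd} (no re-indexing is actually needed, since $\alpha_{b-2,c-1,\bm{\varsigma}}=\alpha_{b,c,\bm{\varsigma}}$ aligns the sums directly), and the parity-independent inversion from Proposition \ref{inverse} applied to $\mathbb{B}^{a,b,c}_{2}$, followed by unfolding via \eqref{defofb}, gives \eqref{generalabc223odd}. One observation worth recording: carrying out the inversion at odd $\alpha_{b,c,\bm{\varsigma}}$ exactly as you describe produces parity conditions $k\equiv 1\pmod 2$ on the first double sum and $k\equiv 0\pmod 2$ on the second (consistent with Theorem \ref{thm6155}), whereas the printed \eqref{generalabc223odd} has them reversed — this appears to be a typo in the paper rather than a gap in your argument.
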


Next, we focus on the case $\beta_{a,b,c,\bm{\varsigma}}=n-m+a-2b+3c+\varsigma_{1}>0$, $\beta_{a,b,c,\bm{\varsigma}}$ is odd. For any $m,n\geq 0$, $0\leq y\leq c$, $0\leq \theta\leq a$, we define 
\begin{align}\label{defoabcoddddddd}
Z^{(a,b,c)}_{m,n,\theta,y,\bm{\varsigma}}&=\sum_{x+k=\theta}(-1)^{k}[\beta_{a,b,c,\bm{\varsigma}}+k-1]\frac{[\beta_{a,b,c,\bm{\varsigma}}+1][\beta_{a,b,c,\bm{\varsigma}}+3]\cdots [\beta_{a,b,c,\bm{\varsigma}}+2k-3]}{[k]!}q^{\varsigma_{1}(x-y)}\nonumber\\
&\quad\quad\times q^{-\frac{y(y-1+2n+2c-2b+2k)}{2}-\frac{x(x+1+2m+2b-4c-2a-2y)}{2}}\qbinom{n-b+c+\theta}{x}\qbinom{m-c+y}{y}\in \mathcal{A}.
\end{align}
Once we focus on a fixed $\bm{\varsigma}$, $Z^{(a,b,c)}_{m,n,\theta,y,\bm{\varsigma}}$ can be abbreviated as $Z^{(a,b,c)}_{m,n,\theta,y}$. When $(\theta,y)=(a,c)$, $Z^{(a,b,c)}_{m,n,\theta,y}$ can be abbreviated as $Z^{a,b,c}_{m,n}$. Next, we want to list some properties about $Z^{(a,b,c)}_{m,n,\theta,y}$.

\begin{lem} \label{lemrecursiveoZ}
When $b\geq a+c$, for any $m,n,a,b,c\in \mathbb{Z}_{\geq 0}$, $0\leq x\leq a$, $0\leq \theta\leq c$, we have 
$$
Z^{(a-1,b,c+1)}_{m+1,n-1,\theta,y}=Z^{(a,b,c)}_{m,n,\theta,y},
\qquad
Z^{(a,b,c)}_{m,n,a,y}=Z^{(a,b+1,c+1)}_{m+1,n,a,y},
\qquad
Z^{(a,b,c)}_{m,n,\theta,y}=Z^{(\theta,b-a-c+\theta+y,y)}_{m-c+y,n-a+\theta,\theta,y},
$$
and $Z^{a,b,c}_{m,n,\bm{\varsigma}}=\qbinom{m}{c}\qbinom{m+b-a-c}{c}^{-1}\Xi^{c,a+c,a}_{n-b+c+a,m+b-a-c,\bm{\varsigma}'}$, where $\bm{\varsigma}'=1-\bm{\varsigma}$.
\end{lem}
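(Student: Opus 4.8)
The plan is to treat the three recursions and the closing identity separately; in each case the verification is the odd-parity counterpart of the corresponding step in the proof of Lemma~\ref{lemrecursiveo} carried out in Appendix~\ref{app5}, the only genuinely new input being a parameter comparison between $\beta$ and $\alpha$.

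For the three recursions I would argue term by term in the defining sum \eqref{defoabcoddddddd}. Under the substitution $(a,b,c,m,n)\mapsto(a-1,b,c+1,m+1,n-1)$, and likewise under $(b,c,m)\mapsto(b+1,c+1,m+1)$ with $\theta=a$, and under $(a,b,c,m,n)\mapsto(\theta,\,b-a-c+\theta+y,\,y,\,m-c+y,\,n-a+\theta)$, a short direct computation shows that the quantity $\beta_{a,b,c,\bm{\varsigma}}=n-m+a-2b+3c+\varsigma_{1}$, both $q$-binomials $\qbinom{n-b+c+\theta}{x}$ and $\qbinom{m-c+y}{y}$, and both quadratic $q$-exponents appearing in \eqref{defoabcoddddddd} are each left unchanged. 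Since the odd-parity coefficient $(-1)^{k}[\beta_{a,b,c,\bm{\varsigma}}+k-1]\tfrac{[\beta_{a,b,c,\bm{\varsigma}}+1][\beta_{a,b,c,\bm{\varsigma}}+3]\cdots[\beta_{a,b,c,\bm{\varsigma}}+2k-3]}{[k]!}$ depends on the data only through $\beta_{a,b,c,\bm{\varsigma}}$ and $k$, it too is invariant, so every summand is preserved and the three identities follow. This is verbatim the computation behind the first three identities of Lemma~\ref{lemrecursiveo}, with the even-parity coefficient replaced by its odd analogue, so I would simply invoke that argument.

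For the last identity, rewrite it as $\qbinom{m+b-a-c}{c}\,Z^{a,b,c}_{m,n,\bm{\varsigma}}=\qbinom{m}{c}\,\Xi^{c,a+c,a}_{n-b+c+a,\,m+b-a-c,\,1-\bm{\varsigma}}$, which is the exact odd-parity analogue of \eqref{oandomega}. The structural point is that, comparing \eqref{defoabc} with \eqref{defoabcoddddddd} and \eqref{defomegaabc} with \eqref{defxiabcodd}, the pairs $(O,Z)$ and $(\Omega,\Xi)$ are assembled from the \emph{same} $q$-power and $q$-binomial shape factors; they differ only in the scalar attached to each index, namely $(-1)^{k}\tfrac{[\,\cdot\,][\,\cdot+2\,]\cdots[\,\cdot+2k-2\,]}{[k]!}$ in the even case and $(-1)^{k}[\,\cdot+k-1\,]\tfrac{[\,\cdot+1\,][\,\cdot+3\,]\cdots[\,\cdot+2k-3\,]}{[k]!}$ in the odd case. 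Moreover a direct check, using only $\varsigma_{1}+\varsigma_{2}=1$, gives the parameter identity
\[
\beta_{a,b,c,\bm{\varsigma}}=\alpha_{a+c,\,a,\,1-\bm{\varsigma}}\quad\text{evaluated at}\quad (m,n)=(n-b+c+a,\;m+b-a-c),
\]
so the scalar coefficients on the two sides of the desired identity agree index by index, exactly as the even-parity scalars do in \eqref{oandomega}. After factoring these scalars out, the claim therefore reduces precisely to the matching of shape factors (weighted by $\qbinom{m+b-a-c}{c}$ on one side and $\qbinom{m}{c}$ on the other) that is already established for the even case in Appendix~\ref{app5}.

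The main obstacle lies in this last step: one must track the reindexing $(a,b,c,m,n)\mapsto(c,a+c,a,n-b+c+a,m+b-a-c)$, the interchange of the roles of the two $q$-binomials, and the two extra factors $\qbinom{m}{c}$, $\qbinom{m+b-a-c}{c}$ carefully enough to see that the shape factors really coincide. Since this bookkeeping is exactly what Appendix~\ref{app5} carries out for the even-parity identity \eqref{oandomega}, I would present the proof as a short reduction: record the parameter identity $\beta_{a,b,c,\bm{\varsigma}}=\alpha_{a+c,a,1-\bm{\varsigma}}$ under the substitution, observe that the shape factors and the scalar coefficients then match term by term, and invoke \eqref{oandomega} (equivalently, its proof) to conclude.
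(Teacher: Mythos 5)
Your proposal is correct and follows exactly the route the paper takes: the paper's own proof of Lemma~\ref{lemrecursiveoZ} simply states that the arguments are identical to those for Lemma~\ref{lemrecursive} and for \eqref{oandomega}, and omits the details, which is precisely the reduction you spell out. The extra observations you supply --- that $\beta_{a,b,c,\bm{\varsigma}}$, both $q$-binomial arguments, and both quadratic $q$-exponents in \eqref{defoabcoddddddd} are invariant under each of the three substitutions, and that $\beta_{a,b,c,\bm{\varsigma}}=\alpha_{a+c,a,1-\bm{\varsigma}}$ under the reindexing $(a,b,c,m,n)\mapsto(c,a+c,a,n-b+c+a,m+b-a-c)$ so that the odd scalar coefficient transports term by term while the shape factors are literally the same as in the even case --- all check out and faithfully fill in what the paper leaves implicit.
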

\begin{proof}
The proofs are identical to those of Lemma \ref{lemrecursive}, and \eqref{oandomega} and are therefore omitted.
\end{proof}

From Lemma \ref{lemrecursiveoZ}, if $b>a+c$, we have $Z^{(a,b,c)}_{m,n}\in q^{-1}\mathbb{Z}[q^{-1}]$. We are thus led to the following theorem:
\begin{thm}\label{thm688}
Let $m,n,a,b,c\in \mathbb{Z}_{\geq 0}$ such that $m\geq c$, $n-b+c\geq 0$, $\beta_{a,b,c,\bm{\varsigma}}>0$, $\beta_{a,b,c,\bm{\varsigma}}$ is odd, and $b>a+c$, then
\begin{align}\label{generalabc55odd}
B^{(a)}_{1}B^{(b)}_{2}B^{(c)}_{1}\eta&=[\beta_{a,b,c,\bm{\varsigma}}]\sum_{\substack{k \ge 0 \\ k\equiv 0 \pmod 2}}\frac{[\beta_{a,b,c,\bm{\varsigma}}-k+2][\beta_{a,b,c,\bm{\varsigma}}-k+4]\cdots [\beta_{a,b,c,\bm{\varsigma}}+k-2]}{[k]!}\mathfrak{B}_{2}^{a-k,b-k,c}\nonumber\\
&\quad\quad+\sum_{\substack{k \ge 0 \\ k\equiv 1 \pmod 2}}\frac{[\beta_{a,b,c,\bm{\varsigma}}-k+1][\beta_{a,b,c,\bm{\varsigma}}-k+3]\cdots [\beta_{a,b,c,\bm{\varsigma}}+k-1]}{[k]!}\mathfrak{B}_{2}^{a-k,b-k,c},
\end{align}
\begin{align}\label{generalabc66odd}
\mathfrak{B}_{2}^{(a,b,c)}=\sum_{0\leq k\leq a}(-1)^{k}[\beta_{a,b,c,\bm{\varsigma}}+k-1]\frac{[\beta_{a,b,c,\bm{\varsigma}}+1][\beta_{a,b,c,\bm{\varsigma}}+3]\cdots [\beta_{a,b,c,\bm{\varsigma}}+2k-3]}{[k]!}B_{1}^{(a-k)}B^{(b-k)}_{2}B^{(c)}_{1}\eta. 
\end{align}
\end{thm}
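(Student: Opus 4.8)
The plan is to follow the template used for Theorems~\ref{thm6155} and \ref{thmm512}, handling the two displayed identities in turn. Note first that, together with $b>a+c$, the hypotheses $m\ge c$ and $n-b+c\ge0$ are exactly the nonvanishing conditions for $B^{(a)}_{1}B^{(b)}_{2}B^{(c)}_{1}\eta$, so both sides are nonzero. I would prove \eqref{generalabc66odd} (icanonical basis expanded in the monomial basis) directly, and then obtain \eqref{generalabc55odd} (monomial basis expanded in the icanonical basis) from it by the inversion argument already carried out in the proof of Proposition~\ref{inverse}, which---as recorded in the remark following that proposition---does not depend on the actual value of the weight parameter involved, here $\beta_{a,b,c,\bm{\varsigma}}$.

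For \eqref{generalabc66odd}: substituting Lemma~\ref{prop59} into the right-hand side and regrouping by the total shift $\theta$ of the leftmost divided power (the contributions $k$ from \eqref{generalabc66odd} and $x$ from Lemma~\ref{prop59}) rewrites it, after using $\varsigma_{1}+\varsigma_{2}=1$, as $\sum_{0\le\theta\le a}\sum_{0\le y\le c}Z^{(a,b,c)}_{m,n,\theta,y}\,F^{(a-\theta)}_{1}F^{(b-\theta-y)}_{2}F^{(c-y)}_{1}\eta$, with $Z^{(a,b,c)}_{m,n,\theta,y}$ exactly the quantity of \eqref{defoabcoddddddd}. Each monomial basis element $B^{(a-k)}_{1}B^{(b-k)}_{2}B^{(c)}_{1}\eta$ is $\psi^{\imath}$-invariant, and every coefficient in the formula is bar-invariant (each $[\,\cdot\,]$ is), so the right-hand side is $\psi^{\imath}$-invariant; by the defining property and uniqueness of the icanonical basis \cite[Theorem 5.10]{BW18b}, it therefore suffices to show $Z^{(a,b,c)}_{m,n,\theta,y}\in\delta_{(\theta,y),(0,0)}+q^{-1}\mathbb{Z}[q^{-1}]$ for all admissible $(\theta,y)$.

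The estimate on $Z^{(a,b,c)}_{m,n,\theta,y}$ is where Lemma~\ref{lemrecursiveoZ} enters. Its collapse identity $Z^{(a,b,c)}_{m,n,\theta,y}=Z^{(\theta,\,b-a-c+\theta+y,\,y)}_{m-c+y,\,n-a+\theta,\theta,y}$ reduces each coefficient to a diagonal one $Z^{a',b',c'}_{m',n'}$ with $b'-a'-c'=b-a-c>0$; the final identity of the same lemma then rewrites this as $\qbinom{m'}{c'}\qbinom{m'+b'-a'-c'}{c'}^{-1}\Xi^{c',\,a'+c',\,a'}_{n'-b'+c'+a',\,m'+b'-a'-c',\,\bm{\varsigma}'}$ with $\bm{\varsigma}'=1-\bm{\varsigma}$, expressing it through the equality-case coefficients $\Xi$ of Section~\ref{section5}. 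By Lemmas~\ref{degxi} and \ref{degxiodddd1} one has $\deg\Xi^{c',a'+c',a'}\le0$ (with $\Xi^{0,0,0}=1$), while $\deg\bigl(\qbinom{m'}{c'}\qbinom{m'+b'-a'-c'}{c'}^{-1}\bigr)=-c'(b'-a'-c')$ by Lemma~\ref{lemdegqbinom}; since $b'-a'-c'=b-a-c>0$, the product has negative degree whenever $c'\ge1$, and the remaining case $c'=0$ is immediate (the $q$-binomial ratio is then $1$, and $\Xi^{0,a',a'}\in q^{-1}\mathbb{Z}[q^{-1}]$ for $a'\ge1$, $=1$ for $a'=0$). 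Hence $Z^{a',b',c'}_{m',n'}\in\delta_{(a',c'),(0,0)}+q^{-1}\mathbb{Z}[q^{-1}]$, and unwinding the collapse identity gives the claim.

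The real weight of the argument sits in Lemma~\ref{lemrecursiveoZ}, and in particular in its last identity---the odd-parity, $\beta$-side analogue of the $\bm{\varsigma}\leftrightarrow1-\bm{\varsigma}$, index-transposition duality \eqref{oandomega}---whose verification is a delicate $q$-binomial computation deferred to the appendix; this, together with the equality-case $\Xi$-estimates of Section~\ref{section5}, is where I expect the difficulty to lie. Granting those, the proof of Theorem~\ref{thm688} itself is light: one need only track which configuration of the transposed indices lands in Lemma~\ref{degxi} versus Lemma~\ref{degxiodddd1}, confirm the normalization at $(\theta,y)=(0,0)$, and quote the value-independent inversion of Proposition~\ref{inverse} to pass from \eqref{generalabc66odd} to \eqref{generalabc55odd}.
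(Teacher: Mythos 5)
Your proposal is correct and follows the same route the paper takes, just with the details spelled out where the paper is terse. The paper derives Theorem~\ref{thm688} from the single observation preceding its statement, namely that Lemma~\ref{lemrecursiveoZ} (especially its last, duality identity $Z^{a,b,c}_{m,n,\bm{\varsigma}}=\qbinom{m}{c}\qbinom{m+b-a-c}{c}^{-1}\Xi^{c,a+c,a}_{\cdot,\cdot,\bm{\varsigma}'}$) gives $Z^{(a,b,c)}_{m,n}\in q^{-1}\mathbb{Z}[q^{-1}]$ when $b>a+c$, and then applies the same template as Theorems~\ref{aeqb+c} and \ref{thm6155}; you have reconstructed precisely that chain---collapse to a diagonal coefficient, pass through the duality to the $\Xi$'s of Section~\ref{section5}, bound the degree of the $q$-binomial ratio by $-c'(b'-a'-c')<0$, invoke the equality-case degree bounds from Lemmas~\ref{degxi}/\ref{degxiodddd1}, check the normalization at $(\theta,y)=(0,0)$, and finally obtain \eqref{generalabc55odd} via the value-independent inversion of Proposition~\ref{inverse}. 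Two minor remarks on calibration: your bookkeeping at $c'=0$, $a'\ge1$ is slightly more careful than the paper (Lemma~\ref{degxiodddd1} requires $a\ge1$, so that edge case really does need the $a=0$ instances, which reduce to the $O^{(b,c)}_{m,n}$ of Theorem~\ref{thm:iCBtoMBodd}), and the last identity of Lemma~\ref{lemrecursiveoZ} is not deferred to the appendix---the paper states its proof as identical to that of \eqref{oandomega}, and it is the even-parity version that appears there. Neither point affects the correctness of your argument.
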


If $b=a+c$, we have $Z^{(a,a+c,c)}_{m,n}=\Xi^{c,c+a,a}_{n,m}$. As a consequence, we obtain the following theorem:
\begin{thm}\label{thm699}
Let $m,n,a,b,c\in \mathbb{Z}_{\geq 0}$ such that $m\geq c$, $n-b+c\geq 0$, $\beta_{a,b,c,\bm{\varsigma}}=n-m+a-2b+3c+\varsigma_{1}>0$, $\beta_{a,b,c,\bm{\varsigma}}$ is odd, and $b>a+c$, then
\begin{align}\label{generalabc225odd}
\mathfrak{B}_{2}^{(a,b,c)}&=\sum_{0\leq k\leq a}(-1)^{k}[\beta_{a,b,c,\bm{\varsigma}}+k-1]\frac{[\beta_{a,b,c,\bm{\varsigma}}+1][\beta_{a,b,c,\bm{\varsigma}}+3]\cdots [\beta_{a,b,c,\bm{\varsigma}}+2k-3]}{[k]!}B_{1}^{(a-k)}B^{(b-k)}_{2}B^{(c)}_{1}\eta\nonumber\\
&\quad\quad -\sum_{0\leq k\leq a-1}\bigg((-1)^{k}[\beta_{a,b,c,\bm{\varsigma}}+k-1]\frac{[\beta_{a,b,c,\bm{\varsigma}}+1][\beta_{a,b,c,\bm{\varsigma}}+3]\cdots [\beta_{a,b,c,\bm{\varsigma}}+2k-3]}{[k]!}\nonumber\\
&\quad\quad\quad\quad\quad\quad\times B_{1}^{(a-1-k)}B^{(b-2-k)}_{2}B^{(c-1)}_{1}\eta\bigg),
\end{align}
\begin{align}\label{generalabc22333odd}
B^{(a)}_{1}B^{(b)}_{2}B^{(c)}_{1}\eta=&[\beta_{a,b,c,\bm{\varsigma}}]\sum_{\substack{k \ge 0 \\ k\equiv 0 \pmod 2}}\sum_{0\leq l\leq\min\{a-k,c\}}\frac{[\beta_{a,b,c,\bm{\varsigma}}-k+2][\beta_{a,b,c,\bm{\varsigma}}-k+4]\cdots[\beta_{a,b,c,\bm{\varsigma}}+k-2]}{[k]!}\nonumber\\
&\quad\quad\quad\quad\quad\quad\quad \times \mathfrak{B}_{2}^{a-k-l,b-k-2l,c-l}\nonumber\\
&\quad\quad+\sum_{\substack{k\geq 0 \\ k\equiv 1 \pmod 2}}\sum_{0\leq l\leq\min\{a-k,c\}}\frac{[\beta_{a,b,c,\bm{\varsigma}}-k+1][\beta_{a,b,c,\bm{\varsigma}}-k+3]\cdots [\beta_{a,b,c,\bm{\varsigma}}+k-1]}{[k]!}\nonumber\\
&\quad\quad\quad\quad\quad\quad\quad \times \mathfrak{B}_{2}^{a-k-l,b-k-2l,c-l}.
\end{align}
\end{thm}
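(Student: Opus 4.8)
The plan is to transcribe, into the odd‑parity $\beta$‑reflection setting, the two boundary‑case arguments already established — the even‑$\beta$ Theorem~\ref{thmm513} and the odd‑$\alpha$ Theorem~\ref{thm677} — together with the inversion technique of Proposition~\ref{inverse}. Since the displayed identity \eqref{generalabc225odd} carries the correction sum over $B_1^{(a-1-k)}B_2^{(b-2-k)}B_1^{(c-1)}\eta$, which is the signature of the regime $b=a+c$ treated in the discussion immediately preceding the statement (for $b>a+c$ the clean formula \eqref{generalabc66odd} of Theorem~\ref{thm688} already applies), I work with $b=a+c$. Write $\beta=\beta_{a,b,c,\bm{\varsigma}}$ — which is invariant under $(a,b,c)\mapsto(a-1,b-2,c-1)$ — and $\mathbb{B}_2^{a,b,c}=\sum_{0\le l\le\min\{a,c\}}\mathfrak{B}_2^{(a-l,b-2l,c-l)}$ as in \eqref{defofb}.

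\emph{Step 1: the averaged identity.} The first step is to establish
\[
\mathbb{B}_2^{a,b,c}=\sum_{0\le k\le a}(-1)^{k}[\beta+k-1]\frac{[\beta+1][\beta+3]\cdots[\beta+2k-3]}{[k]!}\,B_1^{(a-k)}B_2^{(b-k)}B_1^{(c)}\eta .
\]
Expanding each $B_1^{(a-k)}B_2^{(b-k)}B_1^{(c)}\eta$ on the right via Lemma~\ref{prop59} and collecting the coefficient of $F_1^{(a-\theta)}F_2^{(b-\theta-y)}F_1^{(c-y)}\eta$ reproduces precisely the quantity $Z^{(a,b,c)}_{m,n,\theta,y}$ of \eqref{defoabcoddddddd}. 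As the right‑hand side is $\psi^{\imath}$‑invariant (a $\mathbb{Z}[q,q^{-1}]$‑combination of the $\psi^{\imath}$‑invariant vectors $B_1^{(a-k)}B_2^{(b-k)}B_1^{(c)}\eta$), the defining uni‑triangularity of the icanonical basis \eqref{icb1} — the leading $F$‑monomials of the summands of $\mathbb{B}_2^{a,b,c}$ being, on the boundary $b=a+c$, the diagonal ones $F_1^{(a-l)}F_2^{(b-2l)}F_1^{(c-l)}\eta$, modulo $\mathfrak{B}_2^{(p,p+q,q)}\eta=\mathfrak{B}_1^{(q,p+q,p)}\eta$ — makes the identity equivalent to $Z^{(a,b,c)}_{m,n,\theta,y}\in\delta_{\theta,y}+q^{-1}\mathbb{Z}[q^{-1}]$ for all $0\le\theta\le a$ and $0\le y\le c$. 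The recursions of Lemma~\ref{lemrecursiveoZ} then reduce the problem: the reduction $Z^{(a,b,c)}_{m,n,\theta,y}=Z^{(\theta,\,b-a-c+\theta+y,\,y)}_{m-c+y,\,n-a+\theta,\theta,y}$ followed, using $b=a+c$, by the identity $Z^{\theta,\theta+y,y}_{m',n'}=\Xi^{y,\theta+y,\theta}_{n',m',\bm{\varsigma}'}$ with $\bm{\varsigma}'=1-\bm{\varsigma}$ (its $q$‑binomial prefactor being trivial since here $b''-a''-c''=0$) brings everything to the requirement $\Xi^{y,\theta+y,\theta}_{n',m'}\in\delta_{y,\theta}+q^{-1}\mathbb{Z}[q^{-1}]$, which is exactly the estimate established while proving Proposition~\ref{propaeqb+codd}, via Lemmas~\ref{degxiodddd1}, \ref{degxi} and \ref{lemdegqbinom}.

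\emph{Step 2: extracting \eqref{generalabc225odd} and inverting to \eqref{generalabc22333odd}.} As in the proof of Theorem~\ref{thmm59} the defining sum telescopes, $\mathfrak{B}_2^{(a,b,c)}=\mathbb{B}_2^{a,b,c}-\mathbb{B}_2^{a-1,b-2,c-1}$; since $\beta$ is unchanged under $(a,b,c)\mapsto(a-1,b-2,c-1)$, substituting the Step~1 identity into both terms yields \eqref{generalabc225odd}. For the inverse direction, the Step~1 identity is a uni‑triangular change of basis between $\{\mathbb{B}_2^{a,b-k,c-k}\}_{0\le k\le c}$ and $\{B_1^{(a)}B_2^{(b-k)}B_1^{(c-k)}\eta\}_{0\le k\le c}$ of exactly the shape inverted in Proposition~\ref{inverse}; by the Remark following that proposition the inversion does not use the numerical value of the linear parameter, so it applies verbatim with $\beta$ in place of $\alpha$ (and the two reflections interchanged), expressing $B_1^{(a)}B_2^{(b)}B_1^{(c)}\eta$ as the stated $k$‑sum over the $\mathbb{B}_2^{a-k,b-k,c}$; unfolding each $\mathbb{B}_2^{a-k,b-k,c}$ via \eqref{defofb} produces the double sum over $k,l$ of \eqref{generalabc22333odd}.

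\emph{Main obstacle.} The technical crux is the diagonal case $\theta=y$ of Step~1, i.e.\ showing that $Z^{(a,b,c)}_{m,n,\theta,\theta}$ has $q$‑degree exactly $0$ with leading coefficient $1$, rather than merely $\deg\le 0$. Through the $Z\leftrightarrow\Xi$ dictionary of Lemma~\ref{lemrecursiveoZ} this is pushed onto the degree bookkeeping of Lemmas~\ref{lemrecursive1}, \ref{lemdegg} and \ref{degxi}, and ultimately onto parity‑dependent cancellations among the products $[\beta+1][\beta+3]\cdots$ weighted by $[\beta+k-1]$ — the odd‑$\beta$ counterpart of the $q$‑binomial identity of Lemma~\ref{lem1}. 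Controlling these cancellations uniformly as the parameters run through the telescoping of Step~2 is the only part of the argument that is not a routine transcription of the even‑parity proofs of Section~\ref{section5}.
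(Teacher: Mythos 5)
Your proposal is correct and follows essentially the same route as the paper: Theorem \ref{thm699} is obtained there precisely via the dictionary $Z^{(a,a+c,c)}_{m,n}=\Xi^{c,c+a,a}_{n,m}$ of Lemma \ref{lemrecursiveoZ}, the estimate $\Xi^{x,\theta+x,\theta}_{m-c+\theta,n-a+x}\in\delta_{x,\theta}+q^{-1}\mathbb{Z}[q^{-1}]$ from Lemmas \ref{degxiodddd1} and \ref{degxi} (exactly as in Proposition \ref{propaeqb+codd}), the telescoping $\mathfrak{B}_2^{(a,b,c)}=\mathbb{B}_2^{a,b,c}-\mathbb{B}_2^{a-1,b-2,c-1}$, and the parameter-value-independent inversion of Proposition \ref{inverse}. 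You were also right to read the hypothesis as $b=a+c$ (the printed $b>a+c$ is a slip), and the ``main obstacle'' you flag is already settled by the lemmas you cite in Step 1, so no new cancellation identity beyond those is actually required.
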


\subsection{Case for $B^{(a)}_{2}B^{(b)}_{1}B^{(c)}_{2}\eta$ and $\mathfrak{B}_{1}^{(a,b,c)}$}
Define $\hat{\alpha}_{b,c,\bm{\varsigma}}:=n-m+b-2c+\varsigma_{1}$, $\hat{\beta}_{a,b,c,\bm{\varsigma}}:=m-n+a-2b+3c+\varsigma_{2}$. Again, we focus on the case $\hat{\alpha}_{b,c,\bm{\varsigma}}>0$ XOR $\hat{\beta}_{a,b,c,\bm{\varsigma}}>0$. By symmetry, the results in previous subsection can be extended to the following:
\begin{thm}\label{thmm61010}
When $\hat{\alpha}_{b,c,\bm{\varsigma}}>0$, $\hat{\alpha}_{b,c,\bm{\varsigma}}$ is odd, and $b>a+c$, we have
\begin{align*}
B^{(a)}_{2}B^{(b)}_{1}B^{(c)}_{2}\eta&=[\hat{\alpha}_{b,c,\bm{\varsigma}}]\sum_{\substack{k \ge 0 \\ k\equiv 0 \pmod 2}}\frac{[\hat{\alpha}_{b,c,\bm{\varsigma}}-k+2]\cdots [\hat{\alpha}_{b,c,\bm{\varsigma}}+k-4][\hat{\alpha}_{b,c,\bm{\varsigma}}+k-2]}{[k]!}\mathfrak{B}_{1}^{a,b-k,c-k}\\
&\quad\quad+\sum_{\substack{k\geq 0 \\ k\equiv 1 \pmod 2}}\frac{[\hat{\alpha}_{b,c,\bm{\varsigma}}-k+1][\hat{\alpha}_{b,c,\bm{\varsigma}}-k+3]\cdots [\hat{\alpha}_{b,c,\bm{\varsigma}}+k-1]}{[k]!}\mathfrak{B}^{a,b-k,c-k}_{1},
\end{align*}
and
\begin{align*}
\mathfrak{B}_{1}^{(a,b,c)}&=\sum_{0\leq k\leq c}(-1)^{k}[\hat{\alpha}_{b,c,\bm{\varsigma}}+k-1]\frac{[\hat{\alpha}_{b,c,\bm{\varsigma}}+1][\hat{\alpha}_{b,c,\bm{\varsigma}}+3]\cdots [\hat{\alpha}_{b,c,\bm{\varsigma}}+2k-3]}{[k]!}B_{2}^{(a)}B^{(b-k)}_{1}B^{(c-k)}_{2}\eta.
\end{align*} 
\end{thm}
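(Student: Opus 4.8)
The plan is to deduce this statement from its $i=1$ counterpart, Theorem~\ref{thm6155}, by means of the diagram symmetry of $\U_q(\mathfrak{sl}_3)$, as announced at the start of this subsection. Let $\sigma$ denote the $\mathbb{Q}(q)$-algebra automorphism of $\U$ determined by $E_i\mapsto E_{\tau i}$, $F_i\mapsto F_{\tau i}$, $K_i\mapsto K_{\tau i}$ for $i\in\mathbb{I}$; it is an involution, is well defined since $\tau$ preserves the Cartan datum ($i\cdot j=\tau(i)\cdot\tau(j)$), and commutes with Lusztig's bar involution $\psi$. A direct computation gives $\sigma(B_i)=F_{\tau i}+q^{\varsigma_i}E_iK_{\tau i}^{-1}$, which is exactly the idivided power generator $B_{\tau i}$ of the iquantum group $\U^{\imath}_{\bm{\varsigma}'}$ attached to the swapped parameter $\bm{\varsigma}':=(\varsigma_2,\varsigma_1)$, together with $\sigma(k_i)=k_{\tau i}$. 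Hence $\sigma$ restricts to an algebra isomorphism $\U^{\imath}_{\bm{\varsigma}}\xrightarrow{\;\sim\;}\U^{\imath}_{\bm{\varsigma}'}$ interchanging $B_1$ and $B_2$, and, by comparison on generators, it intertwines $\psi^{\imath}$ for $\bm{\varsigma}$ with $\psi^{\imath}$ for $\bm{\varsigma}'$. Under our running assumption one has $\{\bm{\varsigma},\bm{\varsigma}'\}=\{(1,0),(0,1)\}$, so both parameters remain admissible.

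Next I would transport $\sigma$ to the module level. It provides a $\mathbb{Q}(q)$-linear isomorphism $\phi\colon L(m,n)\xrightarrow{\;\sim\;}L(n,m)$ with $\phi(\eta_{m,n})=\eta_{n,m}$ and $\phi(u\cdot v)=\sigma(u)\cdot\phi(v)$ for all $u\in\U$, $v\in L(m,n)$, and it intertwines the Lusztig and the $\psi^{\imath}$-semilinear involutions on the two modules in the evident way. Consequently $\phi$ carries $F^{(a)}_2F^{(b)}_1F^{(c)}_2\eta_{m,n}$ to $F^{(a)}_1F^{(b)}_2F^{(c)}_1\eta_{n,m}$ and $B^{(a)}_2B^{(b)}_1B^{(c)}_2\eta_{m,n}$ to $B^{(a)}_1B^{(b)}_2B^{(c)}_1\eta_{n,m}$, where on the target the idivided powers are those of $\U^{\imath}_{\bm{\varsigma}'}$. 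Comparing the characterization~\eqref{icb1} of $\mathfrak{B}^{(a,b,c)}_1$ in $L(m,n)$ for $\bm{\varsigma}$ with that of $\mathfrak{B}^{(a,b,c)}_2$ in $L(n,m)$ for $\bm{\varsigma}'$ --- both being $\psi^{\imath}$-invariant and equal to the same leading monomial plus a $q^{-1}\mathbb{Z}[q^{-1}]$-combination of lower ones --- the uniqueness in \cite[Theorem~5.10]{BW18b} forces $\phi(\mathfrak{B}^{(a,b,c)}_1)=\mathfrak{B}^{(a,b,c)}_2$; this matches Proposition~\ref{prop32}(2).

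Finally I would invoke Theorem~\ref{thm6155} for the module $L(n,m)$ with parameter $\bm{\varsigma}'$. Its discriminant $\alpha_{b,c,\bm{\varsigma}'}=n-m+b-2c+\varsigma_1$ is precisely $\hat{\alpha}_{b,c,\bm{\varsigma}}$, so the hypotheses ``positive, odd, $b>a+c$'' transfer without change, and the implicit non-vanishing constraints on the weights become the ones appropriate to $L(m,n)$. Applying $\phi^{-1}$ to the two identities of Theorem~\ref{thm6155} and using $\phi^{-1}(B^{(a)}_1B^{(b)}_2B^{(c)}_1\eta_{n,m})=B^{(a)}_2B^{(b)}_1B^{(c)}_2\eta_{m,n}$, $\phi^{-1}(\mathfrak{B}^{(a,b-k,c-k)}_2)=\mathfrak{B}^{(a,b-k,c-k)}_1$, and $\alpha_{b,c,\bm{\varsigma}'}=\hat{\alpha}_{b,c,\bm{\varsigma}}$ throughout, yields the two displayed formulas. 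There is no essential difficulty here; the point that needs care is the bookkeeping --- the triple substitution $1\leftrightarrow 2$, $m\leftrightarrow n$, $\bm{\varsigma}\leftrightarrow\bm{\varsigma}'$ must be applied consistently in every place it occurs, including inside $\alpha$, $\beta$, $\hat{\alpha}$, $\hat{\beta}$, in the parity conventions governing the two sums, and in the ranges ensuring that the monomials and icanonical elements involved are nonzero. Alternatively, one may bypass $\sigma$ entirely and re-run the argument of Theorem~\ref{thm6155} --- its lemmas, namely the $Z$-recursions of Lemma~\ref{lemrecursiveoZ} and the attendant degree estimates, are already stated in the form needed --- verbatim with the roles of $1$ and $2$ interchanged, which is the route implicitly adopted for the other statements of this subsection.
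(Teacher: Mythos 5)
Your proof is correct and spells out exactly the diagram-symmetry argument that the paper compresses into the phrase ``By symmetry, the results in previous subsection can be extended to the following''; the verification that $\sigma(B_i)=B_{\tau i}$ for the swapped parameter $\bm{\varsigma}'$, that $\alpha_{b,c,\bm{\varsigma}'}$ computed on $L(n,m)$ equals $\hat{\alpha}_{b,c,\bm{\varsigma}}$, and that the uniqueness characterization of the icanonical basis transports under $\phi$ are precisely the right ingredients. One small slip in your closing remark: for the $\hat{\alpha}$-odd case being proved here the relevant machinery mirrored from the $i=2$ side is the $\Xi$-recursions (Lemmas~\ref{lemrecursiveodd}, \ref{lem417}, \ref{degxiodddd1}, \ref{degxi}) feeding into Theorem~\ref{thm6155}, not the $Z$-recursions of Lemma~\ref{lemrecursiveoZ}, which govern the $\hat{\beta}$-positive case (Theorems~\ref{thm61212}, \ref{thm61313}); this does not affect your main argument via $\sigma$.
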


\begin{thm}\label{thm61111}
When $\hat{\alpha}_{b,c,\bm{\varsigma}}>0$, $\hat{\alpha}_{b,c,\bm{\varsigma}}$ is odd, and $b=a+c$, we have
\begin{align*}
B^{(a)}_{2}B^{(b)}_{1}B^{(c)}_{2}\eta=&[\hat{\alpha}_{b,c,\bm{\varsigma}}]\sum_{\substack{k \ge 0 \\ k\equiv 0 \pmod 2}}\sum_{0\leq l\leq\min\{a,c-k\}}\frac{[\hat{\alpha}_{b,c,\bm{\varsigma}}-k+2][\hat{\alpha}_{b,c,\bm{\varsigma}}-k+4]\cdots[\hat{\alpha}_{b,c,\bm{\varsigma}}+k-2]}{[k]!}\nonumber\\
&\quad\quad\quad\quad\quad\quad\quad \times \mathfrak{B}_{1}^{a-l,b-k-2l,c-k-l}\nonumber\\
&\quad\quad+\sum_{\substack{k\geq 0 \\ k\equiv 1 \pmod 2}}\sum_{0\leq l\leq\min\{a,c-k\}}\frac{[\hat{\alpha}_{b,c,\bm{\varsigma}}-k+1][\hat{\alpha}_{b,c,\bm{\varsigma}}-k+3]\cdots [\hat{\alpha}_{b,c,\bm{\varsigma}}+k-1]}{[k]!}\nonumber\\
&\quad\quad\quad\quad\quad\quad\quad \times \mathfrak{B}_{1}^{a-l,b-k-2l,c-k-l},
\end{align*}
and
\begin{align*}
\mathfrak{B}_{1}^{(a,b,c)}&=\sum_{0\leq k\leq c}(-1)^{k}[\hat{\alpha}_{b,c,\bm{\varsigma}}+k-1]\frac{[\hat{\alpha}_{b,c,\bm{\varsigma}}+1][\hat{\alpha}_{b,c,\bm{\varsigma}}+3]\cdots [\hat{\alpha}_{b,c,\bm{\varsigma}}+2k-3]}{[k]!}B_{2}^{(a)}B^{(b-k)}_{1}B^{(c-k)}_{2}\eta\nonumber\\
&\quad\quad -\sum_{0\leq k\leq c-1}\bigg((-1)^{k}[\hat{\alpha}_{b,c,\bm{\varsigma}}+k-1]\frac{[\hat{\alpha}_{b,c,\bm{\varsigma}}+1][\hat{\alpha}_{b,c,\bm{\varsigma}}+3]\cdots [\hat{\alpha}_{b,c,\bm{\varsigma}}+2k-3]}{[k]!}\nonumber\\
&\quad\quad\quad\quad\quad\quad\times B_{2}^{(a-1)}B^{(b-2-k)}_{1}B^{(c-1-k)}_{2}\eta\bigg).
\end{align*}
\end{thm}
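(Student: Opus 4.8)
The plan is to derive this statement from its already-established counterpart for the node $i=1$, namely Theorem~\ref{thm677}, by invoking the symmetry that swaps the two nodes of the $A_2$ Dynkin diagram, as foreshadowed in Remark~\ref{iandtaui}. Concretely, the diagram flip $1\leftrightarrow 2$ lifts to a $\mathbb{Q}(q)$-algebra isomorphism $\sigma\colon\U\to\U$ interchanging $E_1\leftrightarrow E_2$, $F_1\leftrightarrow F_2$, and $K_1\leftrightarrow K_2$, and it restricts to an isomorphism $\sigma\colon\U^{\imath}_{\bm{\varsigma}}\to\U^{\imath}_{1-\bm{\varsigma}}$ sending $B_i\mapsto B_{\tau i}$ and $k_i\mapsto k_{\tau i}$; here the standing normalization $\varsigma_1+\varsigma_2=1$ guarantees that $1-\bm{\varsigma}=(\varsigma_2,\varsigma_1)$ is again of the admissible form $(1,0)$ or $(0,1)$. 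Pulling back $L(m,n)$ along $\sigma$ identifies it, as a $\U^{\imath}_{\bm{\varsigma}}$-module, with $L(n,m)$ regarded as a $\U^{\imath}_{1-\bm{\varsigma}}$-module, matching highest weight vectors; under this identification the monomial basis vector $B^{(a)}_{2}B^{(b)}_{1}B^{(c)}_{2}\eta$ of $L(m,n)$ corresponds to $B^{(a)}_{1}B^{(b)}_{2}B^{(c)}_{1}\eta$ of $L(n,m)$, and the icanonical basis vector $\mathfrak{B}^{(a,b,c)}_{1}$ of $L(m,n)$ corresponds to $\mathfrak{B}^{(a,b,c)}_{2}$ of $L(n,m)$.

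The first point I would check is that $\sigma$ intertwines the two bar involutions, i.e.\ $\sigma\circ\psi^{\imath}_{\bm{\varsigma}}=\psi^{\imath}_{1-\bm{\varsigma}}\circ\sigma$, which is immediate from $\psi^{\imath}(B_i)=B_i$, $\psi^{\imath}(k_i)=k_i^{-1}$ and the fact that $\sigma$ fixes $q$; consequently the defining triangularity characterization \eqref{icb1} of the icanonical basis is preserved term by term, which gives the correspondence of $\mathfrak{B}$'s stated above. Next comes the scalar bookkeeping: the quantity $\alpha_{b,c,\bm{\varsigma}}$ that governs Theorem~\ref{thm677}, when evaluated on $L(n,m)$ with parameter $1-\bm{\varsigma}$, equals $n-m+b-2c+(1-\varsigma_2)=n-m+b-2c+\varsigma_1=\hat{\alpha}_{b,c,\bm{\varsigma}}$ (again using $\varsigma_1+\varsigma_2=1$), so its positivity and parity, the relation $b=a+c$, and the dominance conditions $n\geq c$ and $m-b+c\geq 0$ all translate verbatim. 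The key compatibility is precisely Proposition~\ref{prop32}(2): the coefficient functions satisfy $c^{a,b,c}_{\tau i,\bm{\varsigma},\bm{m},x,y}=c^{a,b,c}_{i,1-\bm{\varsigma},\bm{m}',x,y}$ with $\bm{m}'$ the swap of $\bm{m}$, so Lemma~\ref{prop59} (the only bridge between the $B$-monomials and the $F$-monomials) transports unchanged, and hence so do the auxiliary functions $\Xi^{a,b,c}_{m,n}$, their recursions (Lemmas~\ref{lemrecursiveodd} and \ref{lem417}), and the degree and integrality estimates in Lemma~\ref{degxiodddd1}, Proposition~\ref{propaeqb+codd}, and Theorem~\ref{thm677}. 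Applying Theorem~\ref{thm677} on $L(n,m)$ with parameter $1-\bm{\varsigma}$ and transporting back along $\sigma$ then produces the two displayed identities of this statement verbatim, with every occurrence of $\alpha_{b,c,\bm{\varsigma}}$ replaced by $\hat{\alpha}_{b,c,\bm{\varsigma}}$.

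I do not expect a genuine obstacle here: the analytically delicate content (Proposition~\ref{propaeqb+codd}, the recursions for $\Xi^{a,b,c}_{m,n}$, and the degree bookkeeping of Lemma~\ref{degxiodddd1}) has already been carried out for $i=1$, and what remains is a matter of translating indices. The only care required is to pin down $\sigma$ precisely enough to be sure it both maps the admissible parameter set $\{(1,0),(0,1)\}$ to itself and intertwines $\psi^{\imath}$, so that it genuinely carries icanonical bases to icanonical bases; once these are in hand the conclusion is immediate. A fully self-contained alternative, avoiding $\sigma$ altogether, is to rerun the proof of Theorem~\ref{thm677} with the roles of the indices $1$ and $2$ exchanged throughout, using the $i=2$ specialization of Lemma~\ref{prop59} and the evident $i=2$ analogues of $\Xi^{a,b,c}_{m,n}$ and its recursions; this is mechanical and yields the same formulas.
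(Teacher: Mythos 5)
Your proposal is correct and follows the same route the paper takes: the paper's Section \ref{section6} (like Section \ref{section5}) dismisses the $i=2$ case with a one-line "by symmetry" appeal, and your argument is exactly what that symmetry unpacks to — the Dynkin-diagram flip $\sigma$ carrying $\U^{\imath}_{\bm{\varsigma}}$ to $\U^{\imath}_{1-\bm{\varsigma}}$ and $L(m,n)$ to $L(n,m)$, with the key algebraic input being Proposition \ref{prop32}(2), and $\hat{\alpha}_{b,c,\bm{\varsigma}}$ matching $\alpha_{b,c,1-\bm{\varsigma}}$ under the swap. The only thing worth adding is that you supply more detail than the paper bothers to: you verify $\sigma$ intertwines the bar involutions and that the parameter set $\{(1,0),(0,1)\}$ is preserved, whereas the paper leaves these as implicit; your alternative self-contained route (re-running the $\Xi$-machinery with indices exchanged) is also exactly what the paper's Remark \ref{iandtaui} alludes to.
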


\begin{thm}\label{thm61212}
When $\hat{\beta}_{a,b,c,\bm{\varsigma}}>0$, $\hat{\beta}_{a,b,c,\bm{\varsigma}}$ is odd, and $b>a+c$, we have
\begin{align*}
B^{(a)}_{2}B^{(b)}_{1}B^{(c)}_{2}\eta&=[\hat{\beta}_{a,b,c,\bm{\varsigma}}]\sum_{\substack{k \ge 0 \\ k\equiv 0 \pmod 2}}\frac{[\hat{\beta}_{a,b,c,\bm{\varsigma}}-k+2]\cdots [\hat{\beta}_{a,b,c,\bm{\varsigma}}+k-4][\hat{\beta}_{a,b,c,\bm{\varsigma}}+k-2]}{[k]!}\mathfrak{B}_{1}^{a-k,b-k,c}\\
&\quad\quad+\sum_{\substack{k\geq 0 \\ k\equiv 1 \pmod 2}}\frac{[\hat{\beta}_{a,b,c,\bm{\varsigma}}-k+1][\hat{\beta}_{a,b,c,\bm{\varsigma}}-k+3]\cdots [\hat{\beta}_{a,b,c,\bm{\varsigma}}+k-1]}{[k]!}\mathfrak{B}_{1}^{a-k,b-k,c},
\end{align*}
and
\begin{align*}
\mathfrak{B}_{1}^{(a,b,c)}&=\sum_{0\leq k\leq a}(-1)^{k}[\hat{\beta}_{a,b,c,\bm{\varsigma}}+k-1]\frac{[\hat{\beta}_{a,b,c,\bm{\varsigma}}+1][\hat{\beta}_{a,b,c,\bm{\varsigma}}+3]\cdots [\hat{\beta}_{a,b,c,\bm{\varsigma}}+2k-3]}{[k]!}B_{2}^{(a-k)}B^{(b-k)}_{1}B^{(c)}_{2}\eta.
\end{align*}
\end{thm}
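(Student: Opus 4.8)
The plan is to deduce Theorem~\ref{thm61212} from Theorem~\ref{thm688} by means of the Dynkin diagram flip of $\mathfrak{sl}_3$, thereby making precise the phrase ``by symmetry'' and the principle of Remark~\ref{iandtaui}. Let $\sigma$ be the $\mathbb{Q}(q)$-algebra automorphism of $\U=\U_q(\mathfrak{sl}_3)$ determined by $\sigma(E_i)=E_{\tau i}$, $\sigma(F_i)=F_{\tau i}$, and $\sigma(K_\mu)=K_{\tau(\mu)}$ for $i\in\mathbb{I}$, $\mu\in\mathbb{Z}\mathbb{I}$. First I would record its basic compatibilities: $\sigma$ restricts to an isomorphism of $\mathbb{Q}(q)$-algebras from $\U^\imath_{\bm{\varsigma}}$ onto $\U^\imath_{\bm{\varsigma}'}$ with $\bm{\varsigma}'=(\varsigma_2,\varsigma_1)$ (so $\bm{\varsigma}'=\mathbf{1}-\bm{\varsigma}$ under the standing assumption $\bm{\varsigma}\in\{(1,0),(0,1)\}$), sending $B_i\mapsto B_{\tau i}$ and $k_i\mapsto k_i^{-1}$; and for every $m,n\geq 0$ it lifts to a $\mathbb{Q}(q)$-linear isomorphism $L(m,n)\to L(n,m)$ which fixes the highest weight vector $\eta$ and intertwines the two $\Ui$-module structures through $\sigma$.

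The second step is to verify that $\sigma$ respects all the bases in play. As a diagram automorphism, $\sigma$ preserves Lusztig's canonical basis $\mathcal{B}$ of $\U^-$ setwise, interchanging $F_1^{(a)}F_2^{(b)}F_1^{(c)}$ with $F_2^{(a)}F_1^{(b)}F_2^{(c)}$; since $\sigma$ fixes $q$ it sends idivided powers $B_i^{(r)}\mapsto B_{\tau i}^{(r)}$ and hence permutes the monomial bases of~\eqref{mb}; and on generators one checks that $\sigma$ intertwines the bar involutions $\psi^\imath$ of $\U^\imath_{\bm{\varsigma}}$ and $\U^\imath_{\bm{\varsigma}'}$ (both composites being anti-linear $\mathbb{Q}$-algebra maps agreeing on the generators $B_i$ and $k_i$), with the analogous identity holding on each module $L(m,n)$. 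Feeding these facts into the defining characterization~\eqref{icb1} of the icanonical basis, I conclude that $\sigma$ takes the icanonical basis element $\mathfrak{B}_2^{(a,b,c)}$ of $L(m,n)$ for the parameter $\bm{\varsigma}$ to the icanonical basis element $\mathfrak{B}_1^{(a,b,c)}$ of $L(n,m)$ for the parameter $\bm{\varsigma}'$: indeed $\sigma$ carries the leading term $F_1^{(a)}F_2^{(b)}F_1^{(c)}\eta$ to $F_2^{(a)}F_1^{(b)}F_2^{(c)}\eta$ and the lower terms $F_1^{(a-x)}F_2^{(b-x-y)}F_1^{(c-y)}\eta$ to $F_2^{(a-x)}F_1^{(b-x-y)}F_2^{(c-y)}\eta$, preserving membership in $q^{-1}\mathbb{Z}[q^{-1}]$ and $\psi^\imath$-invariance, which pins down the image uniquely.

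Finally I would carry out the relabeling. Instantiate Theorem~\ref{thm688} with the module $L(n,m)$ and parameter $\bm{\varsigma}'$ in place of $L(m,n)$ and $\bm{\varsigma}$: the quantity $\beta_{a,b,c,\bm{\varsigma}'}$ occurring there, evaluated on $L(n,m)$, equals $m-n+a-2b+3c+\varsigma_2=\hat{\beta}_{a,b,c,\bm{\varsigma}}$, and its hypotheses $m'\geq c$, $n'-b+c\geq 0$, $\beta_{a,b,c,\bm{\varsigma}'}>0$ odd, $b>a+c$ translate into $n\geq c$, $m-b+c\geq 0$, $\hat{\beta}_{a,b,c,\bm{\varsigma}}>0$ odd, $b>a+c$. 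Applying $\sigma$ to the two identities of Theorem~\ref{thm688} in this form, and using $\sigma(\mathfrak{B}_2^{(a,b,c)})=\mathfrak{B}_1^{(a,b,c)}$ together with $\sigma(B_1^{(a-k)}B_2^{(b-k)}B_1^{(c)}\eta)=B_2^{(a-k)}B_1^{(b-k)}B_2^{(c)}\eta$, produces precisely the two displayed formulas of Theorem~\ref{thm61212}; the same symmetry applied to Theorems~\ref{thm6155} and~\ref{thm677} yields Theorems~\ref{thmm61010} and~\ref{thm61111}. The only step that is not purely formal is the verification in the second paragraph that $\sigma$ intertwines the bar involutions and hence respects the icanonical basis — this is the place where one must keep careful track of the parameters $\bm{\varsigma}$ versus $\bm{\varsigma}'$ — but it reduces to a routine check on the algebra generators, after which everything else is bookkeeping.
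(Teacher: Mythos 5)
Your proposal is correct and makes precise exactly the ``by symmetry'' that the paper invokes without elaboration in Section~\ref{section6}: the Dynkin diagram automorphism $\sigma$ is indeed the underlying mechanism, and all the parameter translations $(\bm\varsigma\leftrightarrow\bm\varsigma'$, $L(m,n)\leftrightarrow L(n,m)$, $\beta_{a,b,c,\bm\varsigma'}$ on $L(n,m)$ becoming $\hat\beta_{a,b,c,\bm\varsigma}$ on $L(m,n))$ check out. The one step you flag --- that $\sigma$ intertwines the module-level involutions $\psi^\imath$, so that it carries $\mathfrak B_2^{(a,b,c)}$ to $\mathfrak B_1^{(a,b,c)}$ --- is genuinely the point where care is needed, and it is worth noting that it is not a priori a pure generator check: $\psi^\imath$ on $L(m,n)$ is the Bao--Wang $\imath$-bar involution, not just something defined formula-by-formula. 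It does, however, reduce to a routine verification once one recalls that $\eta$ generates $L(m,n)$ as a $\Ui$-module (as the monomial basis statement \eqref{mb} shows), since both $\sigma\circ\psi^\imath_{\bm\varsigma}$ and $\psi^\imath_{\bm\varsigma'}\circ\sigma$ are anti-linear maps $L(m,n)\to L(n,m)$ fixing $\eta$ and semi-compatible with the already-verified algebra-level intertwining; uniqueness then forces them to agree. An alternative and slightly more elementary route, which the paper itself sets up through Proposition~\ref{prop32}(2) and the identity $Z^{a,b,c}_{m,n,\bm\varsigma}=\qbinom{m}{c}\qbinom{m+b-a-c}{c}^{-1}\Xi^{c,a+c,a}_{n-b+c+a,m+b-a-c,\bm\varsigma'}$ of Lemma~\ref{lemrecursiveoZ}, bypasses the module-level $\psi^\imath$-intertwining entirely: one expands the candidate RHS of the second display directly via Lemma~\ref{prop59} for $i=2$ and observes that the resulting coefficients are the symmetric counterparts of those already shown to lie in $q^{-1}\mathbb Z[q^{-1}]$, after which $\psi^\imath$-invariance of the candidate is immediate since each $B_2^{(a-k)}B_1^{(b-k)}B_2^{(c)}\eta$ is $\psi^\imath$-fixed and the scalars are bar-invariant. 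Either way the conclusion is the same, and your automorphism-based argument is arguably the cleaner explanation of \emph{why} the coefficient symmetry holds.
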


\begin{thm}\label{thm61313}
When $\hat{\beta}_{a,b,c,\bm{\varsigma}}>0$, $\hat{\beta}_{a,b,c,\bm{\varsigma}}$ is odd, and $b=a+c$, we have
\begin{align*}
B^{(a)}_{2}B^{(b)}_{1}B^{(c)}_{2}\eta=&[\hat{\beta}_{a,b,c,\bm{\varsigma}}]\sum_{\substack{k \ge 0 \\ k\equiv 0 \pmod 2}}\sum_{0\leq l\leq\min\{a-k,c\}}\frac{[\hat{\beta}_{a,b,c,\bm{\varsigma}}-k+2][\hat{\beta}_{a,b,c,\bm{\varsigma}}-k+4]\cdots[\hat{\beta}_{a,b,c,\bm{\varsigma}}+k-2]}{[k]!}\nonumber\\
&\quad\quad\quad\quad\quad\quad\quad \times \mathfrak{B}_{1}^{a-k-l,b-k-2l,c-l}\nonumber\\
&\quad\quad+\sum_{\substack{k\geq 0 \\ k\equiv 1 \pmod 2}}\sum_{0\leq l\leq\min\{a-k,c\}}\frac{[\hat{\beta}_{a,b,c,\bm{\varsigma}}-k+1][\hat{\beta}_{a,b,c,\bm{\varsigma}}-k+3]\cdots [\hat{\beta}_{a,b,c,\bm{\varsigma}}+k-1]}{[k]!}\nonumber\\
&\quad\quad\quad\quad\quad\quad\quad \times \mathfrak{B}_{1}^{a-k-l,b-k-2l,c-l},
\end{align*}
and
\begin{align*}
\mathfrak{B}_{1}^{(a,b,c)}&=\sum_{0\leq k\leq a}(-1)^{k}[\hat{\beta}_{a,b,c,\bm{\varsigma}}+k-1]\frac{[\hat{\beta}_{a,b,c,\bm{\varsigma}}+1][\hat{\beta}_{a,b,c,\bm{\varsigma}}+3]\cdots [\hat{\beta}_{a,b,c,\bm{\varsigma}}+2k-3]}{[k]!}B_{2}^{(a-k)}B^{(b-k)}_{1}B^{(c)}_{2}\eta\nonumber\\
&\quad\quad -\sum_{0\leq k\leq a-1}\bigg((-1)^{k}[\hat{\beta}_{a,b,c,\bm{\varsigma}}+k-1]\frac{[\hat{\beta}_{a,b,c,\bm{\varsigma}}+1][\hat{\beta}_{a,b,c,\bm{\varsigma}}+3]\cdots [\hat{\beta}_{a,b,c,\bm{\varsigma}}+2k-3]}{[k]!}\nonumber\\
&\quad\quad\quad\quad\quad\quad\times B_{2}^{(a-1-k)}B^{(b-2-k)}_{1}B^{(c-1)}_{2}\eta\bigg).
\end{align*}
\end{thm}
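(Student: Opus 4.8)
The plan is to derive Theorem~\ref{thm61313} from Theorem~\ref{thm699}, which establishes, in the regime $b=a+c$, the identities relating the monomial element $B^{(a)}_{1}B^{(b)}_{2}B^{(c)}_{1}\eta$ to the $\imath$canonical element $\mathfrak{B}^{(a,b,c)}_{2}$ when $\beta_{a,b,c,\bm{\varsigma}}>0$ is odd, by transporting those identities under the node-swapping diagram symmetry of $\U$. Concretely, let $\sigma$ be the $\mathbb{Q}(q)$-algebra automorphism of $\U=\U_{q}(\mathfrak{sl}_{3})$ determined by $E_{i}\mapsto E_{\tau i}$, $F_{i}\mapsto F_{\tau i}$, $K_{\mu}\mapsto K_{\tau\mu}$. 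First I would record the standard facts that $\sigma$ commutes with the bar involution $\psi$, that it carries the simple module $L(m,n)$ onto $L(n,m)$ fixing the highest weight vector $\eta$, and that it sends $F^{(r)}_{i}\mapsto F^{(r)}_{\tau i}$ and hence Lusztig's canonical basis of $L(m,n)$ to that of $L(n,m)$. Since $\sigma(B_{i})=F_{\tau i}+q^{\varsigma_{i}}E_{i}K^{-1}_{\tau i}$ is exactly the generator $B_{\tau i}$ of the quasi-split $\imath$quantum group attached to the swapped parameter $\bm{\varsigma}'=(\varsigma_{2},\varsigma_{1})=1-\bm{\varsigma}$, the map $\sigma$ restricts to an isomorphism $\U^{\imath}_{\bm{\varsigma}}\xrightarrow{\sim}\U^{\imath}_{\bm{\varsigma}'}$ compatible with $\psi^{\imath}$. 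It therefore carries $B^{(a)}_{1}B^{(b)}_{2}B^{(c)}_{1}\eta$ in $L(m,n)$ to $B^{(a)}_{2}B^{(b)}_{1}B^{(c)}_{2}\eta$ in $L(n,m)$, and --- preserving $q$, $\psi^{\imath}$-invariance, and the $q^{-1}\mathbb{Z}[q^{-1}]$-unitriangularity of \eqref{icb1} while sending its leading term $F^{(a)}_{1}F^{(b)}_{2}F^{(c)}_{1}\eta$ to $F^{(a)}_{2}F^{(b)}_{1}F^{(c)}_{2}\eta$ --- it carries $\mathfrak{B}^{(a,b,c)}_{2}$ for $(L(m,n),\bm{\varsigma})$ to $\mathfrak{B}^{(a,b,c)}_{1}$ for $(L(n,m),\bm{\varsigma}')$. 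At the level of coefficients this is precisely the symmetry $c^{a,b,c}_{\tau i,\bm{\varsigma},\bm{m},x,y}=c^{a,b,c}_{i,\bm{\varsigma}',\bm{m}',x,y}$ of Proposition~\ref{prop32}(2).

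With this dictionary in hand, I would apply $\sigma$ to both displayed identities of Theorem~\ref{thm699} and then relabel, writing $(m,n)$ for the swapped pair $(n,m)$ and $\bm{\varsigma}$ for $\bm{\varsigma}'$. Under this relabeling $\beta_{a,b,c,\bm{\varsigma}}=n-m+a-2b+3c+\varsigma_{1}$ becomes $m-n+a-2b+3c+\varsigma_{2}=\hat{\beta}_{a,b,c,\bm{\varsigma}}$, so the hypotheses ``$\beta_{a,b,c,\bm{\varsigma}}>0$, odd, $b=a+c$'' become ``$\hat{\beta}_{a,b,c,\bm{\varsigma}}>0$, odd, $b=a+c$''; every factor $B^{(\ast)}_{1}B^{(\ast)}_{2}B^{(\ast)}_{1}\eta$ becomes $B^{(\ast)}_{2}B^{(\ast)}_{1}B^{(\ast)}_{2}\eta$, and every $\mathfrak{B}^{(\ast)}_{2}$ becomes $\mathfrak{B}^{(\ast)}_{1}$. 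The two resulting identities are exactly those of Theorem~\ref{thm61313}. Since the same transport turns Theorems~\ref{thm6155}, \ref{thm677}, \ref{thm688} into Theorems~\ref{thmm61010}, \ref{thm61111}, \ref{thm61212}, it is most economical to carry out the symmetry argument once and for all for the whole subsection.

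The step I expect to require the most care is the verification of the symmetry dictionary, in particular the claim that $\sigma$ respects the $\imath$canonical basis together with its defining properties. The subtle point is that for $b=a+c$ the $\imath$canonical basis of $L(m,n)$ is only specified modulo the identification $\mathfrak{B}^{(a,a+c,c)}_{2}\eta=\mathfrak{B}^{(c,a+c,a)}_{1}\eta$, so one must check that $\sigma$ sends this identification in $L(m,n)$ to the corresponding one in $L(n,m)$: indeed $\sigma$ sends its two sides to $\mathfrak{B}^{(a,a+c,c)}_{1}\eta$ and $\mathfrak{B}^{(c,a+c,a)}_{2}\eta$ in $L(n,m)$, and these are again identified there (take the outer exponents to be $c$ and $a$ in the defining identification for $L(n,m)$). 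Once the dictionary is established, no analytic input beyond that already used in Theorems~\ref{thm688} and~\ref{thm699} is needed; the rest is the purely formal substitution described above.
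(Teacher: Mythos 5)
Your proposal is correct and coincides with the paper's (implicit) argument: the paper disposes of Theorems~\ref{thmm61010}--\ref{thm61313} in one line with ``by symmetry, the results in the previous subsection can be extended,'' which is precisely the diagram-automorphism transport you spell out in detail, sending $B_{i,\bm\varsigma}\mapsto B_{\tau i,1-\bm\varsigma}$, carrying $L(m,n)$ to $L(n,m)$, and matching $\beta_{a,b,c,\bm\varsigma}$ with $\hat\beta_{a,b,c,\bm\varsigma}$ after relabeling. Your extra care about the boundary identification $\mathfrak{B}^{(a,a+c,c)}_{2}=\mathfrak{B}^{(c,a+c,a)}_{1}$ is a worthwhile completeness check that the paper leaves unstated, but it does not alter the argument.
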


\section{iCanonical basis vs monomial basis on algebra level}\label{section7}
In this section, we first extend the results obtained in the previous sections on the transition matrices between the icanonical basis and the monomial basis at the module level to the algebra level. Next, we show that the icanonical basis is preserved by an involution.

\subsection{iCanonical basis vs monomial basis on $\dot{\U}^{\imath}$}
Recall $L(m,n)$ is the simple $\U$-module with the highest weight $m\omega_{1}+n\omega_{2}$ and highest weight vector $\eta$, for $m,n\geq 0$. Further recall that for any $\zeta\in X^{\imath}$, the icanonical basis on $\dot{\U}^{\imath}1_{\zeta}$ is the set $\{\mathfrak{B}_{2}^{(a,b,c)}1_{\zeta}, \mathfrak{B}_{1}^{(a,b,c)}1_{\zeta}: b\geq a,c\}$ modulo the identification $\mathfrak{B}_{2}^{(a,b,c)}1_{\zeta}=\mathfrak{B}_{1}^{c,b,a}1_{\zeta}$ when $b=a+c$.

For any fixed $\zeta\in X^{\imath}=\mathbb{Z}$, we define 
\begin{align}\label{defabalg}
\alpha^{b,c}_{\zeta, \bm{\varsigma},i}:=(-1)^{i}\zeta+b-2c+\varsigma_{i}\quad\text{and}\quad \beta^{a,b,c}_{\zeta, \bm{\varsigma},i}:=(-1)^{\tau i}\zeta+a-2b+3c+\varsigma_{\tau i}.
\end{align}

For any $\zeta\in X^{\imath}$, letting $m,n\to \infty$, $m-n=\zeta$, the relations between $\mathfrak{B}^{(a,b,c)}_{\tau i}$ and $B^{(a)}_{i}B^{(b)}_{\tau i}B^{(c)}_{i}$ on $L(m,n)$ can be generalized to the modified iquantum group $\dot{\U}^{\imath}1_{\zeta}$. 
\begin{thm}\label{thm711}
When $\alpha^{b,c}_{\zeta, \bm{\varsigma},i}\leq 0$, $\beta^{a,b,c}_{\zeta, \bm{\varsigma},i}\leq 0$, and $b\geq a+c$, we get $B^{(a)}_{\tau i}B^{(b)}_{i}B^{(c)}_{\tau i}1_{\zeta}=\mathfrak{B}^{(a,b,c)}_{i}$.
\end{thm}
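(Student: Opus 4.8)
The plan is to deduce the identity on $\dot{\U}^{\imath}1_{\zeta}$ from its counterpart on the simple modules $L(m,n)$ with $m-n=\zeta$, using the inverse-limit construction of the icanonical basis. Recall from \cite[Theorem 6.17]{BW18b} that $\mathfrak{B}^{(a,b,c)}_{i,\zeta}$ is characterized by the property that, under the natural map $\pi_{m,n}\colon\dot{\U}^{\imath}1_{\zeta}\to L(m,n)$, $x1_{\zeta}\mapsto x\eta$, one has $\pi_{m,n}\big(\mathfrak{B}^{(a,b,c)}_{i,\zeta}\big)=\mathfrak{B}^{(a,b,c)}_{i}\eta$ for every $m,n\geq 0$ with $m-n=\zeta$; likewise $\pi_{m,n}\big(B^{(a)}_{\tau i}B^{(b)}_{i}B^{(c)}_{\tau i}1_{\zeta}\big)=B^{(a)}_{\tau i}B^{(b)}_{i}B^{(c)}_{\tau i}\eta$, which by \eqref{mb} is a monomial basis vector of $L(m,n)$ or zero. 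Expanding the difference $B^{(a)}_{\tau i}B^{(b)}_{i}B^{(c)}_{\tau i}1_{\zeta}-\mathfrak{B}^{(a,b,c)}_{i,\zeta}$ in the (finite) monomial basis of $\dot{\U}^{\imath}1_{\zeta}$ and applying $\pi_{m,n}$: linear independence of the nonzero monomial basis vectors of $L(m,n)$ forces each coefficient to vanish as soon as $m,n$ are large enough that the corresponding idivided-power monomial acts nontrivially on $\eta$. Hence it is enough to establish the module-level identity $B^{(a)}_{\tau i}B^{(b)}_{i}B^{(c)}_{\tau i}\eta=\mathfrak{B}^{(a,b,c)}_{i}\eta$ in $L(m,n)$ for all sufficiently large $m,n$ with $m-n=\zeta$.

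The key point for the module-level statement is that the sign hypotheses depend only on $\zeta$. Writing $m_{1}=m,\ m_{2}=n$, one has $m_{\tau i}-m_{i}=(-1)^{i}\zeta$, so that $m_{\tau i}-m_{i}+b-2c+\varsigma_{i}=\alpha^{b,c}_{\zeta,\bm{\varsigma},i}\leq 0$ and $m_{i}-m_{\tau i}+a-2b+3c+\varsigma_{\tau i}=\beta^{a,b,c}_{\zeta,\bm{\varsigma},i}\leq 0$ hold for \emph{every} pair $(m,n)$ with $m-n=\zeta$; these are exactly the hypotheses of Proposition \ref{prop32}(5) for $B^{(a)}_{\tau i}B^{(b)}_{i}B^{(c)}_{\tau i}\eta$. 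By that proposition (equivalently, by the degree estimate \eqref{degc}) this monomial already lies in the icanonical basis of $L(m,n)$; and by Lemma \ref{prop59} its canonical-basis expansion has the form $F^{(a)}_{\tau i}F^{(b)}_{i}F^{(c)}_{\tau i}\eta+\sum_{(x,y)\neq(0,0)}q^{-1}\mathbb{Z}[q^{-1}]\,F^{(a-x)}_{\tau i}F^{(b-x-y)}_{i}F^{(c-y)}_{\tau i}\eta$ with leading coefficient $1$. Since $B^{(a)}_{\tau i}B^{(b)}_{i}B^{(c)}_{\tau i}\eta$ is moreover $\psi^{\imath}$-invariant (idivided powers and $\eta$ are), it satisfies the two conditions of \eqref{icb1}, and uniqueness there gives $B^{(a)}_{\tau i}B^{(b)}_{i}B^{(c)}_{\tau i}\eta=\mathfrak{B}^{(a,b,c)}_{i}\eta$ (both sides being $0$ if $F^{(a)}_{\tau i}F^{(b)}_{i}F^{(c)}_{\tau i}\eta=0$). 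Note that $\alpha^{b,c}_{\zeta,\bm{\varsigma},i}+\beta^{a,b,c}_{\zeta,\bm{\varsigma},i}=a-b+c+1$, so the two inequalities actually force $b>a+c$, whence the monomials above are genuine canonical basis vectors and the uniqueness in \eqref{icb1} is unambiguous. The case $i=2$ is identical by Remark \ref{iandtaui}.

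The main obstacle, to the extent there is one, lies in the algebra-to-module reduction of the first paragraph: one must have the module-level identity along a cofinal family of $(m,n)$, not merely at scattered points, in order to annihilate every monomial-basis coefficient of the difference. This is automatic here precisely because $\alpha^{b,c}_{\zeta,\bm{\varsigma},i}$ and $\beta^{a,b,c}_{\zeta,\bm{\varsigma},i}$ are functions of $\zeta$ alone, so Proposition \ref{prop32}(5) applies uniformly along the whole projective system $\{L(m,n)\}_{m-n=\zeta}$; the rest is bookkeeping already available from Section \ref{section4}.
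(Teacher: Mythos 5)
Your proof is correct and follows essentially the same route as the paper: the paper's proof is the one-line remark that Proposition~\ref{prop32}(5) passes to the inverse limit $m,n\to\infty$, $m-n=\zeta$, which is precisely the algebra-to-module reduction you spell out in detail. Your added observation that $\alpha^{b,c}_{\zeta,\bm{\varsigma},i}+\beta^{a,b,c}_{\zeta,\bm{\varsigma},i}=a-b+c+1$ forces $b>a+c$ (so the $b\geq a+c$ hypothesis is redundant) is a nice clarification not stated in the paper.
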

\begin{proof}
By imposing the condition $m,n\to \infty$ while $m-n=\zeta$ to Proposition \ref{prop32}, we obtain the desired result.
\end{proof}

\begin{thm}\label{thm722}
When $\alpha^{b,c}_{\zeta, \bm{\varsigma},i}>0$ and $b>a+c$, we get
\begin{align*}
B^{(a)}_{\tau i}B^{(b)}_{i}B^{(c)}_{\tau i}1_{\zeta}&=[\alpha^{b,c}_{\zeta, \bm{\varsigma},i}]\sum_{\substack{k \ge 0 \\ k\equiv 0 \pmod 2}}\frac{[\alpha^{b,c}_{\zeta, \bm{\varsigma},i}-k+2]\cdots [\alpha^{b,c}_{\zeta, \bm{\varsigma},i}+k-4][\alpha^{b,c}_{\zeta, \bm{\varsigma},i}+k-2]}{[k]!}\mathfrak{B}_{i}^{a,b-k,c-k}\\
&\quad\quad+\sum_{\substack{k\geq 0 \\ k\equiv 1 \pmod 2}}\frac{[\alpha^{b,c}_{\zeta, \bm{\varsigma},i}-k+1][\alpha^{b,c}_{\zeta, \bm{\varsigma},i}-k+3]\cdots [\alpha^{b,c}_{\zeta, \bm{\varsigma},i}+k-1]}{[k]!}\mathfrak{B}^{a,b-k,c-k}_{i}.
\end{align*}
Furthermore, if $\alpha^{b,c}_{\zeta, \bm{\varsigma},i}$ is even, then
\begin{align}\label{theorem721}
\mathfrak{B}_{i}^{(a,b,c)}&=\sum_{0\leq k\leq c}(-1)^{k}\frac{[\alpha^{b,c}_{\zeta, \bm{\varsigma},i}][\alpha^{b,c}_{\zeta, \bm{\varsigma},i}+2]\cdots [\alpha^{b,c}_{\zeta, \bm{\varsigma},i}+2k-2]}{[k]!}B^{(a)}_{\tau i}B^{(b-k)}_{i}B^{(c-k)}_{\tau i}1_{\zeta}.
\end{align} 
If $\alpha^{b,c}_{\zeta, \bm{\varsigma},i}$ is odd, then
\begin{align}\label{theorem722}
\mathfrak{B}_{i}^{(a,b,c)}&=\sum_{0\leq k\leq c}(-1)^{k}[\alpha^{b,c}_{\zeta, \bm{\varsigma},i}+k-1]\frac{[\alpha^{b,c}_{\zeta, \bm{\varsigma},i}+1][\alpha^{b,c}_{\zeta, \bm{\varsigma},i}+3]\cdots [\alpha^{b,c}_{\zeta, \bm{\varsigma},i}+2k-3]}{[k]!}B^{(a)}_{\tau i}B^{(b-k)}_{i}B^{(c-k)}_{\tau i}1_{\zeta}.
\end{align} 
\end{thm}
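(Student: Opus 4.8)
The plan is to obtain Theorem~\ref{thm722} as a formal consequence of the module-level transition matrices established in Sections~\ref{section5}--\ref{section6}, by letting $m,n\to\infty$ along the projective system $\{L(m,n)\}_{m-n=\zeta}$ used to build the icanonical and monomial bases of $\dot{\U}^{\imath}1_\zeta$ in \cite[Section 6]{BW18b}. The four module-level inputs are Theorem~\ref{aeqb+c} (the case $i=2$, $\alpha_{b,c,\bm{\varsigma}}$ even, $b>a+c$), Theorem~\ref{thm6155} (the case $i=2$, $\alpha_{b,c,\bm{\varsigma}}$ odd, $b>a+c$), and their $i=1$ counterparts Theorems~\ref{thm5144} and~\ref{thmm61010}; together these cover both values of $i\in\{1,2\}$ and both parities. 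The key bookkeeping point is that the coefficients in all of these formulas are quantum integers built only from $\alpha_{b,c,\bm{\varsigma}}=m-n+b-2c+\varsigma_i$, which depends on $m,n$ solely through $m-n$; fixing $m-n=\zeta$ turns this into $\alpha^{b,c}_{\zeta,\bm{\varsigma},i}$ verbatim, so no limit of coefficients is ever taken and only the basis vectors must stabilize. Likewise the hypotheses $\alpha_{b,c,\bm{\varsigma}}>0$ and $b>a+c$ of the module-level theorems become precisely the standing hypotheses $\alpha^{b,c}_{\zeta,\bm{\varsigma},i}>0$, $b>a+c$, while the auxiliary size constraints ($m\ge c$, $n-b+c\ge 0$, and so on) are automatic once $m,n$ are large.

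Concretely, I would first recall from \cite[Theorem 6.17]{BW18b} and \cite[Section 6]{BW18b} the compatible family of $\mathbb{Q}(q)$-linear projections $\pi_{m,n}\colon\dot{\U}^{\imath}1_\zeta\to L(m,n)$, $x1_\zeta\mapsto x\eta_{m,n}$, which send $B^{(a')}_{\tau i}B^{(b')}_i B^{(c')}_{\tau i}1_\zeta$ to $B^{(a')}_{\tau i}B^{(b')}_i B^{(c')}_{\tau i}\eta_{m,n}$ and send each icanonical basis element $\mathfrak{B}^{(a',b',c')}_i$ of $\dot{\U}^{\imath}1_\zeta$ either to the icanonical basis element $\mathfrak{B}^{(a',b',c')}_i\eta_{m,n}$ of $L(m,n)$ or to $0$. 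By \eqref{mb}, every triple $(a',b',c')$ appearing in the finitely many terms on either side of the claimed identity gives $\mathfrak{B}^{(a',b',c')}_i\eta_{m,n}\ne 0$ for all sufficiently large $m,n$ with $m-n=\zeta$; for such $m,n$ the restriction of $\pi_{m,n}$ to the finite-dimensional span of the icanonical basis vectors involved is injective.

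Next, applying $\pi_{m,n}$ to both sides of the displays of Theorem~\ref{thm722} and invoking the module-level identities of Theorems~\ref{aeqb+c}, \ref{thm6155}, \ref{thm5144}, \ref{thmm61010} on $L(m,n)$ (with $m-n=\zeta$, $m,n$ large), the images agree; by injectivity on the relevant subspace the identities hold already in $\dot{\U}^{\imath}1_\zeta$. This yields the parity-split formulas \eqref{theorem721} (for $\alpha^{b,c}_{\zeta,\bm{\varsigma},i}$ even) and \eqref{theorem722} (for $\alpha^{b,c}_{\zeta,\bm{\varsigma},i}$ odd). The first display, expanding $B^{(a)}_{\tau i}B^{(b)}_i B^{(c)}_{\tau i}1_\zeta$ in the icanonical basis, can be produced the same way from \eqref{generalabc1}, \eqref{generalabc1odd} and their $i=1$ analogues; alternatively one may invert the triangular relations \eqref{theorem721}, \eqref{theorem722} directly over $\dot{\U}^{\imath}1_\zeta$ by the $q$-binomial computation of Proposition~\ref{inverse}, which, as noted in the remark following that proposition, never uses the numerical value of $\alpha_{b,c}$ and so applies verbatim with $\alpha^{b,c}_{\zeta,\bm{\varsigma},i}$ in its place.

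Since the substantive work — the recursions for the coefficient families $\Omega$ and $\Xi$, their integrality, and the supporting $q$-binomial identities — was already carried out in Sections~\ref{section4}--\ref{section6}, no new hard computation is involved. The only point needing care, and the main (if modest) obstacle, is the stabilization argument of the second paragraph: one must confirm via \eqref{mb} that each icanonical basis vector occurring survives under $\pi_{m,n}$ for all large $m,n$, so that agreement of images genuinely forces agreement of preimages in the infinite-dimensional algebra $\dot{\U}^{\imath}1_\zeta$.
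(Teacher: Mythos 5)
Your proposal is correct and takes essentially the same route as the paper's one-line proof, which obtains Theorem~\ref{thm722} by imposing $m,n\to\infty$, $m-n=\zeta$ on the same four module-level theorems you cite (Theorems~\ref{aeqb+c}, \ref{thm5144}, \ref{thm6155}, \ref{thmm61010}). You merely spell out the stabilization details the paper leaves implicit, namely that the coefficients depend on $(m,n)$ only through $m-n=\zeta$ and that each icanonical basis vector appearing survives under $\pi_{m,n}$ for all sufficiently large $m,n$.
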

\begin{proof}
By imposing the condition $m,n\to \infty$ while $m-n=\zeta$ to Theorems \ref{aeqb+c}, \ref{thm5144}, \ref{thm6155}, and \ref{thmm61010}, we obtain the desired result.
\end{proof}

\begin{thm}\label{thm733}
When $\alpha^{b,c}_{\zeta, \bm{\varsigma},i}>0$ and $b=a+c$, we get
\begin{align*}
B^{(a)}_{\tau i}B^{(b)}_{i}B^{(c)}_{\tau i}1_{\zeta}=&[\alpha^{b,c}_{\zeta, \bm{\varsigma},i}]\sum_{\substack{k \ge 0 \\ k\equiv 0 \pmod 2}}\sum_{0\leq l\leq\min\{a,c-k\}}\frac{[\alpha^{b,c}_{\zeta, \bm{\varsigma},i}-k+2][\alpha^{b,c}_{\zeta, \bm{\varsigma},i}-k+4]\cdots[\alpha^{b,c}_{\zeta, \bm{\varsigma},i}+k-2]}{[k]!}\nonumber\\
&\quad\quad\quad\quad\quad\quad\quad \times \mathfrak{B}_{i}^{a-l,b-k-2l,c-k-l}\nonumber\\
&\quad\quad+\sum_{\substack{k\geq 0 \\ k\equiv 1 \pmod 2}}\sum_{0\leq l\leq\min\{a,c-k\}}\frac{[\alpha^{b,c}_{\zeta, \bm{\varsigma},i}-k+1][\alpha^{b,c}_{\zeta, \bm{\varsigma},i}-k+3]\cdots [\alpha^{b,c}_{\zeta, \bm{\varsigma},i}+k-1]}{[k]!}\nonumber\\
&\quad\quad\quad\quad\quad\quad\quad \times \mathfrak{B}_{i}^{a-l,b-k-2l,c-k-l}.
\end{align*}
Furthermore, if $\alpha^{b,c}_{\zeta, \bm{\varsigma},i}$ is even, then
\begin{align}\label{theorem731}
\mathfrak{B}_{i}^{(a,b,c)}&=\sum_{0\leq k\leq c}(-1)^{k}\frac{[\alpha^{b,c}_{\zeta, \bm{\varsigma},i}][\alpha^{b,c}_{\zeta, \bm{\varsigma},i}+2]\cdots [\alpha^{b,c}_{\zeta, \bm{\varsigma},i}+2k-2]}{[k]!}B^{(a)}_{\tau i}B^{(b-k)}_{i}B^{(c-k)}_{\tau i}1_{\zeta}\nonumber\\
&\quad\quad -\sum_{0\leq k\leq c-1}\bigg((-1)^{k}\frac{[\alpha^{b,c}_{\zeta, \bm{\varsigma},i}][\alpha^{b,c}_{\zeta, \bm{\varsigma},i}+2]\cdots [\alpha^{b,c}_{\zeta, \bm{\varsigma},i}+2k-2]}{[k]!}\nonumber\\
&\quad\quad\quad\quad\quad\quad\times B^{(a-1)}_{\tau i}B^{(b-2-k)}_{i}B^{(c-1-k)}_{\tau i}1_{\zeta}\bigg).
\end{align}
If $\alpha^{b,c}_{\zeta, \bm{\varsigma},i}$ is odd, then
\begin{align*}
\mathfrak{B}_{i}^{(a,b,c)}&=\sum_{0\leq k\leq c}(-1)^{k}[\alpha^{b,c}_{\zeta, \bm{\varsigma},i}+k-1]\frac{[\alpha^{b,c}_{\zeta, \bm{\varsigma},i}+1][\alpha^{b,c}_{\zeta, \bm{\varsigma},i}+3]\cdots [\alpha^{b,c}_{\zeta, \bm{\varsigma},i}+2k-3]}{[k]!}B^{(a)}_{\tau i}B^{(b-k)}_{i}B^{(c-k)}_{\tau i}1_{\zeta}\nonumber\\
&\quad\quad -\sum_{0\leq k\leq c-1}\bigg((-1)^{k}[\alpha^{b,c}_{\zeta, \bm{\varsigma},i}+k-1]\frac{[\alpha^{b,c}_{\zeta, \bm{\varsigma},i}+1][\alpha^{b,c}_{\zeta, \bm{\varsigma},i}+3]\cdots [\alpha^{b,c}_{\zeta, \bm{\varsigma},i}+2k-3]}{[k]!}\nonumber\\
&\quad\quad\quad\quad\quad\quad\times B^{(a-1)}_{\tau i}B^{(b-2-k)}_{i}B^{(c-1-k)}_{\tau i}1_{\zeta}\bigg).
\end{align*}
\end{thm}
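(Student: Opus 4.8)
The plan is to deduce Theorem~\ref{thm733} from its module-level counterparts by the same asymptotic-limit procedure used to prove Theorem~\ref{thm722}, namely by imposing $m,n\to\infty$ with $m-n=\zeta$. Concretely, the module-level statements to invoke are Theorems~\ref{thmm59}, \ref{thmm510}, \ref{thm51515}, \ref{thm677}, \ref{thm61111} (together with the $i=2$ analogues in Theorems~\ref{thm733}, \ref{thm744} and their relatives, and the sign-swapped versions under the hypothesis on $\hat\beta$), which handle the case $b=a+c$ with $\alpha_{b,c,\bm\varsigma}>0$ in all four parity/sign subcases. First I would recall from Section~\ref{section24} and \cite[Section~6]{BW18b} that $\dot{\U}^\imath 1_\zeta = \varprojlim L(m,n)$ along $\{L(m,n)\}_{m-n=\zeta}$, that $\mathfrak{B}^{(a,b,c)}_{i,\zeta}$ is by definition the inverse limit of $\mathfrak{B}^{(a,b,c)}_{i}\eta$, and that the monomial basis elements $B^{(a)}_{\tau i}B^{(b)}_{i}B^{(c)}_{\tau i}1_\zeta$ are the inverse limits of $B^{(a)}_{\tau i}B^{(b)}_{i}B^{(c)}_{\tau i}\eta$; the coefficients appearing in all the module-level formulas lie in $\mathbb{Q}(q)$ and, crucially, are independent of $m,n$ once we substitute $m-n=\zeta$ into $\alpha_{b,c,\bm\varsigma}$ (respectively $\beta_{a,b,c,\bm\varsigma}$), yielding exactly $\alpha^{b,c}_{\zeta,\bm\varsigma,i}$ (respectively $\beta^{a,b,c}_{\zeta,\bm\varsigma,i}$) as defined in \eqref{defabalg}.

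The key steps, in order, are: (1) fix $\zeta\in X^\imath$ and $a,b,c\ge 0$ with $b=a+c$ and $\alpha^{b,c}_{\zeta,\bm\varsigma,i}>0$; split into the four cases according to whether $i=1$ or $i=2$ and whether $\alpha^{b,c}_{\zeta,\bm\varsigma,i}$ (or in the parallel statement $\beta^{a,b,c}_{\zeta,\bm\varsigma,i}$) is even or odd. (2) In each case, for all sufficiently large $m,n$ with $m-n=\zeta$, the hypotheses $m\ge c$, $n-b+c\ge 0$, $\alpha_{b,c,\bm\varsigma}=\alpha^{b,c}_{\zeta,\bm\varsigma,i}>0$ of the relevant module-level theorem are satisfied, so the identity holds in $L(m,n)$ after applying $\cdot\,\eta$. (3) Observe that the truncated monomial/icanonical basis elements appearing on both sides—$B^{(a-l)}_{\tau i}B^{(b-k-2l)}_{i}B^{(c-k-l)}_{\tau i}\eta$ and $\mathfrak{B}^{a-l,b-k-2l,c-k-l}_{i}\eta$, and likewise the $B^{(a-1)}_{\tau i}B^{(b-2-k)}_{i}B^{(c-1-k)}_{\tau i}\eta$ terms—are themselves the images under the projection of the corresponding elements of $\dot{\U}^\imath 1_\zeta$; since the coefficients do not depend on $(m,n)$, passing to the inverse limit (equivalently, letting $m,n\to\infty$) converts each module-level identity into the asserted identity in $\dot{\U}^\imath 1_\zeta$. (4) For the first displayed formula (expanding the monomial basis in the icanonical basis), note that the inner sum over $l$ is finite and stable, and the outer sums over $k$ are finite because $B^{(c-k)}_{\tau i}=0$ for $k>c$, so no convergence subtlety arises; the inversion between the two directions is the algebra-level shadow of Proposition~\ref{inverse} and Theorems~\ref{thmm59}--\ref{thmm510}, already established at the module level.

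The main obstacle is purely bookkeeping rather than conceptual: one must check that the several module-level theorems quoted (which are stated separately for $i=1$ and $i=2$, for the $\alpha$-positive and $\beta$-positive regimes, and for even/odd parity) genuinely assemble into the single uniform statement of Theorem~\ref{thm733}, with the substitutions $m_i\mapsto$ ``$+\infty$'', $m_{\tau i}\mapsto$ ``$+\infty$'', $m-n\mapsto\zeta$ matching the definitions \eqref{defabalg} of $\alpha^{b,c}_{\zeta,\bm\varsigma,i}$ and $\beta^{a,b,c}_{\zeta,\bm\varsigma,i}$ on the nose, and with the identification $\mathfrak{B}^{(a,a+c,c)}_{2,\zeta}=\mathfrak{B}^{(c,a+c,a)}_{1,\zeta}$ respected throughout. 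A secondary point worth a line of justification is that the inverse-limit procedure is legitimate here precisely because $\mathfrak{B}^{(a,b,c)}_{i,\zeta}$ and the monomial basis of $\dot{\U}^\imath 1_\zeta$ were \emph{defined} as such limits in Section~\ref{section24}, so ``imposing $m,n\to\infty$ with $m-n=\zeta$'' is well-posed and the argument is complete; this is entirely parallel to the proof of Theorem~\ref{thm722} and so can be stated briefly.

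\begin{proof}
Fix $\zeta\in X^\imath$ and $a,b,c\in\mathbb Z_{\ge 0}$ with $b=a+c$ and $\alpha^{b,c}_{\zeta,\bm\varsigma,i}>0$. Recall from Section~\ref{section24} that $\dot{\U}^\imath 1_\zeta=\varprojlim_{m-n=\zeta}L(m,n)$, that $\mathfrak{B}^{(a,b,c)}_{i,\zeta}$ is the inverse limit of $\mathfrak{B}^{(a,b,c)}_{i}\eta$ along this projective system, and that the monomial basis element $B^{(a)}_{\tau i}B^{(b)}_{i}B^{(c)}_{\tau i}1_\zeta$ is the inverse limit of $B^{(a)}_{\tau i}B^{(b)}_{i}B^{(c)}_{\tau i}\eta$. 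For all $m,n$ with $m-n=\zeta$ and $m$ sufficiently large we have $m\ge c$, $n-b+c\ge 0$, and $\alpha_{b,c,\bm\varsigma}=m-n+b-2c+\varsigma_i=\alpha^{b,c}_{\zeta,\bm\varsigma,i}>0$ (and likewise $\beta_{a,b,c,\bm\varsigma}=\beta^{a,b,c}_{\zeta,\bm\varsigma,i}$), so the hypotheses of the relevant module-level theorem are met. Splitting into the cases $i=1$, $i=2$ and into the even/odd parity of $\alpha^{b,c}_{\zeta,\bm\varsigma,i}$ (using the sign-swapped versions under the hypothesis on $\beta^{a,b,c}_{\zeta,\bm\varsigma,i}$), apply the corresponding identities from Theorems~\ref{thmm59}, \ref{thmm510}, \ref{thm51515}, \ref{thm677}, \ref{thm61111}. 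Each such identity holds in $L(m,n)$ after applying $\cdot\,\eta$, its coefficients lie in $\mathbb Q(q)$ and are independent of $(m,n)$ once $m-n=\zeta$ is imposed, and every basis element occurring on either side (the truncated monomials $B^{(a-l)}_{\tau i}B^{(b-k-2l)}_{i}B^{(c-k-l)}_{\tau i}\eta$, the icanonical vectors $\mathfrak{B}^{a-l,b-k-2l,c-k-l}_{i}\eta$, and the $B^{(a-1)}_{\tau i}B^{(b-2-k)}_{i}B^{(c-1-k)}_{\tau i}\eta$ terms) is the image under the projection of the corresponding element of $\dot{\U}^\imath 1_\zeta$. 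The outer sums over $k$ are finite since $B^{(c-k)}_{\tau i}=0$ and $\mathfrak{B}^{a-l,b-k-2l,c-k-l}_{i}=0$ for $k$ large, and the inner sums over $l$ are finite and stable, so no convergence issue arises. Passing to the inverse limit, equivalently letting $m,n\to\infty$ with $m-n=\zeta$, therefore converts each module-level identity into the asserted identity in $\dot{\U}^\imath 1_\zeta$, with $\alpha_{b,c,\bm\varsigma}$ (resp.\ $\beta_{a,b,c,\bm\varsigma}$) replaced by $\alpha^{b,c}_{\zeta,\bm\varsigma,i}$ (resp.\ $\beta^{a,b,c}_{\zeta,\bm\varsigma,i}$) in accordance with \eqref{defabalg}. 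The two directions (expanding the monomial basis in the icanonical basis and conversely) are mutually inverse at the algebra level because they are so at the module level, by Proposition~\ref{inverse} and Theorems~\ref{thmm59}--\ref{thmm510}. This is entirely parallel to the proof of Theorem~\ref{thm722}.
\end{proof}
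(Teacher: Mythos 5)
Your proposal is correct and follows exactly the same route as the paper: the paper's proof is the single sentence that the result follows by imposing $m,n\to\infty$ with $m-n=\zeta$ in Theorems~\ref{thmm59}, \ref{thmm510}, \ref{thm51515}, \ref{thm677}, and \ref{thm61111}, which is precisely the list of module-level statements you invoke and the limiting procedure you carry out. Your write-up simply spells out the bookkeeping (that the coefficients depend only on $m-n=\zeta$, that the relevant basis vectors are defined as inverse limits, that the sums are finite) which the paper leaves implicit.
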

\begin{proof}
By imposing the condition $m,n\to \infty$ while $m-n=\zeta$ to Theorems \ref{thmm59}, \ref{thmm510}, \ref{thm51515}, \ref{thm677}, and \ref{thm61111}, we obtain the desired result.  
\end{proof}

\begin{thm}\label{thm744}
When $\beta^{a,b,c}_{\zeta, \bm{\varsigma},i}>0$ and $b>a+c$, we get
\begin{align*}
B^{(a)}_{\tau i}B^{(b)}_{i}B^{(c)}_{\tau i}1_{\zeta}&=[\beta^{a,b,c}_{\zeta, \bm{\varsigma},i}]\sum_{\substack{k \ge 0 \\ k\equiv 0 \pmod 2}}\frac{[\beta^{a,b,c}_{\zeta, \bm{\varsigma},i}-k+2]\cdots [\beta^{a,b,c}_{\zeta, \bm{\varsigma},i}+k-4][\beta^{a,b,c}_{\zeta, \bm{\varsigma},i}+k-2]}{[k]!}\mathfrak{B}_{i}^{a-k,b-k,c}\\
&\quad\quad+\sum_{\substack{k\geq 0 \\ k\equiv 1 \pmod 2}}\frac{[\beta^{a,b,c}_{\zeta, \bm{\varsigma},i}-k+1][\beta^{a,b,c}_{\zeta, \bm{\varsigma},i}-k+3]\cdots [\beta^{a,b,c}_{\zeta, \bm{\varsigma},i}+k-1]}{[k]!}\mathfrak{B}_{i}^{a-k,b-k,c}.
\end{align*}
Furthermore, if $\beta^{a,b,c}_{\zeta, \bm{\varsigma},i}$ is even, then
\begin{align}\label{theorem741}
\mathfrak{B}_{i}^{(a,b,c)}&=\sum_{0\leq k\leq a}(-1)^{k}\frac{[\beta^{a,b,c}_{\zeta, \bm{\varsigma},i}][\beta^{a,b,c}_{\zeta, \bm{\varsigma},i}+2]\cdots [\beta^{a,b,c}_{\zeta, \bm{\varsigma},i}+2k-2]}{[k]!}B_{\tau i}^{(a-k)}B^{(b-k)}_{i}B^{(c)}_{\tau i}1_{\zeta}.
\end{align}
If $\beta^{a,b,c}_{\zeta, \bm{\varsigma},i}$ is odd, then
\begin{align}\label{theorem742}
\mathfrak{B}_{i}^{(a,b,c)}&=\sum_{0\leq k\leq a}(-1)^{k}[\beta^{a,b,c}_{\zeta, \bm{\varsigma},i}+k-1]\frac{[\beta^{a,b,c}_{\zeta, \bm{\varsigma},i}+1][\beta^{a,b,c}_{\zeta, \bm{\varsigma},i}+3]\cdots [\beta^{a,b,c}_{\zeta, \bm{\varsigma},i}+2k-3]}{[k]!}B_{\tau i}^{(a-k)}B^{(b-k)}_{i}B^{(c)}_{\tau i}1_{\zeta}.
\end{align}
\end{thm}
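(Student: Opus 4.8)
The plan is to deduce Theorem~\ref{thm744} from the module-level transition matrices by passing to the asymptotic limit along the projective system $\{L(m,n)\}_{m-n=\zeta}$, exactly in the style of the proofs of Theorems~\ref{thm711}, \ref{thm722}, and \ref{thm733}. The first step is a notational matching. Fix $\zeta\in X^{\imath}$ and take $m,n\geq 0$ with $m-n=\zeta$; then the module-level quantity $\beta_{a,b,c,\bm{\varsigma}}=n-m+a-2b+3c+\varsigma_{1}$ of Section~\ref{section5} equals $-\zeta+a-2b+3c+\varsigma_{1}=\beta^{a,b,c}_{\zeta,\bm{\varsigma},2}$, while $\hat{\beta}_{a,b,c,\bm{\varsigma}}=m-n+a-2b+3c+\varsigma_{2}=\zeta+a-2b+3c+\varsigma_{2}=\beta^{a,b,c}_{\zeta,\bm{\varsigma},1}$. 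In particular these quantities depend only on $\zeta$, not on the individual values of $m$ and $n$, and the hypothesis $\beta^{a,b,c}_{\zeta,\bm{\varsigma},i}>0$ is precisely the hypothesis under which the corresponding module-level theorems apply.

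Next I would, for each fixed triple $(a,b,c)$ with $b>a+c$, invoke the appropriate module-level identities and let $m,n\to\infty$ with $m-n=\zeta$. For $i=2$ (studying $\mathfrak{B}^{(a,b,c)}_{2}$ and $B^{(a)}_{1}B^{(b)}_{2}B^{(c)}_{1}$) use Theorem~\ref{thmm512} when $\beta^{a,b,c}_{\zeta,\bm{\varsigma},2}$ is even and Theorem~\ref{thm688} when it is odd; for $i=1$ (studying $\mathfrak{B}^{(a,b,c)}_{1}$ and $B^{(a)}_{2}B^{(b)}_{1}B^{(c)}_{2}$) use Theorem~\ref{thm51616} in the even case and Theorem~\ref{thm61212} in the odd case. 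For all sufficiently large $m,n$ with $m-n=\zeta$ the side conditions $m\geq c$, $n-b+c\geq 0$ (or their $i=1$ analogues) hold, so these identities are valid in $L(m,n)$; moreover every $q$-integer coefficient occurring is a fixed element of $\mathcal{A}$ independent of $m,n$, and each monomial basis vector and each icanonical basis vector $\mathfrak{B}^{(a',b',c')}_{i}\eta$ appearing is nonzero in $L(m,n)$. Since the structure maps of $\{L(m,n)\}_{m-n=\zeta}$ are compatible with both families of bases and with the action of $\dot{\U}^{\imath}1_{\zeta}$, an element of $\dot{\U}^{\imath}1_{\zeta}$ is determined by its family of actions on the highest weight vectors for $m,n$ large, and $\mathfrak{B}^{(a,b,c)}_{i,\zeta}$ is by construction the inverse limit of the $\mathfrak{B}^{(a,b,c)}_{i}\eta$ (cf.\ \cite[Theorem~6.17]{BW18b} and \cite[Section~6]{BW18b}); hence passing to the limit turns these module identities into the asserted identities in $\dot{\U}^{\imath}1_{\zeta}$, with the same coefficients.

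The only point I expect to require care is that no index collapses in the limit, and this is exactly where the strict inequality $b>a+c$ enters: each triple $(a-k,b-k,c)$ on the right-hand sides still satisfies $b-k>(a-k)+c$, so it genuinely indexes a distinct icanonical (resp.\ monomial) basis element of $\dot{\U}^{\imath}1_{\zeta}$ and the boundary identification $\mathfrak{B}^{(a',a'+c',c')}_{2}=\mathfrak{B}^{(c',a'+c',a')}_{1}$ never intervenes; the borderline case $b=a+c$, where such identifications do occur, is what forces the additional correction terms already exhibited in Theorem~\ref{thm733} and is handled separately. Apart from this bookkeeping there is no genuine obstacle: all of the substantive content — the integrality of the transition coefficients and the uni-triangularity characterizing the icanonical basis — has already been carried out at the module level in Sections~\ref{section5} and~\ref{section6}.
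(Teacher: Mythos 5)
Your proposal is correct and takes the same route as the paper: the paper's proof of Theorem~\ref{thm744} is the single sentence that the result follows from Theorems~\ref{thmm512}, \ref{thm51616}, \ref{thm688}, and \ref{thm61212} by imposing $m,n\to\infty$ with $m-n=\zeta$, which is precisely the combination of module-level theorems and the limiting argument you describe. Your additional verification that the notational dictionary $\beta_{a,b,c,\bm{\varsigma}}=\beta^{a,b,c}_{\zeta,\bm{\varsigma},2}$, $\hat{\beta}_{a,b,c,\bm{\varsigma}}=\beta^{a,b,c}_{\zeta,\bm{\varsigma},1}$ depends only on $\zeta$, and that the condition $b>a+c$ prevents any index collision in the limit, is correct and makes explicit what the paper leaves implicit.
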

\begin{proof}
By imposing the condition $m,n\to \infty$ while $m-n=\zeta$ to Theorems \ref{thmm512}, \ref{thm51616}, \ref{thm688}, and \ref{thm61212}, we obtain the desired result.  
\end{proof}

\begin{thm}
When $\beta^{a,b,c}_{\zeta, \bm{\varsigma},i}>0$ and $b=a+c$, we get
\begin{align*}
B^{(a)}_{\tau i}B^{(b)}_{i}B^{(c)}_{\tau i}1_{\zeta}=&[\beta^{a,b,c}_{\zeta, \bm{\varsigma},i}]\sum_{\substack{k \ge 0 \\ k\equiv 0 \pmod 2}}\sum_{0\leq l\leq\min\{a-k,c\}}\frac{[\beta^{a,b,c}_{\zeta, \bm{\varsigma},i}-k+2][\beta^{a,b,c}_{\zeta, \bm{\varsigma},i}-k+4]\cdots[\beta^{a,b,c}_{\zeta, \bm{\varsigma},i}+k-2]}{[k]!}\nonumber\\
&\quad\quad\quad\quad\quad\quad\quad \times \mathfrak{B}_{i}^{a-k-l,b-k-2l,c-l}\nonumber\\
&\quad\quad+\sum_{\substack{k\geq 0 \\ k\equiv 1 \pmod 2}}\sum_{0\leq l\leq\min\{a-k,c\}}\frac{[\beta^{a,b,c}_{\zeta, \bm{\varsigma},i}-k+1][\beta^{a,b,c}_{\zeta, \bm{\varsigma},i}-k+3]\cdots [\beta^{a,b,c}_{\zeta, \bm{\varsigma},i}+k-1]}{[k]!}\nonumber\\
&\quad\quad\quad\quad\quad\quad\quad \times \mathfrak{B}_{i}^{a-k-l,b-k-2l,c-l}.
\end{align*}
Furthermore, if $\beta^{a,b,c}_{\zeta, \bm{\varsigma},i}$ is even, then
\begin{align*}
\mathfrak{B}_{i}^{(a,b,c)}&=\sum_{0\leq k\leq a}(-1)^{k}\frac{[\beta^{a,b,c}_{\zeta, \bm{\varsigma},i}][\beta^{a,b,c}_{\zeta, \bm{\varsigma},i}+2]\cdots [\beta^{a,b,c}_{\zeta, \bm{\varsigma},i}+2k-2]}{[k]!}B_{\tau i}^{(a-k)}B^{(b-k)}_{i}B^{(c)}_{\tau i}1_{\zeta}\nonumber\\
&\quad\quad -\sum_{0\leq k\leq a-1}\bigg((-1)^{k}\frac{[\beta^{a,b,c}_{\zeta, \bm{\varsigma},i}][\beta^{a,b,c}_{\zeta, \bm{\varsigma},i}+2]\cdots [\beta^{a,b,c}_{\zeta, \bm{\varsigma},i}+2k-2]}{[k]!}\nonumber\\
&\quad\quad\quad\quad\quad\quad\times B_{\tau i}^{(a-1-k)}B^{(b-2-k)}_{i}B^{(c-1)}_{\tau i}1_{\zeta}\bigg).
\end{align*}
If $\beta^{a,b,c}_{\zeta, \bm{\varsigma},i}$ is odd, then
\begin{align*}
\mathfrak{B}_{i}^{(a,b,c)}&=\sum_{0\leq k\leq a}(-1)^{k}[\beta^{a,b,c}_{\zeta, \bm{\varsigma},i}+k-1]\frac{[\beta^{a,b,c}_{\zeta, \bm{\varsigma},i}+1][\beta^{a,b,c}_{\zeta, \bm{\varsigma},i}+3]\cdots [\beta^{a,b,c}_{\zeta, \bm{\varsigma},i}+2k-3]}{[k]!}B_{\tau i}^{(a-k)}B^{(b-k)}_{i}B^{(c)}_{\tau i}1_{\zeta}\nonumber\\
&\quad\quad -\sum_{0\leq k\leq a-1}\bigg((-1)^{k}[\beta^{a,b,c}_{\zeta, \bm{\varsigma},i}+k-1]\frac{[\beta^{a,b,c}_{\zeta, \bm{\varsigma},i}+1][\beta^{a,b,c}_{\zeta, \bm{\varsigma},i}+3]\cdots [\beta^{a,b,c}_{\zeta, \bm{\varsigma},i}+2k-3]}{[k]!}\nonumber\\
&\quad\quad\quad\quad\quad\quad\times B_{\tau i}^{(a-1-k)}B^{(b-2-k)}_{i}B^{(c-1)}_{\tau i}1_{\zeta}\bigg).
\end{align*}
\end{thm}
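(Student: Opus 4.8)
The plan is to follow verbatim the strategy used for Theorems~\ref{thm711}, \ref{thm722}, \ref{thm733}, and \ref{thm744}: transport the module-level transition matrices to $\dot{\U}^{\imath}1_{\zeta}$ by imposing $m,n\to\infty$ with $m-n=\zeta$ along the projective system $\{L(m,n)\}_{m-n=\zeta}$ of \cite[Section~6]{BW18b}, along which the icanonical basis and the monomial basis of $\dot{\U}^{\imath}1_{\zeta}$ are the inverse limits of their counterparts on the $L(m,n)$. The module-level inputs for the present case $\beta^{a,b,c}_{\zeta,\bm{\varsigma},i}>0$, $b=a+c$ are Theorem~\ref{thmm513} (the case $i=2$, $\beta_{a,b,c,\bm{\varsigma}}$ even), Theorem~\ref{thm699} ($i=2$, $\beta_{a,b,c,\bm{\varsigma}}$ odd), Theorem~\ref{thm51717} ($i=1$, $\hat{\beta}_{a,b,c,\bm{\varsigma}}$ even), and Theorem~\ref{thm61313} ($i=1$, $\hat{\beta}_{a,b,c,\bm{\varsigma}}$ odd).

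First I would record the bookkeeping: with $m-n=\zeta$ and $i=2$ one has $\beta_{a,b,c,\bm{\varsigma}}=n-m+a-2b+3c+\varsigma_{1}=-\zeta+a-2b+3c+\varsigma_{1}=\beta^{a,b,c}_{\zeta,\bm{\varsigma},2}$, and likewise $\hat{\beta}_{a,b,c,\bm{\varsigma}}=\beta^{a,b,c}_{\zeta,\bm{\varsigma},1}$ when $i=1$, so that every quantum-integer coefficient appearing in Theorems~\ref{thmm513}, \ref{thm699}, \ref{thm51717}, \ref{thm61313} depends on $(m,n)$ only through $m-n$ and is therefore constant along the projective system. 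Next I would check that the module-level hypotheses $m\geq c$ and $n-b+c\geq0$ hold automatically once $m,n$ are large with $m-n=\zeta$, that for such $m,n$ all of the finitely many icanonical and monomial basis vectors occurring on either side are nonzero, and that they are compatible with the transition maps of the system (carrying a basis vector of $L(m,n)$ to the corresponding one of $L(m+1,n+1)$, respecting the identification $\mathfrak{B}^{(a',a'+c',c')}_{2}=\mathfrak{B}^{(c',a'+c',a')}_{1}$). Taking the inverse limit then yields the two expansions on $\dot{\U}^{\imath}1_{\zeta}$, and the two parity cases combine into the two displayed inverse formulas exactly as in the proof of Theorem~\ref{thm733}.

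The main obstacle — already present in Theorems~\ref{thm722}, \ref{thm733}, and \ref{thm744} — is to make the passage to the limit rigorous. One must use that the monomial basis expansion on $\dot{\U}^{\imath}1_{\zeta}$ is genuinely the inverse limit of the monomial basis expansions on the $L(m,n)$ (so that an element of $\dot{\U}^{\imath}1_{\zeta}$ is pinned down by the requirement that all coefficients of its monomial expansion vanish in the limit, cf.\ the discussion around \eqref{defscb}), and that the truncation of the icanonical basis of $L(m,n)$ — some $\mathfrak{B}^{(a',b',c')}_{i}\eta$ vanish for small $m,n$ — does not interfere, which it does not: the finitely many triples $(a',b',c')$ occurring all satisfy $F^{(a')}_{\tau i}F^{(b')}_{i}F^{(c')}_{\tau i}\eta\neq0$ once $m,n$ are large. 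Since this very step has already been executed for the companion theorems of this section, it presents no genuinely new difficulty, and the proof reduces to a routine transcription of the module-level identities obtained in Theorems~\ref{thmm513}, \ref{thm699}, \ref{thm51717}, and \ref{thm61313}.
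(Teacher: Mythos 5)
Your proposal is correct and takes exactly the same route as the paper: the paper's proof consists precisely of the one sentence ``By imposing the condition $m,n\to \infty$ while $m-n=\zeta$ to Theorems \ref{thmm513}, \ref{thm51717}, \ref{thm699}, and \ref{thm61313}, we obtain the desired result,'' which are the same four module-level theorems you identify, and the bookkeeping checks you flag (that $\beta_{a,b,c,\bm{\varsigma}}$ and $\hat{\beta}_{a,b,c,\bm{\varsigma}}$ depend only on $m-n$, that the module-level hypotheses hold for large $m,n$, and that the truncation of the module icanonical bases does not interfere) are exactly the implicit content of that step.
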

\begin{proof}
By imposing the condition $m,n\to \infty$ while $m-n=\zeta$ to Theorems \ref{thmm513}, \ref{thm51717}, \ref{thm699}, and \ref{thm61313}, we obtain the desired result.  
\end{proof}

\subsection{Stability of icanonical basis}
In this subsection, denote $L(m_{1},m_{2})$ as the simple $\U$-module with the highest weight $m_{1}\omega_{1}+m_{2}\omega_{2}$ and highest weight vector $\eta$. To avoid confusion, for any $\zeta\in X^{\imath}$, $m_{1},m_{2}\in \mathbb{Z}_{\geq 0}$, we denote the icanonical basis on $\dot{\U}^{\imath}1_{\zeta}$ as $\mathfrak{B}_{i,\zeta}^{(a,b,c)}$ and the icanonical basis on $L(m_{1},m_{2})$ as $\mathfrak{B}_{i,m_{1},m_{2}}^{(a,b,c)}$. 

The following theorem unravel the stability of icanonical basis, that is, upon viewing $L(m,n)$ as a $\dot{\U}^{\imath}$-module, the action of the icanonical basis of $\dot{\U}^{\imath}$ on $\eta$ yields either an icanonical basis element of $L(m,n)$ or zero.

\begin{thm}\label{thmstability}
For any $\zeta\in X^{\imath}$, $m_{1},m_{2}\in \mathbb{Z}_{\geq 0}$ with $m_{1}-m_{2}=\zeta$, we have the following identity on $L(m_{1},m_{2})$:
\begin{align}
\mathfrak{B}_{i,\zeta}^{(a,b,c)}\eta=\mathfrak{B}_{i,m_{1},m_{2}}^{(a,b,c)}~\text{or 0}.
\end{align}
Furthermore, $\mathfrak{B}_{i}^{(a,b,c)}\eta\not=0$ if and only if $c\leq m_{\tau i}$ and $a\leq b-c\leq m_{i}$.
\end{thm}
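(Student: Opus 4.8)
The plan is to deduce Theorem~\ref{thmstability} directly from the explicit transition matrices already in hand: those of Section~\ref{section7} expressing $\mathfrak{B}^{(a,b,c)}_{i,\zeta}$ in the monomial basis of $\dot{\U}^{\imath}1_{\zeta}$ (Theorems~\ref{thm711}, \ref{thm722}, \ref{thm733}, \ref{thm744}, and the last theorem of that section), together with their module-level counterparts from Sections~\ref{section4}--\ref{section6} (Theorems~\ref{aeqb+c}, \ref{thmm59}, \ref{thm5144}, and the analogues) expressing $\mathfrak{B}^{(a,b,c)}_{i,m_{1},m_{2}}$ in the monomial basis of $L(m_{1},m_{2})$. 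The key observation is that for a fixed $\zeta\in X^{\imath}$ all coefficients occurring in these matrices are determined by $\zeta$ alone, through $\alpha^{b,c}_{\zeta,\bm{\varsigma},i}$ and $\beta^{a,b,c}_{\zeta,\bm{\varsigma},i}$, hence are \emph{the same} for every pair $(m_{1},m_{2})$ with $m_{1}-m_{2}=\zeta$. Fixing such a pair, the assignment $u1_{\zeta}\mapsto u\eta$ realizes $L(m_{1},m_{2})$ as a quotient of $\dot{\U}^{\imath}1_{\zeta}$, so $B^{(a')}_{\tau i}B^{(b')}_{i}B^{(c')}_{\tau i}1_{\zeta}\cdot\eta=B^{(a')}_{\tau i}B^{(b')}_{i}B^{(c')}_{\tau i}\eta$, which by the monomial basis theorem surrounding \eqref{mb} is a monomial basis element of $L(m_{1},m_{2})$ when $c'\le m_{\tau i}$ and $a'\le b'-c'\le m_{i}$, and is $0$ otherwise.

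Concretely, I would fix $(a,b,c)$ with $b\ge a+c$ and argue by the same case division used in Section~\ref{section7} (the signs of $\alpha^{b,c}_{\zeta,\bm{\varsigma},i}$ and $\beta^{a,b,c}_{\zeta,\bm{\varsigma},i}$, the dichotomy $b>a+c$ versus $b=a+c$, and, in the positive case, parity). In each case, take the formula for $\mathfrak{B}^{(a,b,c)}_{i,\zeta}$ in the monomial basis of $\dot{\U}^{\imath}1_{\zeta}$, apply $\cdot\,\eta$, and replace each $B^{(a')}_{\tau i}B^{(b')}_{i}B^{(c')}_{\tau i}1_{\zeta}$ by its image in $L(m_{1},m_{2})$. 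If $c\le m_{\tau i}$ and $a\le b-c\le m_{i}$, one checks that every monomial in the expansion still obeys the corresponding inequalities: along the relevant sums $b'-c'$ is unchanged, so $a\le b'-c'\le m_{i}$ persists, and $\alpha^{b',c'}_{\zeta,\bm{\varsigma},i}=\alpha^{b,c}_{\zeta,\bm{\varsigma},i}+(c-c')\ge\alpha^{b,c}_{\zeta,\bm{\varsigma},i}>0$ then forces the remaining bound $c'\le m_{\tau i}$ by a short numerical check using $m_{1}-m_{2}=\zeta$ (and similarly on the $\beta>0$ side, comparing with Proposition~\ref{prop32}). Thus no monomial drops out, the resulting combination is literally the module-level expansion of $\mathfrak{B}^{(a,b,c)}_{i,m_{1},m_{2}}$, and equality follows. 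If instead $c>m_{\tau i}$ or $b-c>m_{i}$, a parallel bookkeeping shows that every monomial in the expansion is killed by $\eta$, so $\mathfrak{B}^{(a,b,c)}_{i,\zeta}\eta=0$. The final assertion is then immediate: by \eqref{mb} the element $\mathfrak{B}^{(a,b,c)}_{i,m_{1},m_{2}}$ is defined and nonzero exactly when $c\le m_{\tau i}$ and $a\le b-c\le m_{i}$.

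The main work is the vanishing case, and especially the boundary case $b=a+c$: there the monomial expansion of $\mathfrak{B}^{(a,b,c)}_{i,\zeta}$ carries, besides the terms $B^{(a)}_{\tau i}B^{(b-k)}_{i}B^{(c-k)}_{\tau i}1_{\zeta}$, also the family $B^{(a-1)}_{\tau i}B^{(b-2-k)}_{i}B^{(c-1-k)}_{\tau i}1_{\zeta}$, again all lying on the boundary $b'=a'+c'$, and one must verify, invoking the identification $B_{1}^{(a)}B_{2}^{(b)}B_{1}^{(c)}\eta=B_{2}^{(c)}B_{1}^{(b)}B_{2}^{(a)}\eta$ valid for $b=a+c$ together with the prevailing sign constraint, that these also vanish once the leading monomial does, so that no spurious term survives on a small module. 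Once these vanishing patterns are pinned down, everything else reduces to routine $q$-integer manipulations, and we obtain as a byproduct the rank-one instance of the stability statement \cite[Corollary~4.6.3]{Wat23} conjectured in \cite[Remark~6.18]{BW18b}.
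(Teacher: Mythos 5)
Your overall strategy coincides with the paper's for the generic cases: apply the algebra-level expansions of Section \ref{section7} to $\eta$, observe that the coefficients depend only on $\zeta$ through $\alpha^{b,c}_{\zeta,\bm{\varsigma},i}$ and $\beta^{a,b,c}_{\zeta,\bm{\varsigma},i}$, and decide each monomial via the criterion surrounding \eqref{mb}. In the nonvanishing direction the inequalities persist trivially ($c'\le c$ and $b'-c'$ constant, resp.\ $c'=c$ and $b'-c'\le b-c$), and in the vanishing direction with $b>a+c$ your ``parallel bookkeeping'' is exactly the paper's short sign checks in Cases 2--3 (e.g.\ that $c>m_{\tau i}$ together with $b-c\le m_i$ is incompatible with $\alpha^{b,c}_{\zeta,\bm{\varsigma},i}>0$), which do show that every monomial dies there.

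The gap is precisely at the step you flag as the main work: in the boundary case $b=a+c$ it is \emph{not} true that the second family $B^{(a-1)}_{\tau i}B^{(b-2-k)}_{i}B^{(c-1-k)}_{\tau i}1_{\zeta}$ in \eqref{theorem731} ``also vanishes once the leading monomial does'', so the proposed term-by-term argument cannot be completed. These monomials obey the weaker constraint $(b-2-k)-(c-1-k)=b-c-1$, so they can survive exactly when $b-c=m_i+1$, which is a case where the entire first family is killed. Concretely, take $i=2$, $\bm{\varsigma}=(1,0)$, $\zeta=2$, $(a,b,c)=(1,2,1)$ and the module $L(2,0)$: then $\alpha^{b,c}_{\zeta,\bm{\varsigma},2}=2>0$ is even and \eqref{theorem731} reads $\mathfrak{B}^{(1,2,1)}_{2,\zeta}=B^{(1)}_{1}B^{(2)}_{2}B^{(1)}_{1}1_{\zeta}-[2]\,B^{(1)}_{1}B^{(1)}_{2}1_{\zeta}-1_{\zeta}$; on $L(2,0)$ one has $B_{2}\eta=0$ and $B^{(1)}_{1}B^{(2)}_{2}B^{(1)}_{1}\eta=0$ (consistent with $b-c=1>m_{2}=0$), yet the remaining monomial gives $1_{\zeta}\eta=\eta\neq 0$; the identification $B_1^{(a)}B_2^{(b)}B_1^{(c)}\eta=B_2^{(c)}B_1^{(b)}B_2^{(a)}\eta$ does not help, since the surviving term is not of that shape. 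This is exactly where the paper abandons term-by-term evaluation: it rewrites the identity in the aggregated form \eqref{theorem761}, whose right-hand side involves only the monomials $B^{(a)}_{\tau i}B^{(b-k)}_{i}B^{(c-k)}_{\tau i}1_{\zeta}$ (and these do all vanish, by the Case 2 sign check), and then draws the conclusion for the sum $\sum_{l}\mathfrak{B}^{(a-l,b-2l,c-l)}_{i,\zeta}\eta$ via linear independence of icanonical basis elements, rather than for $\mathfrak{B}^{(a,b,c)}_{i,\zeta}\eta$ term by term. Your proposal is missing a mechanism of this kind for the boundary case; as written, the vanishing statement your bookkeeping would need is simply false, so the argument does not reduce to routine $q$-integer manipulations there.
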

\begin{proof}
We restrict our attention to the case $i=2$, and the case for $i=1$ follows by symmetry. Recalling \eqref{defabalg}, we have
$$
\alpha^{b,c}_{\zeta, \bm{\varsigma},2}=m_{1}-m_{2}+b-2c+\varsigma_{2}\quad\text{and}\quad \beta^{a,b,c}_{\zeta, \bm{\varsigma},i}=m_{2}-m_{1}+a-2b+3c+\varsigma_{1}.
$$
We now divide the argument into four cases.

\textbf{Case 1:
When $\alpha^{b,c}_{\zeta, \bm{\varsigma},2}\leq 0$, $\beta^{a,b,c}_{\zeta, \bm{\varsigma},2}\leq 0$.} From Theorem \ref{thm711}, $\mathfrak{B}_{2,\zeta}^{(a,b,c)}\eta=B^{(a)}_{1}B^{(b)}_{2}B^{(c)}_{1}\eta\not=0$ if and only if $c\leq m_{1}$ and $a\leq b-c\leq m_{2}$ (cf. \cite[Theorem 5.12]{WZ25}). 

\textbf{Case 2: 
When $\alpha^{b,c}_{\zeta, \bm{\varsigma},2}>0$ and $b>a+c$.} Apply $\eta$ on both sides of \eqref{theorem721}--\eqref{theorem722}, we first know $B^{(a)}_{1}B^{(b-k)}_{2}B^{(c-k)}_{1}\eta\not=0$ if $c\leq m_{1}$ and $a\leq b-c\leq m_{2}$, for each $0\leq k\leq c$. Hence, it remains to show that, if $c>m_{1}$ or $b-c>m_{2}$, $B^{(a)}_{1}B^{(b-k)}_{2}B^{(c-k)}_{1}\eta=0$. If $b-c>m_{2}$, $B^{(a)}_{1}B^{(b-k)}_{2}B^{(c-k)}_{1}\eta=0$ follows from \cite[Theorem 5.12]{WZ25}. Next, we prove that the conditions $c>m_{1}$ and $b-c\leq m_{2}$ are not compatible. On one hand, $b-c\leq m_{2}$ tells us that
$$
m_{1}-m_{2}+b-2c\leq m_{1}-(b-c)+b-2c=m_{1}-c.
$$
If we further have $c>m_{1}$, it will contradict with the condition $\alpha^{b,c}_{\zeta, \bm{\varsigma},2}>0$.

\textbf{Case 3: 
When $\beta^{a,b,c}_{\zeta, \bm{\varsigma},2}>0$ and $b>a+c$.} Applying both sides of \eqref{theorem741}--\eqref{theorem742} to $\eta$, we first know $B^{(a-k)}_{1}B^{(b-k)}_{2}B^{(c)}_{1}\eta\not=0$ if $c\leq m_{1}$ and $a\leq b-c\leq m_{2}$, for each $0\leq k\leq c$. Hence, it remains to show that, if $c>m_{1}$ or $b-c>m_{2}$, $B^{(a-k)}_{1}B^{(b-k)}_{2}B^{(c)}_{1}\eta=0$. If $c>m_{1}$ or $b-a-c>m_{2}$, then $B^{(a-k)}_{1}B^{(b-k)}_{2}B^{(c)}_{1}\eta=0$ follows from \cite[Theorem 5.12]{WZ25}. Next, we prove that the conditions $c\leq m_{1}$, $b-c>m_{2}$, $b-a-c\leq m_{2}$ are not compatible. From $b-c>m_{2}$, $b-a-c\leq m_{2}$, we know $b-a-c\leq m_{2}\leq b-c-1$. Hence,
$$
m_{2}-m_{1}+a-2b+3c\leq (b-c-1)-m_{1}+a-2b+3c=-b+a+2c-1-m_{1}
$$
If we further have $c\leq m_{1}$, we get $m_{2}-m_{1}+a-2b+3c\leq -b+a+2c-1-c=a+c-b-1$, it will contradicts to the condition $\beta^{a,b,c}_{\zeta, \bm{\varsigma},2}>0$.

\textbf{Case 4: 
When $\alpha^{b,c}_{\zeta, \bm{\varsigma},2}>0$ or $\beta^{a,b,c}_{\zeta, \bm{\varsigma},2}>0$ and $b=a+c$.} We first consider a representative special case, namely when $\alpha^{b,c}_{\zeta, \bm{\varsigma},2}$ is a positive even integer. If $c\leq m_{1}$ and $a= b-c\leq m_{2}$, then by the discussion in Case 2, the right-hand side of \eqref{theorem731} is nonzero, and hence $\mathfrak{B}_{2,\zeta}^{(a,b,c)}\eta\not=0$. On the other hand, if $c>m_{1}$ or $b-c>m_{2}$, note that when $i=2$, \eqref{theorem731} can be rewritten as
\begin{align}\label{theorem761}
\sum_{0\leq l\leq \min{a,c}}\mathfrak{B}_{2,\zeta}^{(a-l,b-2l,c-l)}&=\sum_{0\leq k\leq c}(-1)^{k}\frac{[\alpha^{b,c}_{\zeta, \bm{\varsigma},2}][\alpha^{b,c}_{\zeta, \bm{\varsigma},2}+2]\cdots [\alpha^{b,c}_{\zeta, \bm{\varsigma},2}+2k-2]}{[k]!}B^{(a)}_{1}B^{(b-k)}_{2}B^{(c-k)}_{1}1_{\zeta}.
\end{align}
By the discussion in Case~2, the right-hand side of \eqref{theorem761} vanishes. It then follows, by the linear independence of the elements $\mathfrak{B}_{2,\zeta}^{(a-l,b-2l,c-l)}$, $0\leq l\leq \min\{a,c\}$, that $\mathfrak{B}_{2,\zeta}^{(a,b,c)}\eta=0$. The proofs of the remaining cases proceed analogously and are thus omitted.
\end{proof}

\subsection{Symmetry on $\dot{\U}^{\imath}$}

First, we introduce an anti-involution $\sigma_{\tau}$ on $\dot{\U}^{\imath}$:
\begin{lem}\label{tau}
There exists an anti-involution $\sigma_{\tau}$ on $\dot{\U}^{\imath}$ such that $\sigma_{\tau}(B_{i}1_{\zeta})=1_{\zeta}B_{\tau i}$, for $i\in I$, $\zeta\in X^{\imath}$.
\end{lem}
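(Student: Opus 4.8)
The plan is to build $\sigma_\tau$ directly from a presentation of $\dot{\U}^{\imath}$. Recall from \cite{BW18b, BW21} that $\dot{\U}^{\imath}$ is generated by the idempotents $1_\lambda$ ($\lambda\in X^{\imath}$) together with the elements $B_i 1_\lambda$ and $k_i^{\pm 1} 1_\lambda$ ($i\in\mathbb{I}$), subject to $1_\lambda 1_\mu=\delta_{\lambda,\mu}1_\lambda$, $B_i 1_\lambda=1_{\lambda-\alpha_i}B_i$, $k_i 1_\lambda=q^{\langle h_i-h_{\tau i},\lambda\rangle}1_\lambda$, and the $\imath$Serre relations of $\Ui$ multiplied on the right by each $1_\lambda$. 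I would define $\sigma_\tau$ on generators by
\[
\sigma_\tau(1_\lambda)=1_\lambda,\qquad \sigma_\tau(k_i^{\pm 1}1_\lambda)=k_i^{\pm 1}1_\lambda,\qquad \sigma_\tau(B_i 1_\lambda)=1_\lambda B_{\tau i},
\]
and extend it to words by reversing the order of multiplication, $\sigma_\tau(1_\mu B_{i_1}\cdots B_{i_r}1_\lambda)=1_\lambda B_{\tau i_r}\cdots B_{\tau i_1}1_\mu$. It then suffices to check that $\sigma_\tau$ respects the defining relations (so that it is well defined) and that $\sigma_\tau^2=\mathrm{id}$; anti-multiplicativity is built in. The idempotent, weight and $k_i$ relations are bookkeeping: since $\alpha_{\tau i}=-\alpha_i$ in $X^{\imath}$, one has $1_\lambda B_{\tau i}=B_{\tau i}1_{\lambda-\alpha_i}\in 1_\lambda\dot{\U}^{\imath}1_{\lambda-\alpha_i}$, exactly the block into which an order-reversing map must send $B_i 1_\lambda\in 1_{\lambda-\alpha_i}\dot{\U}^{\imath}1_\lambda$, and $\sigma_\tau(k_i 1_\lambda)=q^{\langle h_i-h_{\tau i},\lambda\rangle}1_\lambda=k_i 1_\lambda$ is consistent with $k_i 1_\lambda$ being a scalar multiple of $1_\lambda$.

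The real content is compatibility with the $\imath$Serre relations. Writing the rank-one $\imath$Serre relation (as recalled in \cite{WZ25}, cf. \cite{BW18a}) schematically as
\[
B_i^{(2)}B_{\tau i}-B_i B_{\tau i}B_i+B_{\tau i}B_i^{(2)}=(\text{correction terms in }B_{\tau i}\text{ and }k_i),
\]
the leading "Serre part" on the left is palindromic, hence invariant under the simultaneous substitution $B_i\leftrightarrow B_{\tau i}$ and reversal of all products; thus $\sigma_\tau$ carries it to the Serre part of the $\imath$Serre relation for the ordered pair $(B_{\tau i},B_i)$. One then has to verify that $\sigma_\tau$ also sends the right-hand-side corrections of the $\imath$Serre relation for $(B_i,B_{\tau i})$, multiplied by $1_\lambda$, to those of the $\imath$Serre relation for $(B_{\tau i},B_i)$, multiplied by $1_\lambda$. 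This is the main obstacle: the chosen parameter $\bm{\varsigma}=(1,0)$ is not $\tau$-invariant, so the two $\imath$Serre relations are not literally interchanged by the index swap, and one must check that the $\bm{\varsigma}$-dependence of the corrections is carried entirely by $k_i$-factors — which become explicit scalars $q^{\langle h_i-h_{\tau i},\lambda\rangle}$ after multiplying by $1_\lambda$ and are fixed by $\sigma_\tau$ — so that the required identity of scalars holds on each weight component $1_\lambda$. Granting this, well-definedness reduces to a finite, if slightly fiddly, computation once the precise $\imath$Serre relation is substituted.

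Finally, $\sigma_\tau^2=\mathrm{id}$ is checked on generators: $\sigma_\tau^2(1_\lambda)=1_\lambda$, $\sigma_\tau^2(k_i^{\pm 1}1_\lambda)=k_i^{\pm 1}1_\lambda$, and
\[
\sigma_\tau^2(B_i 1_\lambda)=\sigma_\tau(1_\lambda B_{\tau i})=\sigma_\tau(B_{\tau i}1_{\lambda-\alpha_i})=1_{\lambda-\alpha_i}B_i=B_i 1_\lambda,
\]
using $\tau^2=\mathrm{id}$ and $1_{\lambda-\alpha_i}B_i=B_i 1_\lambda$. Since these elements generate $\dot{\U}^{\imath}$ and $\sigma_\tau$ is anti-multiplicative, $\sigma_\tau^2=\mathrm{id}$, so $\sigma_\tau$ is an anti-involution of $\dot{\U}^{\imath}$ with the stated action on $B_i 1_\zeta$. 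One could instead attempt to descend $\sigma_\tau$ from an anti-automorphism of $\U$ intertwined with the diagram automorphism $\tau$, but the parameter asymmetry makes the direct construction above more transparent.
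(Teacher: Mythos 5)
Your approach — verifying directly that the assignment $\sigma_\tau(1_\lambda)=1_\lambda$, $\sigma_\tau(k_i^{\pm1}1_\lambda)=k_i^{\pm1}1_\lambda$, $\sigma_\tau(B_i1_\lambda)=1_\lambda B_{\tau i}$, extended anti-multiplicatively, is well defined by checking it respects a presentation of $\dot{\U}^{\imath}$ — is genuinely different from the paper's. The paper does not verify any relations from scratch: it simply observes that $\sigma_\tau$ is the composition of two already-established maps from \cite{BWW25} (an anti-involution and a $\tau$-twist on the modified iquantum group), so that well-definedness is inherited and the only thing to record is the effect on the generators $B_i1_\zeta$. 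It also notes the map is a modified version of the anti-involution constructed in \cite{WZ23}. This is why the paper's proof is a one-line citation rather than a computation.

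The difficulty with your route is that you correctly locate the crux — compatibility with the $\imath$Serre relations when $\bm{\varsigma}=(1,0)$ or $(0,1)$ is not $\tau$-invariant — and then do not resolve it. You write that one "must check" the $\varsigma$-dependent corrections are matched up by the fixed scalars $q^{\langle h_i-h_{\tau i},\lambda\rangle}$ on each $1_\lambda$-block, and then proceed "granting this." But that sentence \emph{is} the proof, and it is neither sketched concretely nor carried out: the quasi-split rank-one $\imath$Serre relation for $(B_i,B_{\tau i})$ has explicit $k_i$- and $q^{\varsigma_i}$-dependent lower-order terms, and whether the reversal-plus-swap sends the $(B_i,B_{\tau i})$-relation multiplied by $1_\lambda$ onto the $(B_{\tau i},B_i)$-relation multiplied by $1_\lambda$ requires matching those scalars term by term, with the weight shift of the idempotent accounted for. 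Without writing down the precise relation and doing the comparison, or else delegating to a reference where this verification is done (as the paper does), the well-definedness of $\sigma_\tau$ remains unjustified. The rest of your argument (the block bookkeeping and the check $\sigma_\tau^2=\mathrm{id}$ on generators) is fine, but it all rests on the omitted step.
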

\begin{proof} 
It follows by composing \cite[(3.8)]{BWW25} and \cite[(3.10)]{BWW25}. This is a modified version of the anti-involution given in \cite{WZ23}. 
\end{proof}

With the above preparations, we can now present the main theorem of this section.
\begin{thm}
Let $\sigma_{\tau}$ be the anti-involution on $\dot{\U}^{\imath}_{\bm{\varsigma}}$ defined in Lemma \ref{tau}, then $\sigma_{\tau}(\mathfrak{B}^{(a,b,c)}_{2}1_{\zeta})=\mathfrak{B}^{c,b,a}_{1}1_{\zeta+3b-3a-3c}$ for any fixed $\lambda\in X^{\imath}$.
\end{thm}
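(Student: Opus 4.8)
The strategy is to verify the identity $\sigma_{\tau}(\mathfrak{B}^{(a,b,c)}_{2}1_{\zeta})=\mathfrak{B}^{c,b,a}_{1}1_{\zeta+3b-3a-3c}$ by appealing to the characterizing properties of the icanonical basis, namely $\psi^{\imath}$-invariance together with the unitriangularity relative to the monomial basis (equivalently the $F$-divided-power basis, via Lemma~\ref{prop59}). First I would compute how $\sigma_{\tau}$ acts on the relevant monomial bases and index sets. Since $\sigma_{\tau}$ is an anti-involution with $\sigma_{\tau}(B_i 1_{\zeta}) = 1_{\zeta} B_{\tau i}$, it sends a divided power monomial $B_{2}^{(a)}B_{1}^{(b)}B_{2}^{(c)}1_{\zeta}$ to $\sigma_{\tau}(B_{2}^{(c)})\sigma_{\tau}(B_{1}^{(b)})\sigma_{\tau}(B_{2}^{(a)})$ applied appropriately to the idempotent, which after tracking the weight shifts $B_i 1_{\lambda} = 1_{\lambda - \alpha_i}B_i$ becomes $B_{1}^{(c)}B_{2}^{(b)}B_{1}^{(a)}1_{\zeta'}$ for the shifted $\imath$weight $\zeta' = \zeta + 3b - 3a - 3c$ (the reversal of the word and the swap $1 \leftrightarrow 2$ are exactly what the anti-involution produces). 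I would verify this weight bookkeeping carefully: each $B_i$ shifts $\lambda$ by $-\alpha_i$, which on $X^{\imath} \cong \mathbb{Z}$ means a shift by $-(-1)^i \cdot(\text{something})$; the precise net shift $3b-3a-3c$ must be confirmed by summing the individual contributions, using $\langle h_i - h_{\tau i}, \alpha_j\rangle$ and the definition $\overline{m\omega_1 + n\omega_2} \mapsto m-n$.

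Next I would show that $\sigma_{\tau}(\mathfrak{B}^{(a,b,c)}_{2}1_{\zeta})$ satisfies the two defining properties of $\mathfrak{B}^{c,b,a}_{1}1_{\zeta'}$. For $\psi^{\imath}$-invariance: it suffices to know that $\sigma_{\tau}$ commutes (or anti-commutes in a controlled way) with $\psi^{\imath}$. Since both $\sigma_{\tau}$ and $\psi^{\imath}$ fix the generators $B_i$ up to the idempotent structure — $\psi^{\imath}(B_i) = B_i$ and $\sigma_{\tau}(B_i 1_{\zeta}) = 1_{\zeta}B_{\tau i}$ — and $\psi^{\imath}$ is anti-linear while $\sigma_{\tau}$ is anti-multiplicative but $q$-linear, their composite $\sigma_{\tau}\circ \psi^{\imath}$ is an anti-linear algebra automorphism fixing $B_i$ up to the $\tau$-twist; comparing with $\psi^{\imath}\circ\sigma_{\tau}$ one sees they agree on generators, hence $\sigma_{\tau}\psi^{\imath} = \psi^{\imath}\sigma_{\tau}$ on $\dot{\U}^{\imath}$. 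Since $\psi^{\imath}(\mathfrak{B}^{(a,b,c)}_{2}1_{\zeta}) = \mathfrak{B}^{(a,b,c)}_{2}1_{\zeta}$, applying $\sigma_{\tau}$ gives $\psi^{\imath}(\sigma_{\tau}(\mathfrak{B}^{(a,b,c)}_{2}1_{\zeta})) = \sigma_{\tau}(\mathfrak{B}^{(a,b,c)}_{2}1_{\zeta})$. For the triangularity: expand $\mathfrak{B}^{(a,b,c)}_{2}1_{\zeta}$ in the monomial basis using the transition matrices from Section~\ref{section7} (Theorems~\ref{thm711}, \ref{thm722}, \ref{thm733}, \ref{thm744}, etc.), apply $\sigma_{\tau}$ term by term using the first step, and observe that $\sigma_{\tau}$ reverses the word but preserves the partial order on triples $(a,b,c)$ governing the unitriangular structure, with the leading term $B_{2}^{(a)}B_{1}^{(b)}B_{2}^{(c)}1_{\zeta}$ mapping to the leading term $B_{1}^{(c)}B_{2}^{(b)}B_{1}^{(a)}1_{\zeta'}$ and all correction terms staying in the correct $q^{-1}\mathbb{Z}[q^{-1}]$-span. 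By the uniqueness in \cite[Theorem~6.17]{BW18b}, the two elements coincide.

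\textbf{Main obstacle.} I expect the principal difficulty to lie in the order-preservation claim for $\sigma_{\tau}$ under word reversal. The icanonical basis triangularity \eqref{icb1} is stated with respect to decreasing all three indices $(a-x, b-x-y, c-y)$; under the reversal $(a,b,c) \mapsto (c,b,a)$ and the swap of which generator plays the role of $i$ versus $\tau i$, one must check that the ``lower'' terms in the expansion of $\mathfrak{B}^{(a,b,c)}_{2}1_{\zeta}$ map precisely to ``lower'' terms in the expansion characterizing $\mathfrak{B}^{c,b,a}_{1}1_{\zeta'}$, and that no coefficient is accidentally promoted out of $q^{-1}\mathbb{Z}[q^{-1}]$ — a subtlety because $\sigma_{\tau}$ is $q$-linear (unlike $\psi^{\imath}$) and the monomial-to-monomial transition under $\sigma_{\tau}$ could a priori introduce powers of $q$ via the weight shifts in $B_i 1_{\lambda} = 1_{\lambda-\alpha_i}B_i$ and the relation $k_i 1_{\lambda} = q^{\langle h_i - h_{\tau i},\lambda\rangle}1_{\lambda}$. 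Resolving this cleanly may be easiest by working at the module level first: apply $\sigma_{\tau}$-type symmetry to the identity $\mathfrak{B}^{(a,b,c)}_{2}\eta \leftrightarrow \mathfrak{B}^{(c,b,a)}_{1}\eta$ already recorded in Section~\ref{section24} (the identification $\mathfrak{B}^{(a,a+c,c)}_{2}\eta = \mathfrak{B}^{(c,a+c,a)}_{1}\eta$ and its projective-limit version), and then pass to $\dot{\U}^{\imath}$ via the inverse limit along $\{L(m,n)\}_{m-n=\zeta}$ as in \cite[Section~6]{BW18b}, where the weight-shift bookkeeping becomes transparent because it is encoded in which module $L(m,n)$ the element acts on.
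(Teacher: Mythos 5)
The outline has the right ingredients but misses the crux of the paper's argument and contains a basis confusion that would stall the characterizing-property route. The paper's proof is a direct computation: apply $\sigma_\tau$ to the explicit monomial-basis expansion of $\mathfrak{B}^{(a,b,c)}_{2}1_{\zeta}$ from Section~\ref{section7} (e.g.\ \eqref{theorem721} in the $\alpha$-case or \eqref{theorem741} in the $\beta$-case), note that the coefficients are scalars in $\mathbb{Q}(q)$ and hence fixed by the $\mathbb{Q}(q)$-linear $\sigma_\tau$, and then \emph{recognize} the resulting sum as the known monomial-basis expansion of $\mathfrak{B}^{(c,b,a)}_{1}1_{\zeta'}$ with $\zeta'=\zeta+3b-3a-3c$. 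The key observation that makes this recognition immediate — and which is absent from your plan — is the elementary identity
\[
\alpha^{b,c}_{\zeta,\bm\varsigma,2} \;=\; \zeta+b-2c+\varsigma_2 \;=\; (\zeta+3b-3a-3c)-2b+3a+c+\varsigma_2 \;=\; \beta^{c,b,a}_{\zeta+3b-3a-3c,\bm\varsigma,1},
\]
(and its companion $\beta^{a,b,c}_{\zeta,\bm\varsigma,2}=\alpha^{b,c}_{\zeta+3b-3a-3c,\bm\varsigma,1}$). This converts the $\alpha$-formula defining $\mathfrak{B}^{(a,b,c)}_2 1_\zeta$ into exactly the $\beta$-formula defining $\mathfrak{B}^{(c,b,a)}_1 1_{\zeta'}$, and vice versa; the $b=a+c$ and $\alpha\le 0$, $\beta\le 0$ cases are handled identically. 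Without this identity you are left needing to re-verify unitriangularity from scratch, which is unnecessary once the explicit transition matrices are already available.

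Two further points. First, the $q$-power worry you flag as the ``main obstacle'' is illusory here: the monomial-to-monomial transformation under $\sigma_\tau$ produces no stray $q$-factors. The idempotent relocation $B_i 1_\lambda = 1_{\lambda-\overline{\alpha_i}}B_i$ carries no $q$; the powers of $q$ in $k_i 1_\lambda = q^{\langle h_i-h_{\tau i},\lambda\rangle}1_\lambda$ are irrelevant because no $k_i$ appear in the monomial basis; and the transition coefficients, being scalars, are untouched by $\sigma_\tau$. Second, the unitriangularity route as you describe it conflates bases: the $q^{-1}\mathbb{Z}[q^{-1}]$-condition in \eqref{icb1} and \cite[Theorem~6.17]{BW18b} is stated relative to the $F$-divided-power (canonical / standardized canonical) basis, not the monomial basis $B^{(a)}_{\tau i}B^{(b)}_i B^{(c)}_{\tau i}1_\zeta$; the monomial-basis coefficients $\frac{[\alpha][\alpha+2]\cdots[\alpha+2k-2]}{[k]!}$ are bar-invariant and are \emph{not} in $q^{-1}\mathbb{Z}[q^{-1}]$. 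So ``expand in the monomial basis, apply $\sigma_\tau$, and check $q^{-1}\mathbb{Z}[q^{-1}]$'' is not a well-posed plan; you would have to push to the $F$-basis or standardized-CB expansion, which re-introduces the work that Section~\ref{section7} has already done. Finally, the module-level fallback via the boundary identification $\mathfrak{B}^{(a,a+c,c)}_2\eta=\mathfrak{B}^{(c,a+c,a)}_1\eta$ only addresses the single case $b=a+c$; it says nothing about $b>a+c$, which is the generic case.
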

\begin{proof}
First, we focus on the even parity case. When $\alpha^{b,c}_{\zeta, \bm{\varsigma},2}=\zeta+b-2c+\varsigma_{2}>0$,  $\alpha^{b,c}_{\zeta, \bm{\varsigma},2}$ is even, and $b>a+c$, we compute 
\begin{align*}
\hat{\tau}(\mathfrak{B}^{(a,b,c)}_{2}1_{\zeta})&=\sum_{0\leq k\leq c}(-1)^{k}\frac{[\alpha^{b,c}_{\zeta, \bm{\varsigma},2}][\alpha^{b,c}_{\zeta, \bm{\varsigma},2}+2]\cdots [\alpha^{b,c}_{\zeta, \bm{\varsigma},2}+2k-2]}{[k]!}1_{\zeta}B_{2}^{(c-k)}B^{(b-k)}_{1}B^{(a)}_{2}\\
&=\sum_{0\leq k\leq c}(-1)^{k}\frac{[\beta^{c,b,a}_{\zeta+3b-3a-3c, \bm{\varsigma},1}][\beta^{c,b,a}_{\zeta+3b-3a-3c, \bm{\varsigma},1}+2]\cdots [\beta^{c,b,a}_{\zeta+3b-3a-3c, \bm{\varsigma},1}+2k-2]}{[k]!}\\
&\quad\quad\quad\quad\times B_{2}^{(c-k)}B^{(b-k)}_{1}B^{(a)}_{2}1_{\zeta+3b-3a-3c}.\\
&=\mathfrak{B}^{c,b,a}_{1}1_{\zeta+3b-3a-3c}.
\end{align*}
Here we use the fact that $\alpha^{b,c}_{\zeta, \bm{\varsigma},2}=\zeta+b-2c+\varsigma_{2}=(\zeta+3b-3a-3c)-2b+3a+c+\varsigma_{2}=\beta^{c,b,a}_{\zeta+3b-3a-3c, \bm{\varsigma},1}$. Meanwhile, when $\beta^{a,b,c}_{\zeta, \bm{\varsigma},2}=-\zeta+a-2b+3c+\varsigma_{1}>0$, and $\beta^{a,b,c}_{\zeta, \bm{\varsigma},2}$ is even, and $b>a+c$, we compute
\begin{align*}
\hat{\tau}(\mathfrak{B}^{(a,b,c)}_{2}1_{\zeta})&=\sum_{0\leq k\leq a}(-1)^{k}\frac{[\beta^{a,b,c}_{\zeta, \bm{\varsigma},2}][\beta^{a,b,c}_{\zeta, \bm{\varsigma},2}+2]\cdots [\beta^{a,b,c}_{\zeta, \bm{\varsigma},2}+2k-2]}{[k]!}1_{\zeta}B_{2}^{(c)}B^{(b-k)}_{1}B^{(a-k)}_{2}\\
&=\sum_{0\leq k\leq a}(-1)^{k}\frac{[\alpha^{b,c}_{\zeta+3b-3a-3c, \bm{\varsigma},1}][\alpha^{b,c}_{\zeta+3b-3a-3c, \bm{\varsigma},1}+2]\cdots [\alpha^{b,c}_{\zeta+3b-3a-3c, \bm{\varsigma},1}+2k-2]}{[k]!}\\
&\quad\quad\quad\quad\times B_{2}^{(c)}B^{(b-k)}_{1}B^{(a-k)}_{2}1_{\zeta+3b-3a-3c}\\
&=\mathfrak{B}^{c,b,a}_{1}1_{\zeta+3b-3a-3c}.
\end{align*}
Here we use the fact that $\beta^{a,b,c}_{\zeta, \bm{\varsigma},2}=-\zeta+a-2b+3c+\varsigma_{1}=-(\zeta+3b-3a-3c)+b-2a+\varsigma_{1}=\alpha^{b,c}_{\zeta+3b-3a-3c, \bm{\varsigma},1}$. The proofs of the remaining cases are entirely analogous and are therefore omitted.
\end{proof}

\section{iCanonical basis to canonical basis}\label{section8}
In this section, we first derive a new closed formula for the coefficients appearing in the transition matrices from the icanonical basis to Lusztig's canonical basis on the simple $\U$-module $L(m,n)$ under a special case, where the positivity of the coefficients manifest. We then summarize the closed formulas for the coefficients in the transition matrices from the icanonical basis to Lusztig's canonical basis on $L(m,n)$ obtained in Sections \ref{section5}--\ref{section6}.
\subsection{Case for $a=0$ or $c=0$} 
First, we focus on the case $a=0$. Recall $\eqref{defomega}$, $\mathfrak{B}^{(0,b,c)}_{2}$ is the $\imath$ canonical basis in simple $\U$-module $L(m,n)$ such that $\mathfrak{B}^{(0,b,c)}_{2}$ is $\psi^{\imath}$-invariant and
\begin{align}\label{811}
\mathfrak{B}^{(0,b,c)}_{2}=F^{(b)}_{2}F^{(c)}_{1}\eta+\sum_{0<\theta\leq c}\Omega_{m,n,\theta}^{(b,c)}F^{(b-\theta)}_{2}F^{(c-\theta)}_{1}\eta.
\end{align}
Meanwhile, recall $\alpha_{b,c}=m-n+b-2c+\varsigma_{2}$ in \eqref{defalpha}. From \eqref{deffff}, \eqref{deffffo} and Theorem \ref{thm41717}, we know if $\alpha_{b,c}$ is even, then
\begin{align*}
\Omega^{(b,c)}_{m,n,\theta}&=\sum_{k+y=\theta}(-1)^{k}\frac{[\alpha_{b,c}][\alpha_{b,c}+2]\cdots [\alpha_{b,c}+2k-2]}{[k]!}q^{-\frac{y(y+1+2n+2c-2b)}{2}}\qbinom{m-c+y}{y}\in q^{-1}\mathbb{Z}[q^{-1}].
\end{align*}
If $\alpha_{b,c}$ is odd, then
\begin{align*}
\Omega^{(b,c)}_{m,n,\theta}=\sum_{k+y=\theta}(-1)^{k}&[\alpha_{b,c}+k-1]\frac{[\alpha_{b,c}+1][\alpha_{b,c}+3]\cdots [\alpha_{b,c}+2k-3]}{[k]!}q^{-\frac{y(y+1+2n+2c-2b)}{2}}\\
&\times \qbinom{m-c+y}{y}\in q^{-1}\mathbb{Z}[q^{-1}].
\end{align*}

Here the notation is slightly different from the original one appearing in \eqref{deffffo}, but this minor modification causes no essential change. Here we present a new closed formula for the coefficients $\Omega^{(b,c)}_{m,n,\theta}$ when $\alpha_{b,c}$ is even, which makes the positivity of $\Omega^{(b,c)}_{m,n,\theta}$ manifest.

\begin{prop}\label{thm811}
For any $m,n,b,c\geq 0$ such that $m\geq c$, $n-b+c\geq 0$, $\alpha_{b,c}> 0$, and $\alpha_{b,c}$ is even, denote $\beta_{c}:={\lfloor \frac{c}{2} \rfloor}$, $\gamma_{c}:=\frac{\alpha_{c}}{2}$. The coefficients $\omega^{(b,c)}_{m,n,\theta}$, for $0 \leq \theta \leq c$, appearing in the expression of the icanonical basis in terms of Lusztig’s canonical basis on the $\mathbf U$-module $L(m,n)$ in \eqref{811} are given by
\begin{align}\label{833}
\Omega^{(b,c)}_{m,n,\theta}=\Omega^{b,\theta}_{m-(c-\theta),n+(c-\theta)}
\end{align}
(Recall when $\theta=c$, $\omega^{b,c}_{m,n}:=\omega^{(b,c)}_{m,n,c}$.)  and 
\begin{align}\label{844}
\Omega^{b,c}_{m,n}=\sum_{x=0}^{\beta_{c}}\qbinom{\gamma_{c}+x-1}{x}_{q^2}\qbinom{n-b+2c-2x}{c-2x}q^{\frac{(2x+2-c)(c-1-2m)}{2}}q^{x(-m+n-b+c+1)}.
\end{align}
\end{prop}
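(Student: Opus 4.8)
The plan is to establish the two displayed identities separately. Identity \eqref{833} is a purely formal reduction to the diagonal case $\theta=c$, while \eqref{844} carries all the content: it rewrites the alternating sum defining $\Omega^{b,c}_{m,n}$ (see \eqref{deffff}) as a sum of products of Gaussian binomials with monomial coefficients, a form in which positivity is visible.

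I would first prove \eqref{833} by iterating Lemma \ref{lemrecur}. Written as $\Omega^{(b,c)}_{m,n,\theta}=\Omega^{(b,c-1)}_{m-1,n+1,\theta}$ and applied $c-\theta$ times it yields
\begin{align*}
\Omega^{(b,c)}_{m,n,\theta}=\Omega^{(b,\theta)}_{m-(c-\theta),\,n+(c-\theta),\,\theta}=\Omega^{b,\theta}_{m-(c-\theta),\,n+(c-\theta)},
\end{align*}
which is \eqref{833}. The only point to check is that the hypotheses survive each step: under $(m,n,c)\mapsto(m-1,n+1,c-1)$ one computes $\alpha_{b,c-1}(m-1,n+1)=(m-1)-(n+1)+b-2(c-1)=m-n+b-2c=\alpha_{b,c}(m,n)$, so the integer $\alpha$ -- hence its sign and parity -- is unchanged, while $m-1\ge c-1$ and $(n+1)-b+(c-1)\ge 0$ are the original inequalities. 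Thus it remains only to evaluate $\Omega^{b,c}_{m,n}$, i.e.\ to prove \eqref{844}.

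For \eqref{844} the plan is induction on $c$ with the two-term recursion of Lemma \ref{lemrecur2},
\begin{align*}
\Omega^{(b,c)}_{m,n}=\frac{q^{-m-1+c}}{[c]}[n-b+c+1]\,\Omega^{(b,c-1)}_{m,n+2}+\frac{q^{-m+n-b+c+1}}{[c]}[m-n+b-2c]\,\Omega^{(b,c-2)}_{m,n+2},
\end{align*}
as the engine; the small values of $c$ serving as base cases are checked directly from \eqref{deffff}, and the degenerate term $\Omega^{(b,0)}_{m,n+2}=1$ is handled separately. On the two terms on the right the quantity $\alpha$ takes the values $\alpha_{b,c}$ and $\alpha_{b,c}+2$, so it stays positive and even and the inductive hypothesis applies. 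Substituting the claimed closed form on the right-hand side turns \eqref{844} into a finite identity among Gaussian binomials in $q$ and in $q^2$: after reindexing one collects, for each $x$, the coefficient of $\qbinom{\gamma_c+x-1}{x}_{q^2}$, using the Pascal recursion $\qbinom{N}{r}=\qbinom{N-1}{r}+q^{N-r}\qbinom{N-1}{r-1}$ to reconcile the binomials $\qbinom{n-b+2c-2x}{c-1-2x}$ and $\qbinom{n-b+2c-2-2x}{c-2-2x}$ coming from the $c-1$ and $c-2$ terms with the target $\qbinom{n-b+2c-2x}{c-2x}$, and the $q^2$-Pascal identity $\qbinom{\gamma_c+x}{x}_{q^2}=\qbinom{\gamma_c+x-1}{x}_{q^2}+q^{2\gamma_c}\qbinom{\gamma_c+x-1}{x-1}_{q^2}$ to absorb the shift $\gamma_{c-2}=\gamma_c+1$; the monomial prefactors in \eqref{844} are tailored precisely so that all contributions cancel. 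An induction-free alternative would convert the numerator product in \eqref{deffff} via $[2\gamma_c+2j]_q=[2]_q[\gamma_c+j]_{q^2}$, recognise \eqref{deffff} as a terminating basic hypergeometric sum and apply a $q$-Chu--Vandermonde summation; the mixing of $q$ and $q^2$ makes this less transparent.

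I expect the coefficient-matching in the inductive step to be the one place of genuine difficulty: it is of the same flavour as, though somewhat longer than, the recursive computations of Sections \ref{section4}--\ref{section6}, and if it becomes unwieldy I would defer the routine part to an appendix, as was done for Lemmas \ref{lemrecur2} and \ref{lemrecur3}. Once \eqref{844} is established, the positivity asserted in the proposition is immediate: $\alpha_{b,c}>0$ together with $\alpha_{b,c}$ even forces $\gamma_c\ge 1$, so $\qbinom{\gamma_c+x-1}{x}_{q^2}$ is a genuine Gaussian binomial, and $n-b+c\ge 0$ makes $\qbinom{n-b+2c-2x}{c-2x}$ one as well; hence each summand of \eqref{844} is a monomial times a product of polynomials with non-negative coefficients, so $\Omega^{b,c}_{m,n}$ -- and, by \eqref{833}, every $\Omega^{(b,c)}_{m,n,\theta}$ -- lies in $\mathbb{Z}_{\ge 0}[q,q^{-1}]$, in fact in $q^{-1}\mathbb{Z}_{\ge 0}[q^{-1}]$ by Theorem \ref{thm41717}.
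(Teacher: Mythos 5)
Your proposal is correct and follows essentially the same route as the paper: \eqref{833} by iterating Lemma \ref{lemrecur} (the appendix's additional citation of Lemma \ref{lemoddddd} is for the companion odd-parity statement and is not needed here), and \eqref{844} by induction on $c$ driven by the two-term recursion of Lemma \ref{lemrecur2}, with the inductive step amounting to pairing the $(x+1)$-th term from $\Omega^{(b,c-1)}_{m,n+2}$ with the $x$-th term from $\Omega^{(b,c-2)}_{m,n+2}$ and combining them via $[c-2-2x]q^{-m+x+c}+[2x+2]q^{-m+x}=[c]q^{-m+c-2-x}$, which is precisely the Pascal-type reconciliation you anticipate. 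The one detail you should be prepared for is that the alignment of terms depends on the parity of $c$ (whether $\beta_c=\beta_{c-1}$ or $\beta_c=\beta_{c-1}+1$), requiring a short case split at the end, as the paper does.
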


The long and straightforward proof for Proposition \ref{thm811} is postponed to Appendix \ref{app8}.

\subsection{Case for general $a,b,c$, $b>a+c$}
Recall $\eqref{icb1}$, $\mathfrak{B}^{(a,b,c)}_{2}$ is the $\imath$ canonical basis in simple $\U$-module $L(m,n)$ such that $\mathfrak{B}^{(a,b,c)}_{2}$ is $\psi^{\imath}$-invariant and
\begin{align}\label{866}
\mathfrak{B}^{(a,b,c)}_{2}=F^{(a)}_{1}F^{(b)}_{2}F^{(c)}_{1}\eta+\sum_{\substack{x,\theta\geq 0 \\ (x,\theta)\not=(0,0)}}q^{-1}\Omega^{(a,b,c)}_{m,n,x,\theta}F^{(a-x)}_{1}F^{(b-x-\theta)}_{2}F^{(c-\theta)}_{1}\eta.
\end{align}
Further recall $\alpha_{b,c,\bm{\varsigma}}=m-n+b-2c+\varsigma_{2}$ in \eqref{defofa}, $\beta_{a,b,c,\bm{\varsigma}}:=n-m+a-2b+3c+\varsigma_{1}$ in \eqref{defofbb}. In total, there are 5 distinct cases for the coefficient of $F^{(a-x)}_{1}F^{(b-x-y)}_{2}F^{(c-y)}_{1}\eta$ appearing in \eqref{866}. The expressions for these coefficients are summarized below.
\begin{prop}
The coefficients $\Omega^{(a,b,c)}_{m,n,x,\theta}$ appearing in \eqref{866} above have the following expressions.
\begin{itemize}
\item When $\alpha_{b,c,\bm{\varsigma}}\leq 0$ and $\alpha_{b,c,\bm{\varsigma}}\leq 0$, from Proposition \ref{prop32}, we get $\Omega^{(a,b,c)}_{m,n,x,\theta}=0$.
\item When $\alpha_{b,c,\bm{\varsigma}}>0$ and $\alpha_{b,c,\bm{\varsigma}}$ is even, from \eqref{defomegaabc} and Theorem \ref{aeqb+c}, we get 
\begin{align*}
\Omega^{(a,b,c)}_{m,n,x,\theta}&=\sum_{k+y=\theta}(-1)^{k}\frac{[\alpha_{b,c,\bm{\varsigma}}][\alpha_{b,c,\bm{\varsigma}}+2]\cdots [\alpha_{b,c,\bm{\varsigma}}+2k-2]}{[k]!}q^{\varsigma_{2}(y-x)}\nonumber\\
&\quad\quad\quad\quad\times q^{-\frac{y(y+1+2n+2c-2b)}{2}-\frac{x(x-1+2m+2b-4c-2a-2y+2k)}{2}}\\
&\quad\quad\quad\quad\times \qbinom{n-b+c+x}{x}\qbinom{m-c+\theta}{y}\in q^{-1}\mathbb{Z}[q^{-1}].
\end{align*}

\item When $\alpha_{b,c,\bm{\varsigma}}>0$ and $\alpha_{b,c,\bm{\varsigma}}$ is odd, from \eqref{defxiabcodd} and Theorem \ref{thm6155}, we get
\begin{align*}
\Omega^{(a,b,c)}_{m,n,x,\theta}&=\sum_{k+y=\theta}(-1)^{k}[\alpha_{b,c,\bm{\varsigma}}+k-1]\frac{[\alpha_{b,c,\bm{\varsigma}}+1][\alpha_{b,c,\bm{\varsigma}}+3]\cdots [\alpha_{b,c,\bm{\varsigma}}+2k-3]}{[k]!}q^{\varsigma_{2}(y-x)}\nonumber\\
&\quad\quad\quad\quad\times q^{-\frac{y(y+1+2n+2c-2b)}{2}-\frac{x(x-1+2m+2b-4c-2a-2y+2k)}{2}}\\
&\quad\quad\quad\quad\times \qbinom{n-b+c+x}{x}\qbinom{m-c+\theta}{y}q^{-1}\mathbb{Z}[q^{-1}].
\end{align*}

\item When $\beta_{a,b,c,\bm{\varsigma}}>0$ and $\beta_{a,b,c,\bm{\varsigma}}$ is even, from \eqref{defoabc} and Theorem \ref{thmm512}, we get
\begin{align*}
\Omega^{(a,b,c)}_{m,n,\theta,y}&=\sum_{x+k=\theta}(-1)^{k}\frac{[\beta_{a,b,c,\bm{\varsigma}}][\beta_{a,b,c,\bm{\varsigma}}+2]\cdots [\beta_{a,b,c,\bm{\varsigma}}+2k-2]}{[k]!}q^{\varsigma_{1}(x-y)}\nonumber\\
&\quad\quad\quad\quad\times q^{-\frac{y(y-1+2n+2c-2b+2k)}{2}-\frac{x(x+1+2m+2b-4c-2a-2y)}{2}}\\
&\quad\quad\quad\quad\times\qbinom{n-b+c+\theta}{x}\qbinom{m-c+y}{y}\in q^{-1}\mathbb{Z}[q^{-1}].
\end{align*}

\item When $\beta_{a,b,c,\bm{\varsigma}}>0$ and $\beta_{a,b,c,\bm{\varsigma}}$ is odd, from \eqref{defoabcoddddddd} and Theorem \ref{thm688}, we get
\begin{align*}
\Omega^{(a,b,c)}_{m,n,\theta,y}&=\sum_{x+k=\theta}(-1)^{k}[\beta_{a,b,c,\bm{\varsigma}}+k-1]\frac{[\beta_{a,b,c,\bm{\varsigma}}+1][\beta_{a,b,c,\bm{\varsigma}}+3]\cdots [\beta_{a,b,c,\bm{\varsigma}}+2k-3]}{[k]!}q^{\varsigma_{1}(x-y)}\nonumber\\
&\quad\quad\quad\quad\times q^{-\frac{y(y-1+2n+2c-2b+2k)}{2}-\frac{x(x+1+2m+2b-4c-2a-2y)}{2}}\\
&\quad\quad\quad\quad\times \qbinom{n-b+c+\theta}{x}\qbinom{m-c+y}{y}\in q^{-1}\mathbb{Z}[q^{-1}].
\end{align*}
\end{itemize}
Furthermore, from \cite{Bao25}, $\Omega^{(a,b,c)}_{m,n,\theta,y}\in q^{-1}\mathbb Z_{\ge 0}[q^{-1}]$.
\end{prop}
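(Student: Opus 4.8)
The plan is to obtain the five coefficient formulas by specializing the module-level transition matrices of Sections \ref{section5}--\ref{section6} together with Lemma \ref{prop59}, and then to read off the positivity from \cite{Bao25}. Fix $i=2$; the case $i=1$ is symmetric. For each of the four cases in which $\alpha_{b,c,\bm{\varsigma}}>0$ or $\beta_{a,b,c,\bm{\varsigma}}>0$ one already has an explicit expansion of $\mathfrak{B}_{2}^{(a,b,c)}$ in the monomial basis: \eqref{generalabc2} and \eqref{generalabc2odd} when $\alpha_{b,c,\bm{\varsigma}}>0$ (even and odd parity), and \eqref{generalabc66} and \eqref{generalabc66odd} when $\beta_{a,b,c,\bm{\varsigma}}>0$. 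First I would substitute Lemma \ref{prop59} to rewrite each monomial element $B_{1}^{(a)}B_{2}^{(b-k)}B_{1}^{(c-k)}\eta$ (respectively $B_{1}^{(a-k)}B_{2}^{(b-k)}B_{1}^{(c)}\eta$) in terms of Lusztig's canonical basis, and then collect the coefficient of a fixed element $F_{1}^{(a-x)}F_{2}^{(b-x-\theta)}F_{1}^{(c-\theta)}\eta$; performing the resulting finite inner sum reproduces exactly the closed forms $\Omega^{(a,b,c)}_{m,n,x,\theta}$, $\Xi^{(a,b,c)}_{m,n,x,\theta}$, $O^{(a,b,c)}_{m,n,\theta,y}$, $Z^{(a,b,c)}_{m,n,\theta,y}$ of \eqref{defomegaabc}, \eqref{defxiabcodd}, \eqref{defoabc}, \eqref{defoabcoddddddd}. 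This identification is precisely the computation already carried out inside the proofs of Theorems \ref{aeqb+c}, \ref{thm6155}, \ref{thmm512} and \ref{thm688}, so here it only remains to record the formulas, the single point of care being the relabeling $(x,\theta)$ versus $(\theta,y)$ according to whether one is in the $\alpha$-cases (inner index $k+y=\theta$) or the $\beta$-cases (inner index $x+k=\theta$).

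The remaining case, $\alpha_{b,c,\bm{\varsigma}}\le 0$ and $\beta_{a,b,c,\bm{\varsigma}}\le 0$, is immediate from Proposition \ref{prop32} together with the degree identity \eqref{degc}: the latter shows $\deg c^{a,b,c}_{2,\bm{\varsigma},\bm{m},x,y}<0$ for $(x,y)\neq(0,0)$ and $c^{a,b,c}_{2,\bm{\varsigma},\bm{m},0,0}=1$, so that $B_{1}^{(a)}B_{2}^{(b)}B_{1}^{(c)}\eta$ is already $\psi^{\imath}$-invariant and hence coincides with the icanonical basis element $\mathfrak{B}_{2}^{(a,b,c)}$. As for integrality, the membership of all the coefficients above in $q^{-1}\mathbb{Z}[q^{-1}]$ for $(x,\theta)\neq(0,0)$, with value $1$ at $(x,\theta)=(0,0)$, was established within Theorems \ref{aeqb+c}, \ref{thm6155}, \ref{thmm512}, \ref{thm688} themselves, via the recursions of Lemmas \ref{lemrecursive}, \ref{lemrecursive1}, \ref{lemrecursiveodd}, \ref{lemrecursiveo}, \ref{lemrecursiveoZ} and the degree bounds of Lemmas \ref{degomega}, \ref{lemdegg}, \ref{degxi}, \ref{degxiodddd1}; so nothing new is required for this part either, and the proof of this portion of the proposition is essentially a bookkeeping assembly across the case division of Sections \ref{section5}--\ref{section6}.

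The last assertion, positivity, is the genuinely new and hardest point. The closed forms just obtained are alternating sums of products of $q$-integers, so $q^{-1}\mathbb{Z}_{\ge 0}[q^{-1}]$-membership is not visible from them directly; the only case in which it is manifest is $a=0$ with $\alpha_{b,c}$ even, where Proposition \ref{thm811} rewrites the coefficient as a nonnegative combination of Gaussian binomials. For the general statement one invokes \cite{Bao25}: the transition matrix from the icanonical basis of $L(m,n)$, regarded as a based $\Ui$-module with bar involution $\psi^{\imath}$, to Lusztig's canonical basis of $L(m,n)$ has off-diagonal entries in $q^{-1}\mathbb{Z}_{\ge 0}[q^{-1}]$; since the first step identifies those entries with the coefficients of the proposition, this yields $\Omega^{(a,b,c)}_{m,n,\theta,y}\in q^{-1}\mathbb{Z}_{\ge 0}[q^{-1}]$. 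I expect this to be the main obstacle: positivity cannot be read off from the explicit signed formulas, and the real work lies in checking that the present data — $L(m,n)$ together with the bilinear form $(\,,\,)_{m,n}$ and the involution $\psi^{\imath}$ used in this paper — constitute a based $\Ui$-module in exactly the sense required by the positivity theorem of \cite{Bao25}, so that the icanonical basis here coincides with the positive one produced there.
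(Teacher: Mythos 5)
Your proposal matches the paper's approach, which is simply a bookkeeping assembly: the proposition has no separate proof in the paper and is presented as a direct compilation of the formulas \eqref{defomegaabc}, \eqref{defxiabcodd}, \eqref{defoabc}, \eqref{defoabcoddddddd}, whose membership in $q^{-1}\mathbb{Z}[q^{-1}]$ was established inside the proofs of Theorems~\ref{aeqb+c}, \ref{thm6155}, \ref{thmm512}, \ref{thm688}, together with a citation of \cite{Bao25} for the positivity. Your identification of those four formulas with the coefficients of $F_1^{(a-x)}F_2^{(b-x-\theta)}F_1^{(c-\theta)}\eta$ via Lemma~\ref{prop59}, and your observation that positivity is the only genuinely new claim and cannot be read off from the alternating sums (hence must come from \cite{Bao25}), are both correct and reflect exactly what the paper is doing; your caution about verifying that the $\Ui$-module structure on $L(m,n)$ with $\psi^\imath$ fits the hypotheses of \cite{Bao25} is a fair point even if the paper does not elaborate on it.

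There is, however, a discrepancy in the first case that you should flag rather than silently smooth over. Your argument (via Proposition~\ref{prop32} and the degree identity \eqref{degc}) shows, correctly, that when $\alpha_{b,c,\bm\varsigma}\le 0$ and $\beta_{a,b,c,\bm\varsigma}\le 0$ one has $\mathfrak{B}_2^{(a,b,c)}=B_1^{(a)}B_2^{(b)}B_1^{(c)}\eta$. But this yields $\Omega^{(a,b,c)}_{m,n,x,\theta}=c^{a,b,c}_{2,\bm\varsigma,\bm m,x,\theta}$ (the coefficient from Lemma~\ref{prop59}), which is \emph{not} zero for $(x,\theta)\neq(0,0)$: the $q$-binomials $\qbinom{m_2-b+c+x}{x}\qbinom{m_1-c+\theta}{\theta}$ vanish only when the monomial basis element $B_1^{(a)}B_2^{(b)}B_1^{(c)}\eta$ itself is zero. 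Thus the proposition's first bullet, which asserts $\Omega^{(a,b,c)}_{m,n,x,\theta}=0$, does not follow from Proposition~\ref{prop32} and appears to be a misprint (as is the condition ``$\alpha_{b,c,\bm\varsigma}\le 0$ and $\alpha_{b,c,\bm\varsigma}\le 0$'', evidently intended to read ``$\alpha_{b,c,\bm\varsigma}\le 0$ and $\beta_{a,b,c,\bm\varsigma}\le 0$''). What your argument actually supports is that in this case the transition coefficients are precisely the $c^{a,b,c}_{2,\bm\varsigma,\bm m,x,\theta}$, which lie in $q^{-1}\mathbb{Z}[q^{-1}]$ by \eqref{degc}; your proof should state that conclusion explicitly instead of letting the reader infer $\Omega=0$.
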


\appendix

\section{Proofs of several identities and recursive relations}

\subsection{Proofs of $q$-binomial identities}

\subsubsection{Proof of Lemma \ref{qmn}}
\label{app1}
Denote  $Q_{m,n} =\frac{[m+n-1]}{[n]!} \prod_{j=0}^{n-2}[m+1+2j]$, and $m=2t+1$, for $t\in\mathbb Z_{\geq 0}$.
We have
\begin{align*}
Q_{m,n}&=\frac{[2t+n]\displaystyle\prod_{j=1}^{n-1}[2(t+j)]}{[n]!}=\frac{[2t+n]\displaystyle\prod_{j=1}^{n-1}[t+j]\displaystyle\prod_{j=1}^{n-1}(q^{t+j}+q^{-t-j})}{[n]!}\\
&=\frac{[2t+n]}{[t+n]}\qbinom{t+n}{n}\prod_{j=1}^{n-1}(q^{t+j}+q^{-t-j}).
\end{align*}
Meanwhile, since 
\begin{align*}
\frac{[2t+n]}{[t+n]}\qbinom{t+n}{n}&=\frac{q^{-t}[t+n]+q^{t+n}[t]}{[t+n]}\qbinom{t+n}{n}=q^{-t}\qbinom{t+n}{n}+q^{t+n}\qbinom{t-1+n}{n}\in \mathcal{A},
\end{align*}
we get $Q_{m,n}\in \mathcal{A}$ as desired. This completes the proof of Lemma \ref{qmn}.

\subsubsection{Proof of Lemma \ref{lem1}}
\label{app2}
It is enough to prove 
$$
[k][k-m][c]-[k][m+1][c+k-1]=[k-m][k-m-1][c-m-1]-[m][m+1][c+2k-m-1].
$$
Denote $s(x):=q^x-q^{-x}$, the above identity is equivalent to 
\begin{align*}
s(k)s(k-m)s(c)-&s(k)s(m+1)s(c+k-1)\\
&=s(k-m)s(k-m-1)s(c-m-1)-s(m)s(m+1)s(c+2k-m-1).
\end{align*}
Now we use the fact that for any $a,b,c\in \mathbb{Z}$, 
$$
s(a)s(b)s(c) = s(a+b+c)-s(a+b-c)-s(a-b+c)+s(a-b-c).
$$
Expanding both sides will give us the desired result. This proves Lemma \ref{lem1}.

\subsubsection{Proof of Lemma \ref{lemepsilon533}}
\label{app3}
We start by proving the second statement. It can be proved using induction directly:
$$
\bar{\epsilon}_{c-1-a,k+1}=\overline{q^{-c+1+2(c-1-a)}}\bar{\epsilon}_{c-1-a,k}+(-1)^{k}\qbinom{c}{k}=q^{-c+1+2a}\epsilon_{a,k}+(-1)^{k}\qbinom{c}{k}=\epsilon_{a,k+1}.
$$
Next, we focus on the first statement only as the last statement follows immediately from the first one. Using induction, it is enough to prove that
\begin{align}\label{lemqbinom}
\sum\limits_{j=0}^{a} q^{-(c+1)j+k(a+1)}\qbinom{a}{j}\qbinom{c-1-a}{k-j}+q^{-c+1+2a}\sum\limits_{j=0}^{a} q^{-(c+1)j+(k-1)(a+1)}\qbinom{a}{j}\qbinom{c-1-a}{k-1-j}=\qbinom{c}{k}.
\end{align}
Meanwhile, recall the $q$-binomial identities (for any $0\leq x\leq y$, $0\leq \beta\leq y$):
\begin{align}
\qbinom{y}{x}&=q^{x}\left(\qbinom{y-1}{x}+q^{-y}\qbinom{y-1}{x-1}\right) \label{lemqbinom1} \\
&=\sum_{\alpha=0}^{\beta}q^{x\beta-y\alpha}\qbinom{\beta}{\alpha}\qbinom{y-\beta}{x-\alpha} \label{lemqbinom2} .
\end{align}
We compute
\begin{align*}
\text{LHS of \eqref{lemqbinom}}&=\sum\limits_{j=0}^{a} q^{-(c+1)j+k(a+1)}\qbinom{a}{j}\qbinom{c-1-a}{k-j}+\sum\limits_{j=0}^{a} q^{-(c+1)(j+1)+(k+1)(a+1)}\qbinom{a}{j}\qbinom{c-1-a}{k-1-j}\\
&=\sum\limits_{j=0}^{a} q^{-(c+1)j+k(a+1)}\qbinom{a}{j}\left(\qbinom{c-1-a}{k-j}+q^{a-c}\qbinom{c-1-a}{k-1-j}\right)\\
&\stackrel{(\ref{lemqbinom1})}{=}\sum\limits_{j=0}^{a} q^{-(c+1)j+k(a+1)}q^{j-k}\qbinom{a}{j}\qbinom{c-a}{k-j}=\sum\limits_{j=0}^{a} q^{ka-cj}\qbinom{a}{j}\qbinom{c-a}{k-j}\stackrel{(\ref{lemqbinom2})}{=}\qbinom{c}{k}.
\end{align*}
This proves Lemma \ref{lemepsilon533}.

\subsection{Proof of Lemma \ref{lemrecur}}\label{applemrecur}
Recall \eqref{deffff}, we have 
\begin{align*}
\Omega^{(b,c+1)}_{m,n,\theta}&=\sum_{0\leq k\leq \theta}(-1)^{k}\frac{[m-n+b-2c-2][m-n+b-2c]\cdots [m-n+b-2c+2k-4]}{[k]!}\\
&\quad\quad\times q^{-\frac{(\theta-k)(\theta-k+3+2n+2c-2b)}{2}}\qbinom{m-c-1+\theta-k}{\theta-k}\\
&=\sum_{0\leq k\leq \theta}(-1)^{k}\frac{[(m-1)-(n+1)+b-2c]\cdots [(m-1)-(n+1)+b-2c+2k-2]}{[k]!}\\
&\quad\quad\times q^{-\frac{(\theta-k)(\theta-k+1+2(n+1)+2c-2b)}{2}}\qbinom{(m-1)-c+\theta-k}{\theta-k}\\
&=\Omega^{(b,c)}_{m-1,n+1,\theta}.
\end{align*}
This proves Lemma \ref{lemrecur}.

\subsection{Proof of Lemma \ref{lemrecur2}}\label{app4}
We compute 
\begin{align*}
\Omega^{(b,c)}_{m,n}&=\sum_{k+y=c}(-1)^{k}\frac{[\alpha_{b,c}][\alpha_{b,c}+2]\cdots [\alpha_{b,c}+2k-2]}{[k]!}q^{-\frac{y(y+1+2n+2c-2b)}{2}}\frac{[m]!}{[y]![m-y]!}\\
&=\sum_{d=0}^{c}(-1)^{d}\frac{[\alpha_{b,c}][\alpha_{b,c}+2]\cdots [\alpha_{b,c}+2d-2]}{[c-d]![d]!}\\
&\quad\quad \times \Big(q^{-(1+c+n-b)}[m-c+d+1]\Big)\Big(q^{-(2+c+n-b)}[m-c+d+2]\Big)\cdots \Big(q^{-(2c+n-b-d)}[m]\Big).
\end{align*}
Meanwhile, denote the sequence $\{\epsilon_{n}\}_{1\leq n\leq c}$ such that $\epsilon_{1}=1$, 
$$
\epsilon_{n+1}=q^{-2n}\left(\epsilon_{n}+(-1)^{n}q^{n(c-n)}\qbinom{c}{n}\right).
$$
It is direct to show that $\epsilon_{n}=(-1)^{n-1}q^{(n-1)(c-n)}\qbinom{c-1}{n-1}$. 
Now we have
\begin{align*}
\Omega^{(b,c)}_{m,n}&=\frac{1}{[c]!}\sum_{d=1}^{c}q^{-(n-b+2c)}[m]q^{-(n-b+2c-1)}[m-1]\cdots q^{-(n-b+c+1+d)}[m-c+1+d]\\
&\quad\quad\times [\alpha_{b,c}][\alpha_{b,c}+2]\cdots [\alpha_{b,c}+2(d-2)]q^{-m+c-d}[n-b+c+2-d]\epsilon_{d}\\
&=\frac{q^{-m+c-1}}{[c]}\sum_{d=1}^{c}\frac{1}{[c-1]!}q^{-(n-b+2c)}[m]q^{-(n-b+2c-1)}[m-1]\cdots q^{-(n-b+c+1+d)}[m-c+1+d]\\
&\quad\quad\times [\alpha_{b,c}][\alpha_{b,c}+2]\cdots [\alpha_{b,c}+2(d-2)]q^{-d+1}[n-b+c+2-d]\epsilon_{d}\\
&=\frac{q^{-m+c-1}}{[c]}\sum_{d=1}^{c}\frac{q^{-(n-b+2c-d+1)}[m]q^{-(n-b+2c-d)}[m-1]\cdots q^{-(n-b+c+2)}[m-c+1+d]}{[c-d]![d-1]!}\\
&\quad\quad\times [\alpha_{b,c}][\alpha_{b,c}+2]\cdots [\alpha_{b,c}+2(d-2)]q^{-d+1}[n-b+c+2-d]
\end{align*}
Denote $m'=m+1$, $n'=n+1$,  
\begin{align*}
\Omega^{(b,c)}_{m,n}&=\frac{q^{-m'+c}}{[c]}\Omega^{(b,c)}_{m',n',c-1}[n-b+c+1]\\
&\quad\quad+\frac{q^{-m'+c}}{[c]}[\alpha_{b,c}]\bigg(\sum_{d=2}^{c}(-1)^{d}\frac{q^{-(n'-b+2c-d)}[m'-1]\cdots q^{-(n'-b+c+1)}[m'-c+d]}{[c-d]![d-1]!}\\
&\quad\quad\quad\quad\times[\alpha_{b,c}+2]\cdots [\alpha_{b,c}+2(d-2)] (q^{n-b+c}+q^{n-b+c-2}+\cdots +q^{n-b+c-2d+4})\bigg)\\
&=\frac{q^{-m'+c}}{[c]}\Omega^{(b,c)}_{m',n',c-1}[n-b+c+1]\\
&\quad\quad+\frac{q^{-m'+c}}{[c]}q^{n-b+c}[\alpha_{b,c}]\bigg(\sum_{d=2}^{c}(-1)^{d}\frac{q^{-(n'-b+2c-d)}[m'-1]\cdots q^{-(n'-b+c+1)}[m'-c+d]}{[c-d]![d-2]!}\\
&\quad\quad\quad\quad\times[\alpha_{b,c}+2]\cdots [\alpha_{b,c}+2(d-2)] q^{-(d-2)}\bigg)\\
&=\frac{q^{-m'+c}}{[c]}\Omega^{(b,c)}_{m',n',c-1}[n-b+c+1]\\
&\quad\quad+\frac{q^{-m'+c}}{[c]}q^{n-b+c}q^{-(c-2)}[\alpha_{b,c}]\bigg(\sum_{d=2}^{c}(-1)^{d}\frac{q^{-(n'-b+2c-d-1)}[m'-1]\cdots q^{-(n'-b+c)}[m'-c+d]}{[c-d]![d-2]!}\\
&\quad\quad\quad\quad\times[\alpha_{b,c}+2]\cdots [\alpha_{b,c}+2(d-2)]\bigg)
\end{align*}
Letting $m''=m'+1$, $n''=n'-1$, $\alpha''_{c}=m''-n''+b-2c=\alpha_{c}+2$, we finally get
$$
\Omega^{(b,c)}_{m,n,c}=\frac{q^{-m'+c}}{[c]}[n-b+c+1]\Omega^{(b,c)}_{m',n',c-1}+\frac{q^{-m'+n-b+c+2}}{[c]}[\alpha_{b,c}]\Omega^{(b,c)}_{m'',n'',c-2}.
$$
This proves Lemma \ref{lemrecur2}.

\subsection{Proof of Lemma \ref{lemrecur3}}\label{app5}

We compute 
\begin{align*}
O^{(b,c)}_{m,n}&=\sum_{k+y=c}(-1)^{k}[\alpha_{b,c}+k-1]\frac{[\alpha_{b,c}+1][\alpha_{b,c}+3]\cdots[\alpha_{b,c}+2k-3]}{[k]!}q^{-\frac{y(y+1+2n+2c-2b)}{2}}\frac{[m]!}{[y]![m-y]!}\\
&=\sum_{d=0}^{c}(-1)^{d}[\alpha_{b,c}+d-1]\frac{[\alpha_{b,c}+1][\alpha_{b,c}+3]\cdots[\alpha_{b,c}+2d-3]}{[c-d]![d]!}\\
&\quad\quad \times \Big(q^{-(1+c+n-b)}[m-c+d+1]\Big)\Big(q^{-(2+c+n-b)}[m-c+d+2]\Big)\cdots \Big(q^{-(2c+n-b-d)}[m]\Big)\\
&=\frac{1}{[c]!}\sum_{d=1}^{c}q^{-(n-b+2c)}[m]q^{-(n-b+2c-1)}[m-1]\cdots q^{-(n-b+c+1+d)}[m-c+1+d]\\
&\quad\quad\times [\alpha_{b,c}+d-1][\alpha_{b,c}+1][\alpha_{b,c}+3]\cdots [\alpha_{b,c}+1+2(d-3)]q^{-m+c-d}[n-b+c+2-d]\epsilon_{d}\\
&\quad+\frac{1}{[c]!}\sum_{d=2}^{c}q^{-(n-b+2c)}[m]q^{-(n-b+2c-1)}[m-1]\cdots q^{-(n-b+c+1+d)}[m-c+1+d]\\
&\quad\quad \times [\alpha_{b,c}+1][\alpha_{b,c}+3]\cdots [\alpha_{b,c}+2d-5][d-1](-1)^{d}q^{(d-1)(c-d)}\qbinom{c-1}{d-1}\\
&=\frac{1}{[c]!}\sum_{d=1}^{c}q^{-(n-b+2c)}[m]q^{-(n-b+2c-1)}[m-1]\cdots q^{-(n-b+c+1+d)}[m-c+1+d]\\
&\quad\quad\times [\alpha_{c}+d-1][\alpha_{b,c}+1][\alpha_{b,c}+3]\cdots [\alpha_{b,c}+1+2(d-3)]q^{-m+c-d}[n-b+c+2-d]\epsilon_{d}\\
&\quad +\frac{1}{[c]}\sum_{d=2}^{c}(-1)^{d}q^{-(n-b+2c-d+1)}[m]q^{-(n-b+2c-d)}[m-1]\cdots q^{-(n-b+c+2)}[m-c+1+d]\\
&\quad\quad \times [\alpha_{b,c}+1][\alpha_{b,c}+3]\cdots [\alpha_{b,c}+2d-5]\frac{1}{[c-d]![d-2]!}.
\end{align*}
The rest is to deal with the first summand. Similarly, we have
\begin{align*}
O^{(b,c)}_{m,n}&=\frac{q^{-m-1+c}}{[c]}\Omega^{(b,c)}_{m+1,n+1,c-1}[n-b+c+1]\\
&\quad+\frac{q^{-m-1+c}}{[c]}\bigg(\sum_{d=2}^{c}(-1)^{d}\frac{q^{-(n+1-b+2c-d)}[m]\cdots q^{-(n+1-b+c+1)}[m+1-c+d]}{[c-d]![d-1]!}[\alpha_{b,c}+d-2]\\
&\quad\quad\times[\alpha_{b,c}+1][\alpha_{b,c}+3]\cdots [\alpha_{b,c}+1+2(d-3)] (q^{n-b+c}+q^{n-b+c-2}+\cdots +q^{n-b+c-2d+4})\bigg)\\
&\quad +\frac{1}{[c]}\sum_{d=2}^{c}(-1)^{d}q^{-(n-b+2c-d+1)}[m]q^{-(n-b+2c-d)}[m-1]\cdots q^{-(n-b+c+2)}[m-c+1+d]\\
&\quad\quad \times [\alpha_{b,c}+1][\alpha_{b,c}+3]\cdots [\alpha_{b,c}+2d-5][d-1]\frac{1}{[c-d]![d-2]!}\\
&=\frac{q^{-m-1+c}}{[c]}\Omega^{(b,c)}_{m+1,n+1,c-1}[n-b+c+1]\\
&\quad+\frac{q^{-m-1+c}}{[c]}q^{n-b+c}\bigg(\sum_{d=2}^{c}(-1)^{d}\frac{q^{-(n+1-b+2c-d)}[m]\cdots q^{-(n-b+c+2)}[m+1-c+d]}{[c-d]![d-2]!}[\alpha_{b,c}+d-2]\\
&\quad\quad\times[\alpha_{b,c}+1][\alpha_{b,c}+3]\cdots [\alpha_{b,c}+2d-5] q^{-(d-2)}\bigg)\\
&\quad +\frac{1}{[c]}\sum_{d=2}^{c}(-1)^{d}q^{-(n-b+2c-d+1)}[m]q^{-(n-b+2c-d)}[m-1]\cdots q^{-(n-b+c+2)}[m-c+1+d]\\
&\quad\quad \times [\alpha_{b,c}+1][\alpha_{b,c}+3]\cdots [\alpha_{b,c}+2d-5]\frac{1}{[c-d]![d-2]!}\\
&=\frac{q^{-m-1+c}}{[c]}\Omega^{(b,c)}_{m+1,n+1,c-1}[n-b+c+1]\\
&\quad+\frac{1}{[c]}\sum_{d=2}^{c}(-1)^{d}\frac{q^{-(n+1-b+2c-d)}[m]\cdots q^{-(n-b+c+2)}[m+1-c+d]}{[c-d]![d-2]!}\\
&\quad\quad\times[\alpha_{b,c}+1][\alpha_{b,c}+3]\cdots [\alpha_{b,c}+2d-5]\\
&\quad\quad\times\Big(q^{-m+n-b+2c+1-d}[\alpha_{b,c}+d-2]+1\Big)\\
&=\frac{q^{-m-1+c}}{[c]}\Omega^{(b,c)}_{m+1,n+1,c-1}[n-b+c+1]+\frac{q^{-m+n-b+2c}}{[c]}[m-n+b-2c+1]\Omega^{(b,c-2)}_{m,n+3}\\
&\quad+\frac{q^{-2m+2n-2b+4c}}{[c]}\sum_{d=3}^{c}(-1)^{d}\frac{q^{-(n+1-b+2c-d)}[m]\cdots q^{-(n-b+c+2)}[m+1-c+d]}{[c-d]![d-2]!}\\
&\quad\quad\times[\alpha_{c}+1][\alpha_{c}+3]\cdots [\alpha_{c}+2d-5](q^{-2}+q^{-4}+\cdots+q^{-2(d-2)})\\
&=\frac{q^{-m-1+c}}{[c]}\Omega^{(b,c)}_{m+1,n+1,c-1}[n-b+c+1]+\frac{q^{-m+n-b+2c}}{[c]}[m-n+b-2c+1]\Omega^{(b,c-2)}_{m,n+3}\\
&\quad+\frac{q^{-2m+2n-2b+4c}}{[c]}q^{-c+1}\sum_{d=3}^{c}(-1)^{d}\frac{q^{-(n+1-b+2c-d)}[m]\cdots q^{-(n-b+c+2)}[m+1-c+d]}{[c-d]![d-3]!}\\
&\quad\quad\times[\alpha_{b,c}+1][\alpha_{b,c}+3]\cdots [\alpha_{b,c}+2d-5]q^{c-d}\\
&=\frac{q^{-m-1+c}}{[c]}\Omega^{(b,c)}_{m+1,n+1,c-1}[n-b+c+1]+\frac{q^{-m+n-b+2c}}{[c]}[m-n+b-2c+1]\Omega^{(b,c-2)}_{m,n+3}\\
&\quad-\frac{q^{-2m+2n-2b+3c+1}}{[c]}[m-n+b-2c+1]\Omega_{m+3,n,b,c,c-3}.
\end{align*}
This completes the proof of Lemma \ref{lemrecur3}. 

\subsection{Proof of Lemma \ref{lemrecursive1}}\label{app6}

When $\bm{\varsigma}=(1,0)$, recall $\epsilon_{a,k+1}$ defined in \eqref{epsilon}, we compute
\begin{align*}
\sum_{k+y=c}(-1)^{k}&\frac{[\alpha_{b,c}][\alpha_{b,c}+2]\cdots [\alpha_{b,c}+2k-2]}{[k]!}q^{-\frac{y(y+1+2n+2c-2b)}{2}+a(c-2k)}\qbinom{m}{y}\\
&=\sum_{k=0}^{c-1}\frac{[\alpha_{b,c}][\alpha_{b,c}+2]\cdots [\alpha_{b,c}+2k-2]}{[c]!}q^{-\frac{(c-1-k)(c-1-k+1+2n+2c-2b)}{2}+a(c-2(k+1))}[m][m-1]\\
&\quad\quad \cdots [m-c+2+k](q^{-m+n-b+c+2a-2k}+\cdots+q^{-m-n+b-c+2a+2}+q^{-m-n+b-c+2a})\epsilon_{a,k+1}\\
&=\sum_{j=0}^{a}(-1)^{j}q^{-(c+1)j}\qbinom{a}{j}\bigg(\sum_{k=0}^{c-1-a}(-1)^{k}\frac{[\alpha_{b,c}][\alpha_{b,c}+2]\cdots [\alpha_{b,c}+2k+2j-2]}{[c]!}\\
&\quad\quad\times q^{-\frac{(c-1-j-k)(c-1-j-k+1+2n+2c-2b)}{2}+a(c-2(k+j+1))}[m][m-1]\cdots [m-c+2+k+j]\\
&\quad\quad\times (q^{-m+n-b+c+2a-2k-2j}+\cdots+q^{-m-n+b-c+2a+2}+q^{-m-n+b-c+2a}) q^{k(a+1)}\qbinom{c-1-a}{k}\bigg)\\
&=\sum_{j=0}^{a}(-1)^{j}q^{-(c+1)j}q^{-\frac{(a-j)(a-j+1+2n+2c-2b)}{2}}\qbinom{a}{j}\bigg(\sum_{k=0}^{c-1-a}(-1)^{k}\frac{[\alpha_{b,c}][\alpha_{b,c}+2]\cdots [\alpha_{b,c}+2k+2j-2]}{[c]!}\\
&\quad\quad\times q^{-\frac{(c-1-a-k)(c-1-a-k+1+2n+2c-2b)}{2}+a(c-2(k+j+1))-(a-j)(c-1-a-k)}[m][m-1]\cdots [m-c+2+k+j]\\
&\quad\quad\times (q^{-m+n-b+c+2a-2k-2j}+\cdots+q^{-m-n+b-c+2a+2}+q^{-m-n+b-c+2a}) q^{k(a+1)}\qbinom{c-1-a}{k}\bigg)\\
&=\sum_{j=0}^{a}(-1)^{j}q^{-(2+3a)j+a^{2}-a+(1-j)(c-1-a)}q^{-\frac{(a-j)(a-j+1+2n+2c-2b)}{2}}[m][m-1]\cdots[m+1-a+j]\\
&\quad\quad\times [\alpha_{b,c}][\alpha_{b,c}+2]\cdots [\alpha_{b,c}+2j-2]\qbinom{a}{j}\frac{[c-1-a]!}{[c]!}\\
&\quad\quad\times \bigg(\sum_{k=0}^{c-1-a}(-1)^{k}\frac{[\alpha_{b,c}+2j][\alpha_{b,c}+2j+2]\cdots [\alpha_{b,c}+2k+2j-2]}{[c-1-a-k]![k]!}\\
&\quad\quad\quad\quad\times q^{-\frac{(c-1-a-k)(c-1-a-k+1+2n+2c-2b+2-2j)}{2}}[m-a+j][m-a+j-1]\cdots [m-c+2+k+j]\\
&\quad\quad\quad\quad\times (q^{-m+n-b+c+2a-2k-2j}+\cdots+q^{-m-n+b-c+2a+2}+q^{-m-n+b-c+2a}) \bigg)\\
\end{align*}
Noticing that if we take $c'=c-a-1$, $m'=m-a+j$, $n'=n+2+a-j$, we have $\alpha'_{b,c}=m'-n'+b-2c'=m-n+b-2c+2j$
\begin{align*}
\sum_{k=0}^{c-1-a}&(-1)^{k}\frac{[\alpha_{b,c}+2j][\alpha_{b,c}+2j+2]\cdots [\alpha_{b,c}+2k+2j-2]}{[c-1-a-k]![k]!}\\
&\quad\quad\times q^{-\frac{(c-1-a-k)(c-1-a-k+1+2n+2c-2b+2-2j)}{2}}[m-a+j][m-a+j-1]\cdots [m-c+2+k+j]\\
&\quad\quad\times (q^{-m+n-b+c+2a-2k-2j}+\cdots+q^{-m-n+b-c+2a+2}+q^{-m-n+b-c+2a})\\
&=\sum_{k=0}^{c'}(-1)^{k}\frac{[\alpha'_{b,c}][\alpha'_{b,c}+2]\cdots [\alpha'_{b,c}+2k-2]}{[k]!}q^{-\frac{(c'-k)(c'-k+1+2n'+2c'-2b)}{2}}\qbinom{m'}{c'-k}\\
&\quad\quad\times q^{-m'+a-k}[n'-b+c'-k]=q^{a-c+1}[c]\Omega^{0,b,c'+1}_{m',n'-2}.
\end{align*}
Finally, we have 
\begin{align*}
\sum_{k+y=c}(-1)^{k}&\frac{[\alpha_{b,c}][\alpha_{b,c}+2]\cdots [\alpha_{b,c}+2k-2]}{[k]!}q^{-\frac{y(y+1+2n+2c-2b)}{2}+a(c-2k)}\qbinom{m}{y}\\
&=\sum_{j=0}^{a}(-1)^{j}q^{-(2+3a+c)j+a^{2}-a+j(1+a)}q^{-\frac{(a-j)(a-j+1+2n+2c-2b)}{2}}[m][m-1]\cdots[m+1-a+j]\\
&\quad\quad\times [\alpha_{b,c}][\alpha_{b,c}+2]\cdots [\alpha_{b,c}+2j-2]\qbinom{a}{j}\frac{[c-1-a]!}{[c-1]!}\Omega^{0,b,c-a}_{m-a+j,n+a-j}.
\end{align*}
When $\bm{\varsigma}=(0,1)$, we compute
\begin{align*}
\sum_{k+y=c}(-1)^{k}&\frac{[\alpha_{b,c}][\alpha_{b,c}+2]\cdots [\alpha_{b,c}+2k-2]}{[k]!}q^{-\frac{y(y+1+2n+2c-2b-2)}{2}+a(c-2k)}\qbinom{m}{y}\\
&=\sum_{k+y=c}(-1)^{k}\frac{[\alpha'_{b,c}][\alpha'_{b,c}+2]\cdots [\alpha'_{b,c}+2k-2]}{[k]!}q^{-\frac{y(y+1+2n'+2c-2b)}{2}+a(c-2k)}\qbinom{m}{y}\\
\end{align*}
where $n'=n-1$ and $\alpha'_{b,c}=m-n'+b-2c=m-n+b-2c+1=\alpha_{b,c}$. Now we successfully go back to the previous case. This proves Lemma \ref{lemrecursive1}. 

\subsection{Proof of Lemma \ref{lemrecursiveo}}
\label{app4}
We prove \eqref{oandomega} only. We can write 
\begin{align*}
O^{a,b,c}_{m,n,\bm{\varsigma}}&=q^{-\frac{c(-1+2n+3c-2b)}{2}}\qbinom{m}{c}\sum_{x+k=a}(-1)^{k}\frac{[\beta_{a,b,c,\bm{\varsigma}}][\beta_{a,b,c,\bm{\varsigma}}+2]\cdots [\beta_{a,b,c,\bm{\varsigma}}+2k-2]}{[k]!}q^{\varsigma_{1}(x-c)}\nonumber\\
&\quad\quad\times q^{-\frac{x(x+1+2m+2b-4c-2a)}{2}+xy-ck}\qbinom{n-b+c+a}{x}.
\end{align*}
The key observation is that
\begin{align*}
\Omega^{c,b',a,\bm{\varsigma}'}_{m',n'}&=q^{-\frac{c(-1+2m'+2b'-4a-c)}{2}}\qbinom{n'+a-b'+c}{c}\sum_{x+k=a}(-1)^{k}\frac{[\alpha_{b',a,\bm{\varsigma}'}][\alpha_{b',a,\bm{\varsigma}'}+2]\cdots [\alpha_{b',a,\bm{\varsigma}'}+2k-2]}{[k]!}\nonumber\\
&\quad\quad\times q^{-\frac{x(x+1+2n'+2a-2b')}{2}+xy-ck}q^{\varsigma'_{2}(x-c)}\qbinom{m'}{x}\\
&=q^{-\frac{c(-1+2n+3c-2b)}{2}}\qbinom{m+b-a-c}{c}\sum_{x+k=a}(-1)^{k}\frac{[\beta_{a,b,c,\bm{\varsigma}}][\beta_{a,b,c,\bm{\varsigma}}+2]\cdots [\beta_{a,b,c,\bm{\varsigma}}+2k-2]}{[k]!}\nonumber\\
&\quad\quad\times q^{-\frac{x(x+1+2m+2b-4c-2a)}{2}+xy-ck}q^{\varsigma_{1}(x-c)}\qbinom{n-b+c+a}{x},
\end{align*}
where $b'=a+c$, $m'=n-b+c+a$, $n'=m+b-a-c$, $\bm{\varsigma}'=1-\bm{\varsigma}$, and $\alpha_{a+c,a,\bm{\varsigma}'}=m'-n'+b-2a+(1-\varsigma_{2})=n-b+c+a-(m+b-a-c)+a+c-2a+\varsigma_{1}=n-m+a-2b+3c+\varsigma_{1}=\beta_{a,b,c,\bm{\varsigma}}$.

Hence, we get
\begin{align*}
\qbinom{m+b-a-c}{c}O^{a,b,c}_{m,n}=\qbinom{m}{c}\Omega^{c,b',a}_{m',n'}.
\end{align*}
This completes the proof of Lemma \ref{lemrecursiveo}.

\subsection{Proof of Lemma \ref{lem417}}\label{app7}

Combining \eqref{defxi2} with the fact that $[\alpha_{b,c,\bm{\varsigma}}+k-1]=q^{k}[\alpha_{b,c,\bm{\varsigma}}+2k-1]-q^{\alpha_{b,c,\bm{\varsigma}}+2k-1}[k]$ for $k\geq 0$, we have
\begin{align*}
\xi^{(a,b,c)}_{m,n,k,y}&=(-1)^{k}q^{-\frac{a(-1+2m+2b-4c-a)}{2}}\qbinom{n-b+c+a}{a}\\
&\quad\quad\times \bigg(\frac{[\alpha_{b,c,\bm{\varsigma}}+1][\alpha_{b,c,\bm{\varsigma}}+3]\cdots [\alpha_{b,c,\bm{\varsigma}}+2k-3][\alpha_{b,c,\bm{\varsigma}}+2k-1]}{[k]!}q^{\varsigma_{2}(y-a)}\nonumber\\
&\quad\quad\quad\quad\quad\quad\times q^{-\frac{y(y+1+2n+2c-2b-2a)}{2}-(a-1)k}\qbinom{m}{y}\\
&\quad\quad\quad\quad -\frac{[\alpha_{b,c,\bm{\varsigma}}+1][\alpha_{b,c,\bm{\varsigma}}+3]\cdots [\alpha_{b,c,\bm{\varsigma}}+2k-3]}{[k-1]!}q^{\varsigma_{2}(y-a)}\nonumber\\
&\quad\quad\quad\quad\quad\quad\times q^{-\frac{y(y+1+2n+2c-2b-2a)}{2}-(a-2)k+\alpha_{b,c,\bm{\varsigma}}-1}\qbinom{m}{y}\bigg).
\end{align*}
Since $\Xi^{a,b,c}_{m,n}=\sum_{k+y=c}\xi^{(a,b,c)}_{m,n,k,y}$, we have
\begin{align*}
\Xi^{a,b,c}_{m,n}&=q^{-\frac{a(-1+2m+2b-4c-a)}{2}}\qbinom{n-b+c+a}{a}\\
&\quad\quad\quad\quad\times \bigg(\sum_{k+y=c}(-1)^{k}\frac{[\alpha_{b,c,\bm{\varsigma}}+1][\alpha_{b,c,\bm{\varsigma}}+3]\cdots [\alpha_{b,c,\bm{\varsigma}}+2k-3][\alpha_{b,c,\bm{\varsigma}}+2k-1]}{[k]!}q^{\varsigma_{2}(y-a)}\nonumber\\
&\quad\quad\quad\quad\quad\quad\times q^{-\frac{y(y+1+2n+2c-2b-2a)}{2}-(a-1)k}\qbinom{m}{y}\bigg)\\
&\quad\quad -q^{-\frac{a(-1+2m+2b-4c-a)}{2}}\qbinom{n-b+c+a}{a}\\
&\quad\quad\quad\quad\times \bigg(\sum_{k+y=c}(-1)^{k}\frac{[\alpha_{b,c,\bm{\varsigma}}+1][\alpha_{b,c,\bm{\varsigma}}+3]\cdots [\alpha_{b,c,\bm{\varsigma}}+2k-3]}{[k-1]!}q^{\varsigma_{2}(y-a)}\nonumber\\
&\quad\quad\quad\quad\quad\quad\times q^{-\frac{y(y+1+2n+2c-2b)}{2}+ay-(a-2)k+\alpha_{b,c,\bm{\varsigma}}-1}\qbinom{m}{y}\bigg)\\
&=q^{-\frac{a(-1+2m+2b-4c-a)}{2}}\qbinom{n-b+c+a}{a}\\
&\quad\quad\quad\quad\times \bigg(\sum_{k+y=c}(-1)^{k}\frac{[\alpha_{b,c,\bm{\varsigma}}+1][\alpha_{b,c,\bm{\varsigma}}+3]\cdots [\alpha_{b,c,\bm{\varsigma}}+2k-3][\alpha_{b,c,\bm{\varsigma}}+2k-1]}{[k]!}q^{\varsigma_{2}(y-a)}\nonumber\\
&\quad\quad\quad\quad\quad\quad\times q^{-\frac{y(y+1+2n+2c-2b-2a)}{2}-(a-1)k}\qbinom{m}{y}\bigg)\\
&\quad\quad +q^{-\frac{a(-1+2m+2b-4c-a)}{2}}\qbinom{n-b+c+a}{a}q^{a(c-1)-(a-2)+\alpha_{b,c,\bm{\varsigma}}-1}\\
&\quad\quad\quad\quad\times \bigg(\sum_{k+y=c-1}(-1)^{k}\frac{[\alpha_{b,c,\bm{\varsigma}}+1][\alpha_{b,c,\bm{\varsigma}}+3]\cdots [\alpha_{b,c,\bm{\varsigma}}+2k-1]}{[k]!}q^{\varsigma_{2}(y-a)}\nonumber\\
&\quad\quad\quad\quad\quad\quad\times q^{-\frac{y(y+1+2n+2c-2b)}{2}-2(a-1)k}\qbinom{m}{y}\bigg).
\end{align*}
Meanwhile, recall when $\alpha_{b,c,\bm{\varsigma}}$ is even, $\Omega^{a,b,c}_{m,n}=\sum_{k+y=c}\omega^{(a,b,c)}_{m,n,k,y}$, where $\omega^{(a,b,c)}_{m,n,k,y}$ is defined in \eqref{defome2}:
\begin{align*}
\omega^{(a,b,c)}_{m,n,k,y}&=(-1)^{k}q^{-\frac{a(-1+2m+2b-4c-a)}{2}}\qbinom{n-b+c+a}{a}\nonumber\\
&\quad\quad\times \frac{[\alpha_{b,c,\bm{\varsigma}}][\alpha_{b,c,\bm{\varsigma}}+2]\cdots [\alpha_{b,c,\bm{\varsigma}}+2k-2]}{[k]!}q^{\varsigma_{2}(y-a)}q^{-\frac{y(y+1+2n+2c-2b-2a)}{2}-ak}\qbinom{m}{y}\\
&=(-1)^{k}q^{-\frac{a(-1+2m+2b-4c-a)}{2}}\qbinom{n-b+c+a}{a}\nonumber\\
&\quad\quad\times \frac{[\alpha_{b,c,\bm{\varsigma}}][\alpha_{b,c,\bm{\varsigma}}+2]\cdots [\alpha_{b,c,\bm{\varsigma}}+2k-2]}{[k]!}q^{\varsigma_{2}(y-a)}q^{-\frac{y(y+1+2n+2c-2b)}{2}+ac-2ak}\qbinom{m}{y}\\
\end{align*}
We know
\begin{align*}
\omega^{(a-1,b,c)}_{m,n-1,k,y}&=(-1)^{k}q^{-\frac{(a-1)(2m+2b-4c-a)}{2}}\qbinom{n-b+c+a-2}{a-1}q^{\varsigma_{2}}\nonumber\\
&\quad\quad\times \frac{[\alpha_{b,c,\bm{\varsigma}}+1][\alpha_{b,c,\bm{\varsigma}}+3]\cdots [\alpha_{b,c,\bm{\varsigma}}+2k-1]}{[k]!}q^{\varsigma_{2}(y-a)}q^{-\frac{y(y+1+2n+2c-2b-2a)}{2}-(a-1)k}\qbinom{m}{y},
\end{align*}
\begin{align*}
\omega^{(a-1,b-1,c-1)}_{m,n,k,y}&=(-1)^{k}q^{-\frac{(a-1)(2+2m+2b-4c-a)}{2}}\qbinom{n-b+c+a-1}{a-1}q^{\varsigma_{2}+(a-1)(c-1)}\nonumber\\
&\quad\quad\times \frac{[\alpha_{b,c,\bm{\varsigma}}+1][\alpha_{b,c,\bm{\varsigma}}+3]\cdots [\alpha_{b,c,\bm{\varsigma}}+2k-1]}{[k]!}q^{\varsigma_{2}(y-a)}q^{-\frac{y(y+1+2n+2c-2b)}{2}-2(a-1)k}\qbinom{m}{y}.
\end{align*}
Finally, we get
\begin{align*}
\Xi^{a,b,c}_{m,n}&=\frac{q^{-\frac{a(-1+2m+2b-4c-a)}{2}}\qbinom{n-b+c+a}{a}}{q^{-\frac{(a-1)(2m+2b-4c-a)}{2}}\qbinom{n-b+c+a-2}{a-1}q^{\varsigma_{2}}}\Omega^{a-1,b,c}_{m,n-1}\\
&\quad\quad\quad\quad +\frac{q^{-\frac{a(-1+2m+2b-4c-a)}{2}}\qbinom{n-b+c+a}{a}q^{a(c-1)-(a-2)+\alpha_{b,c,\bm{\varsigma}}-1}}{q^{-\frac{(a-1)(2+2m+2b-4c-a)}{2}}\qbinom{n-b+c+a-1}{a-1}q^{\varsigma_{2}+(a-1)(c-1)}}\Omega^{a-1,b-1,c-1}_{m,n}.
\end{align*}
This completes the proof of Lemma \ref{lem417}. 

\subsection{Proof of Proposition \ref{thm811}} \label{app8}

First, \eqref{833} follows from Lemmas \ref{lemrecur} and \ref{lemoddddd}. When $\alpha_{c}$ is even, notice that \eqref{844} can be rewritten as
\begin{align}\label{855}
\Omega^{b,c}_{m,n}&=\sum_{x=0}^{\beta_{c}}\qbinom{\gamma_{c}+x-1}{x}_{q^2}\qbinom{n-b+2c-2x}{c-2x}q^{\frac{(2x+2-c)(c-1-2m)}{2}}q^{x(-m+n-b+c+1)}\nonumber\\
&=\sum_{x=0}^{\beta_{c}}\frac{q^{-m+c-1-x}\cdots q^{-m+x}}{[2x]\cdots [4][2][c-2x]!}[n-b+c+1]\cdots [n-b+2c-2x]\nonumber\\
&\quad \quad \quad \quad  \times [m-n+b-2c]\cdots [m-n+b-2c+2(x-1)]q^{x(-m+n-b+c+1)}\nonumber\\
&=\frac{q^{-m+c-1}q^{-m+c-2}\cdots q^{-m}}{[c]!}[n-b+c+1][n-b+c+2]\cdots [n-b+2c]\nonumber\\
&\quad \quad +\frac{q^{-m+c-2}q^{-m+c-3}\cdots q^{-m+1}}{[2][c-2]!}[n-b+c+1][n-b+c+2]\cdots [n-b+2c-2]\nonumber\\
&\quad \quad \quad \quad  \times [m-n+b-2c]q^{-m+n-b+c+1}\nonumber\\
&\quad \quad +\frac{q^{-m+c-3}q^{-m+c-4}\cdots q^{-m+2}}{[4][2][c-4]!}[n-b+c+1][n-b+c+2]\cdots [n-b+2c-4]\nonumber\\
&\quad \quad \quad \quad \times [m-n+b-2c][m-n+b-2c+2]q^{2(-m+n-b+c+1)}\nonumber\\
&\quad \quad +\cdots\nonumber\\
&\quad \quad +\frac{q^{-m+c-1-\beta_{c}}\cdots q^{-m+\beta_{c}}}{[2\beta_{c}]\cdots [4][2][c-2\beta_{c}]!}[n-b+c+1]\cdots [n-b+2c-2\beta_{c}]\nonumber\\
&\quad \quad \quad \quad  \times [m-n+b-2c]\cdots [m-n+b-2c+2(\beta_{c}-1)]q^{\beta_{c}(-m+n-b+c+1)}.
\end{align}
We prove \eqref{855} by induction on $c$. First, assume that the statement is true for any $k\leq c-1$, for some even $c-1$. Now for $c$, by Lemma \ref{lemrecur2}, (here $\beta_{c}=\beta_{c-1}=\beta_{c-2}+1$) we have
\begin{align*}
\Omega^{b,c}_{m,n}&=\frac{q^{-m-1+c}}{[c]}[n-b+c+1]\Omega_{m+1,n+1,b,c,c-1}+\frac{q^{-m+n-b+c+1}}{[c]}[\alpha_{c}]\Omega_{m+2,n,b,c,c-2}\\
&=\frac{q^{-m-1+c}}{[c]}[n-b+c+1]\Omega_{m,n+2,b,c-1,c-1}+\frac{q^{-m+n-b+c+1}}{[c]}[\alpha_{c}]\Omega_{m,n+2,b,c-2,c-2}\\
&=\frac{q^{-m-1+c}}{[c]}[n-b+c+1]\sum_{x=0}^{\beta_{c}}\frac{q^{-m+c-2-x}\cdots q^{-m+x}}{[2x]\cdots [4][2][c-1-2x]!}[n-b+c+2]\cdots [n-b+2c-2x]\\\\
&\quad \quad \quad \quad  \times [m-n+b-2c]\cdots [m-n+b-2c+2(x-1)]q^{x(-m+n-b+c+2)}\\
&\quad \quad +\frac{q^{-m+n-b+c+1}}{[c]}[\alpha_{c}]\sum_{x=0}^{\beta_{c}-1}\frac{q^{-m+c-3-x}\cdots q^{-m+x}}{[2x]\cdots [4][2][c-2-2x]!}[n-b+c+1]\cdots [n-b+2c-2x-2]\\\\
&\quad \quad \quad \quad  \times [m-n+b-2c+2]\cdots [m-n+b-2c+2x]q^{x(-m+n-b+c+1)}\\
&=\frac{q^{-m-1+c}}{[c]}\sum_{x=0}^{\beta_{c}}\frac{q^{-m+c-2-x}\cdots q^{-m+x}}{[2x]\cdots [4][2][c-1-2x]!}[n-b+c+1][n-b+c+2]\cdots [n-b+2c-2x]\\\\
&\quad \quad \quad \quad  \times [m-n+b-2c]\cdots [m-n+b-2c+2(x-1)]q^{x(-m+n-b+c+2)}\\
&\quad \quad +\frac{1}{[c]}\sum_{x=0}^{\beta_{c}-1}\frac{q^{-m+c-3-x}\cdots q^{-m+x}}{[2x]\cdots [4][2][c-2-2x]!}[n-b+c+1]\cdots [n-b+2c-2x-2]\\\\
&\quad \quad \quad \quad  \times [m-n+b-2c] [m-n+b-2c+2]\cdots [m-n+b-2c+2x]q^{(x+1)(-m+n-b+c+1)}\\
\end{align*}
Now we pair the $x+1$-th term in the first summand with the $x$-th term in the second summand, and we get
\begin{align*}
&\frac{q^{-m-1+c}}{[c]}\frac{q^{-m+c-3-x}\cdots q^{-m+x+1}}{[2x+2]\cdots [4][2][c-3-2x]!}[n-b+c+1]\cdots [n-b+2c-2x-2]\\\\
&\quad \quad \quad \quad  \times [m-n+b-2c]\cdots [m-n+b-2c+2x]q^{(x+1)(-m+n-b+c+2)}\\
&\quad \quad +\frac{1}{[c]}\frac{q^{-m+c-3-x}\cdots q^{-m+x}}{[2x]\cdots [4][2][c-2-2x]!}[n-b+c+1]\cdots [n-b+2c-2x-2]\\\\
&\quad \quad \quad \quad  \times [m-n+b-2c]\cdots [m-n+b-2c+2x]q^{(x+1)(-m+n-b+c+1)}\\
&\quad=\frac{1}{[c]}\frac{q^{-m+c-3-x}\cdots q^{-m+x+1}}{[2x]\cdots [4][2][c-3-2x]!}[n-b+c+1]\cdots [n-b+2c-2x-2]\\
&\quad \quad \quad \quad  \times [m-n+b-2c]\cdots [m-n+b-2c+2x]q^{(x+1)(-m+n-b+c+1)}\\
&\quad \quad \quad \quad  \times \bigg( \frac{q^{-m+x+c}}{[2x+2]}+\frac{q^{-m+x}}{[c-2-2x]}\bigg)\\
&\quad=\frac{1}{[c]}\frac{q^{-m+c-3-x}\cdots q^{-m+x+1}}{[2x+2][2x]\cdots [4][2][c-2-2x]!}[n-b+c+1]\cdots [n-b+2c-2x-2]\\
&\quad \quad \quad \quad  \times [m-n+b-2c]\cdots [m-n+b-2c+2x]q^{(x+1)(-m+n-b+c+1)}\\
&\quad \quad \quad \quad  \times \big([c-2-2x]q^{-m+x+c}+[2x+2]q^{-m+x}\big)\\
&\quad=\frac{q^{-m+c-2-x}q^{-m+c-3-x}\cdots q^{-m+x+1}}{[2x+2][2x]\cdots [4][2][c-2-2x]!}[n-b+c+1]\cdots [n-b+2c-2x-2]\\
&\quad \quad \quad \quad  \times [m-n+b-2c]\cdots [m-n+b-2c+2x]q^{(x+1)(-m+n-b+c+1)},
\end{align*}
which is the $x+1$-th term in the expression of $\Omega^{b,c}_{m,n}$ for $0\leq x\leq \beta_{c}-1$. In the last step, we used the fact that
\begin{align*}
[c-2-2x]q^{-m+x+c}+[2x+2]q^{-m+x}=[c]q^{-m+c-2-x}.
\end{align*}

When it comes to the case where $c-1$ is odd, we get $\beta_{c}=\beta_{c-1}+1=\beta_{c-2}+1$ and now we still pair the $x+1$-th term in the first summand with the $x$-th term in the second summand for $0\leq x\leq \beta_{c}-2$ and by the same argument as above we still get the $x+1$-th term in the expression of $\Omega^{b,c}_{m,n}$. The first and last terms in the expression of $\Omega^{b,c}_{m,n}$ come from the first term in the first summand and the last term in the second summand, respectively. This completes the proof of Proposition \ref{thm811}.

\end{document}